\documentclass[11pt]{article}
\usepackage{smile}

\usepackage{fullpage}
\usepackage{lscape}
\usepackage{bigints}
\usepackage{framed}
\usepackage{mdframed}
\usepackage{enumerate}
\usepackage[inline]{enumitem}
\usepackage[T1]{fontenc}
\usepackage{moresize}
\usepackage{bm}
\usepackage{bbm}
\usepackage{dsfont}
\usepackage{amsmath}
\usepackage{amssymb}
\usepackage{amsthm}
\usepackage{amsfonts}
\usepackage{stmaryrd}
\usepackage{array}
\usepackage{mathrsfs}
\usepackage{mathtools} 
\usepackage{extarrows}
\usepackage{stackrel}
\usepackage{relsize,exscale}
\usepackage{scalerel}
\usepackage[nodisplayskipstretch]{setspace}
\usepackage{color}
\usepackage[usenames,dvipsnames]{xcolor}
\usepackage{cancel}
\usepackage{soul}
\usepackage{undertilde}
\usepackage{xfrac}
\usepackage{siunitx}
\usepackage{graphicx}
\usepackage{float}
\usepackage{rotating}
\usepackage{subcaption}
\usepackage{overpic}
\usepackage[all]{xy}
\usepackage{tikz}
\usetikzlibrary{arrows,matrix,positioning,calc,automata,patterns}
\usepackage{booktabs}
\usepackage{dcolumn}
\usepackage{multirow}
\usepackage{diagbox}
\usepackage{tabularx}
\usepackage{verbatim}
\usepackage{listings}
\usepackage{fancyvrb}
\usepackage{hyperref}
\usepackage[round]{natbib}
\usepackage{sectsty}

\hypersetup{
bookmarks=true,         
unicode=false,          
pdftoolbar=true,        
pdfmenubar=true,        
pdffitwindow=false,     
pdfstartview={FitH},    
pdftitle={My title},    
pdfauthor={Author},     
pdfsubject={Subject},   
pdfcreator={Creator},   
pdfproducer={Producer}, 
pdfkeywords={key1, key2}, 
pdfnewwindow=true,      
colorlinks=true,        
linkcolor=blue,         
citecolor=blue,         
filecolor=blue,         
urlcolor=cyan           
}

\usepackage{stackengine}
\stackMath
\newcommand\tenq[2][1]{%
\def\useanchorwidth{T}%
\ifnum#1>1%
\stackunder[0pt]{\tenq[\numexpr#1-1\relax]{#2}}{\!\scriptscriptstyle\thicksim}%
\else%
\stackunder[1pt]{#2}{\!\scriptstyle\thicksim}%
\fi%
}

\makeatletter
\DeclareRobustCommand\widecheck[1]{{\mathpalette\@widecheck{#1}}}
\def\@widecheck#1#2{%
\setbox\z@\hbox{\m@th$#1#2$}%
\setbox\tw@\hbox{\m@th$#1%
\widehat{%
\vrule\@width\z@\@height\ht\z@
\vrule\@height\z@\@width\wd\z@}$}%
\dp\tw@-\ht\z@
\@tempdima\ht\z@ \advance\@tempdima2\ht\tw@ \divide\@tempdima\thr@@
\setbox\tw@\hbox{%
\raise\@tempdima\hbox{\scalebox{1}[-1]{\lower\@tempdima\box
\tw@}}}%
{\ooalign{\box\tw@ \cr \box\z@}}}
\makeatother

\def\biggiven{\,\big{|}\,}
\def\Biggiven{\,\Big{|}\,}
\def\tr{\mathop{\text{tr}}\kern.2ex}

\def\tY{{\tilde Y}}

\def\tT{{\tilde T}}
\def\P{{\mathrm P}}

\def\E{{\mathrm E}}

\def\cov{{\rm Cov}}

\newcolumntype{L}[1]{>{\raggedright\let\newline\\\arraybackslash\hspace{0pt}}m{#1}}
\newcolumntype{C}[1]{>{  \centering\let\newline\\\arraybackslash\hspace{0pt}}m{#1}}
\newcolumntype{R}[1]{>{ \raggedleft\let\newline\\\arraybackslash\hspace{0pt}}m{#1}}
\newcolumntype{d}[1]{D{.}{.}{#1}}
\newcolumntype{H}{>{\setbox0=\hbox\bgroup}c<{\egroup}@{}}
\newcolumntype{Z}{>{\setbox0=\hbox\bgroup}c<{\egroup}@{\hspace*{-\tabcolsep}}}
\newcolumntype{b}{X}
\newcolumntype{s}{>{\hsize=.5\hsize}X}

\numberwithin{equation}{section}

\newtheorem{theorem}{Theorem}[section]
\newtheorem{lemma}{Lemma}[section]
\newtheorem{proposition}{Proposition}[section]
\newtheorem{assumption}{Assumption}[section]
\newtheorem{corollary}{Corollary}[section]
\newtheorem{lemmaA}{Lemma}[section]
\newtheorem{innercustomgeneric}{\customgenericname}
\providecommand{\customgenericname}{}
\newcommand{\newcustomtheorem}[2]{%
\newenvironment{#1}[1]
{%
\renewcommand\customgenericname{#2}%
\renewcommand\theinnercustomgeneric{##1}%
\innercustomgeneric
}
{\endinnercustomgeneric}
}
\newcustomtheorem{customdefinition}{Definition}
\newcustomtheorem{customdefinitions}{Definitions}
\newcustomtheorem{customtheorem}{Theorem}
\newcustomtheorem{customassumption}{Assumption}
\newcustomtheorem{customlemma}{Lemma}
\newcustomtheorem{customexample}{Example}
\theoremstyle{definition}

\newtheorem{remark}{Remark}[section]

\usepackage{enumitem}
\makeatletter
\newcommand{\mylabel}[2]{#2\def\@currentlabel{#2}\label{#1}}
\makeatother

\graphicspath{{./fig3/}}

\allowdisplaybreaks

\usepackage{algorithm}
\usepackage{algpseudocode}
\algrenewcommand\algorithmicrequire{\textbf{Input:}}
\algrenewcommand\algorithmicensure{\textbf{Output:}}

\def\cal{\mathcal}
\def\hat{\widehat}
\def\tilde{\widetilde}
\def\bar{\overline}

\def\conP{\stackrel{\mathrm{P}} {\to}}                 
\def\conD{\stackrel{\cal D} {\to}}

\def\iid{\mathrm{i.i.d.}}

\def\mathbbR{\mathbb{R}}

\def \tmu{\tilde{\mu}}

\def\conas{\stackrel{\mathrm{a.s.}} \to}                  

\def\d{{\, \mathrm{d}}}

\def\bbeta{{\boldsymbol{\beta}}}

\def\boldsymbolp{{\boldsymbol{p}}}

\def\bfx{\mathbf{x}}
\def\bfX{\mathbf{X}}
\def\bfZ{\mathbf{Z}}

\def\bfW{\mathbf{W}}

\def\calN{\mathcal{N}}

\def\calB{\mathcal{B}}
\def\calG{\mathcal{G}}
\def\calE{\mathcal{E}}

\def\calZ{\mathcal{Z}}

\def\calW{\mathcal{W}}
\def\mathbbR{\mathbb{R}}

\def \bw{\boldsymbol{w}}

\def \tildet{\tilde{t}}

\def \var{\mathrm{Var}}

\def\bX{{\boldsymbol{X}}}
\def\bY{{\boldsymbol{Y}}}
\def\bZ{{\boldsymbol{Z}}}
\def\bW{{\boldsymbol{W}}}
\def\bU{{\boldsymbol{U}}}
\def\bV{{\boldsymbol{V}}}
\def\Ind{\mathbf{1}}
\def\bx{{\boldsymbol{x}}}
\def\bz{{\boldsymbol{z}}}
\def\bw{{\boldsymbol{w}}}
\def\bu{{\boldsymbol{u}}}
\def\bv{{\boldsymbol{v}}}
\def \bfw{{\mathbf{w}}}
\def \tmu{\tilde{\mu}}
\def \tildeg{\tilde{g}}
\def \tildef{\tilde{f}}
\def \tsigma{\tilde{\sigma}}
\def \tcalB{\tilde{\mathcal{B}}}
\def \hatT{\hat{T}}
\def \hatL{\hat{L}}
\def \hatG{\hat{G}}
\def \hatU{\hat{U}}
\def \txi{\tilde{\xi}}
\def \lbr{\llbracket}
\def \rbr{\rrbracket}
\def \supp{\mathrm{supp}}

\def \txi{\tilde{\xi}}
\def \tT{\tilde{T}}

\def \barY{\bar{Y}}
\def \hsigma{\hat{\sigma}}

\begin{document}

\setlength{\abovedisplayskip}{5pt}
\setlength{\belowdisplayskip}{5pt}
\setlength{\abovedisplayshortskip}{5pt}
\setlength{\belowdisplayshortskip}{5pt}
\hypersetup{colorlinks,breaklinks,urlcolor=blue,linkcolor=blue}

\title{\LARGE Limit theorems of Azadkia-Chatterjee's conditional graph correlation}

\author{Muhong Gao\thanks{School of Statistics, University of International Business and Economics, Beijing, China; e-mail: {\tt gaomh@uibe.edu.cn}}, ~~Fang Han\thanks{Department of Statistics, University of Washington, Seattle, WA 98195, USA; e-mail: {\tt fanghan@uw.edu}}, ~ and ~Qizhai Li\thanks{Academy of Mathematics and System Science, Chinese Academy of Sciences, Beijing, China; e-mail: {\tt liqz@amss.ac.cn}}}

\date{\today}

\maketitle

\vspace{-1em}

\begin{abstract} 
Inferring the strength of conditional dependence and testing conditional independence are fundamental problems in statistics. A recent breakthrough by Azadkia and Chatterjee introduced, for the first time, a conditional dependence measure that equals $0$ if and only if the variables under study are conditionally independent, and equals $1$ if and only if they are conditionally perfectly dependent. They further proposed a computationally efficient and strongly consistent estimator, $T_n$, based on an ingenious use of ranks and nearest neighbors. Despite these attractive features, the asymptotic theory of $T_n$ has remained largely undeveloped. This paper closes that gap. We prove that, under general dependence, $T_n$ is asymptotically normal and its limiting variance admits a closed form. We also construct consistent variance estimators that are computationally efficient and implementable in $O(n\log n)$ time. Taken together with existing bias-correction methods, these results provide a complete inferential theory for $T_n$.
\end{abstract}

{\bf Keywords}: Measure of conditional dependence, test of conditional independence, dependence measure, rank-based statistic, graph-based statistic

\section{Introduction} \label{sec:intro}
Consider the random triplet $(Y,\bX,\bZ)$, where $Y\in\mathbb{R}$ is a random scalar, and $\bX\in\mathbb{R}^p$ and $\bZ\in\mathbb{R}^q$ are random vectors of dimensions $p$ and $q$, respectively. Our goal is to infer the strength of conditional dependence between $Y$ and $\bX$ given $\bZ$, as well as to test the null hypothesis
\begin{eqnarray}
H_0:\ \text{$Y$ is conditionally independent of $\bX$ given $\bZ$,}
\label{eq:null}
\end{eqnarray}
on the basis of $n$ independent copies $(Y_i,\bX_i,\bZ_i)$'s of $(Y,\bX,\bZ)$.

While the problem of testing \eqref{eq:null} has been studied extensively in the literature, the problem of quantifying the strength of conditional dependence is arguably equally important. In this direction, the recent work of \cite{azadkia2019simple} constitutes a major breakthrough. Building on ideas from \cite{MR3024030} and \cite{chatterjee2020new} for measuring unconditional dependence, they introduced the following population quantity, where $\P_Y$ denotes the law of $Y$:

\begin{eqnarray}
T = T(Y,\bX \mid \bZ) 
=
\frac{\int \E\Big[\var\Big\{\P(Y \ge y \mid \bX,\bZ)\,\big|\,\bZ\Big\}\Big] \, \d \P_Y(y)}
{\int \E\Big[\var\Big\{\Ind(Y \ge y)\mid \bZ\Big\}\Big] \, \d \P_Y(y)}.
\label{eq:T}
\end{eqnarray}
Azadkia and Chatterjee proved that $T=0$ if and only if $Y$ is conditionally independent of $\bX$ given $\bZ$, whereas $T=1$ if and only if $Y$ is almost surely a measurable function of $\bX$ given $\bZ$. To the best of our knowledge, this is the \emph{first} measure of conditional dependence that captures the full range of dependence strength in this manner.

What makes the contribution of \cite{azadkia2019simple} even more striking is the accompanying statistical estimator. Let $R_i$ denote the rank of $Y_i$ among $\{Y_j\}_{j=1}^n$, let $N(i)$ index the nearest neighbor (NN) of $\bZ_i$, and let $M(i)$ index the nearest neighbor of $(\bX_i,\bZ_i)$, with both nearest neighbors defined under the Euclidean metric. Azadkia and Chatterjee introduced the following rank/graph-based statistic, which we refer to as the ``Azadkia--Chatterjee conditional graph correlation'',
\begin{eqnarray}
T_n = T_n(Y,\bX \mid \bZ)
=
\frac{\sum_{i=1}^n \bigl(\min\{R_i,R_{M(i)}\}-\min\{R_i,R_{N(i)}\}\bigr)}
{\sum_{i=1}^n \bigl(R_i-\min\{R_i,R_{N(i)}\}\bigr)},
\label{eq:T_n}
\end{eqnarray}
as a strongly consistent for $T$. Moreover, $T_n$ possesses several notable advantages:
\begin{enumerate}[label=(\roman*)]
\item it is fully nonparametric and tuning-parameter-free;
\item it completely avoids the need to estimate conditional densities, conditional characteristic functions, or mutual information;
\item it is computable in $O(n\log n)$ time.
\end{enumerate}
These features, together with the conceptual appeal of $T$, make $T_n$ an especially attractive tool for quantifying conditional dependence.

At the same time, $T_n$ has clear and important limitations. As noted in \cite{azadkia2019simple}, it generally converges to $T$ at a subparametric rate, and no limit theory has been available for $T_n$. Consequently, one cannot directly quantify the uncertainty arising from random sampling. Resolving this difficulty is by no means routine, and for years after \cite{azadkia2019simple}, the inferential theory of $T_n$ remained open.

The goal of this paper is to resolve this issue in a definitive manner. Our main contributions are threefold:
\begin{enumerate}[label=(\roman*)]
\item under conditions on the joint distribution $F = F_{\bX,Y,\bZ}$ of $(\bX,Y,\bZ)$, we prove that $T_n$ or its bias-corrected version is asymptotically normal (Theorem~\ref{thm:CLT-main} and Corollary~\ref{cor: CLT-Tn});
\item in addition, the limiting variance of $\sqrt{n}\,T_n$ exists and admits a closed form (the same Theorem~\ref{thm:CLT-main} and Corollary~\ref{cor: CLT-Tn});
\item moreover, a consistent variance estimator exists that is computationally efficient and can be implemented in $O(n\log n)$ time (Theorem \ref{thm: est_var-main} and Proposition \ref{prop:nlogn_new}).
\end{enumerate}
Combined with the bias-correction method developed in \cite{azadkia2026biascorrection}, these results provide a complete inferential theory for $T_n$.

\subsection{Related literature}

The study of conditional dependence is intimately related to that of unconditional dependence. Indeed, when $\bZ$ is degenerate, the Azadkia--Chatterjee conditional graph correlation $T_n(Y,\bX \mid \bZ)$ reduces to an unconditional graph correlation between $Y$ and $\bX$, which we denote by $\xi_n=\xi_n(Y, \bX)$. 
It is therefore natural to discuss the related literature on both conditional and unconditional dependence together.

We first discuss the seminal work of \cite{chatterjee2020new} on quantifying unconditional dependence between $Y$ and $X$ (when $p=1$), together with the subsequent work of \cite{azadkia2019simple} on conditional and unconditional dependence. They have by now generated a large and steadily expanding literature, and below we try to give a brief and highly selective review.

\begin{enumerate}[label=(\roman*)]
\item The general asymptotic normality of $\xi_n$ under arbitrary dependence between $\bX$ and $Y$ was established in \cite{Lin_Han_2025_CLT} via direct moment calculations. Subsequently, \cite{kroll2024asymptotic} and \cite{chhaibi2026martingaleapproachfluctuationsrank} developed complementary theory based on empirical-process and martingale methods, respectively, for the analysis of Chatterjee's rank correlation \citep{chatterjee2020new}.

\item From the perspective of statistical inference, \cite{Lin_Han_2024_boostrap} showed that the classical bootstrap generally fails for $\xi_n$, whereas \cite{Dette_Kroll_2025} and \cite{olivares2025powerful} established the consistency of alternative resampling procedures, namely the $m$-out-of-$n$ bootstrap and the multiplier bootstrap, under different regimes. The closed-form expression for the limiting variance of $\xi_n$ under unconditional independence was derived in \cite{Shi_Drton_Han_2024_Bernoulli} and \cite{han2024azadkia}. Large random matrix theory for matrices built from Chatterjee's rank correlation was developed in \cite{dong2025spectral}.

\item As for statistical efficiency, the (Azadkia--)Chatterjee approach has generally been found to be underpowered for testing marginal independence in regular statistical models \citep{cao2020correlations,shi2020power,Shi_Drton_Han_2024_Bernoulli}, even though it is rate-optimal for estimating the corresponding population quantity \citep{auddy2021exact,Lin_Han_2025_CLT}. See also \cite{azadkia2026kernel} for a re-examination of the kernel-based estimator of \cite{MR3024030} and a discussion of its statistical efficiency. 

\item On the methodological side, the NN graph-based framework introduced in \cite{azadkia2019simple} has inspired a variety of follow-up works. These include, among many others, \cite{deb2020kernel}, \cite{huang2022kernel}, \cite{chatterjee2024kernel}, and \cite{roudaki2026kernel} on combining kernels with graph-based methods; \cite{gamboa2022global}, which extends the idea to sensitivity analysis; \cite{lin2021boosting}, which advocates incorporating multiple NNs into estimation; \cite{hormann2026azadkia}, which extends the framework to functional data; \cite{tran2024rank}, which proposes rank-based metrics for NN graph construction; and \cite{ansari2022direct} and \cite{huang2025multivariate}, which extend the setting to multivariate $\bY$.

\item An equally active line of research concerns the population quantity $T$ itself, as well as related alternatives in the setting of unconditional dependence. Representative contributions include \cite{strothmann2022rearranged}, \cite{bucher2024lack}, \cite{ansari2025exact}, \cite{chierichetti2025metricity}, \cite{ansari2025ordering}, \cite{fuchs2025exact}, and \cite{ansari2026quantifying}, among many others.
\end{enumerate}

Notably, the existing literature has so far focused predominantly on the setting of unconditional dependence, in which the conditioning variable $\bZ$ is absent. To the best of our knowledge, the main exceptions are \cite{Shi_Drton_Han_2024_Bernoulli}, which studied the use of $T_n$ for testing \eqref{eq:null} within the conditional randomization test framework; \cite{huang2022kernel}, which proposed a class of conditional dependence measures by combining graph-based and kernel-based ideas; and \cite{azadkia2025new}, which introduced refined versions of $T$ and $T_n$. Even so, inferential results remain unavailable beyond the simple setting in which $Y$ is further assumed to be independent of $(\bX,\bZ)$.

Concerning the task \eqref{eq:null}, the present paper is also inevitably connected to the vast literature on testing conditional independence, and the (bias-corrected) statistic $T_n$ does yield a consistent test of \eqref{eq:null}. Our work therefore also complements the broad class of nonparametric, consistent conditional independence tests developed in \cite{MR2413488}, \cite{MR3449068}, \cite{10.5555/3020548.3020641}, \cite{cai2022distribution}, and \cite{zhang2026doubly}, among many others. At the same time, it is worth noting that these methods are not designed to consistently capture conditional perfect dependence, and their implementations are typically quadratic in $n$ or more expensive.

\subsection{Technical ingredients}

The present work builds on several earlier contributions, especially \cite{Lin_Han_2025_CLT} and \cite{azadkia2026biascorrection}, which established the asymptotic normality of the unconditional version of $T_n$ and resolved the corresponding bias-correction issue, respectively. It is therefore worth clarifying more explicitly what is technically new in the current paper.

Our first main technical contribution is a central limit theorem (CLT) for the Azadkia--Chatterjee conditional correlation coefficient $T_n$ in general settings. The main difficulty here is to handle the interaction between the following two terms in the nominator of \eqref{eq:T_n}:
\[
\sum_{i=1}^n \min\{R_i,R_{M(i)}\}
\qquad\text{and}\qquad
\sum_{i=1}^n \min\{R_i,R_{N(i)}\},
\]
when $(\bX,Y,\bZ)$ is allowed to be arbitrarily dependent. This issue is in our opinion technically far more challenging than in the unconditional setting considered in \cite{Lin_Han_2025_CLT}, and took substantially additional work of ours. In fact, resolving it necessitates sharpening several results from \cite{Lin_Han_2025_CLT} and \cite{Shi_Drton_Han_2024_Bernoulli}; these improvements are highlighted in Section~\ref{sec:Lin_Han} below.

Our second main contribution is the identification of a closed-form expression for the limiting variance of $T_n$. More precisely, we show that this variance can be represented explicitly as a functional of the joint distribution of $(\bX,Y,\bZ)$; see \eqref{eq:sigma2_tTn}, \eqref{eq:han-Tn-var}, and \eqref{eq:han-Tn-var2} below. Moreover, under $H_0$ in \eqref{eq:null}, a further simplification exists; see \eqref{eq:sigma2_H0} below. These explicit characterizations in turn enable us to construct a consistent and computationally efficient variance estimator for $T_n$ with $O(n\log n)$ complexity. Relative to the variance estimator proposed in \cite{Lin_Han_2025_CLT} and the $m$-out-of-$n$ bootstrap considered in \cite{Dette_Kroll_2025}, this provides a computationally more efficient inferential tool.

\subsection{Paper organization and notation}

\paragraph*{Paper organization.}

The rest of the paper is organized as follows. Section~\ref{sec:Lin_Han} revisits the limit theorems established in \cite{Lin_Han_2025_CLT} for the Azadkia--Chatterjee \emph{unconditional} correlation coefficient, and presents our new findings related to this unconditional dependence measure. Section~\ref{sec:SI} introduces the proposed inferential framework for the Azadkia--Chatterjee conditional correlation coefficient. Section~\ref{sec:theory} develops the corresponding theory. Section~\ref{sec:simu} reports numerical experiments illustrating the finite-sample performance of the proposed procedure. All proofs are deferred to the Appendix.

\paragraph*{Notation.}

For any integer $n \ge 1$, let $\lbr n \rbr = \{1,2,\dots,n\}$. A set consisting of distinct elements $x_1,\dots,x_n$ is written either as $\{x_1,\dots,x_n\}$ or as $\{x_i\}_{i=1}^n$. For a real random vector $\bW$, let $\P_{\bW}$, $F_{\bW}$, and $\supp(\bW)$ denote its induced probability measure, cumulative distribution function, and support, respectively. We write $\Ind(\cdot)$ for the indicator function. For a vector $\bv \in \mathbb{R}^d$, let $\|\bv\|$ denote its Euclidean norm. For any $a,b \in \mathbb{R}$, define $a \vee b = \max\{a,b\}$ and $a \wedge b = \min\{a,b\}$. For a finite set $A$, let $\#A$ or $|A|$ denote its cardinality. The symbols $\lfloor \cdot \rfloor$ and $\lceil \cdot \rceil$ denote the floor and ceiling functions. 
For a random variable $U$ and a random vector $\bV$, let $\mu_U$ denote the law of $U$ and let $\mu_{U \mid \bV}$ denote the conditional law of $U$ given $\bV$. 
Finally, $\conas$, $\conP$, and $\conD$ denote convergence almost surely, in probability, and in distribution, respectively. Unless stated otherwise, the terms ``absolutely continuous'' and ``almost everywhere'' are understood with respect to Lebesgue measure.

\section{Revisiting \cite{Lin_Han_2025_CLT}: CLT of the Azadkia-Chatterjee's unconditional correlation coefficient} \label{sec:Lin_Han}

This section reviews and refines the CLT for Azadkia–Chatterjee’s unconditional correlation coefficient $\xi_n(Y,\bX)$. 
To facilitate comparison with $T_n$ in later sections and to maintain notational consistency throughout the paper, in this section we use $\bZ$ in place of $\bX$ and write $\xi_n(Y,\bZ)$ instead of $\xi_n(Y,\bX)$.

Specifically, in this section let $Y$ be a real-valued random variable and $\bZ$ be a random vector in $\mathbbR^q$, both defined on the same probability space. Let $(Y_1,\bZ_1),\dots,(Y_n,\bZ_n)$ be $n$ independent copies of $(Y,\bZ)$. Under the assumption that $(Y,\bZ)$ is continuously distributed, Azadkia-Chatterjee's unconditional correlation coefficient proposed by \cite{azadkia2019simple} is defined as
\begin{eqnarray}
\xi_n(Y,\bZ) := \frac{6}{n^2-1} \sum_{i=1}^n \min\{R_i, R_{N(i)}\}-\frac{2n+1}{n-1},
\label{eq:xi_n}
\end{eqnarray}
where, as before, $R_i$ denotes the rank of $Y_i$ among $\{Y_j\}_{j=1}^n$, and $N(i)$ denotes the index of the first NN of $\bZ_i$ among $\{\bZ_j\}_{j=1}^n$ under the Euclidean metric. Note that $\xi_n$ extends Chatterjee's original correlation coefficient \citep{chatterjee2020new} from the univariate setting $p=1$ to the multivariate setting $p\ge 1$. As $n\to\infty$, $\xi_n$ converges almost surely to the population quantity
\begin{eqnarray}
\xi(Y,\bZ) := \frac{\int \var\Big[\E \big\{ \Ind(Y\ge y) \mid \bZ \big\}\Big] \d \P_Y(y)}
{\int \var\big\{ \Ind(Y\ge y)\big\} \d \P_Y(y)},
\label{eq:xi}
\end{eqnarray}
which is also known as the Dette--Siburg--Stoimenov dependence measure \citep{MR3024030}.

In the special case where $\bZ$ and $Y$ are independent, a CLT for $\xi_n$ was first established in \citet[Theorem~3.1(ii)]{Shi_Drton_Han_2024_Bernoulli}. The subsequent work of \cite{Lin_Han_2025_CLT} extended this result to the general setting in which $\bZ$ and $Y$ may be arbitrarily dependent. We summarize the main conclusions of \cite{Lin_Han_2025_CLT} in Proposition~\ref{prop:Lin_Han} below.

\begin{proposition}[Summary of results in \cite{Lin_Han_2025_CLT}]
\label{prop:Lin_Han}
Assume that $F_{Y,\bZ}$ is fixed and continuous.
\begin{enumerate}[label=(\roman*)]
\item The limiting variance $\sigma^2_{\xi(Y,\bZ)} := \lim_{n \to \infty} n \var(\xi_n)$ exists. Moreover, $\sigma^2_{\xi(Y,\bZ)} > 0$ if and only if $Y$ is not almost surely a measurable function of $\bZ$.
\item There exists a consistent estimator $\tsigma^2$ of $\sigma^2_{\xi(Y,\bZ)}$. Moreover, $\tsigma^2$ can be computed in $O(n^2)$ time.
\item If $Y$ is not almost surely a measurable function of $\bZ$, then, as $n \to \infty$,
\begin{eqnarray*}
\frac{\xi_n - \E(\xi_n)}{\sqrt{\var(\xi_n)}} \conD N(0,1).
\end{eqnarray*}
\end{enumerate}
\end{proposition}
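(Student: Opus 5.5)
This proposition summarizes \cite{Lin_Han_2025_CLT}, so the plan reconstructs that argument. Write $\xi_n$ through $\sum_i \min\{R_i,R_{N(i)}\}$ and replace ranks by $nF_Y(Y_i)$ plus a Hájek-type remainder. Conditional on $\{\bZ_j\}$, the NN graph is fixed and the $Y_i$'s are independent with laws $\mu_{Y\mid \bZ_i}$. This suggests the decomposition
\[
\xi_n-\E\xi_n=\bigl\{\xi_n-\E(\xi_n\mid \bZ_{1:n})\bigr\}+\bigl\{\E(\xi_n\mid \bZ_{1:n})-\E\xi_n\bigr\},
\]
with the two pieces handled separately and then combined.

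For (i), I would compute $n\var(\xi_n)$ directly. Expand $\var\bigl(\sum_i \min\{R_i,R_{N(i)}\}\bigr)$ into pairs $(i,j)$, classified by how the NN graph relates them: $i=j$; $N(i)=j$; $N(i)=N(j)$; $N(N(i))=i$; or unrelated. Each class has a limit governed by two ingredients. The first is the stabilization of NN-graph functionals, namely the limiting expected number of points sharing a nearest neighbour and of mutual pairs; these are constants depending only on $q$ by scale invariance when $F_{\bZ}$ is continuous. The second is the Lebesgue differentiation fact that $\mu_{Y\mid \bZ_{N(i)}}$ approaches $\mu_{Y\mid \bZ_i}$ in an $L^1$ sense, because $\|\bZ_{N(i)}-\bZ_i\|\to0$. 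This yields a closed-form limit. Positivity I would prove by lower bounding $\var(\xi_n)\ge \E\var(\xi_n\mid \bZ_{1:n})$. The conditional variance contains a diagonal term of order $n^{-1}\E\var\{G(Y)\mid \bZ\}$ for a suitable strictly monotone transform $G$ of $F_Y$. The other terms would then have to be shown not to cancel it, for instance via a conditional Efron--Stein bound or by reducing to one-point resampling. The conditional-variance term is positive unless $Y$ is a.s. a function of $\bZ$. Conversely, if $Y=g(\bZ)$, then both $Y_{N(i)}$ and $Y_i$ are determined, and a direct computation shows the variance is $o(1/n)$.

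For (iii), I would argue conditionally. Given $\bZ_{1:n}$, $\sum_i \min\{R_i,R_{N(i)}\}$ (after the Hájek projection) is a sum with a dependency graph of bounded degree. Bounded degree holds because in-degrees of the NN graph are bounded by the kissing number $C_q$. Stein's method with dependency graphs, or the Chen--Shao local dependence bound, then gives a conditional normal approximation at rate $O(n^{-1/2})$ once the conditional variance is of order $1/n$. The second piece, $\E(\xi_n\mid\bZ_{1:n})$, is a stabilizing functional of the NN graph, so a CLT for it follows from the Penrose--Yukich theory or from an Efron--Stein/second-order Poincaré argument. Combining the two pieces through characteristic functions and the convergence in (i) gives asymptotic normality.

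For (ii), I would build a plug-in estimator by replacing each population functional in the closed-form variance with an NN-graph analogue. Conditional quantities such as $\P(Y\ge y\mid \bZ)$ would be estimated through the $Y$'s at $N(i)$ and $N(N(i))$, and the graph constants through empirical counts of shared neighbours. Terms involving $\int\cdot\,\d\P_Y$ require double sums, hence $O(n^2)$ time. Consistency follows from the same Lebesgue-point arguments together with variance bounds on each U-type sum.

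I expect the main obstacle to be the positivity claim in (i), specifically ruling out cancellation between the conditional-variance and the $\E(\cdot\mid\bZ)$ fluctuation terms, together with rigorous control of the Lebesgue-differentiation approximations when no smoothness is assumed. My first attempt would be to lower bound by the conditional variance alone and show it is of exact order $1/n$.
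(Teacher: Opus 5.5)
The paper gives no proof of this proposition; it quotes \cite{Lin_Han_2025_CLT}. That route, which the paper re-runs for $T_n$ in the proof of Theorem~\ref{thm:CLT-main}, has three parts: a H\'ajek representation; a direct computation of $n\var$ via $\E\var(\cdot\mid\bZ_{1:n})+\var\E(\cdot\mid\bZ_{1:n})$ over graph-indexed pair classes; and a \emph{single} application of Chatterjee's nearest-neighbour normal approximation to the unconditional H\'ajek sum in the i.i.d.\ vectors $(\bZ_i,Y_i)$. That last step needs only bounded increments, a bounded-degree interaction rule, and $\liminf n\var>0$. Your variance and estimator plans follow this template, but as written the proposal has genuine gaps.

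The most serious is your claim that the graph limits are ``constants depending only on $q$ by scale invariance when $F_{\bZ}$ is continuous.'' This is false.
\begin{itemize}
\item Take $\bZ=(U,U)$ with $U$ uniform. Its CDF on $\mathbb R^2$ is continuous, but its NN graph is one-dimensional, so the mutual-NN rate is $\mathfrak q_1=2/3$, not $\mathfrak q_2$.
\item Even with a density, unconditional expected counts are not enough. Your pair classes produce terms such as $\E[\var\{F_Y(Y_1\wedge\tY_1)\mid\bZ_1\}\,\Ind\{N(N(1))=1\}]$. Factorizing these needs the \emph{conditional} limits $\P\{N(N(1))=1\mid\bZ_1\}\conP\mathfrak q_q$ and $\E[\#\{j\neq 1:N(j)=N(1)\}\mid\bZ_1]\conP\mathfrak o_q$.
\item Those are Lemmas~\ref{lemma:q_d}--\ref{lemma:o_d}; the second is new in this paper and is proved under a continuous density. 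They are exactly what lets Theorem~\ref{thm:var_xi} reach a closed form, which is why that theorem adds an absolute-continuity assumption absent from Proposition~\ref{prop:Lin_Han}.
\end{itemize}
So under the stated hypothesis your sketch does not establish existence of the limit in (i); you would have to prove directly that the graph-weighted expectations converge. Likewise, (ii) cannot be a plug-in into a formula with universal constants. The graph-weighted terms must be estimated directly, with $\Ind\{N(N(i))=i\}$ and $\#\{j:N(j)=N(i)\}$ inside the sample averages.

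Two further steps are missing.

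First, gluing a conditional CLT to a CLT for $\E(\xi_n\mid\bZ_{1:n})$ via characteristic functions requires $n\var(\xi_n^*\mid\bZ_{1:n})$ to converge \emph{in probability} to a deterministic constant; otherwise the limit is a variance mixture of normals. Convergence of the unconditional $n\var(\xi_n)$ from (i) does not give this. It is provable: that quantity is $n^{-1}$ times a sum of bounded local functionals of the $\bZ$-graph, so Efron--Stein gives variance $O(n^{-1})$. But you must prove it, whereas the single joint application of Chatterjee's theorem avoids the issue.

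Second, for positivity, Efron--Stein is an upper bound and cannot help. The workable tool is the Hoeffding lower bound $\var(S\mid\bZ_{1:n})\ge\sum_k\var(\E[S\mid Y_k,\bZ_{1:n}]\mid\bZ_{1:n})$ for the H\'ajek sum $S$.
\begin{itemize}
\item The projection on $Y_k=y$ is approximately $h(y)+(1+d_k)\int F_Y(y\wedge v)\,\d\mu_{Y\mid\bZ=\bZ_k}(v)$. Here $d_k$ is the in-degree of $k$ and $h(y)=\P(Y\wedge\tY\ge y)$ is decreasing.
\item This is not a strictly monotone transform of $F_Y$. Under independence with $d_k=1$ it equals $(1-u)^2+2(u-u^2/2)\equiv 1$, where $u=F_Y(y)$.
\item So no ``diagonal term'' survives automatically. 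You must use that $d_k$ takes at least two values with non-vanishing frequency, together with non-degeneracy of $\mu_{Y\mid\bZ}$ on a set of positive probability.
\end{itemize}
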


\vspace{0.2cm}

Despite these results, two notable gaps remain in \cite{Lin_Han_2025_CLT}: (1) no closed-form expression for $\sigma^2_{\xi(Y,\bZ)}$ is available; and (2) the estimator $\tsigma^2$ in \cite{Lin_Han_2025_CLT} requires $O(n^2)$ computational time, which is substantially more demanding than the $O(n\log n)$ complexity typically associated with rank- and graph-based statistics, and thus limits its practical usefulness.

As a byproduct of developing our general inferential theory for $T_n$, we resolve both of these issues in a definitive manner. We present these improvements to \cite{Lin_Han_2025_CLT} first, before turning to the general theory of $T_n$. We hope that this presentation makes it clearer that the present paper is not merely an extension of \cite{Lin_Han_2025_CLT} to the setting of conditional dependence.

\subsection{New probabilistic results on NNGs}

To derive the closed-form expression for $\sigma^2_{\xi(Y,\bZ)}$, we begin by reviewing and establishing several probabilistic asymptotic results for nearest neighbor graphs (NNGs), which form the foundation for our subsequent analysis.

Our first result in this section concerns a sample $\{\bW_i\}_{i=1}^n$ consisting of $n$ independent copies of a random vector $\bW \in \mathbbR^d$. Let $\calG_n$ denote the associated directed nearest-neighbor graph (NNG) with vertex set $\lbr n \rbr$. A directed edge $i \to j$ is drawn between two distinct vertices $i$ and $j$ whenever $\bW_j$ is the nearest neighbor of $\bW_i$. Denote by $\calE(\calG_n)$ the edge set of $\calG_n$.

We begin by recalling Theorem 1 of \cite{MR937563} on the expected number of mutual NN pairs. 

\begin{lemma}[\cite{MR937563}, expected number of mutual NN pairs]
\label{lemma:q_d}
Assume that $\bW$ is Lebesgue absolutely continuous. Then, for any fixed $i$, we have
\begin{eqnarray*}
\E\Big(\#\big\{j \in \lbr n \rbr: i\to j,\ j \to i \in \calE(\calG_n)\big\} \ \Big | \ \bW_i \Big)  \conP \mathfrak{q}_d,
\end{eqnarray*}
where $\mathfrak{q}_d$ is a positive constant depending only on $d$, with explicit expression
\begin{eqnarray}
\mathfrak{q}_d := \Big\{2- I_{3/4}\Big(\frac{d+1}{2}, \frac{1}{2}\Big)\Big\}^{-1}, 
\qquad 
I_{x}(a,b) = \frac{\int_{0}^x t^{a-1} (1-t)^{b-1} \d t}{\int_{0}^1 t^{a-1} (1-t)^{b-1} \d t}.
\label{eq:q_d}
\end{eqnarray}
Consequently,
\begin{eqnarray*}
\E\Big( \frac{1}{n} \#\big\{(i,j) \text{ distinct}: i\to j,\ j \to i \in \calE(\calG_n)\big\}  \Big) \to \mathfrak{q}_d.
\end{eqnarray*}
\end{lemma}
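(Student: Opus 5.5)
Fix $i$ and condition on $\bW_i=\bw$. Since $\bW$ has a Lebesgue density, ties occur with probability zero, so each vertex has a unique nearest neighbor. In particular, the count $\#\{j: i\to j,\ j\to i\}$ is either $0$ or $1$. It equals $1$ exactly when the nearest neighbor $j^*$ of $\bW_i$ has $\bW_i$ as its own nearest neighbor. By exchangeability of the other $n-1$ points,
\[
\E(\cdot\mid \bW_i=\bw)=(n-1)\,\P\big(\bW_j \text{ is the NN of } \bw \text{ and } \bw \text{ is the NN of } \bW_j \mid \bW_i=\bw\big)
\]
for a fixed $j\neq i$. Write $\bW_j=\bw+\bu$ with $r=\|\bu\|$. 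The event says that no other point lies in $B(\bw,r)\cup B(\bw+\bu,r)$. Given $\bW_j$, this has probability $(1-\P_{\bW}(B(\bw,r)\cup B(\bw+\bu,r)))^{n-2}$.

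Next I localize, assuming $\bw$ is a Lebesgue point of the density $f$ with $f(\bw)>0$, which holds for $\P_{\bW}$-a.e.\ $\bw$. Substitute $\bu=n^{-1/d}\bv$. Then
\[
n\,\P_{\bW}\big(B(\bw,r)\cup B(\bw+\bu,r)\big)\to f(\bw)\,\|\bv\|^d\,V_d\,(2-c_d),
\]
where $V_d$ is the volume of the unit ball and $c_d V_d$ is the volume of the intersection of two unit balls at unit distance. The conditional expectation therefore tends to
\[
\int f(\bw)\exp\{-f(\bw)V_d(2-c_d)\|\bv\|^d\}\,\d\bv=\frac{1}{2-c_d}.
\]
To get this, I use polar coordinates: $\int_0^\infty dV_d\,s^{d-1}e^{-a s^d}\,\d s=V_d/a$. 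The dependence on $f(\bw)$ cancels, which is the scale invariance one expects.

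The standard lens-volume formula via the regularized incomplete beta function gives $c_d=I_{3/4}(\tfrac{d+1}{2},\tfrac12)$. Each cap of two unit balls at distance $1$ has height $1/2$, and the cap volume is $\tfrac12 V_d I_{2h-h^2}(\tfrac{d+1}{2},\tfrac12)$ with $2h-h^2=3/4$. This yields $\mathfrak q_d$.

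The main obstacle is justifying the interchange of limit and integral by dominated convergence. For $\|\bv\|$ not too large I use $(1-p)^{n-2}\le e^{-(n-2)p}$. The union of the two balls contains $B(\bw,r)$, so the integrand is bounded by $n\,f(\bw+\bu)\,(1-\P_{\bW}(B(\bw,r)))^{n-2}$. Rewritten in terms of the radius distribution of $\P_{\bW}$ around $\bw$, this is exactly $(n-1)$ times the density of a nearest-neighbor distance. Its integral is bounded by $1$, and its tail beyond scale $n^{-1/d}M$ is uniformly small by the Lebesgue point property. This gives the needed uniform integrability and hence pointwise convergence for a.e.\ $\bw$. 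Since a.s.\ convergence of a function of $\bW_i$, whose law does not depend on $n$, implies convergence in probability, this proves the first claim.

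For the consequence, the number of ordered distinct mutual pairs divided by $n$ equals $n^{-1}\sum_i \#\{j: i\to j, j\to i\}$. Its expectation is $\E[\E(\cdot\mid\bW_i)]$, and the conditional expectation is bounded by $1$. Bounded convergence then gives convergence to $\mathfrak q_d$.
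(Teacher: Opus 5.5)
Your argument is correct, and it takes a different route from the paper. The paper gives no proof of Lemma~\ref{lemma:q_d}; it imports the result from \cite{MR937563}. The template it uses for the sibling Lemmas~\ref{lemma:o_d} and~\ref{lemma:two_NNGs} is one-sided. It fixes $\bw_1$, rescales, and truncates the rescaled domain to a ball. It then uses continuity of the density near $\bw_1$ to get only $\liminf\ge$ the constant almost surely. Finally it closes the gap by importing the \emph{unconditional} limit from the literature and invoking Lemma~\ref{lemA:Xn_conP_C}.

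You get both directions directly. Your dominating integrand $(n-1)f(\bw+\bu)\{1-\P_{\bW}(\calB(\bw,\|\bu\|))\}^{n-2}$ is the density of the displacement from $\bw$ to its nearest neighbour. So its mass beyond radius $n^{-1/d}M$ is $\{1-\P_{\bW}(\calB(\bw,n^{-1/d}M))\}^{n-1}\le\exp\{-(n-1)\P_{\bW}(\calB(\bw,n^{-1/d}M))\}$. This is small uniformly in large $n$ once $M$ is large, and it supplies exactly the tightness that the paper's template obtains by appealing to the known unconditional mean. Your route buys three things:
\begin{itemize}
\item almost sure (not merely in-probability) convergence of the conditional expectation;
\item a ``Consequently'' part that is a genuine corollary rather than an input (with Lemma~\ref{lemA:Xn_conP_C} it would have to be assumed);
\item validity at Lebesgue points, i.e.\ under the bare absolute continuity that Lemma~\ref{lemma:q_d} actually assumes, whereas the paper's template uses a continuous density.
\end{itemize}

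That generality creates the one step you should tighten. At a Lebesgue point, $f(\bw+n^{-1/d}\bv)$ need not converge to $f(\bw)$ for any fixed $\bv$. So on $\{\|\bv\|\le M\}$ you cannot pass to the limit by pointwise convergence of the integrand, as your appeal to dominated convergence suggests. Instead, split off $\int_{\|\bv\|\le M}|f(\bw+n^{-1/d}\bv)-f(\bw)|\d\bv=n\int_{\|\bu\|\le n^{-1/d}M}|f(\bw+\bu)-f(\bw)|\d\bu$, which tends to $0$ at a Lebesgue point. Then apply bounded convergence to the remaining factor $\{1-\P_{\bW}(\calB(\bw,r)\cup\calB(\bw+\bu,r))\}^{n-2}$. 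This factor is bounded by $1$ and does converge pointwise: since $\calB(\bw,r)\cup\calB(\bw+\bu,r)\subseteq\calB(\bw,2r)$ occupies a fixed fraction of that ball, the Lebesgue-point property applies to it. With that, your proof is complete.
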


Note that the first statement may equivalently be written as $\P\{N(N(i))=i \mid \bW_i\} \conP \mathfrak{q}_d$, that is, the conditional probability that the NN of $\bW_i$ has $\bW_i$ itself as its NN converges to $\mathfrak{q}_d$, regardless of the specific value of $\bW_i$.

We are now ready to present our first new result, which extends the argument of Lemma~\ref{lemma:q_d} to the conditional expectation of the number of shared nearest-neighbor triplets.

\begin{lemma}[Conditional expected number of shared-NN triplets]
\label{lemma:o_d}
Assume that $\bW$ is Lebesgue absolutely continuous and admits a continuous density on its support. Then, for any fixed $i$, as $n \to \infty$,
\begin{eqnarray*}
\E\Big(\#\big\{j\in \lbr n \rbr: j \to k,\ i \to k \in \calE(\calG_n) \big\} \ \Big | \ \bW_i \Big) \conP \mathfrak{o}_d,
\end{eqnarray*}
where $\mathfrak{o}_d$ is a positive constant depending only on $d$, with explicit expression
\begin{eqnarray}
\mathfrak{o}_d := \int_{\Gamma_{d;2}}
\exp\Big[ - \lambda \big\{ \calB(\bw_1, \|\bw_1\|) \cup \calB(\bw_2, \|\bw_2\|)\big\}\Big]\d(\bw_1, \bw_2),
\label{eq:o_d}
\\
\Gamma_{d;2} :=
\Big\{(\bw_1, \bw_2) \in (\mathbbR^d)^2: \max(\|\bw_1\|, \|\bw_2\|) < \|\bw_1-\bw_2\|\Big\},
\nonumber
\end{eqnarray}
with $\calB(\bw,r)$ denoting the ball of radius $r$ centered at $\bw$, and $\lambda(\cdot)$ denoting Lebesgue measure.
\end{lemma}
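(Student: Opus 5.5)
Write $N(\cdot)$ for the NN map in $\calG_n$ and $f$ for the density of $\bW$. The count in the statement is the number of $j\neq i$ with $N(j)=N(i)$, so its conditional expectation equals
\[
(n-1)\,\P\{N(1)=N(2)\mid \bW_1\}, \qquad i=1,\ j=2,
\]
by exchangeability. The plan is to write this probability as an explicit integral over the positions of $\bW_2$ and of the common neighbor $\bW_k$, rescale by $n^{1/d}$ around $\bW_1$, and pass to the limit by dominated convergence. The limit is the Poisson-type formula \eqref{eq:o_d}, which is the same strategy used for $\mathfrak{q}_d$ in Lemma~\ref{lemma:q_d}.

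\textbf{Step 1: exact integral representation.} Summing over the common neighbor $k\in\{3,\dots,n\}$ gives
\[
(n-1)(n-2)\,\P\{N(1)=3,\ N(2)=3\mid \bW_1\}.
\]
Put $\bw=\bW_1$ and write $\bW_3=\bw+\bu_1$, $\bW_2=\bw+\bu_1-\bu_2$. Then $\bu_1$ is the vector from $\bW_1$ to $\bW_3$ and $\bu_2$ is the vector from $\bW_2$ to $\bW_3$.

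The event $\{N(1)=3,\ N(2)=3\}$ holds if and only if two things happen. First, the geometric constraint $\|\bu_1\|<\|\bW_1-\bW_2\|=\|\bu_1-\bu_2\|$ and $\|\bu_2\|<\|\bu_1-\bu_2\|$ holds. Second, none of the remaining $n-3$ points falls in $\calB(\bW_1,\|\bu_1\|)\cup\calB(\bW_2,\|\bu_2\|)$.

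Hence the conditional expectation equals
\[
(n-1)(n-2)\int \Ind\{(\bu_1,\bu_2)\in\Gamma_{d;2}\}\, f(\bw+\bu_1)\,f(\bw+\bu_1-\bu_2)\,\bigl(1-p_n(\bw,\bu_1,\bu_2)\bigr)^{n-3}\,\d(\bu_1,\bu_2),
\]
where $p_n$ is the $\P_{\bW}$-mass of that union of balls.

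\textbf{Step 2: rescaling and pointwise limit.} Substitute $\bu_\ell=\bw_\ell\,(nf(\bw))^{-1/d}$. The Jacobian is $(nf(\bw))^{-2}$, and it cancels $(n-1)(n-2)f(\bw)^2$ to leading order. The set $\Gamma_{d;2}$ is invariant under this scaling.

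At every $\bw$ in the interior of the support where $f(\bw)>0$ and $f$ is continuous, the two density factors divided by $f(\bw)$ tend to $1$. Also
\[
n\,p_n \to \lambda\bigl\{\calB(\bw_1,\|\bw_1\|)\cup\calB(\bw_1-\bw_2,\|\bw_2\|)\bigr\}.
\]
The reflection $\bw_2\mapsto \bw_1-\bw_2$ maps $\Gamma_{d;2}$ to itself, since it fixes $\|\bw_1-\bw_2\|$ by swapping it with $\|\bw_2\|$. Applying this change of variables puts the limit in the form \eqref{eq:o_d}. So the integrand converges pointwise to that of $\mathfrak{o}_d$.

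Since $\bW_1$ has density $f$, almost every realization of $\bW_1$ is such a point. Once Step 3 is in place, this gives almost sure, and hence in-probability, convergence.

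\textbf{Step 3: domination (the main obstacle).} A dominating function must control two things uniformly in $n$: the densities far from $\bw$, and the factor $(1-p_n)^{n-3}\le \exp\{-(n-3)p_n\}$.

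On $\Gamma_{d;2}$ the union of the balls contains a ball of radius $r=\max(\|\bu_1\|,\|\bu_2\|)$. I would handle the integral in two regimes.

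For small scales $r\le \delta$, continuity and positivity of $f$ near $\bw$ give $p_n\ge c\,r^d$. After rescaling, the integrand is then bounded by $C\,\Ind\{\|\bw_2\|\le 2\|\bw_1\|\}\, e^{-c'\|\bw_1\|^d}$ (up to symmetry), which is integrable.

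For large scales $r>\delta$, the ball $\calB(\bw,\delta')$ is essentially contained in the union, so $p_n\ge c_\delta>0$. The contribution is then at most $n^2 e^{-c_\delta n}\to0$, using only that $f$ integrates to one.

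The delicate point is the boundary of the support, where the lower bound on ball masses can fail. I would restrict to $\bw$ in the interior of the support, which has full $\P_{\bW}$-measure for a density that is continuous on its support. If necessary, I would use a small-ball lemma in the spirit of those behind Lemma~\ref{lemma:q_d}.

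Finally, positivity of $\mathfrak{o}_d$ is immediate: $\Gamma_{d;2}$ is open and nonempty, and the integrand is strictly positive on it.
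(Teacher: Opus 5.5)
Your proposal is correct in substance, up to two local slips noted below. It differs from the paper's proof at the key step.

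\textbf{What the paper does.} It sets up the same exact integral and the same $n^{1/d}$ rescaling, but it never dominates the integrand. It truncates to $\|\bu\|<r$ and uses local continuity of $f$ to obtain, for a.e.\ realization of $\bW_1$, a $\liminf_n$ of the conditional expectation at least the truncated limit integral; it then lets $r\to\infty$. The argument is closed by two inputs: the known unconditional limit $\E(\#)\to\mathfrak{o}_d$ from Lemma 3.7 of \cite{Shi_Drton_Han_2024_Bernoulli}, and Lemma~\ref{lemA:Xn_conP_C}, which says an a.s.\ $\liminf\ge C$ together with $\limsup$ of the means $\le C$ forces convergence in probability to $C$.

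\textbf{What your route buys.} Your two-regime domination gives the matching upper bound pointwise:
\begin{itemize}
\item at small scales, $f\le 2f(\bw)$ and $p_n\gtrsim\max(\|\bu_1\|,\|\bu_2\|)^d$ yield the integrable bound $Ce^{-c\max(\|\bw_1\|,\|\bw_2\|)^d}$;
\item at large scales, $p_n\ge c_\delta$ and the contribution is at most $n^2(1-c_\delta)^{n-3}$.
\end{itemize}
You therefore get convergence at every good $\bw$, hence almost sure convergence, which is stronger than the stated convergence in probability. The argument is also self-contained: it does not need the unconditional limit as input, and with the bounded in-degree of the NNG it recovers it. The paper's route avoids controlling the far field and the geometry of the union away from $\bw$; that work is outsourced to the cited unconditional result.

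\textbf{Two details to fix.}

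First, the rescaled union is $\calB(\mathbf{0},\|\bw_1\|)\cup\calB(\bw_1-\bw_2,\|\bw_2\|)$, since the first ball is centered at $\bW_1$, not $\calB(\bw_1,\|\bw_1\|)\cup\cdots$. Also, the map $\bw_2\mapsto\bw_1-\bw_2$ does \emph{not} preserve $\Gamma_{d;2}$. It sends $\Gamma_{d;2}$ to $\{\max(\|\bw_1\|,\|\bw_1-\bw_2\|)<\|\bw_2\|\}$; for example, $(e_1,-e_1)\in\Gamma_{d;2}$ maps to $(e_1,2e_1)\notin\Gamma_{d;2}$. No change of variables in $(\bw_1,\bw_2)$ is needed. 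Translate the union by $-\bw_1$ and apply $\bx\mapsto-\bx$; this yields $\calB(\bw_1,\|\bw_1\|)\cup\calB(\bw_2,\|\bw_2\|)$ with the same Lebesgue measure, on the unchanged domain $\Gamma_{d;2}$.

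Second, in the large-scale regime the union need not contain a ball $\calB(\bw,\delta')$ centered at $\bw$. If $\|\bu_1\|$ is tiny and $\|\bu_2\|>\delta$, then $\bW_1\notin\calB(\bW_2,\|\bu_2\|)$. Instead, use that $\bW_3$ lies on the sphere $\partial\calB(\bW_2,\|\bu_2\|)$ within $\delta$ of $\bw$. The union then contains the ball of radius $\delta/2$ internally tangent at $\bW_3$, which lies in $\calB(\bw,2\delta)$. This still gives $p_n\ge c_\delta>0$, provided $\delta$ is chosen so that $f\ge f(\bw)/2$ on $\calB(\bw,3\delta)$; that same neighborhood also covers your small-scale bound.
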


Note that, by the bounded convergence theorem, together with the well-known fact that the maximum degree of an NNG is bounded \citep{MR682809}, Lemma~\ref{lemma:o_d} immediately recovers the existing result on the unconditional expectation from \cite{MR914597}:
\begin{eqnarray}
\E\Big(\frac{1}{n}\#\big\{(i,j,k) \text{ distinct}: j \to k,\ i \to k \in \calE(\calG_n) \big\} \Big) \to \mathfrak{o}_d.
\label{eq:E_to_od}
\end{eqnarray}

Table~\ref{table1} reports the values of $\mathfrak{q}_d$ and $\mathfrak{o}_d$ for the first ten dimensions, updating the calculations reported in \citet[Table~1]{han2024azadkia}.

\begin{table}[htbp]
\centering
\caption{The first $10$ values of $\mathfrak{q}_d$ and $\mathfrak{o}_d$.
Specifically, $\mathfrak{q}_d$ is computed by numerical integration according to \eqref{eq:q_d}, whereas $\mathfrak{o}_d$ is estimated by Monte Carlo simulation based on \eqref{eq:E_to_od} with $n = 10^7$.}
\label{table1}
\begin{tabular}{ccccccccccc}
\hline
$d$ & 1 & 2 & 3 & 4 & 5 & 6 & 7 & 8 & 9 & 10 \\
\hline
$\mathfrak{q}_d$ & 0.667 & 0.622 & 0.593 & 0.573 & 0.558 & 0.547 & 0.538 & 0.531 & 0.528 & 0.521 \\
$\mathfrak{o}_d$ & 0.500 & 0.633 & 0.709 & 0.763 & 0.805 & 0.840 & 0.871 & 0.898 & 0.923 & 0.946 \\
\hline
\end{tabular}
\end{table}

\vspace{0.5cm}

Our second result concerns a setting involving two NNGs, generated from the full sample and from a subsample, respectively. Such configurations arise repeatedly in the analysis of the conditional correlation coefficient $T_n$.

To describe this setting, consider a sample $\{\bW_i\}_{i=1}^n$, where each $\bW_i = (\bU_i,\bV_i)$ is independently drawn from the random vector $\bW=(\bU,\bV)$, with $\bU \in \mathbbR^{d_1}$ and $\bV \in \mathbbR^{d_2}$. Let $\calG^{\bW}_n$ denote the NNG associated with the full sample $\{\bW_i\}_{i=1}^n$, and let $\calG^{\bU}_n$ denote the NNG associated with the subsample $\{\bU_i\}_{i=1}^n$. Lemma~\ref{lemma:two_NNGs} below establishes the convergence of the conditional expected number of shared-NN triplets across the two graphs $\calG^{\bW}_n$ and $\calG^{\bU}_n$.

\begin{lemma}[Shared-NN triplets across two NNGs]
\label{lemma:two_NNGs}
Assume that $\bW$ is Lebesgue absolutely continuous and admits a continuous density on its support. Then, for each fixed $i$, as $n \to \infty$,
\begin{eqnarray*}
\E\Big(\#\big\{j\in \lbr n \rbr: j \to k \in \calE(\calG^\bU_n),\ i \to k \in \calE(\calG^\bW_n) \big\} \ \Big | \ \bW_i \Big) \conP 1.
\end{eqnarray*}
\end{lemma}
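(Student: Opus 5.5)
The plan is to reduce the statement to an asymptotic factorization. Write $M(i)$ for the index of the nearest neighbour of $\bW_i$ in $\calG^\bW_n$. The count in the statement is $\#\{j\neq M(i): j\to M(i)\in\calE(\calG^\bU_n)\}$, the in-degree of the vertex $k=M(i)$ in the $\bU$-graph. Expanding over $k$ and $j$ and using exchangeability gives
\[
\E\big(\cdot \mid \bW_i\big)=\sum_{k\neq i}\ \sum_{j\neq k}\P\big\{i\to k\in\calE(\calG^\bW_n),\ j\to k\in\calE(\calG^\bU_n)\mid \bW_i\big\}.
\]
I would treat the term $j=i$ separately. It equals $\P\{\text{the $\bU$-NN of $i$ is $M(i)$}\mid\bW_i\}$, and I expect it to be $o_P(1)$: $M(i)$ lies at $\bW$-distance of order $n^{-1/d}$ with $d=d_1+d_2$, so its $\bU$-distance from $\bU_i$ is typically of order $n^{-1/d}$. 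That is much larger than the $\bU$-NN scale $n^{-1/d_1}$, so $M(i)$ is unlikely to be the $\bU$-nearest neighbour of $i$. (If $d_2=0$ the two graphs coincide, the statement reduces to the in-degree of $N(i)$, and a separate, Lemma~\ref{lemma:o_d}-type argument would be needed; I assume $d_2\ge1$.)

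For $j\notin\{i,k\}$ I would condition on $\bW_k=\bw_k$ and $\bW_j=\bw_j$, writing $\bw=(\bu,\bv)$. The event $i\to k$ in $\calG^\bW_n$ says that no other sample point falls in the ball $A:=\calB(\bW_i,\|\bW_i-\bw_k\|)$. The event $j\to k$ in $\calG^\bU_n$ says that no other sample point has $\bU$-coordinate in $\calB(\bu_j,\|\bu_j-\bu_k\|)$, i.e.\ none falls in the cylinder $C:=\calB(\bu_j,\|\bu_j-\bu_k\|)\times\mathbbR^{d_2}$. Also $\bW_j\notin A$ and $\bU_i\notin \calB(\bu_j,\|\bu_j-\bu_k\|)$. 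The summand therefore equals
\[
(n-1)(n-2)\int\!\!\int f(\bw_k)f(\bw_j)\,\Ind\{\bw_j\notin A,\ \bU_i\notin\cdot\}\,\{1-\P_\bW(A\cup C)\}^{n-3}\,\d\bw_k\,\d\bw_j.
\]
After the rescalings $\bw_k=\bW_i+n^{-1/d}\bs$ and $\bu_j=\bu_k+n^{-1/d_1}\bt$, both $n\P_\bW(A)$ and $n\P_\bW(C)$ converge, by continuity of the density at Lebesgue points, to Poisson-type exponents. These are $f(\bW_i)\lambda(\calB(0,\|\bs\|))$ and $f_\bU(\bU_i)\lambda_{d_1}(\calB(\bt,\|\bt\|))$, using $\bu_k\approx\bU_i$.

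The heart of the argument, and the step I expect to be the main obstacle, is showing that the overlap is negligible, $n\P_\bW(A\cap C)\to0$. Heuristically, $C$ has $\bU$-width $n^{-1/d_1}$, and within the ball $A$ its $\bV$-extent is at most of order $n^{-1/d}$. Hence $\P(A\cap C)\lesssim n^{-1}\cdot n^{-d_2/d}=o(n^{-1})$, and likewise the indicator constraints hold with probability $1-o(1)$. Given this, the integral factorizes in the limit as
\[
\Big(\int f(\bW_i)e^{-f(\bW_i)\lambda(\calB(0,\|\bs\|))}\d\bs\Big)\Big(\int f_\bU(\bU_i)e^{-f_\bU(\bU_i)\lambda_{d_1}(\calB(\bt,\|\bt\|))}\d\bt\Big)=1\cdot1,
\]
since each factor is the total probability of a nearest-neighbour event (substitute $r\mapsto$ volume).

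To make this rigorous I need a dominating function so that dominated convergence applies to the rescaled integrals. I would split into $\bs,\bt$ in a large compact set, where the local continuity of the density applies, and tails. The tails are controlled by the standard bounds on NN distances: $n\P(\text{NN radius ball})$ is tight in probability, uniformly via $(1-p)^{n}\le e^{-np}$. The subtle point is uniformity over $\bW_i$ near the boundary of the support, where only convergence in probability is claimed. I would handle it by restricting to $\bW_i$ in a compact set of Lebesgue points with density bounded away from $0$, which has probability arbitrarily close to one. The resulting error terms are then bounded using the uniformly bounded maximal in-degree of NNGs \citep{MR682809}, which ensures that the conditional expectation is bounded.
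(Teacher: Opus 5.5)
Your proposal is correct, but it takes a different route from the paper.

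\textbf{How the paper proceeds.} It conditions on $\{M(1)=2,\ \bW_2=\bw_2\}$ and uses Lemma~\ref{lemA:cond_pdf}: the remaining points are then i.i.d.\ with $f_{\bW}$ restricted to the complement of $\tcalB(\bw_1,\bw_2)$. The overlap issue you isolate therefore appears there only as the fact that the $\bU$-marginal $\tilde f_{\bw_1,\bw_2}$ of this restricted density tends to $f_{\bU}$ as $\bw_2\to\bw_1$. The paper then integrates over spheres around $\bu_2$ in $\bU$-space.

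\textbf{Main difference: the paper only proves a lower bound.} Truncating the rescaled radius at $\tilde t$ gives $g_n(\tilde t)\to 1-\exp\{-C_1 f_{\bU}(\bu_1)\tilde t\}$, hence $\liminf\ge 1$ almost surely. The matching upper bound comes for free from the unconditional limit $\E[\#\{j\neq 1: N(j)=M(1)\}]\to 1$ of \citet[Lemma~7.4]{Shi_Drton_Han_2024_Bernoulli}, combined with the elementary Lemma~\ref{lemA:Xn_conP_C}. No domination or tail control is needed.

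\textbf{What your route requires and gains.} You work with the joint avoidance factor $\{1-\P_{\bW}(A\cup C)\}^{n-3}$ and the explicit estimate $n\lambda(A\cap C)=O(n^{-d_2/d})$. This forces you to prove both bounds, so the dominated-convergence step you flag is genuinely needed. It does go through at interior points of positive density, for example by:
\begin{itemize}
\item using $1-\P_{\bW}(A\cup C)\le\{(1-\P_{\bW}(A))(1-\P_{\bW}(C))\}^{1/2}$;
\item bounding $\P_{\bW}(A)$ from below directly, since $A$ is centred at $\bW_i$;
\item bounding $\P_{\bW}(C)$ from below via a ball of radius $\min\{\|\bu_j-\bu_k\|,\delta\}$ contained in $\calB(\bu_j,\|\bu_j-\bu_k\|)$ and touching $\bu_k$;
\item using the bounded in-degree as a crude bound when $\|\bW_{M(i)}-\bW_i\|>\delta$.
\end{itemize}
In return, your argument is self-contained: it re-derives the unconditional result rather than citing it. It also yields convergence for almost every fixed $\bw_i$, which is stronger than convergence in probability. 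For the same reason no uniformity in $\bW_i$ is needed, so restricting to a compact set of good points is superfluous.

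\textbf{The $j=i$ term.} Your separate treatment of $j=i$ addresses a point the paper passes over silently, since it counts only $j\neq 1$. Your heuristic can be replaced by a rigorous one-liner: $\P\{N(1)=M(1)\}\to 0$ \citep[Lemma~7.4, Eq.~(26)]{Shi_Drton_Han_2024_Bernoulli}, and a $[0,1]$-valued conditional probability with vanishing mean tends to $0$ in probability.
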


Of note, the unconditional version
\begin{eqnarray*}
\E\Big(\frac{1}{n}\#\big\{(i,j,k) \text{ distinct}: j \to k \in \calE(\calG^\bU_n),\ i \to k \in \calE(\calG^\bW_n) \big\}\Big) \to 1,
\end{eqnarray*}
was previously established in \citet[Lemma 7.4]{Shi_Drton_Han_2024_Bernoulli}. As in Lemmas~\ref{lemma:q_d} and \ref{lemma:o_d}, here we show that the asymptotic behavior of the corresponding conditional expectation is invariant with respect to the specific value of $\bW_i$.

Lemmas~\ref{lemma:q_d} and \ref{lemma:o_d} will be used to derive the closed-form expression for the limiting variance of $\xi_n$, whereas Lemma~\ref{lemma:two_NNGs} will be used to derive the closed-form expression for the limiting variance of $T_n$ in Section~\ref{sec:theory}.

\subsection{Closed-form expression for the limiting variance of $\xi_n$}

In what follows, let $\tY_i$, $\tY'_i$, and $\tY''_i$ denote copies of $Y_i$ such that, conditional on $\bZ_i$, they are independently and identically distributed ($\iid$) according to the conditional distribution of $Y_i$ given $\bZ_i$.

Theorem~\ref{thm:var_xi} below constitutes the main theoretical result of this section. It provides an explicit expression for the limiting variance of $\xi_n$ when $\bZ$ and $Y$ are possibly dependent. In this way, it complements \cite{Lin_Han_2025_CLT} and further extends the corresponding results of \cite{Shi_Drton_Han_2024_Bernoulli} and \cite{chhaibi2026martingaleapproachfluctuationsrank} to the settings of dependent pairs and multivariate $\bZ$, respectively. 

\begin{theorem}[Asymptotic variance of $\xi_n$ under dependence] \label{thm:var_xi}
Assume that $F_{Y,\bZ}$ is fixed and continuous. Assume further that $\bZ$ is Lebesgue absolutely continuous and admits a continuous density function on its support. We then have
\begin{align}
\hspace{-0.9cm}\sigma^2_{\xi(Y,\bZ)} &:=\lim_{n \to \infty} n \var\big\{\xi_n(Y,\bZ)\big\} \cr
=& \ 36 \, \Big\{	(1+\mathfrak{q}_q) \,T_1 + (2-2\mathfrak{q}_q + \mathfrak{o}_q) \, T_2 - (2-\mathfrak{q}_q + \mathfrak{o}_q) \, T_3 +4 \, T_4 - 2 \, T_5 + T_6 - 4 \, T_7 \Big\}, \label{eq:sigma2_xi}
\end{align}
where
\begin{align}
T_1 &=  \E\big\{ F_Y^2(Y \wedge \tY)\big\}, \quad & T_2 &= \E\big\{F_Y(Y \wedge \tY) \cdot F_Y(Y \wedge\tY')\big\},  \cr
T_3 &= \E\big\{F_Y(Y \wedge \tY) \cdot F_Y(\tY' \wedge\tY'')\big\}, \quad & T_4 &= \E\big\{\Ind(Y_1 \leq Y_2 \wedge \tY_2)\cdot F_Y(Y_1 \wedge \tY_1)\big\},  \cr
T_5 &=  \E\big\{\Ind(Y_1 \leq Y_2 \wedge \tY_2)\cdot F_Y(\tY_1 \wedge \tY_1')\big\}, \quad & T_6 &= \E\big\{F_Y(Y_1 \wedge \tY_1 \wedge Y_2 \wedge \tY_2)\big\}, \cr
T_7 &= \E\big\{F_Y(Y \wedge \tY)\big\}^2, & \label{eq:T1-T7}
\end{align}
with $\mathfrak{q}_q$ and $\mathfrak{o}_q$ defined in \eqref{eq:q_d} and \eqref{eq:o_d}, respectively.
\end{theorem}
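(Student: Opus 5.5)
We reduce the problem to computing $\lim_{n\to\infty} n^{-1}\var\bigl(\sum_{i=1}^n \min\{R_i,R_{N(i)}\}\bigr)$, since $\xi_n$ is an affine function of $S_n:=\sum_i \min\{R_i,R_{N(i)}\}$ with slope $6/(n^2-1)$. The first step replaces ranks by the population distribution function. Write $\min\{R_i,R_{N(i)}\}=\sum_{k}\Ind(Y_k\le Y_i\wedge Y_{N(i)})$, so that
\[
\frac{S_n}{n}=\sum_{i=1}^n\Bigl\{F_Y(Y_i\wedge Y_{N(i)})+\frac1n\sum_{k\neq i,N(i)}\bigl(\Ind(Y_k\le Y_i\wedge Y_{N(i)})-F_Y(Y_i\wedge Y_{N(i)})\bigr)\Big\}+O(1).
\]
This is a Hájek-type projection of the rank part onto the $Y_k$'s. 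Conditionally on $\{\bZ_j\}$, it produces a leading term
\[
n^{-1/2}\sum_i F_Y(Y_i\wedge Y_{N(i)}) \;+\; n^{-1/2}\sum_k \bigl\{g(Y_k)-\E g(Y_k)\bigr\},
\qquad g(y)=\E\bigl\{\Ind(y\le Y_2\wedge\tY_2)\bigr\}.
\]
The remainder is shown to be $o_P(1)$ in $L^2$ by the moment bounds of \cite{Lin_Han_2025_CLT}; Proposition~\ref{prop:Lin_Han} already guarantees that the limit exists, so we only need to identify it.

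We then expand the variance of this linear-plus-graph statistic using the law of total variance, conditioning on $\bZ_1,\dots,\bZ_n$.

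\emph{Pair covariances.} Conditional covariances $\cov\{F_Y(Y_i\wedge Y_{N(i)}),F_Y(Y_j\wedge Y_{N(j)})\mid \bZ\}$ are nonzero only when the edges $i\to N(i)$ and $j\to N(j)$ share a vertex. Classifying the shared-vertex configurations gives the following cases.
\begin{itemize}
\item $j=i$ contributes $T_1$-type terms.
\item The mutual pair $j=N(i)$, $N(j)=i$ also contributes $T_1$, with frequency $\mathfrak{q}_q$ by Lemma~\ref{lemma:q_d}.
\item Edges sharing exactly one vertex, either in a chain ($N(i)=j$ non-mutual, or $N(j)=i$) or with a shared target ($N(i)=N(j)$), contribute $T_2$ with $T_3$ subtracted. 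Their frequencies come from Lemma~\ref{lemma:q_d} and Lemma~\ref{lemma:o_d}: chains appear with total count $2(1-\mathfrak{q}_q)$ and shared targets with count $\mathfrak{o}_q$.
\end{itemize}
Here continuity of the density of $\bZ$ lets us replace $\bZ_{N(i)}$ by $\bZ_i$ in the conditional laws, so all paired $Y$'s become conditional copies $\tY,\tY'$ given a common $\bZ$. The "$-T_3$" pieces come from subtracting the product of conditional means. This explains the coefficient $(2-\mathfrak{q}_q+\mathfrak{o}_q)$: it collects the non-self configurations together with the diagonal correction, and summing the one-vertex and mutual counts gives exactly the stated weights.

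\emph{Remaining terms.} The cross terms between the graph sum and the projection $\sum_k g(Y_k)$ yield $T_4$ and $T_5$. The variance of $\sum_k g(Y_k)$ yields $T_6-T_7$-type terms, and the variance of the conditional mean $\E\{\cdot\mid\bZ\}$ yields further $T_7$ corrections. The convergence of the empirical averages to these expectations uses the conditional versions in Lemmas~\ref{lemma:q_d}–\ref{lemma:o_d}: because the limits are invariant in $\bW_i$, they factor out of the integrals against the law of $\bZ_i$. Bounded degree \citep{MR682809} together with dominated convergence justifies passing the limits inside.

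The main obstacle is this combinatorial bookkeeping. We must carefully enumerate the configurations (mutual pairs, chains, shared targets), track which $Y$'s are conditionally independent copies, and verify that the unconditional mean-square terms collapse to the single constant $-4T_7$. First we would check the formula in the independent case against \cite{Shi_Drton_Han_2024_Bernoulli} to pin down the coefficients, then carry the same enumeration through under dependence.
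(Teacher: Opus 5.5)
Your outline is correct and is essentially the paper's proof. You use the H\'ajek projection, the law of total variance given $\bfZ$, and the self, mutual, chain and shared-target configurations with weights $1$, $\mathfrak{q}_q$, $2(1-\mathfrak{q}_q)$, $\mathfrak{o}_q$, made deterministic via the conditional Lemmas~\ref{lemma:q_d}--\ref{lemma:o_d} and bounded degree; the paper does the same but imports the decompositions of $\E\{\var(\xi_n\mid\bfZ)\}$ and $\var\{\E(\xi_n\mid\bfZ)\}$ from \cite{Lin_Han_2025_CLT} rather than rederiving them. One misattribution: replacing $\bZ_{N(i)}$ by $\bZ_i$ in the conditional laws rests on $\|\bZ_{N(i)}-\bZ_i\|\to 0$ plus a Lusin-type argument (Lemma 11.7 of \cite{azadkia2019simple}), not on continuity of the density, which enters only through Lemma~\ref{lemma:o_d}.
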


Notably, when $Y$ and $\bZ$ are further assumed to be independent, all terms $T_1$ through $T_7$ in \eqref{eq:T1-T7} reduce to distribution-free constants, and \eqref{eq:sigma2_xi} further simplifies to the corresponding expression in \cite{Shi_Drton_Han_2024_Bernoulli}, summarized in Proposition~\ref{prop:var_xi_indep} below.

\begin{proposition}[Asymptotic variance under independence] \label{prop:var_xi_indep}
If $Y$ is independent of $\bZ$, then the terms $T_1$ through $T_7$ in \eqref{eq:T1-T7} all reduce to constants, with values $T_1 = 1/6$, $T_2 = 2/15$, $T_3=1/9$, $T_4 = 1/15$, $T_5 = 1/9$, $T_6 = 1/5$, and $T_7 = 1/9$. Consequently, the limiting variance reduces to
\begin{eqnarray*}
\frac{2}{5} +\frac{2}{5}\mathfrak{q}_q + \frac{4}{5}\mathfrak{o}_q,
\end{eqnarray*}
which agrees with the expression in \citet[Theorem 3.1(ii)]{Shi_Drton_Han_2024_Bernoulli}.
\end{proposition}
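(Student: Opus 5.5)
The plan is a direct computation: substitute the independence structure into \eqref{eq:T1-T7}, evaluate each $T_k$ as a moment of i.i.d.\ uniforms, and insert the values into \eqref{eq:sigma2_xi}. The only point to justify first is the reduction to uniforms. If $Y$ is independent of $\bZ$, the conditional law of $Y$ given $\bZ$ equals $\P_Y$. Hence $\tY_i,\tY_i',\tY_i''$ are i.i.d.\ copies of $Y$, independent of $(Y_i,\bZ_i)$ and of all other indices. Because $F_{Y,\bZ}$ is continuous, $F_Y$ is continuous. Therefore $F_Y(Y)\sim \mathrm{Unif}(0,1)$, and $F_Y$ is monotone, so $F_Y(a\wedge b)=F_Y(a)\wedge F_Y(b)$, and
\[
\Ind(Y_1\le Y_2\wedge \tY_2)=\Ind\{F_Y(Y_1)\le F_Y(Y_2)\wedge F_Y(\tY_2)\}\quad\text{a.s.}
\]
(ties have probability zero). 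Consequently every $T_k$ is a moment of i.i.d.\ uniforms $U,V,W,\dots$.

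The evaluations use two facts: $m(u):=\E\{u\wedge V\}=u-u^2/2$, and $U\wedge V$ has density $2(1-t)$.
\begin{itemize}
\item $T_1=\E\{(U\wedge V)^2\}=2\int_0^1 t^2(1-t)\,\d t=1/6$.
\item $T_2=\E\{m(U)^2\}=\int_0^1 (u-u^2/2)^2\,\d u=1/3-1/4+1/20=2/15$.
\item $T_3=\{\E(U\wedge V)\}^2=1/9$, which also equals $T_7$.
\item For $T_4$, condition on $U_1$. This gives $\P(U_1\le U_2\wedge V_2\mid U_1)=(1-U_1)^2$, so
\[
T_4=\int_0^1(1-u)^2\,(u-u^2/2)\,\d u=\int_0^1\bigl(u-\tfrac52u^2+2u^3-\tfrac12u^4\bigr)\d u=\tfrac1{15}.
\]
\item For $T_5$, the factor $F_Y(\tY_1\wedge\tY_1')$ is independent of $(Y_1,Y_2,\tY_2)$, so $T_5=\tfrac13\cdot\tfrac13=1/9$.
\item $T_6=\E\{\min(U_1,U_2,U_3,U_4)\}=1/5$.
\end{itemize}

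Finally, substitute into \eqref{eq:sigma2_xi} and collect terms by coefficient.
\begin{itemize}
\item Constant part: $\tfrac16+\tfrac4{15}-\tfrac29+\tfrac4{15}-\tfrac29+\tfrac15-\tfrac49=\tfrac1{90}$, which times $36$ gives $2/5$.
\item Coefficient of $\mathfrak{q}_q$: $\tfrac16-\tfrac4{15}+\tfrac19=\tfrac1{90}$, giving $\tfrac25\mathfrak{q}_q$.
\item Coefficient of $\mathfrak{o}_q$: $\tfrac2{15}-\tfrac19=\tfrac1{45}$, giving $\tfrac{36}{45}\mathfrak{o}_q=\tfrac45\mathfrak{o}_q$.
\end{itemize}
This yields $\tfrac25+\tfrac25\mathfrak{q}_q+\tfrac45\mathfrak{o}_q$, matching \citet[Theorem 3.1(ii)]{Shi_Drton_Han_2024_Bernoulli}.

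There is no real obstacle here. The only care needed is bookkeeping of which copies are independent, in particular in $T_4$ and $T_5$, where $Y_1$ and $\tY_1$ share the index $1$ but are independent under $H_0$.
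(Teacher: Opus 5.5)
Your proposal is correct and matches the paper's approach: the paper gives no separate proof beyond direct substitution, and your route (reduce each $T_k$ to a moment of i.i.d.\ uniforms via $F_Y(a\wedge b)=F_Y(a)\wedge F_Y(b)$, then collect the constant, $\mathfrak{q}_q$, and $\mathfrak{o}_q$ coefficients $1/90$, $1/90$, $1/45$ and multiply by $36$ in \eqref{eq:sigma2_xi}) is exactly that computation, with all seven values checking out. One cosmetic point: in your closing remark the operative assumption is independence of $Y$ and $\bZ$, not the null $H_0$ of \eqref{eq:null}.
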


\subsection{Variance estimation for $\xi_n$}

Recall that the variance estimator proposed in \cite{Lin_Han_2025_CLT} requires $O(n^2)$ computational time. In contrast, our new Theorem~\ref{thm:var_xi} yields, as a byproduct, a new estimator of $\sigma^2_{\xi(Y,\bZ)}$ that can be computed in $O(n\log n)$ time. We first present the form of this estimator,  along with its theoretical properties, in Theorem~\ref{thm:est_var_xi} below.

\begin{theorem}[Consistent estimator of the limiting variance]\label{thm:est_var_xi}
Assume the conditions of Theorem~\ref{thm:var_xi}. Then the following statistic converges in probability to $\sigma^2_{\xi(Y,\bZ)}$:
\begin{eqnarray}
\hsigma^2_{\xi(Y,\bZ)} &:=&  36\Big\{	(1+\mathfrak{q}_q) \,\hatT_1 + (2-2\mathfrak{q}_q + \mathfrak{o}_q) \, \hatT_2 - (2-\mathfrak{q}_q + \mathfrak{o}_q) \, \hatT_3 \cr
&& \qquad +4 \, \hatT_4 - 2 \, \hatT_5 + \hatT_6 - 4 \, \hatT_7 \Big\},
\label{eq:est_var_cha}
\end{eqnarray}
where
\begin{align}
\hatT_1 &=  \frac{1}{n^3} \sum_{i=1}^n  \big(R_i \wedge R_{N(i)} \big)^2,  &
\hatT_2 &= \frac{1}{n^3} \sum_{i=1}^n \big(R_i \wedge R_{N(i)} \big) \big( R_i \wedge R_{N_2(i)}\big),  \cr
\hatT_3 &= \frac{1}{n^3} \sum_{i=1}^n \big(R_i \wedge R_{N(i)} \big) \big( R_{N_2(i)} \wedge R_{N_3(i)}\big), \qquad &
\hatT_4 &= \frac{1}{n^3}
\sum_{1 \leq i \neq j \leq n}
\Ind\big(R_i \leq R_j \wedge R_{N(j)}\big) \big(R_i \wedge R_{N(i)}\big),  \cr
\hatT_5 &=   \frac{1}{n^3} \sum_{1 \leq i \neq j \leq n} \Ind\big(R_i \leq R_j \wedge R_{N(j)}\big) \big(R_{N(i)} \wedge R_{N_2(i)}\big), \hspace{-4.5em} \cr
\hatT_6 &= \frac{1}{n^3} \sum_{1 \leq i \neq j \leq n}  R_i \wedge R_{N(i)} \wedge R_j \wedge R_{N(j)},  &
\hatT_7 &= \Big(\frac{1}{n^2}\sum_{i=1}^n R_i \wedge R_{N(i)}\Big)^2, \label{eq:hatT1-hatT7}
\end{align}
with $N_2(i)$ and $N_3(i)$ denoting the indices of the second and third NNs of $\bZ_i$, respectively.
\end{theorem}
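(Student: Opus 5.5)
Since $\hsigma^2_{\xi(Y,\bZ)}$ is a fixed linear combination of $\hatT_1,\dots,\hatT_7$ with deterministic coefficients, and $\sigma^2_{\xi(Y,\bZ)}$ is the same combination of $T_1,\dots,T_7$ by Theorem~\ref{thm:var_xi}, it suffices to show $\hatT_k \conP T_k$ for each $k$. I will do this term by term with a common two-step scheme. First, replace ranks by the population distribution function: $R_i/n = F_Y(Y_i) + O_P(\sup_y|F_n(y)-F_Y(y)|)$. By Glivenko--Cantelli this error is uniformly small in $i$. All summands are bounded: ranks over $n$ lie in $[0,1]$, and indicators are bounded. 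Hence each $\hatT_k$ differs from its population-CDF analogue by $o_P(1)$. For example, $\hatT_1$ is replaced by $n^{-1}\sum_i F_Y^2(Y_i\wedge Y_{N(i)})$, and $\hatT_4$ by $n^{-2}\sum_{i\ne j}\Ind\{F_Y(Y_i)\le F_Y(Y_j\wedge Y_{N(j)})\}F_Y(Y_i\wedge Y_{N(i)})$. Ties and the indicator's discontinuity are harmless because $F_Y$ is continuous and the indicator compares ranks exactly. Second, show each analogue converges in probability to $T_k$ by computing its mean and showing its variance vanishes.

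For the mean, the key input is that $\|\bZ_{N(i)}-\bZ_i\|\conP 0$, and likewise for $N_2(i)$ and $N_3(i)$. This holds because $q$ is fixed and $\bZ$ has a density. Combined with continuity of the conditional law of $Y$ given $\bZ$ in a weak (Lusin-type) sense, as in \cite{azadkia2019simple} and \cite{Lin_Han_2025_CLT}, this shows that conditional on $\bZ_i$, the responses $(Y_i, Y_{N(i)}, Y_{N_2(i)}, Y_{N_3(i)})$ behave asymptotically like $(Y,\tY,\tY',\tY'')$: i.i.d.\ from $\mu_{Y\mid\bZ_i}$. I would invoke the standard lemma of \cite{azadkia2019simple} (Lemma 14.1 there) stating $\E|g(\bZ_{N(i)})-g(\bZ_i)|\to 0$ for bounded measurable $g$. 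I would extend it to the 2nd and 3rd NNs and apply it to functions of the form $\bz\mapsto \E\{h(y,Y)\mid\bZ=\bz\}$. Conditional on the $\bZ$'s, the $Y$'s are independent, so the conditional expectation of a product factorizes into such functions.

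This yields $\E\hatT_1\to T_1$, and similarly for $T_2$ and $T_3$, using distinct neighbors $N,N_2,N_3$. For the double sums $\hatT_4,\hatT_5,\hatT_6$, a pair $i\ne j$ chosen uniformly has disjoint neighborhoods with probability $1-O(1/n)$. On that event, $(Y_i,Y_{N(i)},\dots)$ and $(Y_j,Y_{N(j)})$ are asymptotically independent copies, which gives $T_4,T_5,T_6$. For $\hatT_7$ I take the square of the convergent mean term $n^{-2}\sum_i R_i\wedge R_{N(i)}\conP \E F_Y(Y\wedge\tY)$ and apply the continuous mapping theorem.

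The variance (concentration) step is where I expect the main work. For single sums, I would use the Efron--Stein inequality: changing one observation $(Y_l,\bZ_l)$ alters the NN and $k$-NN relations of at most $C_q$ points, by the bounded in-degree of $k$-NN graphs \citep{MR682809}. It also shifts every rank by at most one. So each bounded difference is $O(1/n)$ and the variance is $O(1/n)$. The double sums are U-statistic-like, and the same bounded-difference argument applies to the normalized quantity $n^{-3}\sum_{i\ne j}$: changing one point affects $O(n)$ of the pairs, each by $O(1)$ after normalization by $n^2$, giving $O(1/n)$ per coordinate. The delicate part is justifying the mean computation uniformly over the dependence between the indicator $\Ind(Y_i\le Y_j\wedge Y_{N(j)})$ and the neighbor structure when $i$ and $j$ are near each other. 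Since this occurs only for an $O(1/n)$ fraction of pairs, I would bound it crudely and discard it.
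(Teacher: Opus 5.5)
Your proposal is correct, and its skeleton is the paper's: $\hsigma^2_{\xi(Y,\bZ)}$ and $\sigma^2_{\xi(Y,\bZ)}$ are the same fixed linear combination, so everything reduces to $\hatT_k\conP T_k$ for $k=1,\dots,7$.

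The paper does not reprove this termwise consistency. It imports it from \cite{Lin_Han_2025_CLT} (Lemmas 2.10--2.11), together with the identities \eqref{eq:thm:var_xi:3}--\eqref{eq:thm:var_xi:4}. You instead give a self-contained argument in three parts:
\begin{itemize}
\item exact replacement of $R_i/n$ by $F_Y(Y_i)$, at uniform cost $\sup_y|F_n(y)-F_Y(y)|$;
\item convergence of means via the nearest-neighbour continuity lemma for measurable functions (the paper cites it as Lemma~11.7 of \cite{azadkia2019simple}), extended to $N_2,N_3$ through bounded $k$-NN in-degree;
\item an $O(n^{-1})$ variance bound from Efron--Stein.
\end{itemize}
Your route is longer, but it shows exactly what is used: continuity of $Y$, vanishing $k$-NN distances, and bounded in-degree. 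The paper's two-line proof buys brevity at the price of deferring all of this to Lin--Han.

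One correction to where you place the difficulty. The step you flag as delicate is not. For index-disjoint tuples such as $(i,N(i),j,N(j))$, the $Y$'s are exactly conditionally independent given $\bfZ$, however close $\bZ_i$ and $\bZ_j$ are. Overlapping pairs form only an $O(1/n)$ fraction.

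What does need a line is the discontinuity of $\Ind(u\le w\wedge x)$ in $\hatT_4$ and $\hatT_5$. The continuity lemma applies to a fixed measurable function of one neighbour. But the integrated indicator, viewed as a function of $\bZ_{N(j)}$, depends on the other random points. The clean fix is as follows.
\begin{itemize}
\item Show $(Y_1,Y_{N(1)},Y_2,Y_{N(2)})\conD(Y_1,\tY_1,Y_2,\tY_2)$, and analogously with $N_2(1)$ appended for $\hatT_5$. Products of bounded continuous test functions, such as characteristic-function factors, factorize given $\bfZ$, so the lemma applies factor by factor.
\item Note that the limit law puts no mass on $\{Y_1=Y_2\}\cup\{Y_1=\tY_2\}$, since $Y$ is continuous and $Y_1$ is independent of $(Y_2,\tY_2)$. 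The integrand is therefore continuous almost surely under the limit.
\end{itemize}
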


Compared with the original estimator in \cite{Lin_Han_2025_CLT} (Theorem 1.1), the new estimator $\hsigma^2_{\xi(Y,\bZ)}$ is notably simpler, owing to the explicit closed-form expression of $\hsigma^2_{\xi(Y,\bZ)}$ established in Theorem~\ref{thm:var_xi} through the incorporation of the constants $\mathfrak{q}_q$ and $\mathfrak{o}_q$. Moreover, unlike $\tilde\sigma^2$ in \cite{Lin_Han_2025_CLT}, the following proposition shows that $\hsigma^2_{\xi(Y,\bZ)}$ can be computed in $O(n\log n)$ time.

\begin{proposition} \label{prop:nlogn}
The estimator $\hsigma^2_{\xi(Y,\bZ)}$ in \eqref{eq:est_var_cha} can be computed in $O(n \log n)$ time. In particular, the terms $\hatT_4$, $\hatT_5$, and $\hatT_6$ admit $O(n \log n)$ implementations via Algorithm~\ref{alg1}.
\end{proposition}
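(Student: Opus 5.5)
The plan is to treat the seven terms separately. The whole cost is dominated by two preprocessing steps, each $O(n\log n)$. First, sorting the $Y_i$ gives the ranks $R_i$. Second, the first three nearest neighbors $N(i),N_2(i),N_3(i)$ of every $\bZ_i$ can be found in $O(n\log n)$ total time for fixed dimension $q$, using a $k$-d tree or the all-$k$-nearest-neighbors algorithm of Vaidya, with $k=3$. After this, every quantity of the form $a_i:=R_i\wedge R_{N(i)}$, $R_i\wedge R_{N_2(i)}$, or $R_{N_2(i)}\wedge R_{N_3(i)}$ is available in $O(1)$ per index. The single-index sums $\hatT_1,\hatT_2,\hatT_3$ and $\hatT_7$ are therefore computed in $O(n)$.

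The remaining work concerns the double sums $\hatT_4,\hatT_5,\hatT_6$, which naively cost $O(n^2)$. Write $a_j=R_j\wedge R_{N(j)}$.

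For $\hatT_4$ and $\hatT_5$, the summand factors as $\Ind(R_i\le a_j)\,b_i$, with $b_i=a_i$ for $\hatT_4$ and $b_i=R_{N(i)}\wedge R_{N_2(i)}$ for $\hatT_5$. Hence
\[
\hatT_{4}\ \text{or}\ \hatT_5 \;=\; \frac{1}{n^3}\sum_{i=1}^n b_i\Big(\#\{j: a_j\ge R_i\}-\Ind(a_i\ge R_i)\Big).
\]
The counts $c(r):=\#\{j:a_j\ge r\}$, for $r\in\lbr n\rbr$, are obtained by bucketing the integer values $a_j\in\lbr n\rbr$ and taking a reverse cumulative sum, which is $O(n)$. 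Sorting plus binary search would give $O(n\log n)$ equally well. The diagonal correction $j=i$ is $O(1)$ per $i$. So these two terms take $O(n\log n)$ overall.

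For $\hatT_6$, the summand is $a_i\wedge a_j$. Sort the $a$'s as $a_{(1)}\le\dots\le a_{(n)}$. Then
\[
\sum_{i\ne j} a_i\wedge a_j \;=\; 2\sum_{k=1}^n (n-k)\,a_{(k)},
\]
since each $a_{(k)}$ is the minimum in exactly $n-k$ unordered pairs. Ties cause no problem here, because the identity holds for any fixed ordering of tied values. This term also costs $O(n\log n)$. These reductions are what Algorithm~\ref{alg1} would encode.

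The main obstacle is the nearest-neighbor step, since that is the only place where the dimension enters and where $O(n\log n)$ is not elementary. I would handle it by citing the known all-$k$-NN result for fixed $q$ and fixed $k$. The constants $\mathfrak{q}_q,\mathfrak{o}_q$ depend only on $q$ and are precomputed, as in Table~\ref{table1}, so they add $O(1)$. Summing all the costs gives $O(n\log n)$.
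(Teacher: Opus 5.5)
Your proposal is correct and follows essentially the same route as the paper. Both compute ranks and the first three nearest neighbors in $O(n\log n)$, evaluate the single-index sums $\hat{T}_1,\hat{T}_2,\hat{T}_3,\hat{T}_7$ directly, and reduce the double sums $\hat{T}_4,\hat{T}_5,\hat{T}_6$ to single sums via counts such as $\#\{j:U_j\ge R_i\}$; your $O(n)$ bucketing is just an alternative to the paper's binary search. One point in your favor: your identity $\sum_{i\neq j}a_i\wedge a_j=2\sum_{k}(n-k)a_{(k)}$ for $\hat{T}_6$ is exact under ties. By contrast, the formula $2\sum_i U_i\{\#\{j:U_j\ge U_i\}-1\}$ in Algorithm~\ref{alg1} gives each tied pair $U_i=U_j$ double weight, and such ties always occur, for instance at mutual nearest neighbors. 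After the $n^{-3}$ scaling this is only an $O(n^{-1})$ discrepancy and does not affect consistency, but it means your sort-based version, not Algorithm~\ref{alg1}'s literal formula, computes $\hat{T}_6$ exactly.
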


\begin{algorithm}[htbp]
\caption{Fast computations of $\hatT_4$, $\hatT_5$, and $\hatT_6$ in $O(n \log n)$ time}
\label{alg1}
\begin{algorithmic}[1]

\Require Sample $\{(\bZ_i,Y_i)\}_{i=1}^n$.

\State Perform $k$-NN search on $\{\bZ_i\}_{i=1}^n$, for $k=1$ and $2$. Obtain $N(i), N_2(i)$, for $i=1,\dots, n$. 
\State Sort $\{Y_i\}_{i=1}^n$ and compute ranks $R_i$. Obtain 
$R_i, R_{N(i)}, R_{N_2(i)}$, for $i=1,\dots, n$.
\State For $i=1,\dots, n$, compute $U_i = R_i \wedge R_{N(i)}$ and $V_i=  R_{N(i)} \wedge R_{N_2(i)}$. The remaining objective is to compute $\hatT_4 
=\sum_{i=1}^n U_i \cdot \sum_{j \in 
\lbr n \rbr, j\neq i}  \Ind(R_i \leq U_j)
$, $\hatT_5= \sum_{i=1}^n V_i \cdot \sum_{j \in 
\lbr n \rbr, j\neq i}  \Ind(R_i \leq U_j)$, and 
$\hatT_6 = \sum_{i\neq j} U_i \wedge U_j = 2\cdot \sum_{i=1}^n U_i \cdot \big\{\sum_{j=1}^n \Ind(U_i \leq U_j) - 1\big\}$.
\State For $i=1,\dots,n$, use binary search to find the rank of $R_i$ among $\{U_j\}_{j=1}^n$, i.e., compute $R^*_i = \#\{j \in \lbr n \rbr: R_i > U_j\}$; also, find the rank of $U_i$ among $\{U_j\}_{j=1}^n$, i.e., compute $R^\#_i = \#\{j \in \lbr n \rbr: U_i > U_j\}$.
\State Compute $\hatT_4 = \sum_{i=1}^n U_i \cdot \big\{n-R^*_i - \Ind(R_i \leq U_i)\big\}$, $\hatT_5 = \sum_{i=1}^n V_i \cdot \big\{n-R^*_i - \Ind(R_i \leq U_i)\big\}$, and $\hatT_6 =2\cdot\sum_{i=1}^n U_i\cdot(n- R^\#_i -1)$.
\Ensure Estimators $\hatT_4$, $\hatT_5$, and $\hatT_6$.

\end{algorithmic}
\end{algorithm}

\section{Statistical inference of $T_n$} \label{sec:SI}

This section introduces inferential procedures for constructing confidence intervals for $T$ in \eqref{eq:T}, as well as for testing $H_0$ in \eqref{eq:null}, based on Azadkia--Chatterjee's conditional correlation coefficient $T_n$ in \eqref{eq:T_n}. Before proceeding, we first introduce some notation and preliminary observations.

Let $(\bX_1,Y_1,\bZ_1),\dots,(\bX_n,Y_n,\bZ_n)$ be $n$ $\iid$ copies of the random triplet $(\bX,Y,\bZ)$, where $Y \in \mathbb{R}$, $\bX \in \mathbb{R}^p$, and $\bZ \in \mathbb{R}^q$, with $p,q \ge 1$. Recall that $T_n$ in \eqref{eq:T_n} takes the form
\begin{eqnarray}
T_n=T_n(Y, \bX \mid \bZ) &=&  \frac{\tau_n(Y, \bX \mid \bZ)}{\kappa_n(Y, \bZ)}, \cr
\text{with}	\hspace{2cm} \tau_n=\tau_n(Y, \bX \mid \bZ) &:=& \frac{1}{n^2}\sum_{i=1}^n \big(\min\{R_i, R_{M(i)}\}- \min\{R_i, R_{N(i)}\}\big), \label{eq:tau_n} \\
\kappa_n=\kappa_n(Y, \bZ) &:=& \frac{1}{n^2}\sum_{i=1}^n\big(R_i-\min\{R_i,R_{N(i)}\}\big), \nonumber
\end{eqnarray}
where $\tau_n$ and $\kappa_n$ denote, respectively, the numerator and denominator of $T_n$ after scaling by $n^{-2}$, and $M(i)$ and $N(i)$ index the NNs of $(\bX_i,\bZ_i)$ and $\bZ_i$, respectively. Whenever $Y$ is not almost surely a function of $\bZ$, $T_n(Y, \bX \mid \bZ)$ converges almost surely to the conditional dependence measure in \eqref{eq:T}, expressed as 
\begin{eqnarray}
T=T(Y, \bX \mid \bZ) &=&  \tau(Y, \bX \mid \bZ) / \kappa(Y, \bZ), \cr
\text{with}	\hspace{2cm}  \tau=\tau(Y, \bX \mid \bZ) &:=& \int \E\big[\var\big\{\P(Y \geq y \mid \bX, \bZ) \ \big | \  \bZ \big\}\big] \d \P_Y(y), \label{eq:tau} \\
\kappa=\kappa(Y, \bZ) &:=& \int \E\big\{ \var \big( \Ind(Y \geq y) \mid \bZ \big)\big\} \d \P_Y(y), \label{eq:kappa}
\end{eqnarray}
where $\tau$ and $\kappa$ denote, respectively, the numerator and denominator of $T$.

\subsection{Confidence intervals} \label{sec:CI}

Constructing confidence intervals for $T$ using $T_n$ hinges on deriving the limiting distribution of $T_n - T$, where
\begin{eqnarray}
T_n - T \ = \  
\frac{\tau_n- T\cdot \kappa_n}{\kappa_n}
\ =: \  \frac{\tT_n}{\kappa_n}.  \label{eq:T_n-T}
\end{eqnarray}
It was shown in \cite{azadkia2019simple} that
\[
\kappa_n \conas \kappa >0
\qquad\text{and}\qquad
\tT_n \conas 0.
\]
Accordingly, the main challenge is to infer the limiting distribution of the numerator term $\tT_n$.

The construction of confidence intervals for $T$ proceeds in the following three steps.

\begin{description}
\item[Step 1] (CLT). Establish a CLT for $\tT_n$:
\begin{eqnarray}
\sqrt{n} \big\{\tT_n - \E(\tT_n) \big\}\conD N(0, \sigma^2), \quad \text{as } n \to \infty, \label{eq:step1-CLT}
\end{eqnarray}
and construct a consistent estimator $\hat{\sigma}^2$ of the limiting variance $\sigma^2$.

\item[Step 2] (Bias correction, if necessary).
Let $L_n =\E(\tT_n)$ be the (asymptotic) bias in \eqref{eq:step1-CLT}. If $\sqrt{n} L_n \to 0$, then it is asymptotically negligible. In that case,
\begin{eqnarray*}
\sqrt{n} \tT_n \conD N(0, \sigma^2),
\end{eqnarray*}
and, by Slutsky's theorem,
\begin{eqnarray*}
\sqrt{n} \big(T_n - T \big)\conD N(0, \sigma^2/\kappa^2), \quad \text{as } n \to \infty.
\end{eqnarray*}

Otherwise, the bias is not negligible. In such cases, let 
\[
L^{(\tau)}_n = \E(\tau_n) - \tau~~~ {\rm and}~~~ L^{(\kappa)}_n = \E(\kappa_n) - \kappa 
\]
denote the biases of $\tau_n$ and $\kappa_n$, respectively.
Construct consistent bias estimators $\hat{L}^{(\tau)}_n$ and $\hat{L}^{(\kappa)}_n$ such that
\begin{eqnarray}
\hat{L}^{(\tau)}_n - L^{(\tau)}_n = o_\P(n^{-1/2}), \qquad  \hat{L}^{(\kappa)}_n - L^{(\kappa)}_n = o_\P(n^{-1/2}).\label{eq:step2-bias}
\end{eqnarray}
Then the bias-corrected conditional correlation coefficient
\begin{eqnarray*}
T_n^{\mathrm{bc}} \ : = \  
\big(\tau_n - 	\hat{L}^{(\tau)}_n\big)/\big(\kappa_n - 	\hat{L}^{(\kappa)}_n\big)
\end{eqnarray*}
satisfies
\begin{eqnarray*}
\sqrt{n} \big(	T_n^{\mathrm{bc}} - T \big)\conD N(0, \sigma^2/\kappa^2), \quad \text{as } n \to \infty.
\end{eqnarray*}

\item[Step 3] (Confidence interval). A $(1-\alpha)$ confidence interval for $T$ is given by
\begin{eqnarray}
&&\mathsf{CI}_{\alpha}(T) = \Big(T_n -\frac{z_{\alpha/2}\cdot \hat{\sigma}}{\kappa_n \cdot\sqrt{n}} \ , \ T_n +\frac{z_{\alpha/2}\cdot \hat{\sigma}}{\kappa_n \cdot\sqrt{n} } \Big), \quad \text{if bias correction is unnecessary,} \cr
\text{or}&&\mathsf{CI}^{\mathrm{bc}}_{\alpha}(T) = \Big(T_n^{\mathrm{bc}} -\frac{z_{\alpha/2}\cdot \hat{\sigma}}{\kappa_n \cdot\sqrt{n}} \ , \ T_n^{\mathrm{bc}} +\frac{z_{\alpha/2}\cdot \hat{\sigma}}{\kappa_n \cdot\sqrt{n} } \Big), \quad \text{if bias correction is necessary,}  \qquad \label{eq:step3-CI}
\end{eqnarray}
where $z_{\alpha/2}$ denotes the $(1-\alpha/2)$-quantile of the standard normal distribution.
\end{description}

\subsection{Conditional independence testing} \label{sec:test}

Further simplifications arise when the goal is to test $H_0$ in \eqref{eq:null}. Specifically, under $H_0$, we have $T=\tau=0$, so that
\[
T_n - T = \tau_n/\kappa_n.
\]
Accordingly, testing $H_0$ is equivalent to testing $\tau=0$, which can be carried out using $\tau_n$ alone.

The construction of a test of $H_0$ based on $\tau_n$ proceeds as follows.

\begin{description}
\item[Step $\mathbf{1'}$] (CLT under $H_0$). Establish a CLT for $\tau_n$ under $H_0$:
\begin{eqnarray}\label{eq:han-CI-test}
\sqrt{n} \big\{\tau_n - \E(\tau_n) \big\}\conD N(0, \sigma_0^2), \quad \text{as } n \to \infty.
\end{eqnarray}
Construct a consistent estimator $\hat{\sigma}_0^2$ of the limiting variance $\sigma_0^2$. \footnote{Note that under $H_0$, $T=0$, so $\tau_n$ coincides with $\tT_n$ in \eqref{eq:T_n-T}; hence \eqref{eq:han-CI-test} is a special case of \eqref{eq:step1-CLT}.}

\item[Step $\mathbf{2'}$] (Bias correction, if necessary). Let $L^{(\tau)}_n = \E(\tau_n) - \tau$ denote the bias of $\tau_n$.
Whenever $\sqrt{n} L_n \nrightarrow 0$, the bias is not negligible.
In such cases, construct a bias estimator $\hat{L}^{(\tau)}_n$ such that
\begin{eqnarray}
\hat{L}^{(\tau)}_n - L^{(\tau)}_n = o_\P(n^{-1/2}), \label{eq:step2'-bias}
\end{eqnarray}
analogously to \eqref{eq:step2-bias} in Step 2 above.

\item[Step $\mathbf{3'}$] (A test of $H_0$).
The resulting level-$\alpha$ test is given by
\begin{align*}
&\mathsf{T}_{\alpha} = \Ind\big(\sqrt{n} \tau_n / \hat{\sigma} > z_\alpha\big),  && \hspace{-1.2cm} \text{if bias correction is unnecessary,} \cr
\text{or } \quad &\mathsf{T}^{\mathrm{bc}}_\alpha = \Ind\big(	 \sqrt{n} (\tau_n - 	\hat{L}^{(\tau)}_n ) / \hat{\sigma} > z_\alpha\big),  && \hspace{-1.2cm} \text{if bias correction is necessary.}
\end{align*}
\end{description}

\section{Theory} \label{sec:theory}

This section provides the theoretical foundation for the inferential procedures described in Section~\ref{sec:SI}. In particular, 
\begin{enumerate}[label=(\roman*)]
\item For Steps 1 and $1'$, we establish a CLT, derive a closed-form expression for the limiting variance (Section~\ref{sec:CLT}), and provide consistent estimators thereof (Section~\ref{sec:est_var});
\item For Steps 2 and $2'$, we show that bias correction could be unnecessary when the combined dimension of $\bX$ and $\bZ$ satisfies $p+q \leq 3$; otherwise, a bias-correction procedure is justified (Section~\ref{sec:bias_correct});
\item Finally, for Steps 3 and $3'$, we establish the validity of the proposed confidence intervals and tests (Section~\ref{sec:infer_theory}).
\end{enumerate}

\subsection{CLT} \label{sec:CLT}

Before presenting the main theorems in this section, we first introduce the following assumptions.

\begin{assumption} \label{assump_4.1}
Assume that $\{(\bX_i,Y_i,\bZ_i): i\in\lbr n\rbr\}$ are $n$ independent copies of $(\bX,Y,\bZ)$.
\end{assumption}

\begin{assumption}\label{assump_4.2}
The joint cumulative distribution function $F_{\bX,Y,\bZ}$ of $(\bX,Y,\bZ)$ is continuous.
\end{assumption}

\begin{assumption} \label{assump_4.3}
$(\bX,\bZ)$ is absolutely continuous and admits a density function $f_{\bX,\bZ}(\bx,\bz)$ that is continuous on its support.
\end{assumption}

\begin{assumption}\label{assump_4.4}
$Y$ is not almost surely equal to a function of $\bZ$.
\end{assumption}

\begin{assumption}\label{assump_4.5}
Define
$G_\bz(t) = \E[\Ind(Y \geq t) \mid \bZ = \bz]$ and $G_{\bx,\bz}(t) = \E[\Ind(Y \geq t) \mid (\bX, \bZ) = (\bx,\bz)]$.
For any fixed $t$, assume that the mapping $\bz \mapsto G_{\bz}(t)$ is continuous almost everywhere on $\supp(\bZ)$, and that the mapping $(\bx,\bz) \mapsto G_{\bx,\bz}(t)$ is continuous almost everywhere on $\supp((\bX,\bZ))$.
\end{assumption}

Recall that $T_n(Y, \bX \mid \bZ) = \tau_n(Y, \bX \mid \bZ) / \kappa_n(Y, \bZ)$ in \eqref{eq:T_n}, with $\tau_n$ and $\kappa_n$ given by
\begin{align}
&	\tau_n(Y, \bX \mid \bZ) = \txi_{1,n} -  \txi_{2,n},     &\kappa_n(Y, \bZ) 	= (n+1)/(2n)- 3^{-1} -  \txi_{2,n}, \cr
\text{where} \hspace{1cm} & \txi_{1,n} := \frac{1}{n^2}\sum_{i=1}^n \min\{R_i, R_{M(i)}\} -\frac{1}{3},
& \txi_{2,n} := \frac{1}{n^2} \sum_{i=1}^n \min\{R_i, R_{N(i)}\} - \frac{1}{3}. \label{eq:txi}
\end{align}
It is worth noting that $\txi_{1,n}$ and $\txi_{2,n}$ can be viewed as unnormalized versions of the Azadkia--Chatterjee unconditional correlation coefficient $\xi_n$ in \eqref{eq:xi_n}, in the sense that
\begin{eqnarray}
\txi_{1,n} &=& 6^{-1}\cdot \xi_n(Y, (\bX, \bZ)) + O_\P(n^{-2}) +O(n^{-1}), \cr \txi_{2,n} &=&  6^{-1} \cdot \xi_n(Y,\bZ) + O_\P(n^{-2}) + O(n^{-1}). \label{eq:txi_2}
\end{eqnarray}
Using this notation, $\tT_n$ in \eqref{eq:T_n-T} admits the decomposition
\begin{eqnarray}
\tT_n \ = \kappa_n\cdot (T_n -T) \ = \ \txi_{1,n} -(1 -T) \cdot \txi_{2,n} - \big\{(n+1)/(2n)-3^{-1}\big\}\cdot T.  \label{eq:tTn}
\end{eqnarray}

We first derive the general CLT. As noted earlier in Section~\ref{sec:CI}, the CLT for $T_n$ relies on that for $\tT_n$. We therefore begin by establishing a CLT for $\tT_n$ in the general case where $Y$ may depend on $\bX$ conditionally on $\bZ$. Throughout Section~\ref{sec:theory}, $\barY$, $\barY'$, $\tY$, and $\tY'$ denote copies of $Y$ such that, conditional on $(\bX,\bZ)$, (i) they are mutually independent, (ii) $\barY,\barY' \sim \mu_{Y \mid \bX,\bZ}$, and (iii) $\tY,\tY' \sim \mu_{Y \mid \bZ}$.

\begin{theorem}[CLT of $\tT_n$] \label{thm:CLT-main}
Assume Assumptions~\ref{assump_4.1}--\ref{assump_4.5}. Then, as $n \to \infty$, $\tT_n \ = \kappa_n\cdot (T_n -T)$ satisfies the CLT
\begin{eqnarray*}
\sqrt{n} \big\{\tT_n - \E(\tT_n) \big\}\conD N(0, \sigma^2),
\end{eqnarray*}
where
\begin{eqnarray}
\sigma^2 = \lim_{n \to \infty} n \var(\tT_n) = \sigma_1^2 + (1-T)^2 \cdot \sigma_2^2 - 2\cdot (1-T) \cdot \sigma_{1,2}.  \label{eq:sigma2_tTn}
\end{eqnarray}
The explicit expressions of $\sigma_1^2$, $\sigma_2^2$, and $\sigma_{1,2}$ are given as:
\begin{eqnarray*}
&&	\sigma_1^2 = \lim_{n \to \infty} n \var(\txi_{1,n}) = \lim_{n \to \infty} n \var\big(6^{-1} \cdot \xi_n(Y,\bZ) \big) = 36^{-1}\cdot \sigma^2_{\xi(Y,\bZ)}, \cr
&&	\sigma_2^2 = \lim_{n \to \infty} n \var(\txi_{2,n}) = \lim_{n \to \infty} n \var\big(6^{-1} \cdot \xi_n(Y,(\bX,\bZ)) \big) = 36^{-1}\cdot \sigma^2_{\xi(Y,(\bX,\bZ))},
\end{eqnarray*}
where $\sigma^2_{\xi(Y,\bZ)}$ and $\sigma^2_{\xi(Y,(\bX,\bZ))}$ are the limiting variances of the Azadkia--Chatterjee unconditional coefficients $\xi(Y,\bZ)$ and $\xi(Y,(\bX,\bZ))$, respectively. 
\footnote{The closed form of $\sigma^2_{\xi(Y,\bZ)}$ is provided in \eqref{eq:sigma2_xi}–\eqref{eq:T1-T7}, and $\sigma^2_{\xi(Y,(\bX,\bZ))}$ is defined in the same manner as $\sigma^2_{\xi(Y,\bZ)}$, with $\bZ$ replaced by $(\bX,\bZ)$.}  Furthermore,
\begin{eqnarray}
\sigma_{1,2} &=& \lim_{n \to \infty} n \cdot \cov\big(\txi_{1,n} \, , \, \txi_{2,n}\big) \cr
&=& 4 \, U_1 -2\, U_2 - U_3 +2 \, U_4 - U_5 + 2\, U_6 - U_7 + U_8 - 4 \, U_9,
\label{eq:sigma12}
\end{eqnarray}
with
\begin{align}
U_1 &=  \E\big\{F_Y(Y \wedge \barY) \cdot F_Y(Y \wedge\tY)\big\}, \quad & U_2 &= \E\big\{F_Y(Y \wedge \tY) \cdot F_Y(\barY \wedge\barY')\big\},  \cr
U_3 &= \E\big\{F_Y(Y \wedge \barY) \cdot F_Y(\tY \wedge\tY')\big\}, \quad & U_4 &= \E\big\{\Ind(Y_1 \leq Y_2 \wedge \tY_2)\cdot F_Y(Y_1 \wedge \barY_1)\big\},  \cr
U_5 &=  \E\big\{\Ind(Y_1 \leq Y_2 \wedge \tY_2)\cdot F_Y(\barY_1 \wedge \barY_1')\big\}, \quad & U_6 &=  \E\big\{\Ind(Y_1 \leq Y_2 \wedge \barY_2)\cdot F_Y(Y_1 \wedge \tY_1)\big\}, \cr
U_7 &=  \E\big\{\Ind(Y_1 \leq Y_2 \wedge \barY_2)\cdot F_Y(\tY_1 \wedge \tY_1')\big\}, \quad & U_8 &= \E\big\{F_Y(Y_1 \wedge \barY_1 \wedge Y_2 \wedge \tY_2)\big\}, \cr
U_9 &= \E\big\{F_Y(Y \wedge \barY)\big\}\cdot \E\big\{F_Y(Y \wedge \tY)\big\}. & \label{eq:U1-U9}
\end{align}
\end{theorem}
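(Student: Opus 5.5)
Since $\tT_n = \txi_{1,n} - (1-T)\,\txi_{2,n} - c_n$ with $c_n$ deterministic by \eqref{eq:tTn}, the statement reduces to a joint CLT for $(\txi_{1,n},\txi_{2,n})$. We then read off the variance as
\[
\sigma_1^2 + (1-T)^2\sigma_2^2 - 2(1-T)\sigma_{1,2}.
\]
By the Cramér--Wold device, it suffices to prove asymptotic normality of $a\,\txi_{1,n} + b\,\txi_{2,n}$ for fixed $(a,b)$, and to identify the three limits $\sigma_1^2$, $\sigma_2^2$, $\sigma_{1,2}$. (The statement labels $\sigma_1^2$ by $\xi(Y,\bZ)$ and $\sigma_2^2$ by $\xi(Y,(\bX,\bZ))$. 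Given \eqref{eq:txi_2}, the roles are the reverse. I would just note that $n\var(\txi_{1,n}) \to \sigma^2_{\xi(Y,(\bX,\bZ))}/36$ and $n\var(\txi_{2,n}) \to \sigma^2_{\xi(Y,\bZ)}/36$, both by Theorem~\ref{thm:var_xi}, applied with $\bZ$ and with $(\bX,\bZ)$ as conditioning vector; Assumption~\ref{assump_4.3} gives the needed densities.)

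\textbf{Normality.} I would follow the strategy of \cite{Lin_Han_2025_CLT}. First replace ranks by $nF_Y(Y_i)$: write
\[
R_i \wedge R_{M(i)} = \sum_{k} \Ind\big(Y_k \le Y_i \wedge Y_{M(i)}\big),
\]
so that $n^{-2}\sum_i R_i\wedge R_{M(i)}$ equals, up to $O(n^{-1})$, a Hájek-type projection. That projection is
\[
n^{-1}\sum_i F_Y\big(Y_i\wedge Y_{M(i)}\big) \;+\; n^{-1}\sum_k \Big\{ \tfrac1n\sum_i \Ind\big(Y_k\le Y_i\wedge Y_{M(i)}\big) - \text{its conditional mean}\Big\}.
\]
The analogous decomposition holds with $N(i)$ in place of $M(i)$. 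The combination $a\,\txi_{1,n}+b\,\txi_{2,n}$ then becomes a sum of locally dependent terms on the union of the two NN graphs, plus a remainder whose variance is $o(n^{-1})$. Both graphs have bounded in-degree \citep{MR682809}, so the dependency neighbourhoods are uniformly bounded. I would then apply the normal approximation for graph-based statistics used in \cite{Lin_Han_2025_CLT} (Chatterjee's NN-graph CLT or a Stein dependency-graph bound), conditional on the covariates and then unconditionally. The lower bound on the variance needed for a nondegenerate limit comes from Assumption~\ref{assump_4.4}; if $\sigma^2=0$ the conclusion is trivially the degenerate normal.

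\textbf{Covariance and main obstacle.} The hard step is computing
\[
\sigma_{1,2}=\lim_n n\cov(\txi_{1,n},\txi_{2,n}).
\]
Expanding the covariance of the projected statistics produces sums over pairs $(i,j)$, classified by how the edges $i\to M(i)$ and $j\to N(j)$ interact: $j=i$, $N(j)=M(i)$, $j=M(i)$ or $i=N(j)$, and disjoint configurations. The disjoint configurations contribute the $-4U_9$ centering term and the $U_4,\dots,U_8$ terms coming from the linear rank-projection parts. The interaction terms yield $U_1,U_2,U_3$. To obtain limits I would use Assumption~\ref{assump_4.5} together with the fact that NN distances vanish. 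Conditionally on $(\bX_i,\bZ_i)$, this gives $Y_{M(i)} \Rightarrow \barY$ and $Y_{N(i)}\Rightarrow \tY$, asymptotically independent of $Y_i$ given the covariates. The cross-graph counting is the key input. We need the limits of conditional expected numbers of configurations such as "$j\to k$ in $\calG^\bZ$, $i\to k$ in $\calG^{(\bX,\bZ)}$"; Lemma~\ref{lemma:two_NNGs} shows these equal $1$, with no dimension constant. This is why, unlike \eqref{eq:sigma2_xi}, no $\mathfrak{q}$ or $\mathfrak{o}$ appears in \eqref{eq:sigma12}.

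Mutual-type cross configurations would also need handling: $M(N(i))=i$, or $N(i)=M(i)$. I would try to show these have either asymptotically identical integrands, so they merge into the $U$-terms, or limits of $1$ by the same subsample argument, since a $\bZ$-neighbour is generically not an $(\bX,\bZ)$-neighbour. Verifying this bookkeeping, with the correct coefficients $4,-2,-1,2,\dots$, is where I expect most of the work. I would check the result against the special case $\bX$ independent of everything, where $\sigma_{1,2}$ should match the covariance computed in \cite{Shi_Drton_Han_2024_Bernoulli}.
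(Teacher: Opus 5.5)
Your architecture matches the paper's. You use the H\'ajek representation of \citet{Lin_Han_2025_CLT} and Chatterjee's normal approximation \citep{MR2435859} on the union of the two NN interaction graphs. The paper applies the latter unconditionally; your ``conditional on covariates'' route is unnecessary and would need a separate CLT for the conditional mean. The covariance is then computed configuration by configuration, with Lemma~\ref{lemma:two_NNGs} as the cross-graph input. You are also right that the statement swaps the labels: $\txi_{1,n}$ is built from $M(i)$, so $n\var(\txi_{1,n})\to\sigma^2_{\xi(Y,(\bX,\bZ))}/36$.

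\textbf{The gap: computing $\sigma_{1,2}$.} This is the one genuinely new ingredient, and your sketch lacks the device that makes it computable. In an unconditional expansion over pairs $(i,j)$, the ``disjoint'' configurations are not uncorrelated. Through the random graphs each carries an $O(n^{-1})$ covariance, and there are order $n^2$ of them. Together they contribute $O(1)$ to $n\cov$, so they cannot simply be read off as ``$-4U_9$ plus $U_4,\dots,U_8$''.

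The paper instead conditions on $(\bfX,\bfZ)$ and uses the law of total covariance.
\begin{itemize}
\item In $\E\cov(\cdot\mid\bfX,\bfZ)$, disjoint configurations vanish exactly, because the $Y_i$ are conditionally independent. Index the $N$-term by $i$ and the $M$-term by $j$. The surviving configurations $i=j$, $j=N(i)$, $i=M(j)$ and $N(i)=M(j)$ each contribute $\E\cov[F_Y(Y\wedge\tY),F_Y(Y\wedge\barY)\mid\bX,\bZ]=U_1-U_2$.
\item For the between part $\cov(\E[\cdot\mid\bfX,\bfZ],\E[\cdot\mid\bfX,\bfZ])$, Lemma~C.1 of \citet{Lin_Han_2025_CLT} replaces $\sum_i\E[F_Y(Y_i\wedge Y_{M(i)})\mid\bfX,\bfZ]$ by the i.i.d.\ sum $\sum_i\phi(\bX_i,\bZ_i)$, up to $o(n)$ variance.
\item One then conditions again on $\bfZ$. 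The $\bZ$-graph does not see $\bfX$, so disjoint terms decouple again. Applying Lemma~C.1 to the $\bZ$-graph gives $2(U_2-U_3)+(U_3-U_9)$.
\item The cross terms with $h_1,h_2$ are handled the same way and produce the $U_4$--$U_8$ terms and the remaining $U_9$'s.
\end{itemize}
Without this step, or some other way of approximating the conditional-mean graph sums by i.i.d.\ sums, the long-range covariances are uncontrolled and the coefficients cannot be obtained.

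\textbf{Smaller corrections.}
\begin{itemize}
\item The mutual-type configurations $N(i)=M(i)$, $M(N(i))=i$ and $N(M(i))=i$ neither merge into the $U$-terms nor have limit $1$. Their probabilities vanish \citep[Lemma~7.4]{Shi_Drton_Han_2024_Bernoulli}.
\item The configuration whose expected count tends to $1$ is the shared target $N(j)=M(i)$. Because the integrand depends on location, you need the \emph{conditional} form of Lemma~\ref{lemma:two_NNGs} together with Assumption~\ref{assump_4.5}. These show that on $\{M(1)=N(2)\}$ one has $\tmu_{\bZ_2}(C)\approx\tmu_{\bZ_1}(C)$ (Lemmas~\ref{lemA:aux-1-6} and \ref{lemA:aux-2-8}). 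Note that $\bX_2$ is not close to $\bX_1$ on this event, which is exactly why $Y_2$ plays the role of $\tY$.
\item Assumption~\ref{assump_4.4} gives $\kappa>0$, not $\sigma^2>0$; for example, $\sigma^2=0$ when $T=1$. Your case split already covers this.
\end{itemize}
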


By Slutsky's theorem, Theorem~\ref{thm:CLT-main} directly yields the CLT for $T_n$ and $T_n^{\mathrm{bc}}$, stated in Corollary~\ref{cor: CLT-Tn} below.

\begin{corollary}[CLT of $T_n$ and $T_n^{\mathrm{bc}}$] \label{cor: CLT-Tn}
Assume Assumptions~\ref{assump_4.1}--\ref{assump_4.5}. Let $L_n =\E(\tT_n)$ denote the bias of $\tT_n$.
\begin{enumerate}[label=(\roman*)]
\item If $\sqrt{n} L_n \to 0$ as $n \to \infty$, then
\begin{eqnarray}\label{eq:han-Tn-var}
\sqrt{n} \big(T_n - T \big)\conD N(0, \sigma^2/\kappa^2).
\end{eqnarray}
\item Assume that the bias-corrected correlation coefficient $T_n^{\mathrm{bc}}  =
\big(\tau_n - 	\hat{L}^{(\tau)}_n\big)/\big(\kappa_n - 	\hat{L}^{(\kappa)}_n\big)$
satisfies \eqref{eq:step2-bias}. Then, as $n \to \infty$,
\begin{eqnarray}\label{eq:han-Tn-var2}
\sqrt{n} \big(T_n^{\mathrm{bc}}   - T \big)\conD N(0, \sigma^2/\kappa^2).
\end{eqnarray}
\end{enumerate}
\end{corollary}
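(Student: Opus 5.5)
The corollary follows from Theorem~\ref{thm:CLT-main} together with the almost sure convergence $\kappa_n \conas \kappa>0$ from \cite{azadkia2019simple} (which uses Assumption~\ref{assump_4.4}), via Slutsky's theorem. The plan is to handle part (i) by writing $T_n - T = \tT_n/\kappa_n$ as in \eqref{eq:T_n-T}, and part (ii) by reducing $T_n^{\mathrm{bc}}-T$ to the same structure with a negligible perturbation.

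For part (i), write
\[
\sqrt{n}\,\tT_n = \sqrt{n}\big\{\tT_n - \E(\tT_n)\big\} + \sqrt{n}L_n .
\]
The first term converges in distribution to $N(0,\sigma^2)$ by Theorem~\ref{thm:CLT-main}, and the second tends to $0$ by hypothesis, so $\sqrt{n}\tT_n \conD N(0,\sigma^2)$. Since $\kappa_n \conP \kappa>0$, Slutsky's theorem gives
\[
\sqrt{n}(T_n-T)=\sqrt{n}\,\tT_n/\kappa_n \conD N(0,\sigma^2/\kappa^2).
\]

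For part (ii), set $\tau_n^{\mathrm{bc}} = \tau_n - \hat L^{(\tau)}_n$ and $\kappa_n^{\mathrm{bc}} = \kappa_n - \hat L^{(\kappa)}_n$. Then
\[
T_n^{\mathrm{bc}} - T = \frac{\tau_n^{\mathrm{bc}} - T\kappa_n^{\mathrm{bc}}}{\kappa_n^{\mathrm{bc}}},
\qquad
\tau_n^{\mathrm{bc}} - T\kappa_n^{\mathrm{bc}} = \tT_n - \big(\hat L^{(\tau)}_n - T\hat L^{(\kappa)}_n\big).
\]
Substituting $\hat L^{(\tau)}_n = L^{(\tau)}_n + o_\P(n^{-1/2})$ and $\hat L^{(\kappa)}_n = L^{(\kappa)}_n + o_\P(n^{-1/2})$ from \eqref{eq:step2-bias} gives
\[
\hat L^{(\tau)}_n - T\hat L^{(\kappa)}_n = L^{(\tau)}_n - T L^{(\kappa)}_n + o_\P(n^{-1/2}).
\]
Since $\tau = T\kappa$, we have $L^{(\tau)}_n - T L^{(\kappa)}_n = \E(\tau_n) - T\,\E(\kappa_n) = \E(\tT_n)$. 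Hence
\[
\sqrt{n}\big(\tau_n^{\mathrm{bc}} - T\kappa_n^{\mathrm{bc}}\big) = \sqrt{n}\big\{\tT_n - \E(\tT_n)\big\} + o_\P(1) \conD N(0,\sigma^2)
\]
by Theorem~\ref{thm:CLT-main}.

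It remains to show $\kappa_n^{\mathrm{bc}} \conP \kappa$, which then finishes part (ii) by Slutsky's theorem. This needs $\hat L^{(\kappa)}_n \conP 0$, i.e.\ $L^{(\kappa)}_n \to 0$. This is the one step that requires care. The plan is to argue that $\kappa_n$ is bounded (it lies in $[0,1]$), so bounded convergence applied to $\kappa_n \conas \kappa$ gives $\E(\kappa_n)\to\kappa$, that is, $L^{(\kappa)}_n\to 0$. Combined with $\hat L^{(\kappa)}_n - L^{(\kappa)}_n = o_\P(n^{-1/2})$, this yields $\hat L^{(\kappa)}_n \conP 0$ and hence $\kappa_n^{\mathrm{bc}} \conP \kappa > 0$.
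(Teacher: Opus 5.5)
Your proposal is correct and follows the paper's proof essentially step for step: part (i) via $T_n - T = \tT_n/\kappa_n$ and Slutsky's theorem, and part (ii) via the rewriting $T_n^{\mathrm{bc}}-T = \{\tT_n - (\hat L^{(\tau)}_n - T\hat L^{(\kappa)}_n)\}/(\kappa_n - \hat L^{(\kappa)}_n)$ together with the identity $L^{(\tau)}_n - T L^{(\kappa)}_n = \E(\tT_n)$. The only addition is your justification of $L^{(\kappa)}_n \to 0$, which the paper simply asserts. Your bounded-convergence argument ($0 \le \kappa_n \le 1$ and $\kappa_n \conas \kappa$) supplies this under the corollary's own assumptions, without needing the extra regularity conditions behind the paper's bias-rate theorem.
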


\begin{remark}
As will be shown in Theorem~\ref{thm:bias_correct} below, the condition $p+q \leq 3$ could imply $\sqrt{n}L_n \to 0$. In this case, $\sqrt{n}(T_n-T)$ is asymptotically normal, so that $T_n$ converges to $T$ at the parametric rate $n^{-1/2}$ without the need for bias correction.
\end{remark}

We next derive the CLT under $H_0$. To this end, only the limiting distribution of $\tau_n$ is needed.

\begin{corollary}[CLT of $\tau_n$ under conditional independence]
\label{cor: CLT-H0}
Assume Assumptions~\ref{assump_4.1}--\ref{assump_4.5}. Assume further that $Y$ is conditionally independent of $\bX$ given $\bZ$.
Then, as $n \to \infty$, $\tau_n = \tT_n$ satisfies the CLT
\begin{eqnarray*}
\sqrt{n} \big\{\tau_n - \E(\tau_n) \big\}\conD N(0, \sigma_0^2),
\end{eqnarray*}
where $\sigma_0^2$ is strictly positive and admits the simplified expression
\begin{eqnarray}
\sigma_0^2 &=& (2+\mathfrak{q}_q+ \mathfrak{q}_{p+q}) \cdot\E\big\{ F_Y^2(Y \wedge \tY)\big\}
\cr
&&+	(\mathfrak{o}_q+ \mathfrak{o}_{p+q} - 2\mathfrak{q}_q- 2\mathfrak{q}_{p+q} -4) \cdot \E\big\{F_Y(Y \wedge \tY) \cdot F_Y(Y \wedge\tY')\big\}
\cr
&&+(2+\mathfrak{q}_q
+ \mathfrak{q}_{p+q}-\mathfrak{o}_q- \mathfrak{o}_{p+q}) \cdot \E\big\{F_Y(Y \wedge \tY) \cdot F_Y(\tY' \wedge\tY'')\big\}, \label{eq:sigma2_H0}
\end{eqnarray}
where, as before, $\mathfrak{q}_d$ and $\mathfrak{o}_d$ are defined in \eqref{eq:q_d} and \eqref{eq:o_d}, respectively.
\end{corollary}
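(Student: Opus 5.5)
Under $H_0$ we have $T=0$, so by \eqref{eq:tTn} $\tau_n=\tT_n=\txi_{1,n}-\txi_{2,n}$ up to a deterministic constant. Asymptotic normality of $\sqrt n\{\tau_n-\E(\tau_n)\}$ therefore follows directly from Theorem~\ref{thm:CLT-main}, with
\[
\sigma_0^2=\sigma^2\big|_{T=0}=\sigma_1^2+\sigma_2^2-2\sigma_{1,2}.
\]
What remains is to simplify this expression using conditional independence and to show it is strictly positive.

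\textbf{Simplification.} The key observation is that under $H_0$, $\mu_{Y\mid\bX,\bZ}=\mu_{Y\mid\bZ}$. Hence the copies $\barY,\barY'$ and $\tY,\tY'$ are all conditionally i.i.d.\ given $\bZ$, so every $\barY$ may be replaced by a $\tY$-type copy.

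We then evaluate each of $U_1,\dots,U_9$ in \eqref{eq:U1-U9} and match it with the $T_1,\dots,T_7$ of \eqref{eq:T1-T7}:
\begin{align*}
U_1&=T_2, & U_2&=U_3=T_3, & U_4&=U_6=T_4,\\
U_5&=U_7=T_5, & U_8&=T_6, & U_9&=T_7.
\end{align*}
This gives $\sigma_{1,2}=4T_2-2T_3+4T_4-2T_5+T_6-4T_7$.

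Next consider $\sigma_2^2$, which by Theorem~\ref{thm:var_xi} is built from the $T_k$ computed with $(\bX,\bZ)$ in place of $\bZ$. Under $H_0$ the law of $(Y,\tY)$ obtained by conditioning on $(\bX,\bZ)$ coincides with the one obtained by conditioning on $\bZ$. Checking each $T_k$, every one depends only on these conditional laws. Consequently $\sigma_2^2$ equals \eqref{eq:sigma2_xi}$/36$ with the same $T_k$ but with constants $\mathfrak q_{p+q},\mathfrak o_{p+q}$ in place of $\mathfrak q_q,\mathfrak o_q$.

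Now add $\sigma_1^2+\sigma_2^2-2\sigma_{1,2}$. The terms $4T_4-2T_5+T_6-4T_7$ appear with total coefficient $1+1-2=0$ and cancel. The surviving coefficients are:
\begin{align*}
T_1&:\ 2+\mathfrak q_q+\mathfrak q_{p+q},\\
T_2&:\ 4-2\mathfrak q_q-2\mathfrak q_{p+q}+\mathfrak o_q+\mathfrak o_{p+q}-8,\\
T_3&:\ -(4-\mathfrak q_q-\mathfrak q_{p+q}+\mathfrak o_q+\mathfrak o_{p+q})+4.
\end{align*}
These are exactly the coefficients in \eqref{eq:sigma2_H0}. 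This is a routine bookkeeping step, and I would double-check each $U_k$ identification, for instance that $U_2$'s $(Y\wedge\tY)$ and $(\barY\wedge\barY')$ factors become conditionally independent pairs matching $T_3$.

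\textbf{Strict positivity (main obstacle).} Write $a=2+\mathfrak q_q+\mathfrak q_{p+q}$ and $b=\mathfrak o_q+\mathfrak o_{p+q}$, and recall that the three coefficients sum to $0$. Then
\[
\sigma_0^2=a(T_1-T_2)+(b-a)(T_2-T_3).
\]
Let $g(\bZ,y)$ denote the function $\P(Y\wedge\tY\le\cdot\mid\bZ)$ composed with $F_Y$. Conditioning on $(\bZ,Y)$ and applying Jensen's inequality gives $T_1\ge T_2$. Further, $T_2-T_3=\E\,\var\{\E(F_Y(Y\wedge\tY)\mid \bZ,Y)\mid\bZ\}\ge0$.

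Since $b-a$ can be negative (Table~\ref{table1}), I would bound $T_1-T_2\ge T_2-T_3$ via the variance decomposition and use $2a-b>0$. From the table, $\mathfrak o_d<1<1+\mathfrak q_d$, so $b<a$ and $2a-b\ge a>0$. This gives
\[
\sigma_0^2\ge(2a-b)(T_2-T_3)+a\big[(T_1-T_2)-(T_2-T_3)\big].
\]
Strictness would then follow from Assumption~\ref{assump_4.4}: if $Y$ is not a function of $\bZ$, the conditional variance $T_1-T_2=\E\var\{F_Y(Y\wedge\tY)\mid\bZ,Y\}$ is positive. Alternatively, one can simply invoke Theorem~\ref{thm:CLT-main} together with the positivity argument of Proposition~\ref{prop:Lin_Han}(i) in the unconditional case.

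The residual inequality $T_1-T_2\ge T_2-T_3$ is the delicate point. If it fails, I would fall back on proving $\sigma_0^2=\lim n\var(\tau_n)>0$ directly. The approach there would be to exhibit, via a Hájek-type projection, a nondegenerate first-order component of $\tau_n$ driven by the randomness of $\tY$ given $\bZ$.
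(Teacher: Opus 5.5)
\textbf{The genuine gap is strict positivity of $\sigma_0^2$.} Write $a=2+\mathfrak{q}_q+\mathfrak{q}_{p+q}$ and $b=\mathfrak{o}_q+\mathfrak{o}_{p+q}$. The exact identity is
\[
\sigma_0^2=a\big\{(T_1-T_2)-(T_2-T_3)\big\}+b\,(T_2-T_3).
\]
Your displayed lower bound has $(2a-b)$ where $b$ should be, so it is false whenever $b<a$ and $T_2>T_3$. Everything therefore hinges on $T_1-T_2\ge T_2-T_3$, i.e.\ $\E\,\var\{F_Y(Y\wedge\tY)\mid\bZ\}\ge 2\,\E\,\cov\{F_Y(Y\wedge\tY),F_Y(Y\wedge\tY')\mid\bZ\}$. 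This is precisely the step you leave open, and neither fallback closes it:
\begin{itemize}
\item Proposition~\ref{prop:Lin_Han}(i) gives positivity of $\sigma_1^2$ and $\sigma_2^2$ separately. That says nothing about the limiting variance of the difference $\txi_{1,n}-\txi_{2,n}$.
\item The H\'ajek-projection remark only restates the claim.
\item The quantity you name as the source of strictness, $T_1-T_2>0$, is not what the identity needs.
\end{itemize}
The missing ingredient is the paper's Lemma~\ref{lemA:var_cov_ineq}. Take $U=F_Y(Y)$ and $V=F_Y(\tY)$, i.i.d.\ given $\bZ$, so that $F_Y(Y\wedge\tY)=U\wedge V$, and let $G(t)=\P(U\ge t\mid\bZ)$. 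Then
\[
\E\{\var(U\wedge V\mid U,\bZ)\mid\bZ\}-\var\{\E(U\wedge V\mid U,\bZ)\mid\bZ\}=\int_0^1\!\!\int_0^1\big\{G(s\vee t)-G(s)G(t)\big\}^2\,\d s\,\d t .
\]
This is strictly positive whenever $\mu_{Y\mid\bZ}$ is nondegenerate; equivalently, it is the squared norm of the degenerate Hoeffding component of the non-additive kernel $u\wedge v$. Assumption~\ref{assump_4.4} makes this hold on a $\bZ$-set of positive probability. Combined with $a,b>0$ and $T_2-T_3\ge 0$, this gives $\sigma_0^2>0$, with no appeal to Table~\ref{table1}, which in any case only has Monte Carlo values for $d\le 10$.

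\textbf{The variance formula.} Your route here is different from the paper's but legitimate. You specialize the 23-term expression of Theorem~\ref{thm:CLT-main} through $U_k\mapsto T_k$. The paper instead works directly with the $H_0$-reduced H\'ajek representation, where $T=0$ and $h_1=h_2$ remove the $h$-terms. It obtains $\sigma_0^2=a\,\E\var\{F_Y(Y\wedge\tY)\mid\bZ\}+(b-2a)\,\E\cov\{F_Y(Y\wedge\tY),F_Y(Y\wedge\tY')\mid\bZ\}$, with the $\var\{\E(\cdot\mid\bZ)\}$ terms cancelling. That makes the disappearance of $T_4,\dots,T_7$ transparent and puts the variance in exactly the form the positivity lemma needs.

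Your bookkeeping has one slip: $-2U_2-U_3=-3T_3$, so $\sigma_{1,2}=4T_2-3T_3+4T_4-2T_5+T_6-4T_7$. With your $-2T_3$, the $T_3$-coefficient comes out as $\mathfrak{q}_q+\mathfrak{q}_{p+q}-\mathfrak{o}_q-\mathfrak{o}_{p+q}$. That is off by $2$ from \eqref{eq:sigma2_H0}, and the three coefficients would then not sum to zero, contrary to what you use later. With $-3T_3$ everything matches.
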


\subsection{Estimation of the limiting variance} \label{sec:est_var}

We begin with the general case. In view of Theorem~\ref{thm:CLT-main}, we can construct a consistent estimator of $\sigma^2$ in a manner analogous to that of Theorem~\ref{thm:est_var_xi}. 

\begin{theorem}[Consistent estimator of limiting variance $\sigma^2$] \label{thm: est_var-main}
Assume Assumptions~\ref{assump_4.1}--\ref{assump_4.5}. Then $\sigma^2$ in Theorem~\ref{thm:CLT-main} admits the consistent estimator
\begin{eqnarray}
\hsigma^2 = \hsigma_1^2 + (1-T_n)^2 \cdot \hsigma_2^2 - 2\cdot (1-T_n) \cdot \hsigma_{1,2}, \label{eq:sigma2_est}
\end{eqnarray}
where the explicit expressions of $\hsigma_1^2$, $\hsigma_2^2$, and $\hsigma_{1,2}$ are given as follows:
\begin{eqnarray*}
\hsigma_1^2 = 36^{-1}\cdot\hsigma^2_{\xi(Y,\bZ)},  \quad \text{and} \quad \hsigma_2^2  = 36^{-1}\cdot\hsigma^2_{\xi(Y,(\bX,\bZ))}.
\end{eqnarray*}
Here $\hsigma^2_{\xi(Y,\bZ)}$ and $\hsigma^2_{\xi(Y,(\bX,\bZ))}$ are consistent estimators of the limiting variances of the Azadkia--Chatterjee unconditional correlation coefficients $\xi(Y,\bZ)$ and $\xi(Y,(\bX,\bZ))$, respectively. 
\footnote{Here the form of $\hsigma^2_{\xi(Y,\bZ)}$ is provided in \eqref{eq:est_var_cha} in Theorem~\ref{thm:est_var_xi}, and $\hsigma^2_{\xi(Y,(\bX,\bZ))}$ is defined in the same manner as $\hsigma^2_{\xi(Y,\bZ)}$, with $\bZ$ replaced by $(\bX,\bZ)$.}
Furthermore, we introduce
\begin{eqnarray*}
\hsigma_{1,2} = 4 \, \hatU_1 -2\, \hatU_2 - \hatU_3 +2 \, \hatU_4 - \hatU_5 + 2\, \hatU_6 - \hatU_7 + \hatU_8 - 4 \, \hatU_9,
\end{eqnarray*}
where
\begin{align*}
\hatU_1 &=  \frac{1}{n^3} \sum_{i=1}^n \big(R_i \wedge R_{M(i)} \big) \big( R_i \wedge R_{N(i)}\big),  &
\hatU_2 &= \frac{1}{n^3} \sum_{i=1}^n \big(R_i \wedge R_{N(i)} \big) \big( R_{M(i)} \wedge R_{M_2(i)}\big),  \cr
\hatU_3 &= \frac{1}{n^3} \sum_{i=1}^n \big(R_i \wedge R_{M(i)} \big) \big( R_{N(i)} \wedge R_{N_2(i)}\big), \qquad &
\hatU_4 &= \frac{1}{n^3}
\sum_{1 \leq i \neq j \leq n}
\Ind\big(R_i \leq R_j \wedge R_{N(j)}\big) \big(R_i \wedge R_{M(i)}\big),  \cr
\hatU_5 &=   \frac{1}{n^3} \sum_{1 \leq i \neq j \leq n} \Ind\big(R_i \leq R_j \wedge R_{N(j)}\big) \big(R_{M(i)} \wedge R_{M_2(i)}\big), \hspace{-4.5em} \cr
\hatU_6 &=   \frac{1}{n^3} \sum_{1 \leq i \neq j \leq n} \Ind\big(R_i \leq R_j \wedge R_{M(j)}\big) \big(R_i \wedge R_{N(i)}\big), \hspace{-4.5em} \cr
\hatU_7 &=   \frac{1}{n^3} \sum_{1 \leq i \neq j \leq n} \Ind\big(R_i \leq R_j \wedge R_{M(j)}\big) \big(R_{N(i)} \wedge R_{N_2(i)}\big), \hspace{-4.5em} \cr
\hatU_8 &= \frac{1}{n^3} \sum_{1 \leq i \neq j \leq n}  R_i \wedge R_{M(i)} \wedge R_j \wedge R_{N(j)},  &
\hatU_9 &= \Big(\frac{1}{n^2}\sum_{i=1}^n R_i \wedge R_{M(i)}\Big) \Big(\frac{1}{n^2}\sum_{i=1}^n R_i \wedge R_{N(i)}\Big), \label{eq:hatT1-hatT7+}
\end{align*}
with $M_2(i)$ and $N_2(i)$ denoting the indices of the second NNs of $(\bX_i,\bZ_i)$ and $\bZ_i$, respectively.
\end{theorem}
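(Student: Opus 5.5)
The plan is to reduce consistency of $\hsigma^2$ to consistency of each building block and then apply the continuous mapping theorem. Since $T_n \conas T$ (Azadkia and Chatterjee), it suffices to show $\hsigma_1^2 \conP \sigma_1^2$, $\hsigma_2^2\conP\sigma_2^2$ and $\hsigma_{1,2}\conP\sigma_{1,2}$. The first two follow from Theorem~\ref{thm:est_var_xi}, applied once with conditioning vector $\bZ$ and once with $(\bX,\bZ)$. Assumption~\ref{assump_4.3} gives the continuous-density hypothesis for $(\bX,\bZ)$. For $\bZ$ alone it yields an absolutely continuous marginal, and I would check that the proof of Theorem~\ref{thm:est_var_xi} only uses the density near typical points, so it goes through. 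The remaining work is to show $\hatU_k\conP U_k$ for $k=1,\dots,9$, and then combine these linearly as in \eqref{eq:sigma12}.

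For each $\hatU_k$ the template is the standard one: replace ranks by $nF_n(Y)$ and then by $nF_Y(Y)$, show that the expectation converges to $U_k$, and show that the variance tends to $0$.

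The first reduction costs $O(n^{-1/2})$ uniformly by Glivenko--Cantelli, since every summand is a Lipschitz function of the normalized ranks (or an indicator of rank comparisons, handled by continuity of $F_Y$).

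For the expectations, use the nearest-neighbour coupling argument of \cite{azadkia2019simple} and \cite{Lin_Han_2025_CLT}. $\|(\bX_{M(i)},\bZ_{M(i)})-(\bX_i,\bZ_i)\|\to0$ and $\|\bZ_{N(i)}-\bZ_i\|\to0$ in probability, as do the second neighbours $M_2,N_2$. Together with Assumption~\ref{assump_4.5}, the conditional law of $Y_{M(i)}$ (resp. $Y_{M_2(i)}$) given the sample configuration approaches $\mu_{Y\mid \bX_i,\bZ_i}$, and that of $Y_{N(i)}, Y_{N_2(i)}$ approaches $\mu_{Y\mid \bZ_i}$. These neighbours are asymptotically conditionally independent because they are distinct sample points.

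The one coupling that needs care is a joint term such as $\hatU_1$, where $Y_{M(i)}$ and $Y_{N(i)}$ may coincide. I would argue that $\P\{M(i)=N(i)\}$ need not vanish. However, whether $M(i)=N(i)$ or not, the limiting pair $(\barY,\tY)$ in $U_1$ is defined as conditionally independent given $(\bX,\bZ)$. So I must check that the definition of $U_1$ matches the intended coupling.

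Actually the cleaner route is the following. $\hatU_1$ should estimate $\E\{F_Y(Y\wedge Y_{M})F_Y(Y\wedge Y_N)\}$, which converges to the corresponding limit irrespective of coincidence, because on $\{M(i)=N(i)\}$ we have $\tY=\barY$ distributionally only if $\mu_{Y|\bX,\bZ}=\mu_{Y|\bZ}$. This is the main obstacle, and I would resolve it by showing $\P\{M(i)=N(i)\}\to0$ when $p\ge1$. The $\bZ$-nearest neighbour is at distance of order $n^{-1/q}$, while a point must be within order $n^{-1/(p+q)}\gg n^{-1/q}$ in $\bZ$-distance to be the joint nearest neighbour. Since $\bX$ has continuous conditional density, the $\bX$-coordinate of $N(i)$ is at distance of order one from $\bX_i$, so it is not the joint nearest neighbour.

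For the double sums $\hatU_4$–$\hatU_8$, the $j$-index averages over an asymptotically independent copy $(Y_2,\tY_2)$ or $(Y_2,\barY_2)$. Dependence between $i$ and $j$ arises only when they are graph-close, which is $O(1/n)$ of pairs. $\hatU_9$ is a product of two already-consistent averages (\citealp{azadkia2019simple}).

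For the variance bounds, use the Efron--Stein inequality. Changing one observation alters at most a bounded number of $M$, $M_2$, $N$, $N_2$ edges, because the maximum in-degree of a $k$-NN graph is bounded (\citealp{MR682809}), and it changes each rank by $1$. Each normalized summand therefore changes by $O(1/n)$ per affected term, and each double sum changes by $O(1/n)$ overall. Efron--Stein then gives variance $O(1/n)\to0$. Combining with the expectation limits yields $\hatU_k\conP U_k$, hence $\hsigma_{1,2}\conP\sigma_{1,2}$, and the theorem follows.
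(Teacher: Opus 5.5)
Your skeleton matches the paper's: $T_n\conas T$ plus continuous mapping, Theorem~\ref{thm:est_var_xi} for $\hsigma_1^2,\hsigma_2^2$, and, for each $\hatU_k$, replacing ranks by $F_Y$, convergence of the mean, and an Efron--Stein bound via bounded NN in-degree. The paper simply defers this last part to the proof of Theorem~\ref{thm:est_var_xi}, i.e.\ Lemmas 2.10--2.11 of \cite{Lin_Han_2025_CLT}. Those steps are fine, as is $\P\{M(i)=N(i)\}\to0$ (\citealp[Lemma~7.4]{Shi_Drton_Han_2024_Bernoulli}).

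The gap is in identifying the limits of the mixed terms $\hatU_1,\dots,\hatU_8$, which is exactly where this estimator differs from the unconditional one. To pin down $M(i)$, $M_2(i)$ or $M(j)$ you must condition on $(\bfX,\bfZ)$. Given that, $Y_{N(i)}$ has law $\mu_{Y\mid \bX_{N(i)},\bZ_{N(i)}}$. As you observe yourself, $\bX_{N(i)}$ stays at distance of order one from $\bX_i$, so this law does \emph{not} approach $\mu_{Y\mid\bZ_i}$. Your claim that the conditional laws of $Y_{N(i)},Y_{N_2(i)}$ given the configuration tend to $\mu_{Y\mid\bZ_i}$ is therefore false. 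The appeal to \emph{distinct sample points} does not produce the joint limits $(Y,\barY,\tY)$, $(Y,\barY,\tY,\tY')$ appearing in $U_1,\dots,U_8$. If you condition only on $\bfZ$, the $N$-part becomes correct, but $M(i)$ is then a data-dependent index that depends on $\bX_{N(i)}$ itself. The claimed asymptotic conditional independence is again unjustified.

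The missing idea is the paper's elimination step, used in Lemmas~\ref{lemA:lim1}, \ref{lemA:lim3}, \ref{lemA:lim6} and \ref{lemA:lim8}. On $\{N(i)\notin\{M(i),M_2(i)\}\}$, condition on $(\bX_i,\bZ_i)$, $\bZ_{N(i)}$ and the $M$-neighbours. Then $\bX_{N(i)}$ is asymptotically distributed as $\mu_{\bX\mid\bZ=\bZ_{N(i)}}$, because the event $\{M(i)=k\}$ only excludes a ball of vanishing radius around $(\bX_i,\bZ_i)$. By Fubini it can be integrated out, turning $\mu_{Y\mid \bX_{N(i)},\bZ_{N(i)}}$ into $\mu_{Y\mid\bZ_{N(i)}}$. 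Only then do $\bZ_{N(i)}\to\bZ_i$, $(\bX_{M(i)},\bZ_{M(i)})\to(\bX_i,\bZ_i)$ and a Lusin-type argument give $U_k$. For the last step use Lemma~11.7 of \cite{azadkia2019simple} together with the simple-function approximations of Lemmas~\ref{lemA:aux-2-1}--\ref{lemA:aux-2-7}. The same must be done for $\bX_{N_2(i)}$ in $\hatU_3,\hatU_7$; there the independence of $\tY,\tY'$ rests on the conditional independence of $\bX_{N(i)}$ and $\bX_{N_2(i)}$. It must also be done for $\bX_{N(j)}$ in $\hatU_4,\hatU_5,\hatU_8$.

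Separately, there is an issue you share with the paper's own proof: you match $\hsigma_1^2$ to $\sigma_1^2$ by label. But by \eqref{eq:tTn}, $\sigma^2=\lim n\var(\tT_n)$ puts coefficient $1$ on $\lim n\var(\txi_{1,n})$, and by \eqref{eq:txi_2} $\txi_{1,n}$ is the $M$-based statistic $6^{-1}\xi_n(Y,(\bX,\bZ))$. The identification $\sigma_1^2=36^{-1}\sigma^2_{\xi(Y,\bZ)}$ in Theorem~\ref{thm:CLT-main} swaps the two unconditional variances. As stated, $\hsigma^2$ converges to $\lim n\var(\tT_n)$ only when $T=0$ or $\sigma^2_{\xi(Y,\bZ)}=\sigma^2_{\xi(Y,(\bX,\bZ))}$. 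For consistency in general, $\hsigma_1^2$ and $\hsigma_2^2$ must be interchanged.
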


According to Theorem~\ref{thm:est_var_xi} and Proposition~\ref{prop:nlogn}, the computational complexities of $\hsigma_1^2$ and $\hsigma_2^2$ are both of order $O(n \log n)$. For $\hsigma_{1,2}$, an analysis similar to that in Proposition \ref{prop:nlogn} shows that its computational complexity is also of order $O(n \log n)$. Indeed, the terms $\hat U_4$--$\hat U_8$ involved in $\hsigma_{1,2}$ can be computed via fast algorithms analogous to Algorithm~\ref{alg1}, so that each term can be evaluated in $O(n \log n)$ time. Consequently, the overall computational complexity of $\hsigma^2$ is of order $O(n \log n)$.

\begin{proposition} \label{prop:nlogn_new}
The estimator $\hsigma^2$ in \eqref{eq:sigma2_est} can be computed in $O(n \log n)$ time.
In particular, the terms $\hat U_4$--$\hat U_8$ can be computed in $O(n \log n)$ time via fast algorithms analogous to Algorithm~\ref{alg1}.
\end{proposition}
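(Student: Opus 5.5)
The estimator $\hsigma^2$ in \eqref{eq:sigma2_est} is a fixed algebraic combination of $T_n$, $\hsigma^2_{\xi(Y,\bZ)}$, $\hsigma^2_{\xi(Y,(\bX,\bZ))}$ and $\hatU_1,\dots,\hatU_9$. By Proposition~\ref{prop:nlogn}, the first three quantities can be computed in $O(n\log n)$ time; for $\hsigma^2_{\xi(Y,(\bX,\bZ))}$ we simply apply Proposition~\ref{prop:nlogn} to the sample $\{(\bX_i,\bZ_i)\}$ in place of $\{\bZ_i\}$. It therefore suffices to show that each $\hatU_k$ costs $O(n\log n)$. The plan has three steps.

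\textbf{Preprocessing.} We run first- and second-NN searches on $\{\bZ_i\}$ and on $\{(\bX_i,\bZ_i)\}$ using $k$-d trees or an equivalent structure. This takes $O(n\log n)$ time for fixed dimension and produces $N(i),N_2(i),M(i),M_2(i)$. We then sort $Y$ to obtain the ranks $R_i$, also in $O(n\log n)$. From these we form, in $O(n)$ time, the arrays
\[
A_i=R_i\wedge R_{N(i)},\quad B_i=R_i\wedge R_{M(i)},\quad C_i=R_{N(i)}\wedge R_{N_2(i)},\quad D_i=R_{M(i)}\wedge R_{M_2(i)}.
\]
With these arrays, $\hatU_1,\hatU_2,\hatU_3$ and $\hatU_9$ are single sums or products of single sums, so each costs $O(n)$.

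\textbf{Double sums $\hatU_4$--$\hatU_7$.} Each has the form $\sum_{i}W_i\cdot\#\{j\neq i: R_i\le S_j\}$, with $S=A$ or $B$ and $W\in\{A,B,C,D\}$. We sort $\{S_j\}$ once. For each $i$, binary search gives $R^*_i=\#\{j:S_j<R_i\}$ in $O(\log n)$ time. The inner count then equals $n-R^*_i-\Ind(R_i\le S_i)$, exactly as in Algorithm~\ref{alg1}. The total cost is $O(n\log n)$.

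\textbf{Cross term $\hatU_8$.} This is the only step that differs from Algorithm~\ref{alg1}, and it is the main (mild) obstacle, because the two arrays are different and the sum $\sum_{i\ne j}B_i\wedge A_j$ is not symmetric. We write
\[
\sum_{i\neq j}B_i\wedge A_j=\sum_{i,j}B_i\wedge A_j-\sum_i B_i\wedge A_i .
\]
For the full double sum, we sort $A$ and precompute prefix sums of the sorted values. For each $i$, binary search locates $k_i=\#\{j:A_j<B_i\}$. The contribution of $i$ is then
\[
(\text{sum of the } k_i \text{ smallest } A_j) + B_i\,(n-k_i).
\]
Ties between $A_j$ and $B_i$ cause no ambiguity, since the minimum takes the same value either way.

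Summing the costs, every step is $O(n\log n)$ or cheaper, so $\hsigma^2$ is computable in $O(n\log n)$ time.
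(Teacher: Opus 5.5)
Your proposal is correct and follows essentially the same route as the paper, which likewise reduces everything to $k$-NN searches on $\{\bZ_i\}$ and $\{(\bX_i,\bZ_i)\}$, one sort of $\{Y_i\}$, and binary-search reformulations of the double sums as in Algorithm~\ref{alg1}. Your explicit prefix-sum treatment of the non-symmetric cross term $\hatU_8$ usefully fills in a step the paper only calls ``analogous to Algorithm~\ref{alg1}'', since the $\hatT_6$ reformulation there relies on the symmetry of $U_i\wedge U_j$.
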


Next, for conditional independence testing, it suffices to estimate $\sigma_0^2$ in \eqref{eq:sigma2_H0}. To this end, we consider two alternative estimators: (1) the fast simplified estimator $\hat{\sigma}^2_{0,\mathrm{F}}$, and (2) the $m$-out-of-$n$ bootstrap estimator $\hat{\sigma}^2_{0,\mathrm{B}}$. Compared with $\hsigma^2$ in Theorem~\ref{thm: est_var-main}, both alternative estimators remain consistent under $H_0$, while offering simpler computation and improved estimation accuracy.

We first discuss the direct estimation approach based on Theorem~\ref{thm: est_var-main}, which has time complexity $O(n\log n)$.

\begin{corollary}[Consistency of fast simplified estimator $\hat{\sigma}^2_{0,\mathrm{F}}$]
\label{cor: est_var-H0}
Assume Assumptions~\ref{assump_4.1}--\ref{assump_4.5}. Assume that $Y$ is conditionally independent of $\bX$ given $\bZ$. Then $\sigma_0^2$ in Corollary~\ref{cor: CLT-H0} admits the simplified consistent estimator
\begin{eqnarray}
\hat{\sigma}^2_{0,\mathrm{F}} &=& (2+\mathfrak{q}_q+ \mathfrak{q}_{p+q})\cdot  \frac{1}{n^3} \sum_{i=1}^n  \big(R_i \wedge R_{N(i)} \big)^2 \cr
&& + \	(\mathfrak{o}_q+ \mathfrak{o}_{p+q} - 2\mathfrak{q}_q- 2\mathfrak{q}_{p+q} -4) \cdot \frac{1}{n^3} \sum_{i=1}^n \big(R_i \wedge R_{N(i)} \big) \big( R_i \wedge R_{N_2(i)}\big)  \cr
&&+ \ (2+\mathfrak{q}_q
+ \mathfrak{q}_{p+q}-\mathfrak{o}_q- \mathfrak{o}_{p+q})\cdot \frac{1}{n^3}\sum_{i=1}^n \big(R_i \wedge R_{N(i)} \big) \big( R_{N_2(i)} \wedge R_{N_3(i)}\big),
\label{eq:sigma2_F}
\end{eqnarray}
with $N_2(i)$ and $N_3(i)$ denoting the indices of the second and third NNs of $\bZ_i$, respectively.
\end{corollary}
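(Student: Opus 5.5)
The plan is to reduce the statement to the consistency of the three ``local'' building blocks $\hatT_1$, $\hatT_2$, $\hatT_3$ of Theorem~\ref{thm:est_var_xi}, computed on the $\bZ$-graph, and then combine them by the continuous mapping theorem. The three sample averages in \eqref{eq:sigma2_F} are exactly $\hatT_1,\hatT_2,\hatT_3$ built from $\{(Y_i,\bZ_i)\}$, multiplied by deterministic coefficients involving $\mathfrak{q}_q,\mathfrak{q}_{p+q},\mathfrak{o}_q,\mathfrak{o}_{p+q}$. These coefficients match those in \eqref{eq:sigma2_H0}. So it suffices to show, under the assumptions of the corollary, that
\[
\hatT_1\conP \E\{F_Y^2(Y\wedge\tY)\},\quad
\hatT_2\conP \E\{F_Y(Y\wedge\tY)F_Y(Y\wedge\tY')\},\quad
\hatT_3\conP \E\{F_Y(Y\wedge\tY)F_Y(\tY'\wedge\tY'')\},
\]
where $\tY,\tY',\tY''$ are drawn from $\mu_{Y\mid\bZ}$. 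Note that $\eqref{eq:sigma2_H0}$ is already expressed in terms of $\mu_{Y\mid \bZ}$ only. This is because under $H_0$ we have $\mu_{Y\mid \bX,\bZ}=\mu_{Y\mid\bZ}$, so the $\barY$'s in Theorem~\ref{thm:CLT-main} may be replaced by $\tY$'s. Consequently only the $\bZ$-graph enters the estimator, and nothing about the $(\bX,\bZ)$-graph needs to be estimated.

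For each $\hatT_k$ I will proceed in three steps.

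First, replace ranks by $F_Y$. Since $\max_i|R_i/n-F_Y(Y_i)|\to 0$ a.s. by Glivenko--Cantelli and continuity of $F_Y$, and all summands are bounded products of minima, it follows that, e.g., $\hatT_3 = n^{-1}\sum_i F_Y(Y_i\wedge Y_{N(i)})F_Y(Y_{N_2(i)}\wedge Y_{N_3(i)})+o_\P(1)$.

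Second, compute the limit of the expectation. Condition on $\bZ_1,\dots,\bZ_n$; then the $Y_j$'s are independent with laws $\mu_{Y\mid \bZ_j}$. The indices $i,N(i),N_2(i),N_3(i)$ are distinct, so the conditional expectation of the $i$-th summand is an integral of a bounded continuous function against $\mu_{Y\mid\bZ_i}\otimes\mu_{Y\mid\bZ_{N(i)}}\otimes\mu_{Y\mid\bZ_{N_2(i)}}\otimes\mu_{Y\mid\bZ_{N_3(i)}}$. Absolute continuity of $\bZ$ (a marginal of Assumption~\ref{assump_4.3}) gives $\|\bZ_{N_k(i)}-\bZ_i\|\to0$ in probability for fixed $k\le 3$, by the standard $k$-NN distance lemma used in \cite{azadkia2019simple}. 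Together with the a.e.\ continuity of $\bz\mapsto G_\bz(t)$ in Assumption~\ref{assump_4.5}, this yields weak convergence of each $\mu_{Y\mid \bZ_{N_k(i)}}$ to $\mu_{Y\mid\bZ_i}$. Bounded convergence then gives the stated limits of $\E\hatT_k$.

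Third, show the variance vanishes. Each summand depends on $(Y_j,\bZ_j)$ only through $i$ and its three nearest neighbours. Since the in-degree of $k$-NN graphs is bounded by a constant depending only on $q$ \citep{MR682809}, an Efron--Stein argument bounds $\var(\hatT_k)$ by $O(1/n)$. The same argument was used for the terms in Theorem~\ref{thm:est_var_xi}, and I would reuse that proof verbatim.

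The step I expect to be the main obstacle is the second one, specifically the claim that $\mu_{Y\mid \bZ_{N_k(i)}}$ converges weakly to $\mu_{Y\mid\bZ_i}$. Here I only have a.e.\ continuity of $G_\bz(t)$ for each fixed $t$ rather than a continuous density of $\bZ$, and I need the convergence simultaneously for $k=1,2,3$. I would handle this as in Azadkia--Chatterjee: first restrict to the full-measure set of continuity points of $\bz\mapsto G_\bz(t)$ for $t$ in a countable dense set of rationals. Then use monotonicity in $t$ and continuity of $F_Y$ to pass to all $t$. Finally, apply a Lusin-type approximation to control the event that $\bZ_i$ falls near a discontinuity. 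Once this is in place, the corollary follows from the continuous mapping theorem, with positivity of $\sigma_0^2$ already provided by Corollary~\ref{cor: CLT-H0}.
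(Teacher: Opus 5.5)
Your proposal is correct and is essentially the paper's proof, which simply observes that the three averages in \eqref{eq:sigma2_F} are $\hatT_1,\hatT_2,\hatT_3$ of Theorem~\ref{thm:est_var_xi} computed on the $\bZ$-graph, hence converge to $T_1,T_2,T_3$, while Corollary~\ref{cor: CLT-H0} writes $\sigma_0^2$ as the same linear combination of these limits. Your Steps 1--3 just re-derive Theorem~\ref{thm:est_var_xi}, and the obstacle you anticipate is not a real one: $\tmu_{\bZ_{N_k(1)}}(A)-\tmu_{\bZ_1}(A)\conP 0$ holds for every measurable $\bz\mapsto\tmu_{\bz}(A)$ by the argument of Lemma~11.7 in \cite{azadkia2019simple} (bounded in-degree makes the law of $\bZ_{N_k(1)}$ at most a constant multiple of that of $\bZ_1$), so the continuity in Assumption~\ref{assump_4.5} is not needed for this step.
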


We next consider the $m$-out-of-$n$ bootstrap procedure proposed in \cite{Dette_Kroll_2025}, which has computational complexity $O(B \, m \log m)$, where $B$ denotes the number of bootstrap replicates. The procedure is as follows. For each bootstrap iteration $b = 1,\dots,B$, draw $m<n$ observations without replacement from $\{(Y_i, \bX_i,\bZ_i)\}_{i=1}^n$, denoted by $\{(Y_{b,j}^*, \bX_{b,j}^*, \bZ_{b,j}^*)\}_{j=1}^m$, and compute the statistic $\tau_m$ in \eqref{eq:tau_n} based on this bootstrap sample, denoted by $\tau_{m,b}^*$. The bootstrap estimator $\hat{\sigma}^2_{0,\mathrm{B}}$ of $\sigma^2$ is then defined by
\begin{eqnarray}
\hat{\sigma}^2_{0,\mathrm{B}} = \frac{m}{B}\sum_{b=1}^B \Big(\tau_{m,b}^* - \frac{1}{B}\sum_{j=1}^B \tau_{m,j}^*\Big)^2.
\label{eq:sigma2_B}
\end{eqnarray}

\begin{proposition}[Consistency of $m$-out-of-$n$ bootstrap estimator $\hat{\sigma}^2_{0,\mathrm{B}}$]
\label{prop:est_var-B}
Assume Assumptions~\ref{assump_4.1}--\ref{assump_4.5}.
Assume that $Y$ is conditionally independent of $\bX$ given $\bZ$.
Assume $B\to \infty$, $m \to \infty$, and $m = o(n)$. Then under the null hypothesis, $\hat{\sigma}^2_{0,\mathrm{B}}$ is consistent, in the sense that $\hat{\sigma}^2_{0,\mathrm{B}} \conP \sigma_0^2$ as $n\to \infty$.
\end{proposition}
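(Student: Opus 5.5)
We prove consistency of the $m$-out-of-$n$ bootstrap by reducing it to a statement about the conditional (given the data) variance of $\sqrt{m}\,\tau_m^*$. First consider the ideal estimator with $B=\infty$, namely $\hat{\sigma}^2_{0,\infty}:=m\,\var^*(\tau_m^*)$, where $\var^*$ and $\E^*$ denote variance and expectation over the subsampling without replacement, conditional on the full sample. The finite-$B$ estimator in \eqref{eq:sigma2_B} differs from $\hat{\sigma}^2_{0,\infty}$ only by Monte Carlo error. Since $\tau_m$ is a bounded statistic and, conditionally on the data, the $B$ draws are i.i.d., this error is $O_\P(B^{-1/2})$ times the conditional standard deviation of $m(\tau_{m}^*-\E^*\tau_m^*)^2$. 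So it suffices to show that the conditional fourth moment $m^2\E^*(\tau_m^*-\E^*\tau_m^*)^4$ is $O_\P(1)$ and that $\hat{\sigma}^2_{0,\infty}\conP\sigma_0^2$.

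\textbf{Step 1: reduce to unconditional moments.} The key observation is that, unconditionally, a subsample of size $m$ drawn without replacement from an i.i.d.\ sample is itself an i.i.d.\ sample of size $m$ from $F_{\bX,Y,\bZ}$. Hence $\E\{\E^*(\tau_m^*)\}=\E(\tau_m)$ and $\E\{\E^*(\tau_m^{*2})\}=\E(\tau_m^2)$, where $\tau_m$ denotes the statistic on $m$ i.i.d.\ observations. Since $\E^*(\tau_m^*)$ is a $U$-statistic of order $m$ in the data with kernel $\tau_m$, we have
\[
\E\{\hat{\sigma}^2_{0,\infty}\}=m\,\var(\tau_m)-m\,\var\{\E^*(\tau_m^*)\}.
\]
Corollary~\ref{cor: CLT-H0}, together with the limiting-variance statement in Theorem~\ref{thm:CLT-main} specialized to $T=0$, gives $m\var(\tau_m)\to\sigma_0^2$. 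By Hoeffding's bound for $U$-statistics of order $m$, $\var\{\E^*(\tau_m^*)\}\le (m/n)\var(\tau_m)$, so the second term is $O(m/n)\to0$ because $m=o(n)$.

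\textbf{Step 2: concentration.} It remains to show that $\hat{\sigma}^2_{0,\infty}$ concentrates around its mean. Write
\[
\hat{\sigma}^2_{0,\infty}=m\,\E^*(\tau_m^{*2})-m\{\E^*(\tau_m^*)\}^2 .
\]
The first term is a $U$-statistic of order $m$ with kernel $m\tau_m^2$, and the second is the square of a $U$-statistic of order $m$. Applying Hoeffding's variance inequality again, the variance of $\E^*(m\tau_m^{*2})$ is at most $(m/n)\var(m\tau_m^2)$. Here $\var(m\tau_m^2)\le \E(m^2\tau_m^4)$, and this requires $\E\{m^2(\tau_m-\E\tau_m)^4\}=O(1)$ together with $\sqrt{m}\,\E\tau_m=O(1)$.

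\textbf{Main obstacle.} The bias $\sqrt m\,\E\tau_m$ need not vanish in general, and may even diverge when $p+q\ge4$. To handle this, center $\tau_m^*$ at $\E^*(\tau_m^*)$ throughout, which the estimator already does, so that the bias cancels exactly. After this centering, only the fourth central moment bound is needed. This is precisely where I expect the work to lie. I would obtain it from the moment calculations underlying the proof of Theorem~\ref{thm:CLT-main}, either via the H\'ajek projection and bounded-degree NN-graph dependency arguments used there, or via an Efron--Stein/Boucheron--Lugosi--Massart inequality. Replacing one observation changes $\sum_i\min\{R_i,R_{M(i)}\}$ and $\sum_i\min\{R_i,R_{N(i)}\}$ by $O(n)$ in aggregate, because the NN graph has bounded in-degree \citep{MR682809} and each rank changes by at most one. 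This yields an $O(m^{-2})$ fourth central moment. Combining Steps 1 and 2 with the Monte Carlo reduction gives $\hat{\sigma}^2_{0,\mathrm{B}}\conP\sigma_0^2$.
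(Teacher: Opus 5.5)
Your argument is correct, and it takes a genuinely different route from the paper. The paper proves this in one line: it feeds the existence and positivity of $\lim_n n\var(\tau_n)=\sigma_0^2$ and the null CLT of Corollary~\ref{cor: CLT-H0} into Theorem~1(ii) of \cite{Dette_Kroll_2025}. It does not check that Dette and Kroll's argument, written for Chatterjee's univariate coefficient, transfers to $\tau_n$.

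You instead prove the result from scratch:
\begin{itemize}
\item You write the $B=\infty$ estimator through order-$m$ $U$-statistics, using the fact that a uniformly drawn subsample is unconditionally i.i.d.
\item You apply Hoeffding's bound $\var(U)\le (m/n)\var(h)$.
\item You get a deterministic bounded-difference estimate $|\tau_m(\mathbf{w})-\tau_m(\mathbf{w}^{(i)})|\le C/m$. This holds because the other ranks move by at most one, and only $O(1)$ summands change their nearest neighbour, namely those whose old or new NN is $i$, by the in-degree bound. McDiarmid or Boucheron--Lugosi--Massart then gives $\E(\tau_m-\E\tau_m)^4=O(m^{-2})$.
\end{itemize}
This single moment bound controls both the Hoeffding term and the Monte Carlo error. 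The Monte Carlo part works because $\E\{m^2\E^*(\tau_m^*-\E^*\tau_m^*)^4\}\le 16\,m^2\E(\tau_m-\E\tau_m)^4$.

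One point of precision on the bias. Implement the cancellation with a \emph{deterministic} shift rather than the random centring $\E^*(\tau_m^*)$. Write $\hat\sigma^2_{0,\infty}=\E^*\{m(\tau_m^*-\E\tau_m)^2\}-m(\E^*\tau_m^*-\E\tau_m)^2$.
\begin{itemize}
\item The first term is an order-$m$ $U$-statistic with mean $m\var(\tau_m)\to\sigma_0^2$ and variance at most $(m/n)\,m^2\E(\tau_m-\E\tau_m)^4=O(m/n)$.
\item The second term has mean $m\var(\E^*\tau_m^*)\le (m/n)\,m\var(\tau_m)\to 0$.
\end{itemize}

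The two approaches buy different things. The paper's is short, but outsources the verification of the cited theorem's hypotheses. In particular it never addresses why a bias $\sqrt{m}\,\E\tau_m$, which can diverge when $p+q\ge 5$, is harmless. Yours shows exactly what is needed: convergence of $m\var(\tau_m)$ and an $O(m^{-2})$ fourth central moment. Neither the CLT nor $\sigma_0^2>0$ is used, and bias immunity follows transparently from shift invariance.

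Your argument also never uses $H_0$ except to identify the limit. Under a fixed alternative it therefore gives $\hat\sigma^2_{0,\mathrm{B}}\conP\lim_m m\var(\tau_m)<\infty$. This is sharper than the crude bound $\hat\sigma^2_{0,\mathrm{B}}<mC$ invoked in the proof of Theorem~\ref{thm:test}(ii).
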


\subsection{Bias correction} \label{sec:bias_correct}

Recall that $L^{(\tau)}_n = \E(\tau_n) - \tau$ and $L^{(\kappa)}_n = \E(\kappa_n) - \kappa$ represent the biases of $\tau_n$ and $\kappa_n$, respectively.
The goal of this subsection is to introduce consistent estimators $\hatL^{(\tau)}_n$ and $\hatL^{(\kappa)}_n$ such that $\hatL^{(\tau)}_n - L^{(\tau)}_n= o_\P(n^{-1/2})$ and $\hatL^{(\kappa)}_n - L^{(\kappa)}_n= o_\P(n^{-1/2})$, as required in \eqref{eq:step2-bias} and \eqref{eq:step2'-bias} for our inferential procedures. 

Recall from \eqref{eq:txi} that $\tau_n$ and $\kappa_n$ admit the decompositions
\begin{align*}
&	\tau_n = \txi_{1,n} -  \txi_{2,n},     &\kappa_n	= (n+1)/(2n)- 3^{-1} -  \txi_{2,n}, \cr
\text{where} \hspace{2cm} & \txi_{1,n} := \sum_{i=1}^n \min\{R_i, R_{M(i)}\} -\frac{1}{3},
& \txi_{2,n} := \sum_{i=1}^n \min\{R_i, R_{N(i)}\} - \frac{1}{3}. 
\end{align*}
As noted earlier in \eqref{eq:txi_2}, $\txi_{1,n}$ and $\txi_{2,n}$ can be viewed as unnormalized versions of the Azadkia--Chatterjee unconditional correlation coefficient $\xi_n$ in \eqref{eq:xi_n}.
Therefore, as $n\to \infty$, we have
\begin{eqnarray*}
&&	 \hspace{1cm}\txi_{1,n} \conas \txi_1, \qquad \txi_{2,n} \conas \txi_2, \cr
\text{where}  \hspace{1cm}  &&
\txi_1  := \int \var\big[\E \big\{ \Ind(Y\geq y) \mid \bX, \bZ\big\}\big] \d \P_Y(y), \cr
&& \txi_2 := \int \var\big[\E \big\{ \Ind(Y\geq y) \mid \bZ \big\}\big] \d \P_Y(y),
\end{eqnarray*}
which correspond to the numerator of $\xi$ in \eqref{eq:xi}, with $\bX$ therein replaced by $(\bX,\bZ)$ and $\bZ$, respectively.
It is also straightforward to verify that the population quantities $\tau$ and $\kappa$ in \eqref{eq:tau} and \eqref{eq:kappa} admit analogous decompositions, namely, $\tau = \txi_1 - \txi_2$ and $\kappa = 6^{-1} - \txi_2$.

Denote the biases of $\txi_{1,n}$ and $\txi_{2,n}$ by
\begin{eqnarray*}
L_n^{\bX, \bZ}: = \E(\txi_{1,n}) - \txi_1, \qquad L_n^{\bZ}: = \E(\txi_{2,n}) - \txi_2.
\end{eqnarray*}
From the above decompositions, it is readily verified that
\begin{eqnarray*}
&&L^{(\tau)}_n  \ = \ \E(\tau_n) - \tau  \ = \ \E( \txi_{1,n} -  \txi_{2,n}) - (\txi_1 - \txi_2) \ = \ L_n^{\bX, \bZ} - L_n^{\bZ}, \cr
&&L^{(\kappa)}_n \ = \ \E(\kappa_n) - \kappa \ = \ - \big\{ \E(  \txi_{2,n})   -\txi_2 \big\} + 1/(2n) \ = \ - L_n^{\bZ} + O(n^{-1}).
\end{eqnarray*}
Therefore, it suffices to perform bias correction separately for $L_n^{\bX, \bZ}$ and $L_n^{\bZ}$; that is, to construct estimators $\hatL_n^{\bX, \bZ}$ and $\hatL_n^{\bZ}$ such that $\hatL_n^{\bX, \bZ} =L_n^{\bX, \bZ} + o_\P(n^{-1/2})$ and $\hatL_n^{\bZ} =L_n^{\bZ} + o_\P(n^{-1/2})$. Then
\begin{eqnarray}
\hatL^{(\tau)}_n = \hatL_n^{\bX, \bZ} - \hatL_n^{\bZ}, \qquad \text{and} \quad \hatL^{(\kappa)}_n = - \hatL_n^{\bZ},   \label{eq:bias_est}
\end{eqnarray}
serve as the desired bias estimators satisfying \eqref{eq:step2-bias} and \eqref{eq:step2'-bias}.

Note that the construction of $L_n^{\bX, \bZ}$ is entirely analogous to that of $L_n^{\bZ}$: one simply replaces the data $\{\bZ_i\}_{i=1}^n$ by $\{(\bX_i,\bZ_i)\}_{i=1}^n$.
We therefore focus on the construction of $L_n^{\bZ}$ below to illustrate the bias-correction procedure and its theoretical justification, following the framework of \cite{azadkia2026biascorrection}.

Recall the function $G_\bz(t) = \E\{\Ind(Y\geq t) \mid \bZ=\bz\}$ defined in Assumption~\ref{assump_4.5}.
Section 3 of \cite{azadkia2026biascorrection} establishes the following alternative representation of the bias:
\begin{eqnarray}
L_n^\bZ &=& \int \E \Big\{G_{\bZ_1}(t) G_{\bZ_{N(1)}}(t) - G^2_{\bZ_1}(t)\Big\} \d \P_Y(t) \cr
&=&
\E \Big\{G_{\bZ_1}(Y^*) G_{\bZ_{N(1)}}(Y^*) - G^2_{\bZ_1}(Y^*)\Big\}, \label{eq:bias}
\end{eqnarray}
where $Y^*$ is independent of $\bZ$ and has the same marginal distribution as $Y$.
This identity naturally suggests a two-step procedure for constructing an estimator.
First, construct an appropriate estimator $\hat{G}.(\cdot)$ of the bivariate regression function $G.(\cdot)$.
Second, approximate the expectation in \eqref{eq:bias} by replacing the population mean with the empirical distribution of $Y^*$ and $\bZ$, leading to the estimator
\begin{eqnarray*}
\hat{L}_n^{\bZ} = \frac{1}{n(n-1)} \sum_{1 \leq i \neq j \leq n} \Big\{ \hatG_{\bZ_i}(Y_j) \hatG_{\bZ_{N(i)}}(Y_j) - \hatG^2_{\bZ_i}(Y_j) \Big\}.
\end{eqnarray*}
Note that, for each fixed $t$, $G_\bZ(t) = \E\{\Ind(Y\geq t) \mid \bZ\}$ is the regression mean function of $\Ind(Y\geq t)$ on $\bZ$. This motivates estimating $G.(t)$ by regression techniques.
According to the results in Section 4.2.2 of \cite{azadkia2026biascorrection}, $G.(t)$ can be effectively estimated by ridge least squares \citep{tuo2024asymptotic}, which yields the estimator $\hat{G}.(t)$ and hence $\hat{L}_n^{\bZ}$. The complete procedure for computing $\hat{L}_n^{\bZ}$ is summarized in Algorithm~\ref{alg2}.

\begin{algorithm}[htbp]
\caption{Compute bias estimator $\hat{L}_n^{\bZ}$}
\label{alg2}
\begin{algorithmic}[1]

\Require Sample $\{(Y_i, \bZ_i)\}_{i=1}^n$; $K$ basis functions $\boldsymbolp(\bz) = (p_1(\bz),...,p_K(\bz))^\top$, defined for $\bz \in \mathbbR^q$; regularization penalty parameter $\lambda_n>0$.
\State
For each $j = 1, \dots, n$, let $t= Y_j$. Solve the ridge estimator
\begin{eqnarray*}
\hat{\bbeta}_j = \arg \min_{\bbeta \in \mathbbR^K} \Big\{\frac{1}{n}\sum_{i=1}^n  \big\{ \Ind(Y_i \geq t) - \boldsymbolp(\bZ_i)^\top \bbeta\big\} + \lambda_n \|\bbeta\|^2 \Big\}.
\end{eqnarray*}
Obtain $\hatG_\bz(Y_j) = \hat{\bbeta}_j^\top \boldsymbolp(\bz)$, for any given $\bz \in \mathbbR^q$.
\State Perform NN search on $\{\bZ_i\}_{i=1}^n$. Obtain $N(i)$ for $i=1,\dots,n$.
\State Compute
$
\hat{L}_n^{\bZ} = n^{-1}(n-1)^{-1} \sum_{1 \leq  i \neq j \leq n} \Big\{ \hatG_{\bZ_i}(Y_j) \hatG_{\bZ_{N(i)}}(Y_j) - \hatG^2_{\bZ_i}(Y_j) \Big\}.
$
\Ensure Bias estimator $\hat{L}_n^{\bZ}$.
\end{algorithmic}
\noindent \textit{Note:} The bias estimator $\hat{L}_n^{\bX, \bZ}$ can be obtained by repeating Algorithm~\ref{alg2} with $\bZ$ replaced by $(\bX,\bZ)$.
\end{algorithm}

Theorem~\ref{thm:bias_correct} below establishes the bias rate and the convergence rate of the bias estimators. For ease of exposition, Assumptions~\ref{assump_A.1}--\ref{assump_A.4}, which are needed in this subsection, are deferred to Appendix~\ref{secA:assump}.

\begin{theorem} \label{thm:bias_correct}
We establish the following results:
\begin{enumerate}[label=(\roman*)]
\item (Bias rate) Assume Assumptions~\ref{assump_4.1}, \ref{assump_4.2}, and \ref{assump_A.1}. Then, as $n\to \infty$,
\begin{eqnarray*}
L_n^{\bX, \bZ} = O(n^{-2/(p+q)}),  \qquad  L_n^{\bZ} = O(n^{-2/q} + n^{-1}).
\end{eqnarray*}
If $p+q \leq 3$, then $L^{(\tau)}_n  = L_n^{\bX, \bZ} - L_n^{\bZ}  =  o(n^{-1/2})$ and $L^{(\kappa)}_n  = - L_n^{\bZ} +O(n^{-1}) =o(n^{-1/2})$, that is, both $L^{(\tau)}_n$ and $L^{(\kappa)}_n$ are asymptotically negligible and bias correction is unnecessary.

\item (Bias-correction efficacy) Assume Assumptions~\ref{assump_4.1}, \ref{assump_4.2}, \ref{assump_A.2}, \ref{assump_A.3}, and \ref{assump_A.4}. Then, as $n \to \infty$, the bias estimators $\hat{L}_n^{\bZ}$ and $\hat{L}_n^{\bX, \bZ}$ produced by Algorithm~\ref{alg2} satisfy
\begin{eqnarray*}
\hat{L}_n^{\bZ} - L_n^{\bZ} = o_\P(n^{-1/2}), \qquad \hat{L}_n^{\bX,\bZ} - L_n^{\bX,\bZ} = o_\P(n^{-1/2}).
\end{eqnarray*}
\end{enumerate}
\end{theorem}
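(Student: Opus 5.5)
Part (i) rests on the bias representation \eqref{eq:bias} from \cite{azadkia2026biascorrection}. The dimension bookkeeping is handled by treating $\bZ$ alone and then repeating the argument with $(\bX,\bZ)$ in place of $\bZ$. Write $L_n^{\bZ} = \int \E\{G_{\bZ_1}(t)(G_{\bZ_{N(1)}}(t)-G_{\bZ_1}(t))\}\,\d\P_Y(t)$. Assumption~\ref{assump_A.1} presumably imposes Lipschitz (or $C^{1}$ with Lipschitz gradient) smoothness of $\bz\mapsto G_\bz(t)$, uniformly in $t$, together with density and support regularity.

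The plan is to Taylor expand $G_{\bZ_{N(1)}}(t)-G_{\bZ_1}(t)$ around $\bZ_1$. The first-order term is $\nabla G_{\bZ_1}(t)^\top(\bZ_{N(1)}-\bZ_1)$. Conditional on $\bZ_1$, the displacement $\bZ_{N(1)}-\bZ_1$ has mean $O(n^{-2/q})$: its leading part $O(n^{-1/q})$ is asymptotically symmetric, and the asymmetry comes only from the density gradient, which contributes one extra factor of $n^{-1/q}$. The second-order term is bounded by $\E\|\bZ_{N(1)}-\bZ_1\|^2 = O(n^{-2/q})$, using standard NN distance moment bounds under bounded support or a moment condition.

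This gives $O(n^{-2/q})$ for $q\ge 2$. The case $q=1$ needs separate handling, and I would expect the $n^{-1}$ term to come from there. One option is to compare with the symmetric limit; the other is to treat the $1/(2n)$ and rank-discretization effects, which contribute $O(n^{-1})$ to $\E\txi_{2,n}-\txi_2$ anyway. The same argument with dimension $p+q$ gives $L_n^{\bX,\bZ}=O(n^{-2/(p+q)})$, since $p+q\ge2$.

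Finally, if $p+q\le 3$ then $q\le 2$ and $2/(p+q)\ge 2/3>1/2$. Hence both biases are $o(n^{-1/2})$, and the stated conclusions for $L_n^{(\tau)}$ and $L_n^{(\kappa)}$ follow from the identities preceding \eqref{eq:bias_est}.

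Part (ii) splits the error $\hat L_n^{\bZ}-L_n^{\bZ}$ into three pieces.
\begin{enumerate}[label=(\alph*)]
\item A plug-in error from replacing $G$ by $\hat G$ in $\frac{1}{n(n-1)}\sum_{i\ne j}\{G_{\bZ_i}(Y_j)G_{\bZ_{N(i)}}(Y_j)-G_{\bZ_i}^2(Y_j)\}$.
\item A sampling error of this oracle average around its expectation.
\item The discrepancy between the oracle expectation and $L_n^{\bZ}$, which vanishes up to $O(n^{-1})$ from the $i\ne j$ exclusion.
\end{enumerate}

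For (b), the summand $G_{\bZ_i}(G_{\bZ_{N(i)}}-G_{\bZ_i})$ is bounded by $\sup_t|G_{\bZ_{N(i)}}(t)-G_{\bZ_i}(t)|=O_\P(n^{-1/q})$. Efron--Stein applies because each observation changes only boundedly many NN relations. This gives a variance of order $n^{-1}\cdot n^{-2/q}$, so the fluctuation is $o_\P(n^{-1/2})$.

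The key structural point for (a) is that the functional is a \emph{difference} $\hat G_{\bZ_i}(\hat G_{\bZ_{N(i)}}-\hat G_{\bZ_i})$, so errors in $\hat G$ enter only through its increments across NN pairs. I would write $\hat G=G+\Delta$ and expand. Cross terms take the form $\Delta_{\bZ_i}(G_{\bZ_{N(i)}}-G_{\bZ_i})$ or $G_{\bZ_i}(\Delta_{\bZ_{N(i)}}-\Delta_{\bZ_i})$, and the quadratic term is $\Delta_{\bZ_i}(\Delta_{\bZ_{N(i)}}-\Delta_{\bZ_i})$. I would then invoke the ridge-regression rates from \cite{tuo2024asymptotic} as encoded in Assumptions~\ref{assump_A.2}--\ref{assump_A.4}. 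Those rates should give $\sup|\Delta|$ small together with smoothness of $\hat G$ (a Lipschitz bound on the basis expansion), so that $|\Delta_{\bZ_{N(i)}}-\Delta_{\bZ_i}|\lesssim \|\bZ_{N(i)}-\bZ_i\|$ times a vanishing factor. Each product is then (rate of $\Delta$)$\times n^{-1/q}$, and the assumptions are presumably calibrated so that this is $o(n^{-1/2})$.

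The main obstacle is (a): the required product-rate condition forces a careful uniform-in-$t$ control of $\hat G$ and its increments over the $n$ thresholds $t=Y_j$. I would handle it with a sup over $t$ via monotonicity/bracketing in $t$, and otherwise follow Section~4.2.2 of \cite{azadkia2026biascorrection}. The $(\bX,\bZ)$ statement follows verbatim with dimension $p+q$.
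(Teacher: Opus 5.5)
Part (i) is fine in outline. The paper does not prove it; it cites Theorem~3.1 of \cite{azadkia2026biascorrection} for $\bZ$ and again for $(\bX,\bZ)$. One correction: your claim that $\E(\bZ_{N(1)}-\bZ_1\mid\bZ_1)=O(n^{-2/q})$ is false when $\bZ_1$ lies within distance of order $n^{-1/q}$ of the boundary of $\supp(\bZ)$. There the displacement has an inward mean of order $n^{-1/q}$, so the bound holds only after integrating over $\bZ_1$. The Lipschitz boundary and density bounds of Assumption~\ref{assump_A.1} give that layer probability $O(n^{-1/q})$, hence a contribution of $O(n^{-2/q})$. You also need the $C^2$ extension $\tilde G$, because the segment from $\bZ_1$ to $\bZ_{N(1)}$ can leave the support.

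The genuine gap is step (a) of part (ii). You bound the terms linear in $\Delta=\hat G-G$, namely $n^{-1}\sum_i\Delta_{\bZ_i}(G_{\bZ_{N(i)}}-G_{\bZ_i})$ and $n^{-1}\sum_i G_{\bZ_i}(\Delta_{\bZ_{N(i)}}-\Delta_{\bZ_i})$, by (size of $\Delta$)$\times\E\|\bZ_{N(1)}-\bZ_1\|$. For this to be $o(n^{-1/2})$ you need $\Delta$ of order $o_\P(n^{1/q-1/2})$, and a Lipschitz constant for $\Delta$ of the same order. Nothing in Assumptions~\ref{assump_A.2}--\ref{assump_A.4} gives this, and they are not calibrated for it.
\begin{itemize}
\item Among the basis quantities, Assumption~\ref{assump_A.4} involves only $\zeta_{0,K}$, not $\zeta_{r,K}$ for $r\ge1$. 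It therefore says nothing about derivatives of $\hat G-G$, which your Lipschitz bound needs.
\item For $q\ge5$, the rate $n^{1/q-1/2}$ is faster than the minimax rate $n^{-s/(2s+q)}$ for the smoothness $s=\lfloor q/2\rfloor+1$ in Assumption~\ref{assump_A.3}. For example, at $q=5$ you would need $n^{-3/10}$ against an attainable $n^{-3/11}$, and the requirement tends to $n^{-1/2}$ as $q$ grows.
\item The Lipschitz requirement is a derivative-estimation rate, which is harder still.
\end{itemize}

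Two ideas are missing.
\begin{itemize}
\item \emph{Cancellation.} For each fixed threshold $t$, summation by parts rewrites the linear terms as $n^{-1}\sum_i\Delta_{\bZ_i}\{(d_i-1)G_{\bZ_i}+(G_{\bZ_{N(i)}}-G_{\bZ_i})+\sum_{k:N(k)=i}(G_{\bZ_k}-G_{\bZ_i})\}$, where $d_i$ is the in-degree of $i$ in the NNG. The bracket is mean-zero to leading order, so only the term quadratic in $\Delta$ should need a rate.
\item \emph{Higher-order expansion.} Expand the NN increments to order $\lfloor q/2\rfloor$, so that the remainder $O(\|\bZ_{N(i)}-\bZ_i\|^{\lfloor q/2\rfloor+1})$ is $o_\P(n^{-1/2})$. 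This is what the derivative order in Assumption~\ref{assump_A.3} is for.
\end{itemize}
Both steps require regularity of $\hat G$ itself, since $\Delta$ depends on the data.

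The paper does none of this by hand. It invokes Theorem~4.2 of \cite{azadkia2026biascorrection} to show that the ridge estimator of Algorithm~\ref{alg2} satisfies their regularity condition on $\hat G$ (their Assumption~4.3). It then applies their Theorem~4.1, which gives $\sqrt n\,\E|\hat L_n^{\bZ}-L_n^{\bZ}|\to0$, and repeats the argument with $(\bX,\bZ)$. Your steps (b) and (c) are fine, but the difficulty lies in (a).
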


\subsection{Inferential validity} \label{sec:infer_theory}

Recall the confidence intervals proposed in \eqref{eq:step3-CI}:
\begin{eqnarray*}
&&\mathsf{CI}_{\alpha}(T) = \Big(T_n -\frac{z_{\alpha/2}\cdot \hat{\sigma}}{\kappa_n \cdot\sqrt{n}} \ , \ T_n +\frac{z_{\alpha/2}\cdot \hat{\sigma}}{\kappa_n \cdot\sqrt{n} } \Big), \qquad 
\mathsf{CI}^{\mathrm{bc}}_{\alpha}(T) = \Big(T_n^{\mathrm{bc}} -\frac{z_{\alpha/2}\cdot \hat{\sigma}}{\kappa_n \cdot\sqrt{n}} \ , \ T_n^{\mathrm{bc}} +\frac{z_{\alpha/2}\cdot \hat{\sigma}}{\kappa_n \cdot\sqrt{n} } \Big).
\end{eqnarray*}
Based on the explicit variance-estimation and bias-correction procedures developed in Section~\ref{sec:theory}, we can now specify the concrete forms of $\hat{\sigma}$ and $T_n^{\mathrm{bc}}$. Specifically, $\hat{\sigma}$ is a consistent estimator of $\sigma$, as given in \eqref{eq:sigma2_est} of Theorem~\ref{thm: est_var-main}, whereas
$
T_n^{\mathrm{bc}} =
\big(\tau_n - 	\hat{L}^{(\tau)}_n\big)/\big(\kappa_n - 	\hat{L}^{(\kappa)}_n\big),
$
with $\hat{L}^{(\tau)}_n$ and $\hat{L}^{(\kappa)}_n$ defined in \eqref{eq:bias_est}.

As a direct consequence of the CLT established in Corollary~\ref{cor: CLT-Tn}, we then obtain the validity of the confidence intervals in Theorem~\ref{thm:CI} below.

\begin{theorem}[Confidence interval validity] \label{thm:CI}
Assume Assumptions~\ref{assump_4.1}--\ref{assump_4.5}. Let $\alpha \in (0,1)$ be a prespecified significance level. Assume that $\sigma^2$ in \eqref{eq:sigma2_tTn} is strictly positive.
\begin{enumerate}[label=(\roman*)]
\item Assume Assumption~\ref{assump_A.1}. If $p+q\leq 3$, then the confidence interval $\mathsf{CI}_{\alpha}(T)$ is valid, in the sense that
\begin{eqnarray*}
\lim_{n \to \infty}\P\big( T \in \mathsf{CI}_{\alpha}(T)\big) = 1-\alpha.
\end{eqnarray*}
\item Assume Assumptions~\ref{assump_A.2}--\ref{assump_A.4}. For general $p,q \geq 1$, the bias-corrected confidence interval $\mathsf{CI}^{\mathrm{bc}}_{\alpha}(T)$ is valid, in the sense that
\begin{eqnarray*}
\lim_{n \to \infty}\P\big( T \in \mathsf{CI}^{\mathrm{bc}}_{\alpha}(T)\big) = 1-\alpha.
\end{eqnarray*}
\end{enumerate}
Moreover, as $n \to \infty$, the lengths of both confidence intervals converge to $0$ at rate $O_\P(n^{-1/2})$.
\end{theorem}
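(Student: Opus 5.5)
The argument reduces to the CLT and variance-consistency results already established, followed by a Slutsky-type step and a pivot inversion. The coverage event $T\in\mathsf{CI}_\alpha(T)$ is equivalent to
\[
\Big|\sqrt{n}\,\kappa_n (T_n-T)/\hat\sigma\Big| < z_{\alpha/2},
\]
and analogously for $T_n^{\mathrm{bc}}$. It therefore suffices to show that the pivot $\sqrt{n}\,\kappa_n(T_n-T)/\hat\sigma$ (resp.\ with $T_n^{\mathrm{bc}}$) converges in distribution to $N(0,1)$. Continuity of the normal cdf then gives coverage $1-\alpha$.

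\textbf{Part (i).} Write $\sqrt{n}\,\kappa_n(T_n-T)=\sqrt{n}\,\tT_n=\sqrt{n}\{\tT_n-\E(\tT_n)\}+\sqrt{n}L_n$.
\begin{itemize}
\item By Theorem~\ref{thm:CLT-main}, the first term converges in distribution to $N(0,\sigma^2)$.
\item For the second term, use \eqref{eq:tTn} together with $\tau=T\kappa$. This gives $L_n=\E(\tT_n)=L^{(\tau)}_n-T\,L^{(\kappa)}_n+O(n^{-1})$, since the deterministic term $\{(n+1)/(2n)-1/3\}T$ differs from $6^{-1}T$ by $O(n^{-1})$.
\item Theorem~\ref{thm:bias_correct}(i), applied under Assumption~\ref{assump_A.1} with $p+q\le 3$, gives $L^{(\tau)}_n,L^{(\kappa)}_n=o(n^{-1/2})$. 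Hence $\sqrt{n}L_n\to0$.
\end{itemize}
Next, Theorem~\ref{thm: est_var-main} gives $\hat\sigma^2\conP\sigma^2>0$. Here $T_n\conas T$ from \cite{azadkia2019simple} is used inside $\hat\sigma^2$, which is already covered by that theorem. Slutsky's theorem then yields $\sqrt{n}\,\tT_n/\hat\sigma\conD N(0,1)$. This is exactly the pivot, because $\kappa_n$ appears in both the interval half-width and the identity $\tT_n=\kappa_n(T_n-T)$.

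\textbf{Part (ii).} Set $\tau_n^{\mathrm{bc}}=\tau_n-\hat L^{(\tau)}_n$ and $\kappa_n^{\mathrm{bc}}=\kappa_n-\hat L^{(\kappa)}_n$. Then
\[
\kappa_n^{\mathrm{bc}}(T_n^{\mathrm{bc}}-T)=\tT_n-(\hat L^{(\tau)}_n-T\hat L^{(\kappa)}_n).
\]
Combining Theorem~\ref{thm:bias_correct}(ii) with \eqref{eq:bias_est} gives $\hat L^{(\tau)}_n=L^{(\tau)}_n+o_\P(n^{-1/2})$ and $\hat L^{(\kappa)}_n=L^{(\kappa)}_n+o_\P(n^{-1/2})$. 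Hence
\[
\sqrt{n}\,\kappa_n^{\mathrm{bc}}(T_n^{\mathrm{bc}}-T)=\sqrt{n}\{\tT_n-\E(\tT_n)\}+o_\P(1)+O(n^{-1/2})\conD N(0,\sigma^2),
\]
using the same expression for $L_n$ as in part (i).

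The interval uses $\kappa_n$ rather than $\kappa_n^{\mathrm{bc}}$. To handle this, note $\kappa_n\conas\kappa>0$, and $\hat L^{(\kappa)}_n=L^{(\kappa)}_n+o_\P(n^{-1/2})$ with $L^{(\kappa)}_n\to0$ because $\E\kappa_n\to\kappa$ by bounded convergence. Therefore $\kappa_n^{\mathrm{bc}}/\kappa_n\conP1$, and Slutsky's theorem again gives the pivot limit.

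The step needing most care is this bias bookkeeping: expressing $\E(\tT_n)$ exactly through $L^{(\tau)}_n$ and $L^{(\kappa)}_n$, and controlling the $O(n^{-1})$ discrepancy between $(n+1)/(2n)-1/3$ and $1/6$. I expect it to be routine.

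\textbf{Interval lengths.} Each interval has length $2z_{\alpha/2}\hat\sigma/(\kappa_n\sqrt{n})$. Since $\hat\sigma\conP\sigma<\infty$ and $\kappa_n\to\kappa>0$, this length is $O_\P(n^{-1/2})$.
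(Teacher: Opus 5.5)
Your proposal is correct and follows the paper's proof. Both arguments kill or estimate the bias via Theorem~\ref{thm:bias_correct}, take the CLT from Theorem~\ref{thm:CLT-main}, use the consistency of $\hat\sigma^2$ from Theorem~\ref{thm: est_var-main}, and apply Slutsky to the pivot $\sqrt{n}\,\kappa_n(T_n-T)/\hat\sigma$. The only difference is that you carry out inline the bias bookkeeping (the identity $L_n=L^{(\tau)}_n-T\,L^{(\kappa)}_n$ and replacing $\kappa_n-\hat L^{(\kappa)}_n$ by $\kappa_n$), which the paper packages into Corollary~\ref{cor: CLT-Tn}; this is a harmless and slightly more explicit presentation.
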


For conditional independence testing, by combining the two limiting variance-estimation methods developed in the latter part of Section~\ref{sec:est_var} with the bias-correction procedure in Section~\ref{sec:bias_correct}, we obtain four distinct level-$\alpha$ tests:
\begin{eqnarray}
&&\mathsf{T}^{\mathrm{F}}_{\alpha} = \Ind \big(	\sqrt{n} \tau_n / \hat{\sigma}_{0,\mathrm{F}}> z_\alpha\big), \qquad 
\mathsf{T}^{\mathrm{F,bc}}_\alpha = \Ind\big(	\sqrt{n} (\tau_n - \hatL_n^{(\tau)}) / \hat{\sigma}_{0,\mathrm{F}} > z_\alpha\big), \cr
&&\mathsf{T}^{\mathrm{B}}_{\alpha} = \Ind\big(\sqrt{n} \tau_n / \hat{\sigma}_{0,\mathrm{B}}> z_\alpha\big), \qquad 
\mathsf{T}^{\mathrm{B,bc}}_\alpha = \Ind\big(	\sqrt{n} (\tau_n - \hatL_n^{(\tau)}) / \hat{\sigma}_{0,\mathrm{B}} > z_\alpha\big), 
\label{eq:four_tests}
\end{eqnarray}
where $\hat{\sigma}_{0,\mathrm{F}}$ and $\hat{\sigma}_{0,\mathrm{B}}$ correspond to the fast estimator in \eqref{eq:sigma2_F} and the bootstrap estimator in \eqref{eq:sigma2_B}, respectively, and $\hatL_n^{(\tau)} = \hatL_n^{\bX, \bZ} - \hatL_n^{\bZ}$ is the bias estimator.
Let $H_1$ denote the alternative hypothesis consisting of all distributions of $(\bX, Y, \bZ)$ under which $Y$ is not conditionally independent of $\bX$ given $\bZ$.
Theorem~\ref{thm:test} establishes the asymptotic size control and consistency of these four tests.

\begin{theorem}[Test validity and consistency] \label{thm:test}
Assume Assumptions~\ref{assump_4.1}--\ref{assump_4.5} and \ref{assump_A.1}--\ref{assump_A.4}. Let $\alpha \in (0,1)$ be a prespecified significance level. Assume $B\to \infty$, $m \to \infty$, and $m = o(n)$.
We have the following results:
\begin{enumerate}[label=(\roman*)]
\item The tests $\mathsf{T}^{\mathrm{F,bc}}_\alpha$ and $\mathsf{T}^{\mathrm{B,bc}}_\alpha$ are valid in the sense that, for any $(\bX, Y, \bZ)$ whose distribution is fixed and satisfies $H_0$ in \eqref{eq:null},
\begin{eqnarray*}
\lim_{n \to \infty} \P(\mathsf{T}^{\mathrm{F,bc}}_\alpha = 1) = \alpha, \qquad  \lim_{n \to \infty} \P(\mathsf{T}^{\mathrm{B,bc}}_\alpha = 1) = \alpha.
\end{eqnarray*}
Moreover, if $p+q \leq 3$, then the same conclusion also holds for the corresponding non-bias-corrected tests $\mathsf{T}^{\mathrm{F}}_{\alpha}$ and $\mathsf{T}^{\mathrm{B}}_{\alpha}$.
\item The four proposed tests are consistent in the sense that, for any $(\bX, Y, \bZ)$ whose distribution is fixed and does not satisfy $H_0$ in \eqref{eq:null},
\begin{eqnarray*}
\lim_{n \to \infty} \P(\mathsf{T} = 1) =1,
\qquad \text{for each }  \mathsf{T} \in \{\mathsf{T}^{\mathrm{F}}_{\alpha}, \mathsf{T}^{\mathrm{B}}_{\alpha}, \mathsf{T}^{\mathrm{F,bc}}_\alpha , \mathsf{T}^{\mathrm{B,bc}}_\alpha\}.
\end{eqnarray*}
\end{enumerate}
\end{theorem}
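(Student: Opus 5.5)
The two parts are proved separately. Size control is handled by Slutsky's theorem, using Corollary~\ref{cor: CLT-H0}, Corollary~\ref{cor: est_var-H0}, Proposition~\ref{prop:est_var-B}, and Theorem~\ref{thm:bias_correct}. Consistency follows from the strong consistency of $\tau_n$ for $\tau>0$, combined with stochastic boundedness of the variance estimators under $H_1$.

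\emph{Part (i).} Fix a null distribution, so $\tau=0$ and $L^{(\tau)}_n=\E(\tau_n)$. Decompose
\[
\sqrt n(\tau_n-\hatL^{(\tau)}_n)=\sqrt n\{\tau_n-\E(\tau_n)\}-\sqrt n(\hatL^{(\tau)}_n-L^{(\tau)}_n).
\]
By Corollary~\ref{cor: CLT-H0}, the first term converges in distribution to $N(0,\sigma_0^2)$. By Theorem~\ref{thm:bias_correct}(ii) together with \eqref{eq:bias_est}, the second term is $o_\P(1)$. Corollary~\ref{cor: est_var-H0} and Proposition~\ref{prop:est_var-B} give $\hat\sigma^2_{0,\mathrm F}\conP\sigma_0^2$ and $\hat\sigma^2_{0,\mathrm B}\conP\sigma_0^2$. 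Since $\sigma_0^2>0$ by Corollary~\ref{cor: CLT-H0}, the continuous mapping theorem and Slutsky's theorem yield that the standardized statistic converges in distribution to $N(0,1)$. Because the normal law is continuous at $z_\alpha$, the rejection probability tends to $\alpha$.

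For $p+q\le3$ we drop the bias correction. Theorem~\ref{thm:bias_correct}(i) gives $\sqrt n L^{(\tau)}_n\to0$, so $\sqrt n\tau_n=\sqrt n\{\tau_n-\E\tau_n\}+o(1)$, and the same argument applies.

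\emph{Part (ii).} Fix an alternative, so $\tau>0$ by the characterization in \cite{azadkia2019simple}. We have $\tau_n\conas\tau$, since $\txi_{1,n}\to\txi_1$ and $\txi_{2,n}\to\txi_2$. Theorem~\ref{thm:bias_correct} gives $\hatL^{(\tau)}_n=L^{(\tau)}_n+o_\P(n^{-1/2})$ with $L^{(\tau)}_n\to0$, so $\tau_n-\hatL^{(\tau)}_n\conP\tau>0$. Hence both numerators satisfy $\sqrt n(\cdot)\ge c\sqrt n$ with probability tending to one, for some $c>0$. It then suffices to show $\hat\sigma_{0,\mathrm F}=O_\P(1)$ and $\hat\sigma_{0,\mathrm B}=O_\P(1)$ under $H_1$, so that the ratio diverges to $+\infty$.

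The bound for $\hat\sigma_{0,\mathrm F}$ is deterministic. Each average of the form $n^{-3}\sum_i(\text{rank products})$ in \eqref{eq:sigma2_F} is bounded by $1$, and the coefficients are fixed constants, so $|\hat\sigma^2_{0,\mathrm F}|\le C$. One caveat: this estimator could be negative or zero in finite samples. I would interpret $\hat\sigma_{0,\mathrm F}$ as the square root of its positive part and argue via consistency for a positive limit. The limit would be the same functional \eqref{eq:sigma2_H0} of the terms $\E\{F_Y^2(Y\wedge\tY)\}$ etc., which remain well defined under $H_1$; the terms converge by the ergodic arguments behind Theorem~\ref{thm:est_var_xi}. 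If that positivity fails, it still suffices that the ratio with a bounded nonnegative denominator exceeds $z_\alpha$, treating a zero denominator as $+\infty$.

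For $\hat\sigma^2_{0,\mathrm B}$, the subsample draws are distributed as i.i.d.\ samples of size $m$ from the alternative distribution. Therefore $\E\hat\sigma^2_{0,\mathrm B}\le m\var(\tau_m)$, up to the $B$-averaging, and $m\var(\tau_m)$ is bounded by Theorem~\ref{thm:CLT-main}, whose variance limits exist under general dependence. Markov's inequality then gives $O_\P(1)$.

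The main obstacle is this last step: controlling the bootstrap estimator under $H_1$. It requires uniform-in-$m$ boundedness of $m\var(\tau_m)$, which I would obtain from the Efron--Stein bound used in \cite{Lin_Han_2025_CLT}. The bound is $\var(\tau_m)\lesssim m^{-1}$, because altering one observation changes $\sum_i\min\{R_i,R_{M(i)}\}$ by $O(m)$ via the bounded NNG degree \citep{MR682809}.
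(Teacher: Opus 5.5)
Your proof is correct. Part~(i) follows the paper's argument: the CLT under $H_0$, the $o_\P(n^{-1/2})$ rate of the bias estimator, consistency of both variance estimators, and Slutsky's theorem. You also explicitly invoke Proposition~\ref{prop:est_var-B} for the bootstrap test, which the paper's proof leaves implicit.

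Part~(ii) controls the two denominators under $H_1$ differently from the paper, and the two comparisons go in opposite directions.

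\emph{Bootstrap estimator.} The paper uses only the trivial bound $\hat\sigma^2_{0,\mathrm{B}}\le Cm$, which holds because $|\tau_m|\le 1$, together with $m=o(n)$; the statistic is then at least of order $\sqrt{n/m}\to\infty$. You instead show $\hat\sigma^2_{0,\mathrm{B}}=O_\P(1)$ from $\E\hat\sigma^2_{0,\mathrm{B}}\le m\var(\tau_m)$. This inequality is exact, not merely ``up to the $B$-averaging'': the empirical variance is at most the mean squared deviation from $\E\tau_m$, and each subsample is marginally i.i.d.\ of size $m$. Your route buys divergence at the full rate $\sqrt n$ without using $m=o(n)$. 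Under the stated hypotheses, however, the step you call the main obstacle is unnecessary. Even on your route, no separate Efron--Stein argument is needed: $m\var(\tau_m)\to\sigma_1^2+\sigma_2^2-2\sigma_{1,2}$ by Theorem~\ref{thm:CLT-main}, and a convergent sequence is bounded.

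\emph{Fast estimator.} Here the paper does more work. It identifies the probability limit of $\hat\sigma^2_{0,\mathrm{F}}$ under $H_1$, which is the same functional \eqref{eq:sigma2_H0} because the estimator uses only $\bZ$-neighbors. It then proves this limit is strictly positive via Lemma~\ref{lemA:var_cov_ineq}. Your deterministic bound $|\hat\sigma^2_{0,\mathrm{F}}|\le C$ is more elementary and already suffices for divergence. Positivity is needed only so that the statistic is well defined with probability tending to one. Rather than relying on a zero-denominator convention, you can obtain it as the paper does, from $\var[F_Y(Y\wedge\tY)\mid\bZ]>2\cov[F_Y(Y\wedge\tY),F_Y(Y\wedge\tY')\mid\bZ]>0$ together with Assumption~\ref{assump_4.4}.
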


\begin{remark}
Unfortunately, a calculation of the joint limiting distribution of $\tau_n$ and the log-likelihood ratio, analogous to that in \cite{shi2020power} (see also \cite{shi2020rate}), shows that all the tests considered in Theorem~\ref{thm:test} still have Pitman efficiency zero. Nevertheless, by adopting ideas similar to those in \citet[Section~4]{bickel2022measures}, one may either use $T_n$ solely for measuring conditional dependence, or combine the conditional independence test based on $\tau_n$ with any conditional independence test that possesses positive Pitman efficiency. The resulting combined test can be size-adjusted either via a naive Bonferroni correction or through a more refined analysis of the joint distribution of the two test statistics. In this way, one obtains a combined procedure with nonzero local efficiency.
\end{remark}

\section{Simulations} \label{sec:simu}

We conduct simulation studies to investigate the finite-sample performance of the proposed confidence intervals and conditional independence tests. 
To this end, we consider the following two data-generating models on $(\bX, Y, \bZ)$.

\begin{description}
\item[Model 1] (Uniform distribution). Let $\bZ = (Z_1, \dots, Z_q) \sim \mathrm{Unif}[0,1]^q$ follow a $q$-dimensional uniform distribution. 
Define the $p$-dimensional extension $\tilde\bZ = (\tilde{Z}_1, \dots, \tilde{Z}_p)$ of $\bZ$, with 
\[
\tilde{Z}_j = Z_j \text{ for } j \leq q,\text{ and } \tilde{Z}_j = Z_1 \text{ for } j > q \text{ (if $p>q$)} .
\] 
Let $\epsilon_0, \epsilon_1, \dots, \epsilon_p \sim_\iid \mathrm{Unif}[0,1]$ be independent noise variables. 
For a prespecified $\rho \in [0,1]$, define $Y$ and $\bX = (X_1,\dots, X_p)$ as
\begin{eqnarray}
Y = Z_q + \rho \,\epsilon_1 + \sqrt{1-\rho^2} \, \epsilon_0, \qquad \text{and } \quad  X_j = \tilde{Z}_j + \epsilon_j, \quad \text{for } j = 1\dots, p.  \label{eq:model1}
\end{eqnarray}
It is clear that as $\rho$ increases, the conditional dependence of $Y$ on $\bX$ given $\bZ$ becomes stronger. In particular, $\rho = 0$ corresponds to the null hypothesis \eqref{eq:null}, whereas $\rho = 1$ corresponds to the case where $Y$ is a deterministic function of $\bX$ given $\bZ$.

\item[Model 2] (Gaussian distribution). Model 2 is structurally similar to Model 1, except that all $\mathrm{Unif}[0,1]$ distributions are replaced with the standard normal $N(0,1)$ distributions. 
\end{description}

For the settings of dimensions $p, q$, we consider the following five scenarios 
\[
(p,q) \in \{(1,1), (1,2), (3,1), (3,3), (5,5)\}.
\] 
In each simulation run, we generate $\iid$  $\{(\bX_i, Y_i, \bZ_i)\}_{i=1}^n$ from the selected model, with sample size 
\[
n \in \{1000,5000,10000\}. 
\]
For the the confidence interval $\mathsf{CI}^{\mathrm{bc}}_{\alpha}(T) $ and testing
 methods $\mathsf{T}^{\mathrm{F,bc}}$, $\mathsf{T}^{\mathrm{B,bc}}$ that involve bias correction, we set the penalty parameter $\lambda_n = \lceil n^{-0.85} \rceil$ in Algorithm~\ref{alg2}. 
 For the 
methods $\mathsf{T}^{\mathrm{B}}$ and $\mathsf{T}^{\mathrm{B,bc}}$ that employ the $m$-out-of-$n$ bootstrap, we set the number of bootstrap replications to $B = 200$ and the subsample size to $m = \lceil n^{0.5} \rceil$. In each scenario, the true values of $T$ and $\sigma^2$ (or $\sigma_0^2$) are approximated by Monte Carlo simulation by evaluating $T_n$ and $\hsigma^2$ with $n=10^7$.

Note that {\bf Model 2} represents a challenging setting in which the observed random variables are unbounded, making bias correction---which essentially amounts to a nonparametric regression adjustment---substantially more difficult. We include this setting to contrast it with {\bf Model 1}, which represents the most idealized case, and to assess the robustness of our methods in scenarios that fall outside the scope of the available theoretical guarantees.

\vspace{0.3cm}

All code required to reproduce the results in this paper is publicly available at: \\\url{https://github.com/MuhongGao/Conditional_Independence}.

\subsection{Empirical coverage probabilities}
Tables \ref{Table_2} and \ref{Table_3} report the empirical coverage probabilities (ECPs) of the proposed confidence intervals 
$\mathsf{CI}_{\alpha}(T) $ and $\mathsf{CI}^{\mathrm{bc}}_{\alpha}(T) $ under Models 1 and 2, respectively. In addition, the tables report the relative empirical root mean squared error (rRMSE), defined below, to assess the accuracy of the limiting variance estimator $\hat{\sigma}^2$ in \eqref{eq:sigma2_est}:
\begin{eqnarray}
	\text{rRMSE} = \sqrt{\E \Big\{\Big(\frac{\hat{\sigma}^2 -\sigma^2}{\sigma^2}\Big)^2\Big\}}.   \label{eq:rRMSE}
\end{eqnarray}
From the tables, we observe that when $(p,q)=(1,1)$ and $(1,2)$, so that $p+q\leq 3$, both $\mathsf{CI}$ and $\mathsf{CI}^{\mathrm{bc}}$ achieve ECPs close to the nominal level $1-\alpha=0.9$. By contrast, when $(p,q)=(3,3)$, so that $p+q>3$, the ECPs of $\mathsf{CI}$ deteriorate substantially, whereas those of $\mathsf{CI}^{\mathrm{bc}}$ remain close to $0.9$. This pattern is especially pronounced under {\bf Model 1} when $\rho$ is not too close to $1$. These findings are consistent with Theorem~\ref{thm:bias_correct}, which indicates that bias correction is necessary when $p+q>3$, and they further demonstrate the effectiveness of the proposed bias-correction procedure. The case $(p,q)=(5,5)$, on the other hand, illustrates the curse of dimensionality, as one would expect.

The situation is also of interest under {\bf Model 2}, where the assumptions required for bias correction are violated because $(\bX,\bZ)$ is supported on an unbounded domain. In this case, as shown in Table~\ref{Table_3}, both $\mathsf{CI}$ and $\mathsf{CI}^{\mathrm{bc}}$ continue to perform well when $p+q\leq 3$, suggesting that bias correction may serve as a safe alternative to $\mathsf{CI}$, albeit at a higher computational cost. On the other hand, when $p+q>3$, $\mathsf{CI}^{\mathrm{bc}}$ no longer performs as well as it does in Table~\ref{Table_2}, although it remains substantially superior to $\mathsf{CI}$. This suggests that the bias-correction step is indeed sensitive to tail observations, and points to the potential value of applying a rank transformation, as in \citet[Section~4]{cattaneo2025rosenbaum}, to stabilize the bias correction. Given the already broad scope of the present paper, we do not pursue this direction further.

\begin{table}[tbp]
	\caption{
		\textbf{(Model 1: ECPs).} $\mathsf{CI}$ and $\mathsf{CI}^{\mathrm{bc}}$ denote the ECPs of the proposed $(1-\alpha)$-level confidence intervals $\mathsf{CI}_{\alpha}(T)$ and $\mathsf{CI}^{\mathrm{bc}}_{\alpha}(T)$, respectively, with $\alpha = 0.1$. rRMSE represents the relative empirical root mean squared error of $\hat{\sigma}^2$.
		The results are based on 1000 simulation replicates.
	}
	\label{Table_2}
	\resizebox{\linewidth}{!}{
		\begin{tabular}{cr@{\hspace{6mm}}cccccccccccc}
			\toprule
			\multicolumn{2}{c}{} & \multicolumn{3}{c}{$(p,q)=(1,1)$} & \multicolumn{3}{c}{$(p,q)=(1,2)$} & \multicolumn{3}{c}{$(p,q)=(3,3)$} & \multicolumn{3}{c}{$(p,q)=(5,5)$} \\[1mm]
			\cmidrule(lr){3-5}  \cmidrule(lr){6-8}
			\cmidrule(lr){9-11}  \cmidrule(lr){12-14}
			$\rho$ & \multicolumn{1}{c}{$n$} & $\mathsf{CI}$     & $\mathsf{CI}^{\mathrm{bc}}$  & \footnotesize rRMSE  & $\mathsf{CI}$    & $\mathsf{CI}^{\mathrm{bc}}$ & \footnotesize rRMSE  & $\mathsf{CI}$    & $\mathsf{CI}^{\mathrm{bc}}$ & \footnotesize rRMSE  & $\mathsf{CI}$    & $\mathsf{CI}^{\mathrm{bc}}$ & \footnotesize rRMSE \\[1mm] \midrule
			\multirow{3}[0]{*}{0} & 1000  & 0.88  & 0.88  & 0.40  & 0.85  & 0.86  & 0.39  & 0.81  & 0.85  & 0.40  & 0.72  & 0.85  & 0.39 \\      & 5000  & 0.88  & 0.88  & 0.18  & 0.90  & 0.90  & 0.18  & 0.79  & 0.90  & 0.17  & 0.44  & 0.89  & 0.18 \\      & 10000 & 0.89  & 0.89  & 0.13  & 0.89  & 0.89  & 0.13  & 0.78  & 0.88  & 0.12  & 0.27  & 0.89  & 0.12 \\[2mm]
			\multirow{3}[0]{*}{0.3} & 1000  & 0.89  & 0.88  & 0.40  & 0.88  & 0.88  & 0.38  & 0.78  & 0.86  & 0.40  & 0.70  & 0.84  & 0.39 \\      & 5000  & 0.89  & 0.89  & 0.18  & 0.91  & 0.91  & 0.18  & 0.74  & 0.89  & 0.17  & 0.44  & 0.82  & 0.18 \\      & 10000 & 0.90  & 0.90  & 0.12  & 0.89  & 0.88  & 0.13  & 0.74  & 0.88  & 0.12  & 0.30  & 0.78  & 0.12 \\[2mm]
			\multirow{3}[0]{*}{0.5} & 1000  & 0.88  & 0.88  & 0.39  & 0.88  & 0.88  & 0.37  & 0.70  & 0.87  & 0.40  & 0.48  & 0.86  & 0.41 \\      & 5000  & 0.88  & 0.88  & 0.18  & 0.89  & 0.90  & 0.17  & 0.64  & 0.90  & 0.17  & 0.15  & 0.81  & 0.20 \\      & 10000 & 0.91  & 0.91  & 0.12  & 0.89  & 0.89  & 0.12  & 0.59  & 0.88  & 0.12  & 0.06  & 0.72  & 0.14 \\[2mm]
			\multirow{3}[0]{*}{0.7} & 1000  & 0.89  & 0.89  & 0.40  & 0.87  & 0.88  & 0.38  & 0.48  & 0.89  & 0.52  & 0.08  & 0.91  & 0.52 \\      & 5000  & 0.90  & 0.90  & 0.18  & 0.88  & 0.90  & 0.18  & 0.28  & 0.93  & 0.26  & 0.00  & 0.87  & 0.29 \\      & 10000 & 0.89  & 0.89  & 0.13  & 0.89  & 0.89  & 0.12  & 0.19  & 0.91  & 0.20  & 0.00  & 0.71  & 0.23 \\[2mm]
			\multirow{3}[0]{*}{0.9} & 1000  & 0.85  & 0.85  & 0.54  & 0.84  & 0.92  & 0.89  & 0.01  & 0.76  & 1.93  & 0.00  & 0.72  & 0.97 \\      & 5000  & 0.89  & 0.89  & 0.26  & 0.84  & 0.91  & 0.37  & 0.00  & 0.72  & 1.35  & 0.00  & 0.80  & 0.78 \\      & 10000 & 0.88  & 0.88  & 0.18  & 0.86  & 0.92  & 0.24  & 0.00  & 0.69  & 1.11  & 0.00  & 0.92  & 0.70 \\
			\bottomrule
		\end{tabular}%
	}
\end{table}

Regarding the rRMSE, it decreases uniformly with $n$ across all scenarios, providing empirical evidence for the consistency of $\hat{\sigma}^2$ established in Theorem \ref{thm: est_var-main}. Moreover, the rRMSE increases noticeably with both $p+q$ and $\rho$, indicating that the convergence rate of $\hat{\sigma}^2$ deteriorates in higher-dimensional settings and under stronger dependence. Such behavior is in line with theoretical intuition.

\subsection{Empirical powers of tests of $H_0$} \label{sec:powers}

The empirical powers under {\bf Models 1} and {\bf 2} are reported in Figures~\ref{Figure_1} and \ref{Figure_2}, respectively.
We begin with the results under {\bf Model 1} (Figure~\ref{Figure_1}). When $(p,q) = (1,1), (1,2)$, or $(3,1)$, all four methods perform similarly well, with power curves that nearly overlap. By contrast, when $(p,q) = (3,3)$ or $(5,5)$, the bias-corrected methods $\mathsf{T}^{\mathrm{F,bc}}$ and $\mathsf{T}^{\mathrm{B,bc}}$ clearly outperform their non-bias-corrected counterparts $\mathsf{T}^{\mathrm{F}}$ and $\mathsf{T}^{\mathrm{B}}$, and this advantage becomes more pronounced as $(p,q)$ increases. This pattern is in line with Theorem~\ref{thm:bias_correct}, which shows that bias correction is unnecessary when $p+q < 4$, whereas its benefit becomes increasingly substantial as the dimension grows.
Furthermore, when $p$ and $q$ are large, although the non-bias-corrected methods $\mathsf{T}^{\mathrm{F}}$ and $\mathsf{T}^{\mathrm{B}}$ control size under $H_0$ (that is, when $\rho = 0$) well below the nominal level $\alpha = 0.05$, their power increases much more slowly as $\rho$ grows. This suggests that, under {\bf Model 1}, the bias $L_n = \E(\tau_n) - \tau$ is positive, rendering the tests more conservative and thereby lowering their rejection probabilities.
Finally, comparing the two variance-estimation methods, we observe little difference in either size or empirical power: $\mathsf{T}^{\mathrm{F,bc}}$ and $\mathsf{T}^{\mathrm{B,bc}}$ behave almost identically, and the same is true for $\mathsf{T}^{\mathrm{F}}$ and $\mathsf{T}^{\mathrm{B}}$.

\begin{table}[tbp]
	\caption{
		\textbf{(Model 2: ECPs).} $\mathsf{CI}$ and $\mathsf{CI}^{\mathrm{bc}}$ denote the ECPs of the proposed $(1-\alpha)$-level confidence intervals $\mathsf{CI}_{\alpha}(T)$ and $\mathsf{CI}^{\mathrm{bc}}_{\alpha}(T)$, respectively, with $\alpha = 0.1$. rRMSE represents the relative empirical root mean squared error of $\hat{\sigma}^2$. The results are based on 1000 simulation replicates.
	}
	\label{Table_3}
	\resizebox{\linewidth}{!}{
		\begin{tabular}{cr@{\hspace{6mm}}cccccccccccc}\toprule\multicolumn{2}{c}{} & \multicolumn{3}{c}{$(p,q)=(1,1)$} & \multicolumn{3}{c}{$(p,q)=(1,2)$} & \multicolumn{3}{c}{$(p,q)=(3,3)$} & \multicolumn{3}{c}{$(p,q)=(5,5)$} \\[1mm]
			\cmidrule(lr){3-5}  \cmidrule(lr){6-8}
			\cmidrule(lr){9-11}  \cmidrule(lr){12-14}
			$\rho$ & \multicolumn{1}{c}{$n$} & \multicolumn{1}{c}{$\mathsf{CI}$} & \multicolumn{1}{c}{$\mathsf{CI}^{\mathrm{bc}}$ } & \multicolumn{1}{c}{\footnotesize rRMSE} & \multicolumn{1}{c}{$\mathsf{CI}$} & \multicolumn{1}{c}{$\mathsf{CI}^{\mathrm{bc}}$ } & \multicolumn{1}{c}{\footnotesize rRMSE} & \multicolumn{1}{c}{$\mathsf{CI}$} & \multicolumn{1}{c}{$\mathsf{CI}^{\mathrm{bc}}$ } & \multicolumn{1}{c}{\footnotesize rRMSE} & \multicolumn{1}{c}{$\mathsf{CI}$} & \multicolumn{1}{c}{$\mathsf{CI}^{\mathrm{bc}}$ } & \multicolumn{1}{c}{\footnotesize rRMSE} \\[1mm]
			\midrule\multirow{3}[1]{*}{0} & 1000  & 0.84  & 0.84  & 0.42  & 0.86  & 0.84  & 0.41  & 0.75  & 0.83  & 0.42  & 0.67  & 0.80  & 0.44 \\      & 5000  & 0.90  & 0.89  & 0.19  & 0.89  & 0.88  & 0.19  & 0.64  & 0.86  & 0.18  & 0.28  & 0.87  & 0.19 \\      & 10000 & 0.90  & 0.90  & 0.13  & 0.90  & 0.90  & 0.13  & 0.61  & 0.88  & 0.13  & 0.12  & 0.87  & 0.13 \\[2mm]
			\multirow{3}[0]{*}{0.3} & 1000  & 0.84  & 0.83  & 0.41  & 0.86  & 0.83  & 0.40  & 0.71  & 0.83  & 0.42  & 0.65  & 0.71  & 0.44 \\      & 5000  & 0.91  & 0.90  & 0.18  & 0.89  & 0.88  & 0.18  & 0.59  & 0.85  & 0.18  & 0.36  & 0.67  & 0.19 \\      & 10000 & 0.91  & 0.91  & 0.13  & 0.91  & 0.91  & 0.12  & 0.53  & 0.86  & 0.12  & 0.19  & 0.55  & 0.13 \\[2mm]
			\multirow{3}[0]{*}{0.5} & 1000  & 0.85  & 0.85  & 0.39  & 0.85  & 0.86  & 0.38  & 0.60  & 0.83  & 0.42  & 0.43  & 0.70  & 0.45 \\      & 5000  & 0.91  & 0.91  & 0.17  & 0.89  & 0.89  & 0.17  & 0.40  & 0.83  & 0.18  & 0.09  & 0.57  & 0.20 \\      & 10000 & 0.91  & 0.91  & 0.12  & 0.92  & 0.90  & 0.12  & 0.29  & 0.79  & 0.12  & 0.02  & 0.41  & 0.14 \\[2mm]
			\multirow{3}[0]{*}{0.7} & 1000  & 0.84  & 0.84  & 0.38  & 0.84  & 0.87  & 0.39  & 0.30  & 0.88  & 0.51  & 0.06  & 0.72  & 0.52 \\      & 5000  & 0.90  & 0.90  & 0.17  & 0.88  & 0.88  & 0.17  & 0.07  & 0.84  & 0.27  & 0.00  & 0.51  & 0.26 \\      & 10000 & 0.90  & 0.90  & 0.12  & 0.91  & 0.89  & 0.12  & 0.02  & 0.76  & 0.21  & 0.00  & 0.25  & 0.21 \\[2mm]
			\multirow{3}[1]{*}{0.9} & 1000  & 0.84  & 0.84  & 0.49  & 0.78  & 0.95  & 1.03  & 0.00  & 0.94  & 1.44  & 0.00  & 0.97  & 0.68 \\      & 5000  & 0.90  & 0.90  & 0.23  & 0.78  & 0.86  & 0.46  & 0.00  & 0.97  & 1.10  & 0.00  & 0.95  & 0.45 \\      & 10000 & 0.90  & 0.89  & 0.16  & 0.81  & 0.82  & 0.32  & 0.00  & 0.98  & 0.96  & 0.00  & 0.76  & 0.42 \\\bottomrule\end{tabular}%
	}
\end{table}

We next examine the results under {\bf Model 2} (Figure~\ref{Figure_2}). Overall, the patterns are highly consistent with those observed under {\bf Model 1}, exhibiting similar trends and leading to the same qualitative conclusions. A closer inspection reveals that the discrepancies are, if anything, slightly more pronounced under {\bf Model 2}. In particular, (i) when $(p,q) = (1,2)$ or $(3,1)$, the four methods exhibit less overlap in their power curves; and (ii) when $(p,q) = (5,5)$ and the sample size is relatively small (e.g., $n = 1000$), the bias-corrected methods $\mathsf{T}^{\mathrm{F,bc}}$ and $\mathsf{T}^{\mathrm{B,bc}}$ exhibit size inflation under $H_0$ (that is, when $\rho = 0$), with rejection probabilities around $0.2$, substantially above the nominal level $0.05$. However, this issue is quickly alleviated as the sample size increases.

Overall, these results show that, unlike the confidence-interval counterpart, the proposed testing procedures exhibit consistently stable and favorable performance across different models, even in the presence of unbounded distributions. This is in line with the general intuition that testing is often statistically easier than estimation.

\subsection{Comparison of variance estimators under $H_0$}

We next take a closer look at the performance of the two asymptotic variance estimators under $H_0$: the fast $k$NN-based estimator $\hat{\sigma}^2_{0,\mathrm{F}}$ in \eqref{eq:sigma2_F}$,$ and the $m$-out-of-$n$ bootstrap estimator $\hat{\sigma}^2_{0,\mathrm{B}}$ in \eqref{eq:sigma2_B}.

We focus on the following three aspects:
\begin{enumerate}[label=(\roman*)]
\item the rRMSE for estimating $\sigma_0^2$ (defined analogously to \eqref{eq:rRMSE});
\item the ECP of $\tau$, defined as the empirical probability that the $(1-\alpha)$-level confidence interval
\[
\Big( \tau_n -z_{1-\alpha/2} \hat{\sigma} /\sqrt{n} \ , \  \tau_n +z_{1-\alpha/2} \hat{\sigma} /\sqrt{n}\Big)
\]
covers the true value $\tau = 0$ under $H_0$, where $\hat{\sigma}$ denotes either $\hat{\sigma}_{0,\mathrm{F}}$ or $\hat{\sigma}_{0,\mathrm{B}}$, and where we set $\alpha = 0.05$;
\item the CPU time required to compute each estimator. All numerical experiments were implemented in MATLAB R2023b on a Windows desktop equipped with an Intel Xeon Platinum 8370C 64-core processor.
\end{enumerate}

Figures~\ref{Figure_4} and \ref{Figure_5} report the results for {\bf Models 1} and {\bf 2}, respectively. The overall patterns in the two figures are nearly identical. We therefore focus on Figure~\ref{Figure_4} under {\bf Model 1}.
For the rRMSE, the bootstrap estimator $\hat{\sigma}^2_{0,\mathrm{B}}$ consistently maintains a relatively low level, whereas the fast estimator $\hat{\sigma}^2_{0,\mathrm{F}}$ exhibits larger values when the sample size is small (e.g., $n = 1000$). As $n$ increases, however, the rRMSE of $\hat{\sigma}^2_{0,\mathrm{F}}$ decreases substantially, eventually becoming comparable to, and even slightly smaller than, that of $\hat{\sigma}^2_{0,\mathrm{B}}$ when $n = 10000$. This suggests that $\hat{\sigma}^2_{0,\mathrm{F}}$ converges more slowly than $\hat{\sigma}^2_{0,\mathrm{B}}$, but attains comparable performance once the sample size is sufficiently large.

Despite these differences in rRMSE, they do not translate into noticeable differences in statistical inference. Both in terms of empirical power (see Section~\ref{sec:powers}) and ECP, the two estimators yield nearly identical results, with their corresponding curves largely overlapping.

Finally, in terms of computational efficiency, the $k$NN-based estimator $\hat{\sigma}^2_{0,\mathrm{F}}$ is substantially faster to compute than the bootstrap estimator $\hat{\sigma}^2_{0,\mathrm{B}}$. Even with a relatively small number of bootstrap replications ($B = 200$), the $m$-out-of-$n$ bootstrap incurs a much higher computational cost than the $k$NN-based estimator.

In summary, $\hat{\sigma}^2_{0,\mathrm{B}}$ achieves lower rRMSE when the sample size is small, whereas $\hat{\sigma}^2_{0,\mathrm{F}}$ becomes increasingly competitive, faster, and more stable as the sample size grows.

\begin{figure}[!htbp]
\centering
\raisebox{1 em}{
\includegraphics[width= \linewidth]
{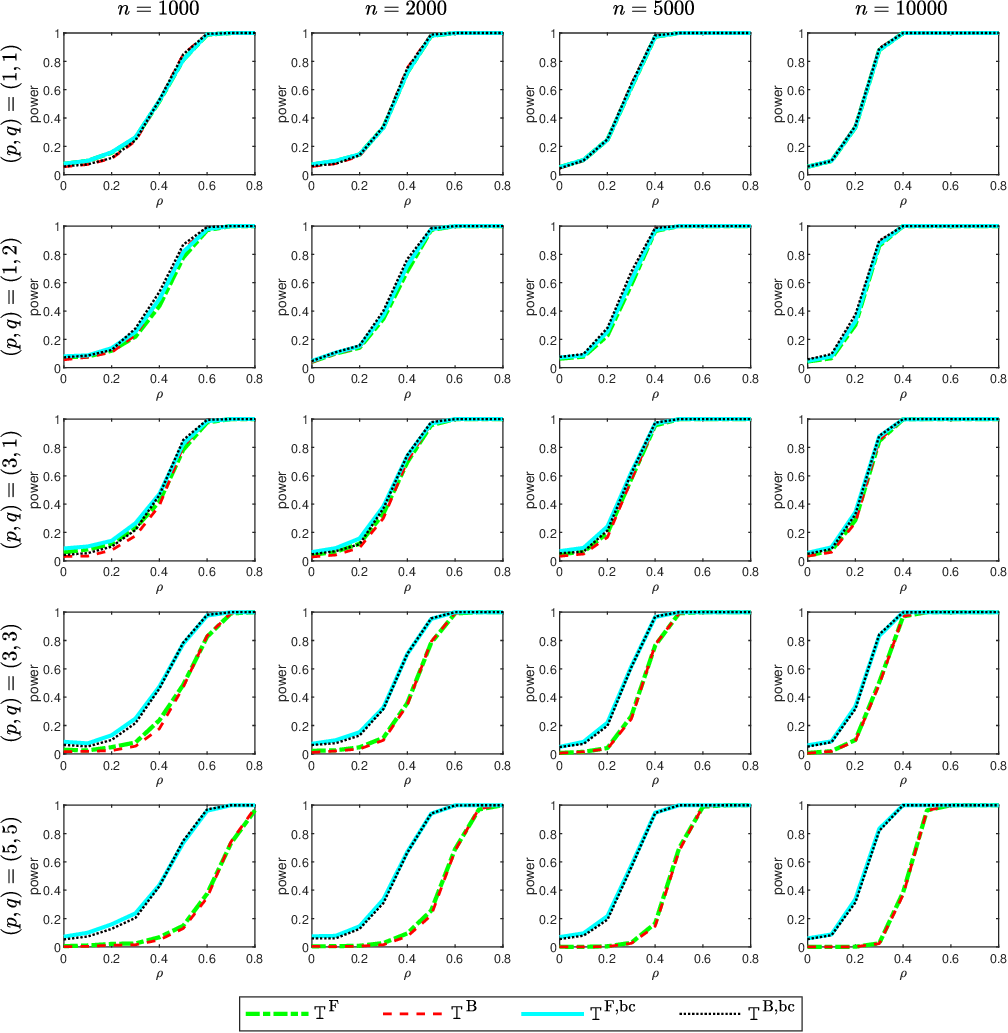}}
\caption{\textbf{(Model 1: power comparison)}
Empirical powers of the four proposed tests in \eqref{eq:four_tests} under Model 1, at nominal significance level $\alpha = 0.05$. The $y$-axis shows rejection frequencies based on $1000$ replicates, while the $x$-axis represents the model parameter $\rho$ in \eqref{eq:model1}, which characterizes the strength of conditional correlation. The case $\rho = 0$ corresponds to the null hypothesis.
}
\label{Figure_1}
\end{figure}

\begin{figure}[!htbp]
\centering
\raisebox{1 em}{
\includegraphics[width= \linewidth]
{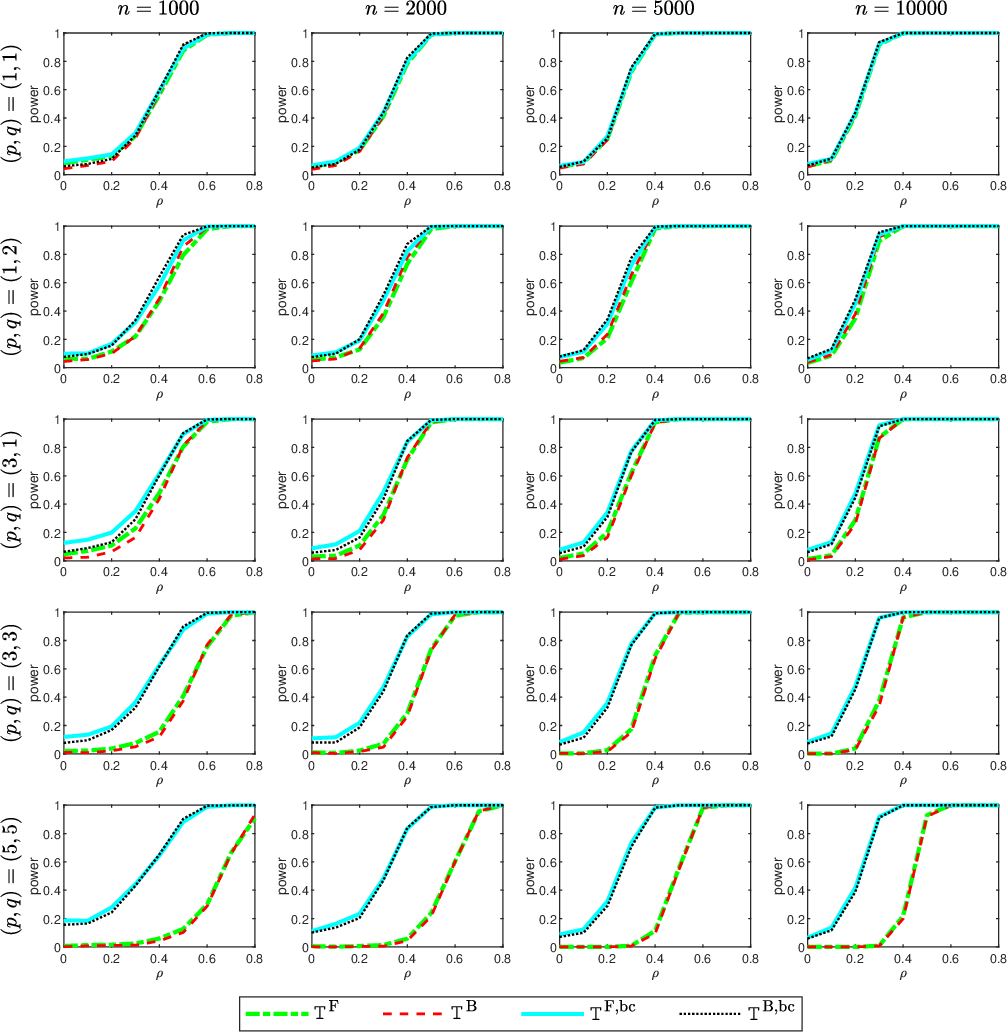}}
\caption{\textbf{(Model 2: power comparison)}
Empirical powers of the four proposed tests in \eqref{eq:four_tests} under Model 3, at  nominal  significance level $\alpha = 0.05$. The $y$-axis shows rejection frequencies based on $1000$ replicates, while the $x$-axis represents the model parameter $\rho$ in \eqref{eq:model1}, which characterizes the strength of conditional correlation. The case $\rho = 0$ corresponds to the null hypothesis.
}
\label{Figure_2}
\end{figure}

\begin{figure}[!htbp]
\centering
\raisebox{1 em}{
\includegraphics[width= \linewidth]
{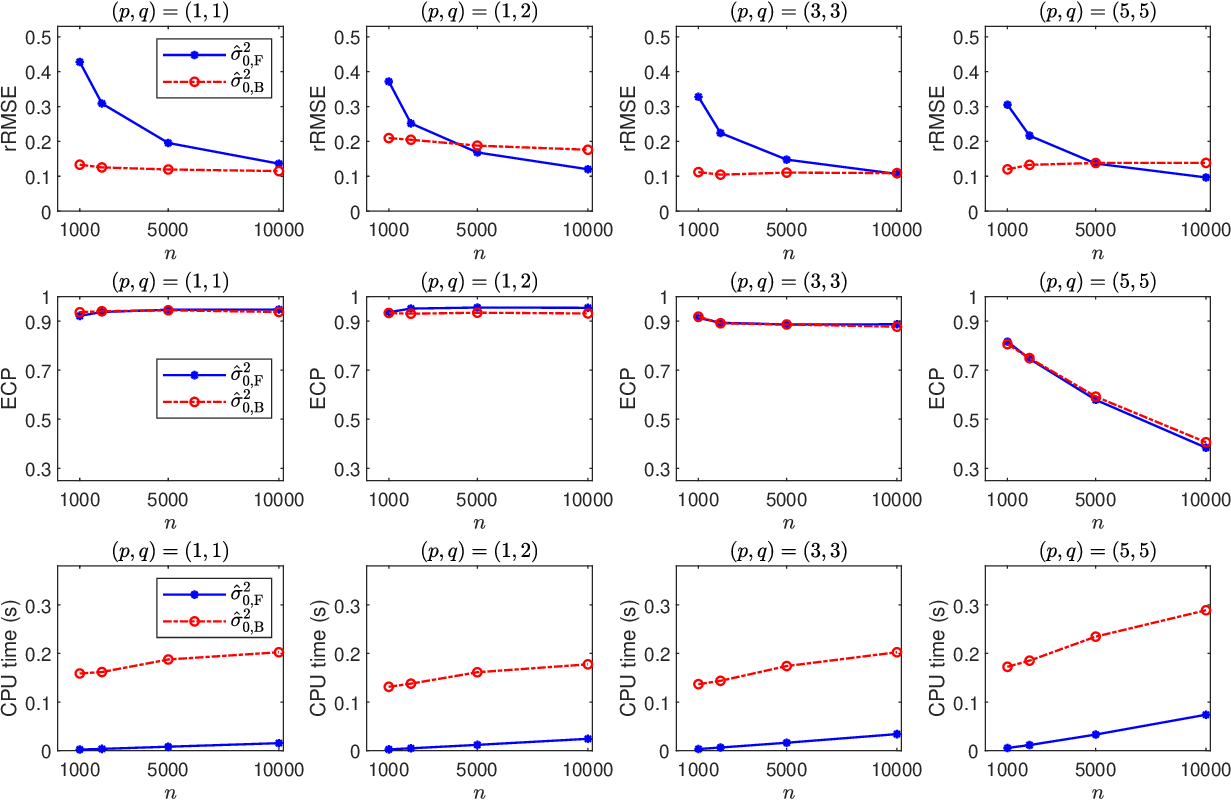}}
\caption{
\textbf{(Model 1: comparison of variance estimators under $H_0$)} 
For the fast $k$NN-based estimator $\hat{\sigma}^2_{0,\mathrm{F}}$ in \eqref{eq:sigma2_F} and the $m$-out-of-$n$ bootstrap estimator $\hat{\sigma}^2_{0,\mathrm{B}}$ in \eqref{eq:sigma2_B}, the first row reports the rRMSE values, the second row reports the ECP values, and the third row presents the CPU time (in seconds). All values are averaged over $1000$ simulation replicates.
}
\label{Figure_4}
\end{figure}

\begin{figure}[!htbp]
\centering
\raisebox{1 em}{
\includegraphics[width= \linewidth]
{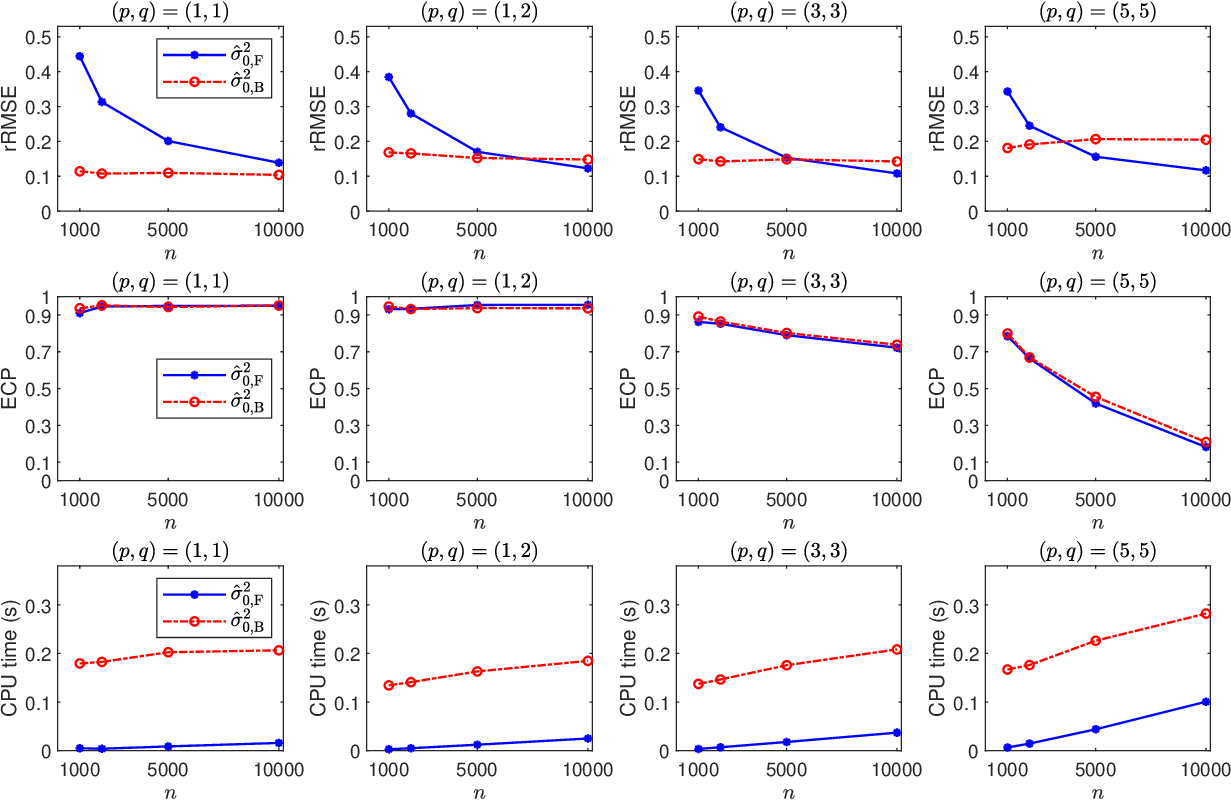}}
\caption{
\textbf{(Model 2: comparison of variance estimators under $H_0$)} 
For the fast $k$NN-based estimator $\hat{\sigma}^2_{0,\mathrm{F}}$ in \eqref{eq:sigma2_F} and the $m$-out-of-$n$ bootstrap estimator $\hat{\sigma}^2_{0,\mathrm{B}}$ in \eqref{eq:sigma2_B}, the first row reports the rRMSE values, the second row reports the ECP values, and the third row presents the CPU time (in seconds). All values are averaged over $1000$ simulation replicates.
}
\label{Figure_5}
\end{figure}

\clearpage

{\centering\huge Supplement to ``Limit theorems of Azadkia-Chatterjee's conditional graph correlation'' \par}

\appendix

\vspace{1cm}

The appendix is organized as follows. Appendix~\ref{secA:assump} presents additional assumptions and notation. Appendix~\ref{secA:proofs-1} provides the proofs of Lemmas~\ref{lemma:o_d} and \ref{lemma:two_NNGs} on NNG theory. Appendix~\ref{secA:proofs-2} contains the proofs of Theorems~\ref{thm:var_xi},  \ref{thm:est_var_xi}, and Proposition \ref{prop:nlogn} in Section~\ref{sec:Lin_Han}. 
Appendix~\ref{secA:proofs-3} presents the proof of Theorem~\ref{thm:CLT-main} in Section~\ref{sec:theory}, while Appendix~\ref{secA:proofs-4} provides the proofs of the remaining theorems in Section~\ref{sec:theory}. Appendix~\ref{secA:aux-lemmas} collects the auxiliary lemmas required for proving Theorem~\ref{thm:CLT-main}.

\section{Supplementary assumptions and notation} \label{secA:assump}

The following assumptions, Assumptions~\ref{assump_A.1}--\ref{assump_A.4}, are imposed on the joint distribution of $(Y, \bZ)$. 
For $\bz \in \supp(\bZ)$ and $t \in \mathbbR$, define
$G_\bz(t) = \E[\Ind(Y \geq t) \mid \bZ = \bz] $.
As an important remark, 
we also assume that the corresponding variants of Assumptions~\ref{assump_A.1}--\ref{assump_A.4} remain valid when $\bZ$ is replaced by $(\bX, \bZ)$.

\begin{assumption}[Regularity conditions on $(Y, \bZ)$ distribution]\label{assump_A.1} We assume that
\begin{enumerate}[label=(\roman*)]
\item 	$\supp(\bZ)$ is compact, and has a Lipschitz boundary (\cite{grisvard2011elliptic}, Definition 1.2.1.1).
\item $\bZ$ admits a density $f_\bZ$ satisfying $\inf_{\bZ \in \supp(\bZ)} f_\bZ \geq C $ for some constant $C>0$. Moreover, $f_\bZ$ is Lipschitz continuous on $\supp(\bZ)$. 
\item For every $t \in \supp(Y)$, there exists a twice continuously differentiable function $\tilde{G}.(t): \mathbbR^q \to [0,1]$ such that $\tilde{G}.(t) = G.(t)$ on $\supp(\bZ)$, and 
\begin{eqnarray*}
\sup_{t \in \supp(Y), \bz' \in \mathbbR^q} \Big(\|\nabla_{\bz} \tilde{G}_{\bz'}(t)\| + \|\nabla_{\bz} \tilde{G}_{\bz'}(t)\|_2\Big) < \infty,
\end{eqnarray*}
where $\|\cdot \|_2$ denotes the matrix spectral norm. 
\end{enumerate}
\end{assumption}

\begin{assumption}[Boundness condition on $\bZ$]\label{assump_A.2}
For any $r>0$, there exists some constant $C_r$ only depending on $r$, such that
\begin{eqnarray*}
\big\{\E(\|\bZ_1-\bZ_{N(1)}\|^r)\big\}^{1/r} \leq C_r n^{-1/q}
\end{eqnarray*}
holds for sufficiently large $n$. 
\end{assumption}

For a positive integer $r$, let $\Lambda_r$ denote the set of multi-indices in $\lbr q \rbr$, with total degree $r$, i.e., $\Lambda_r = \big\{\boldsymbol{\alpha}=(\alpha_1,\dots, \alpha_q): \sum_{i=1}^q \alpha_i = r; \alpha_i \in \mathbb{Z}^{\geq 0}, \text{for } i \in \lbr q \rbr\big\}$, 
where $\mathbb{Z}^{\geq 0}$ is the set of non-negative integers.
For $\boldsymbol{\alpha}=(\alpha_1,\dots, \alpha_q) \in \Lambda_r$ and a $q$-variate function $f$, let $D^{\boldsymbol{\alpha}}f = \partial^{\alpha_1+\dots+\alpha_q}f/\partial z_1^{\alpha_1}\cdots z_q^{\alpha_q}$ be the partial derivative of $f$ on $\Lambda_r $.  
Recall the basis functions $\boldsymbolp(\bz) = (p_1(\bz),...,p_K(\bz))^\top$ in Algorithm \ref{alg2}. 
For integer $r>0$, let $\zeta_{r,K} = \max_{\boldsymbol{\alpha} \in \Lambda_r} \sup_{\bz \in \mathbbR^q}
\|D^{\boldsymbol{\alpha}}\boldsymbolp(\bz)\|$, and let $\zeta_{0,K} = 
\sup_{\bz \in \mathbbR^q}
\|\boldsymbolp(\bz)\|$.
Let $\underline{\lambda}_K = \lambda_{\min}\E\{\boldsymbolp(\bZ)\boldsymbolp(\bZ)^\top\}$.
\begin{assumption}[Smoothness condition for $G_\bz(t)$] \label{assump_A.3}
Assume that
both $\max_{\boldsymbol{\alpha} \in \Lambda_1} \|D^{\boldsymbol{\alpha} } G.(t)\|_{\infty}$ and $\max_{\boldsymbol{\alpha} \in \Lambda_{\lfloor q/2 \rfloor+1}} \|D^{\boldsymbol{\alpha} } G.(t)\|_{\infty}$ are uniformly bounded over $t \in \supp(Y)$. 
\end{assumption}

\begin{assumption}[Conditions on penalty parameter and basis functions]\label{assump_A.4}
Let $\lambda_n$ be the penalty parameter in Algorithm \ref{alg2}. 
Assume $c_1 \cdot n^{-c} \leq \lambda_n \leq  c_2 \cdot n^{-c}$ for some constants $c, c_1, c_2>0$. Assume $K \to \infty$ as $n \to \infty$, and $\underline{\lambda}_K > \lambda_n $ for sufficiently large $n$. Assume $K/n\to 0$, $\zeta_{0,K} = o\big((n/\log n)^{1/4}(\underline{\lambda}_K-\lambda_n)^{1/2}\big)$, and $\underline{\lambda}_K^{-1}\,\zeta_{0,K}^2\,\log K=o(n)$. 
\end{assumption}

Assumptions~\ref{assump_A.1}--\ref{assump_A.4} correspond to Assumptions~3.1, 4.1, 4.2, and 4.5 in \cite{azadkia2026biascorrection}, respectively. Specifically, Assumption~\ref{assump_A.1} imposes regularity conditions on the distribution of $(Y, \bZ)$, which are required to establish the bias rate in Theorem~\ref{thm:bias_correct}(i). 
Assumptions~\ref{assump_A.2} and \ref{assump_A.3} are used to develop the general bias correction theory. Assumption~\ref{assump_A.4} is imposed for the ridge regression estimator in Algorithm~\ref{alg2}, ensuring its effectiveness for bias correction. We refer the reader to \cite{azadkia2026biascorrection} for further details.

\paragraph*{Notation in proofs.}
For a sequence of random vectors $\bX_1,\dots, \bX_n \in \mathbbR^d$,
we use the boldface notation  $\bfX = (\bX_1,\dots, \bX_n) \in (\mathbbR^{d})^n$ to denote the joint vector collecting all $n$ samples. For $\bw \in \mathbbR^d$ and $r >0$, let $\mathcal{B}(\bw,r)$ denote the $d$-dimensional ball with center $\bw$ and radius $r$. For $\bw_1, \bw_2 \in \mathbbR^d$, let $\tilde{\mathcal{B}}(\bw_1,\bw_2) = \mathcal{B}(\bw_1,\|\bw_2-\bw_1\|)$ denote the $d$-dimensional ball centered at $\bw_1$, with $\bw_2$ lying on its surface. 
For a $d$-dimensional manifold $M \subseteq \mathbbR^d$, let $V(M)$ denote the volume of this manifold. For $x \in \mathbbR$ and $A \subseteq \mathbbR$, write $\Ind_A(x) = \Ind(x \in A)$.
For a random vector $\bZ \in \mathbbR^d$,
let $\tmu_{\bz} := \mu_{Y|\bZ=\bz}$ 
denote the conditional law of $Y$ given $\bZ=\bz$. Refer to Lemma \ref{lemA:mu} below for the existence of $\tmu_{\bz}$. Similarly, let $\tmu_{(\bx,\bz)} := \mu_{Y|(\bX,\bZ)=(\bx,\bz)}$ 
denote the conditional law of $Y$ given $(\bX,\bZ)=(\bx,\bz)$.
\begin{lemmaA}[Theorem 2.1.22 and Exercise 4.1.18 in \cite{MR3930614}] \label{lemA:mu}

For each Borel set $A \subseteq \mathbbR$, there is a measurable map 
\begin{eqnarray*}
\bz \longmapsto \tmu_\bz(A), \quad \text{from }\mathrm{supp}(\bZ) \text{ into } [0,1],
\end{eqnarray*}
such that 
\begin{enumerate}
\item for each $A$, $\tmu_\bZ(A)$ is a version of $\P(Y \in A \mid \bZ)$;
\item with probability one, $\tmu_\bZ$ is a probability measure on $\mathbbR$. 
\end{enumerate}
\end{lemmaA}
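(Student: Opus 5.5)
This is the standard existence of a regular conditional distribution for a real-valued random variable given a random vector; I would prove it directly.

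\emph{Step 1: rational skeleton.} For each rational $r\in\mathbb{Q}$ fix a version $g_r(\bZ)$ of $\P(Y\le r\mid \bZ)$, with $g_r$ Borel measurable on $\mathbbR^q$ (Doob--Dynkin). Since $\mathbb{Q}$ is countable, there is a $\P_\bZ$-null Borel set $\mathcal{N}$ off which:
\begin{enumerate}[label=(\alph*)]
\item $r\mapsto g_r(\bz)$ is nondecreasing on $\mathbb{Q}$;
\item $g_r(\bz)\in[0,1]$;
\item $g_r(\bz)\to 0$ as $r\to-\infty$ and $g_r(\bz)\to 1$ as $r\to\infty$ along integers;
\item $g_{r+1/k}(\bz)\downarrow g_r(\bz)$ as $k\to\infty$.
\end{enumerate}
Each of these holds almost surely by monotonicity of conditional expectation and the conditional monotone convergence theorem, applied to the countably many relevant events.

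\emph{Step 2: CDF and measure.} For $\bz\notin\mathcal N$, set $F_\bz(t)=\inf_{r>t,\,r\in\mathbb Q} g_r(\bz)$. By (a)--(d), $F_\bz$ is a genuine distribution function, so it defines a unique Borel probability measure $\tmu_\bz$ on $\mathbbR$ via Lebesgue--Stieltjes. For $\bz\in\mathcal N$, set $\tmu_\bz=\P_Y$. This already gives item~2, and in fact $\tmu_\bz$ is a probability measure for every $\bz$.

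\emph{Step 3: measurability and the version property.} Let $\mathcal D$ be the class of Borel sets $A$ such that $\bz\mapsto\tmu_\bz(A)$ is measurable and $\E\{\tmu_\bZ(A)\Ind(\bZ\in B)\}=\P(Y\in A,\bZ\in B)$ for all Borel $B$.
\begin{itemize}
\item[$\bullet$] \emph{$\mathcal D$ contains $(-\infty,t]$.} Here $\tmu_\bz((-\infty,t])=F_\bz(t)$ is a countable infimum of measurable functions, hence measurable. Conditional monotone convergence gives $F_\bZ(t)=\lim_{r\downarrow t} g_r(\bZ)=\P(Y\le t\mid\bZ)$ almost surely.
\item[$\bullet$] \emph{$\mathcal D$ is a Dynkin system.} It is closed under proper differences and under increasing unions, by linearity and the monotone convergence theorem respectively.
\end{itemize}
The half-lines form a $\pi$-system generating the Borel $\sigma$-field, so Dynkin's $\pi$--$\lambda$ theorem gives $\mathcal D=\mathcal B(\mathbbR)$. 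This is exactly item~1. Replacing $\mathbbR^q$ by $\mathrm{supp}(\bZ)$ only restricts the domain, which changes nothing.

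The only step requiring care is Step~1: one must collect the countably many almost-sure properties before defining $F_\bz$. Right-continuity at irrational $t$ comes from the infimum construction rather than from (d) directly. The rest is routine measure theory.
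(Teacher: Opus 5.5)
Your proof is correct and is essentially the argument behind the paper's citation. The paper does not prove this lemma itself; it invokes Durrett's existence theorem for regular conditional distributions together with the exercise that factors the kernel through $\bZ$, and the textbook proof of that result is exactly your rational-skeleton construction of $F_\bz$ followed by a $\pi$--$\lambda$ argument. Applying Doob--Dynkin at the outset, so that each $g_r$ is a Borel function of $\bz$, and setting $\tmu_\bz=\P_Y$ on the null set, gives the measurability in $\bz$ directly and even makes $\tmu_\bz$ a probability measure for every $\bz$.
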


\section{Proofs of Lemmas \ref{lemma:o_d} and \ref{lemma:two_NNGs}}  \label{secA:proofs-1}

Both the proofs of Lemmas~\ref{lemma:o_d} and \ref{lemma:two_NNGs} require the following  Lemma \ref{lemA:Xn_conP_C}. 
\begin{lemmaA}\label{lemA:Xn_conP_C}
Let $\{X_n\}_{n=1,2\dots}$ be a sequence of nonnegative one-dimensional random variables. Let $C>0$ be some constant. Assume that
$\liminf_{n\to \infty} X_n \geq C$ holds with probability one, and $\limsup_{n\to \infty}\E(X_n)  \leq C$. Then 
\begin{eqnarray*}
X_n \conP C, \quad \text{as } n \to \infty.
\end{eqnarray*}
\end{lemmaA}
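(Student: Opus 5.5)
The plan is to combine Fatou's lemma with the nonnegativity of $X_n$ and the matching upper bound on the means. Write $X_n - C = (X_n - C)^+ - (X_n - C)^-$. We show that $(X_n-C)^-$ goes to zero almost surely and that $\E\{(X_n-C)^+\}$ goes to zero. Together these give convergence in probability.

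First, the negative part. Since $\liminf_n X_n \ge C$ almost surely, for every $\epsilon>0$ we have almost surely $X_n > C-\epsilon$ eventually. Hence $(X_n - C)^- \to 0$ almost surely. Because $X_n \ge 0$, we also have $0 \le (X_n - C)^- \le C$. The bounded convergence theorem then gives $\E\{(X_n-C)^-\} \to 0$.

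Next, the positive part. Using the decomposition,
\[
\E\{(X_n-C)^+\} = \E(X_n) - C + \E\{(X_n-C)^-\}.
\]
Taking $\limsup$ and using $\limsup_n \E(X_n)\le C$ yields $\limsup_n \E\{(X_n-C)^+\} \le 0$. Since this quantity is nonnegative, $\E\{(X_n-C)^+\}\to 0$, and Markov's inequality gives $(X_n-C)^+ \conP 0$.

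Finally, almost sure convergence of $(X_n-C)^-$ implies its convergence in probability. Combining the two parts gives $|X_n - C| = (X_n-C)^+ + (X_n-C)^- \conP 0$, which is the claim.

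The only delicate point is that $\E(X_n)$ might be infinite for finitely many $n$. This does not matter: the hypothesis $\limsup_n \E(X_n)\le C$ forces $\E(X_n)<\infty$ for all large $n$, and the decomposition above is then legitimate. Nonnegativity of $X_n$ is used essentially, since it supplies the uniform bound on the negative part needed for bounded convergence. No other obstacle is anticipated.
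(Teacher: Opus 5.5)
Your proof is correct, and it follows a different route from the paper's.

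The paper argues by contradiction. It uses the almost-sure hypothesis only to get $\P(X_n > C-\epsilon) > 1-\epsilon$ eventually. It then supposes $\P(X_{i_k} > C+\delta) > \delta$ along a subsequence and lower-bounds $\E(X_{i_k})$ by hand:
\begin{itemize}
\item mass below $C-\epsilon$ counts as $0$, by nonnegativity;
\item mass in $(C-\epsilon, C+\delta]$ counts as at least $C-\epsilon$;
\item mass above $C+\delta$ counts as at least $C+\delta$.
\end{itemize}
This gives a bound tending to $C+\delta^2 > C$, contradicting $\limsup_n \E(X_n)\le C$.

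You instead argue directly. You split into positive and negative parts, use bounded convergence for $\E\{(X_n-C)^-\}$, and use the identity $\E\{(X_n-C)^+\} = \E(X_n) - C + \E\{(X_n-C)^-\}$. This replaces the subsequence and tail bookkeeping. It also proves more: $\E|X_n - C| \to 0$, i.e.\ $L^1$ convergence and in particular $\E(X_n)\to C$, from which $X_n \conP C$ is immediate. The paper's argument delivers only convergence in probability.

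Both proofs use nonnegativity at the same point, to cap the deficit below $C$ by $C$. Both actually need only $(X_n-C)^- \conP 0$ rather than the almost-sure $\liminf$ condition, since bounded convergence also holds under convergence in probability.

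The paper's version is more hands-on, working with tail probabilities explicitly. Yours is shorter and avoids the fiddly inequalities; the paper's version, as written, also contains small slips such as $1+\delta$ where $C+\delta$ is meant.
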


\begin{proof}
By condition that $\P(\liminf_{n\to \infty} X_n \geq C)=1$, for any
$\epsilon>0$, we have
\begin{eqnarray}
\P(X_n > C-\epsilon) >1-\epsilon \label{eq:lemA:Xn_conP_C:1}
\end{eqnarray}
holds for sufficiently large $n$. 
Next, we share prove that for any $\epsilon>0$, 
\begin{eqnarray*}
\P(X_n<C+\epsilon) >1-\epsilon
\end{eqnarray*}
holds for sufficiently large $n$. Then the proof is completed by combing the above two results. 

If this is not true, then there exists some $\delta>0$, and a subsequence $X_{i_1},X_{i_2}\dots$ such that 
\begin{eqnarray*}
\P(X_{i_k} >1+\delta )>\delta, \quad \text{for all } k=1,2\dots.
\end{eqnarray*}
Combining this with \eqref{eq:lemA:Xn_conP_C:1}, we obtain that, for any $\epsilon > 0$, the following holds for sufficiently large $k$:
\begin{eqnarray*}
\E(X_{i_k}) 
&\geq& \P(X_{i_k}\leq 1-\epsilon)\cdot 0 + \P(1-\epsilon <X_{i_k}\leq 1+\delta)\cdot (C-\epsilon)
+ \P(X_{i_k}> 1+\delta)\cdot (C+\delta) \cr
&\geq&
(1-\epsilon-\delta) \cdot(C-\epsilon) + \delta \cdot (C+\delta).
\end{eqnarray*}
Note that 
\begin{eqnarray*}
\lim_{\epsilon \to 0} \big\{(1-\epsilon-\delta) \cdot(C-\epsilon) + \delta \cdot (C+\delta)\big\} = C+\delta^2 >C.
\end{eqnarray*}
Thus, by choosing sufficiently small $\epsilon$, we have that $\E(X_{i_k}) >C$ holds for sufficiently large $k$, which is obviously a contradiction with $\limsup_{n\to \infty}\E(X_n)\leq C$. 
This completes the proof. 
\end{proof}

\subsection{Proof of Lemma \ref{lemma:o_d}}
\begin{proof}[Proof of Lemma \ref{lemma:o_d}] 	For $i \in \lbr n \rbr$, 
let $N(i)$ indexes the  NN of $\bW_i$ among $\lbr n \rbr$.
It suffices to show that, as $n \to \infty$,
\begin{eqnarray*}
\E\big[\#\{j\in \lbr n \rbr:j \neq 1, N(j) = N(1)\} \ \big | \ \bW_1\big] \conP \mathfrak{o}_d.
\end{eqnarray*}

Let  $f$ be the density function of $\bW$. Let $\calW^o \subseteq \mathbbR^d$ denote the interior of the support $\supp(\bW)$, and let $\calW^+ = \{\bw \in \calW^o: f(\bw)>0\}$. 	Since $\bW$ is absolutely continuous and $f$ is continuous on $\supp(\bW)$, it follows that $\calW^+$ is an open subset of $\mathbbR^d$ and $\P(\bW \in \calW^+) = 1$.

Fix some $\bw_1 \in \calW^+$. We have
\begin{eqnarray*}
&&\E\big[\#\{j\in \lbr n \rbr:j \neq 1, N(j) = N(1)\} \ \big | \ \bW_1=\bw_1\big] \cr
&=&
\sum_{j=2}^n\sum_{k\in\lbr n \rbr:k\neq 1, k \neq j}
\E\big[\Ind(N(1)=j, N(k) =j) \ \big | \ \bW_1=\bw_1\big] \cr
&=&
(n-1)\cdot(n-2)	\cdot \E\big[\Ind(N(1)=2, N(3) =2) \ \big | \ \bW_1=\bw_1\big].
\end{eqnarray*}
Note that the event $\{N(1)=2,\, N(3)=2\}$ occurs if and only if the following two conditions are satisfied:
\begin{enumerate}
\item $ \hspace{2.5cm}\|\bW_3-\bW_1\| > \max\{\|\bW_2-\bW_1\|, \|\bW_3-\bW_2\|\}.$

This guarantees that $\bW_3$ is farther away from $\bW_1$ than $\bW_2$ is, and also $\bW_1$ is farther away from $\bW_3$ than $\bW_2$ is. Thus, among $\{\bW_1, \bW_2,\bW_3\}$, $\bW_2$ is the NN of $\bW_1$ as well as the NN of $\bW_3$.
\item $	\hspace{2.5cm} \bW_j \notin \tcalB(\bW_1, \bW_2)\cup \tcalB(\bW_3, \bW_2), \quad \text{for }j = 4,\dots, n.$

This ensures that $\bW_j$ is not the NN of either $\bW_1$ or $\bW_3$. 
\end{enumerate}
It follows that
\begin{eqnarray}
&&\E\big[\Ind(N(1)=2, N(3) =2) \ \big | \ \bW_1=\bw_1, \bW_2=\bw_2, \bW_3=\bw_3 \big] \cr
&=&
\Ind\big(\|\bw_3-\bw_1\| > \max\{\|\bw_2-\bw_1\|, \|\bw_3-\bw_2\|\}\big) \cr
&&\times
\P\Big(	\bigcap_{j=4}^n \big\{\bW_j \notin \tcalB(\bw_1, \bw_2)\cup \tcalB(\bw_3, \bw_2)\big\} \ \Big | \ \bW_1=\bw_1, \bW_2=\bw_2, \bW_3=\bw_3\Big) \cr
&=&
\Ind\big(\|\bw_3-\bw_1\| > \max\{\|\bw_2-\bw_1\|, \|\bw_3-\bw_2\|\}\big)
\cdot
\Big(1 - \int_{\bw_4\in \tcalB(\bw_1, \bw_2)\cup \tcalB(\bw_3, \bw_2)} f(\bw_4) \d \bw_4\Big)^{n-3} .
\nonumber
\end{eqnarray}
Therefore,
\begin{eqnarray}
&&\E\big[\#\{j\in \lbr n \rbr:j \neq 1, N(j) = N(1)\} \ \big | \ \bW_1=\bw_1 \big] \cr
&=&
(n-1)\cdot(n-2)	\cdot \E\big[\Ind(N(1)=2, N(3) =2) \ \big | \ \bW_1=\bw_1\big] \cr
&=&
(n-1)\cdot(n-2)	\cr
&& \times\int \int f(\bw_2)\cdot f(\bw_3) \cdot \E\big[\Ind(N(1)=2, N(3) =2) \ \big | \ \bW_1=\bw_1, \bW_2=\bw_2, \bW_3=\bw_3\big]
\d \bw_2 \d \bw_3 \cr
&=&
(n-1)\cdot(n-2)	\cdot \int \int f(\bw_2)\cdot f(\bw_3) \cdot\Ind \Big(\|\bw_3-\bw_1\| > \max\big\{\|\bw_2-\bw_1\|, \|\bw_3-\bw_2\|\big\}\Big) \cr
&& \qquad \qquad \qquad\qquad \qquad
\times \Big(1 - \int_{\bw_4\in \tcalB(\bw_1, \bw_2)\cup \tcalB(\bw_3, \bw_2)} f(\bw_4) \d \bw_4\Big)^{n-3} \d \bw_2 \d \bw_3. \label{eq:lemma:o_d:1}
\end{eqnarray}

Write $f_1: =f(\bw_1)>0$ as the probability density of $\bW$ at $\bw_1$. 
We next apply a change of variables to the integration variable $\bw_2$ and $\bw_3$.
Let 
\begin{eqnarray*}
\bu_2 = n^{1/d}\cdot f_1^{1/d}\cdot (\bw_2-\bw_1), \quad \text{and } \bu_3 = n^{1/d}\cdot f_1^{1/d}\cdot (\bw_2-\bw_3).
\end{eqnarray*}	
Then 
\begin{eqnarray}
\bw_2 = \bw_1 + n^{-1/d}\cdot f_1^{-1/d} \cdot \bu_2, \quad \bw_3 = \bw_1 + n^{-1/d}\cdot f_1^{-1/d} \cdot \bu_2 - n^{-1/d}\cdot f_1^{-1/d} \cdot \bu_3.  \label{eq:lemma:o_d:2}
\end{eqnarray}
Obviously, the mapping from $(\bw_2,\bw_3)$ to $(\bu_2,\bu_3$) is a linear bijection. 
It is easy to verify the following three equations:
\begin{eqnarray}
\text{(1)}&&	\frac{\d \bw_2 \d \bw_3}{\d \bu_2 \d \bu_3 } = \frac{1}{n^2\cdot f_1^2}; \cr
\text{(2)}&& \Ind\big(\|\bw_3-\bw_1\| > \max\{\|\bw_2-\bw_1\|, \|\bw_3-\bw_2\|\}\big) = 
\Ind\big(\|\bu_3-\bu_2\| > \max\{\|\bu_2\|, \|\bu_3\|\}\big); \qquad \quad
\cr
\text{(3)}&&
V( \tcalB(\bw_1, \bw_2)\cup \tcalB(\bw_3, \bw_2))  
= (n\cdot f_1)^{-1}
\cdot V( \mathcal{B}(\bu_2, \|\bu_2\|)\cup \mathcal{B}(\bu_3, \|\bu_3\|)). \label{eq:lemma:o_d:3}
\end{eqnarray}
By replacing $\d \bw_2 \d \bw_3$ with $\d \bu_2 \d \bu_3$ in the above integral \eqref{eq:lemma:o_d:1}, we get
\begin{eqnarray*}
&&\E\big[\#\{j\in \lbr n \rbr:j \neq 1, N(j) = N(1)\} \ \big | \ \bW_1=\bw_1\big] \cr
&=&
\frac{(n-1)\cdot(n-2)	}{n^2\cdot f_1^2}\cdot \int \int f(\bw_2)\cdot f(\bw_3) \cdot\Ind\big(\|\bu_3-\bu_2\| > \max\{\|\bu_2\|, \|\bu_3\|\}\big) \cr
&& \qquad \qquad \qquad\qquad \qquad
\times \Big(1 - \int_{\bw_4\in \tcalB(\bw_1, \bw_2)\cup \tcalB(\bw_3, \bw_2)} f(\bw_4) \d \bw_4\Big)^{n-3} \d \bu_2 \d \bu_3.
\end{eqnarray*}
Note that the original variables $\bw_2$ and $\bw_3$ still appear in the integrand. In this case, they can be regarded as functions of $\bu_2$ and $\bu_3$, as specified in \eqref{eq:lemma:o_d:2}.

For $r >0$, define
\begin{eqnarray}
&&g_n(r):=	\frac{(n-1)\cdot(n-2)	}{n^2\cdot f_1^2}\cdot \int_{\|\bu_3\| <r} \int_{\|\bu_2\|<r} f(\bw_2)\cdot f(\bw_3) \cdot\Ind(\|\bu_3-\bu_2\| > \max\{\|\bu_2\|, \|\bu_3\|\}) \cr
&& \qquad \qquad \qquad\qquad \qquad
\times \Big(1 - \int_{\bw_4\in \tcalB(\bw_1, \bw_2)\cup \tcalB(\bw_3, \bw_2)} f(\bw_4) \d \bw_4\Big)^{n-3} \d \bu_2 \d \bu_3.
\nonumber
\end{eqnarray}
Then 
\begin{eqnarray*}
\E\big[\#\{j\in \lbr n \rbr:j \neq 1, N(j) = N(1)\} \ \big | \ \bW_1=\bw_1\big] = \lim_{r \to \infty} g_n(r)=:g_n(\infty).
\end{eqnarray*}

Now fix $r>0$. As $n \to \infty$, it is clear that
\begin{eqnarray*}
&&	\sup_{(\bw_2, \bw_3): \ \|\bu_2\| <r, \|\bu_3\|<r}\|\bw_3-\bw_1\| \cr
&=& \sup_{(\bu_2, \bu_3): \  \|\bu_2\| <r, \|\bu_3\|<r}\| n^{-1/d}\cdot f_1^{-1/d} \cdot \bu_2 - n^{-1/d}\cdot f_1^{-1/d} \cdot \bu_3\| \to 0.
\end{eqnarray*}
Similarly, we can verify that
\begin{eqnarray*}
&&	\sup_{\bw_2: \ \|\bu_2\| <r}\|\bw_2-\bw_1\| \to 0, \cr
&&\sup_{(\bw_2,\bw_3,\bw_4): \ \|\bu_2\| <r, \|\bu_3\|<r, \bw_4\in \tcalB(\bw_1, \bw_2)\cup \tcalB(\bw_3, \bw_2)}\|\bw_4-\bw_1\| \to 0.
\end{eqnarray*}
Note that the density function $f$ is uniformly continuous in some neighborhood of $\bw=\bw_1$. Thus, 
for any $\epsilon>0$, 
\begin{eqnarray*}
\sup_{(\bw_2,\bw_3,\bw_4): \ \|\bu_2\|<r, \ \|\bu_3\|<r, \ \bw_4\in \tcalB(\bw_1, \bw_2)\cup \tcalB(\bw_3, \bw_2)} \max\Big\{|f(\bw_2)- f_1|, |f(\bw_3)- f_1|, |f(\bw_4)- f_1|\Big\} < \epsilon
\end{eqnarray*}
holds for sufficiently large $n$. 
It follows that
\begin{eqnarray*}
g_n(r) &\geq& 
\frac{(n-1)\cdot(n-2)	}{n^2\cdot f_1^2}\cdot \int_{\|\bu_3\| <r} \int_{\|\bu_2\|<r} (f_1-\epsilon)^2 \cdot\Ind(\|\bu_3-\bu_2\| > \max\{\|\bu_2\|, \|\bu_3\|\}) \cr
&& \qquad \qquad \qquad\qquad \qquad
\times \Big(1 - \int_{\bw_4\in \tcalB(\bw_1, \bw_2)\cup \tcalB(\bw_3, \bw_2)} (f_1+\epsilon) \d \bw_4\Big)^{n-3} \d \bu_2 \d \bu_3 \cr
&=&
\frac{(n-1)\cdot(n-2)	}{n^2\cdot f_1^2}\cdot \int_{\|\bu_3\| <r} \int_{\|\bu_2\|<r} (f_1-\epsilon)^2 \cdot\Ind(\|\bu_3-\bu_2\| > \max\{\|\bu_2\|, \|\bu_3\|\}) \cr
&& \qquad \qquad \qquad
\times \Big(1 - \frac{f_1+\epsilon}{f_1 \cdot n}\cdot V\big( \mathcal{B}(\bu_2, \|\bu_2\|)\cup \mathcal{B}(\bu_3, \|\bu_3\|)\big)\Big)^{n-3} \d \bu_2 \d \bu_3
\end{eqnarray*}
holds for sufficiently large $n$, 
where the last step is due to \eqref{eq:lemma:o_d:3}.

Note that as $n \to \infty$, 
\begin{eqnarray*}
&&\Big(1 - \frac{f_1+\epsilon}{f_1 \cdot n}\cdot V\big( \mathcal{B}(\bu_2, \|\bu_2\|)\cup \mathcal{B}(\bu_3, \|\bu_3\|)\big)\Big)^{n-3} \cr
&\to&
\exp\Big(- \frac{f_1+\epsilon}{f_1} \cdot V\big( \mathcal{B}(\bu_2, \|\bu_2\|)\cup \mathcal{B}(\bu_3, \|\bu_3\|)\big)\Big).
\end{eqnarray*}
Moreover, it is straightforward to verify that this convergence holds uniformly for all $\bu_2$ and $\bu_3$ satisfying $\|\bu_2\| < r$ and $\|\bu_3\| < r$.
Since $\epsilon > 0$ is arbitrary, letting $\epsilon \to 0$ yields
\begin{eqnarray*}
\liminf_{n\to \infty} g_n(r)
&\geq&
\int_{\|\bu_3\| <r} \int_{\|\bu_2\|<r}  \Ind\big(\|\bu_3-\bu_2\| > \max\{\|\bu_2\|, \|\bu_3\|\}\big) \cr
&& \qquad \qquad 
\times 	\exp\Big(-  V\big( \mathcal{B}(\bu_2, \|\bu_2\|)\cup \mathcal{B}(\bu_3, \|\bu_3\|)\big)\Big) \d \bu_2 \d \bu_3.
\end{eqnarray*}
Note that $\E\big[\#\{j\in \lbr n \rbr:j \neq 1, N(j) = N(1)\} \ \big | \ \bW_1=\bw_1\big] = g_n(\infty) \geq g_n(r)$ for all $r>0$. Then
\begin{eqnarray*}
&&\liminf_{n \to \infty}\E\big[\#\{j\in \lbr n \rbr:j \neq 1, N(j) = N(1)\} \ \big | \ \bW_1=\bw_1\big] \cr
&\geq &
\sup_{r>0} \liminf_{n \to \infty} g_n(r) \cr
&\geq&
\sup_{r>0} \int_{\|\bu_3\| <r} \int_{\|\bu_2\|<r}  \Ind\big(\|\bu_3-\bu_2\| > \max\{\|\bu_2\|, \|\bu_3\|\}\big) \cr
&& \qquad \qquad 
\times 	\exp\Big(-  V\big( \mathcal{B}(\bu_2, \|\bu_2\|)\cup \mathcal{B}(\bu_3, \|\bu_3\|)\big)\Big) \d \bu_2 \d \bu_3 \cr
&=&
\int \int \Ind\big(\|\bu_3-\bu_2\| > \max\{\|\bu_2\|, \|\bu_3\|\}\big) \cr
&& \qquad 
\times 	\exp\Big(-  V\big( \mathcal{B}(\bu_2, \|\bu_2\|)\cup \mathcal{B}(\bu_3, \|\bu_3\|)\big)\Big) \d \bu_2 \d \bu_3 \cr
&=& \mathfrak{o}_d,
\end{eqnarray*}
where the last step follows from the definition of $\mathfrak{o}_d$; see \eqref{eq:o_d} and \citet[Theorem 3.1]{Shi_Drton_Han_2024_Bernoulli}. Since $\bw_1 \in \calW^+$ and $\P(\bW \in \calW^+) = 1$, it follows that
\begin{eqnarray*}
\liminf_{n \to \infty}\E\big[\#\{j\in \lbr n \rbr:j \neq 1, N(j) = N(1)\} \ \big | \ \bW_1\big] \geq \mathfrak{o}_d
\end{eqnarray*}
holds with probability one. 

On the other hand, by Lemma 3.7 in \cite{Shi_Drton_Han_2024_Bernoulli}, we have
\begin{eqnarray*}
\lim_{n \to \infty}\E\big[\#\{j\in \lbr n \rbr:j \neq 1, N(j) = N(1)\}\big] = \mathfrak{o}_d.
\end{eqnarray*}
Combining these two results and applying Lemma \ref{lemA:Xn_conP_C}, we complete the proof. 
\end{proof}

\subsection{Proof of Lemma \ref{lemma:two_NNGs}} \label{secB.2}
Before the proof of Lemma~\ref{lemma:two_NNGs}, we first present Lemma~\ref{lemA:cond_pdf}.

\begin{lemmaA} \label{lemA:cond_pdf}
Assume that the $d$-dimensional random vector $\bW$
is absolutely continuous and admits a continuous probability density function
$f(\bw)$ over its support. 
Let $\bW_1, \bW_2,\dots, \bW_n$ be i.i.d random vectors sampled from $\bW$. Let $M(1)$ be the index of the NN of $\bW_1$. Let $\bw_1, \bw_2 \in \supp(\bW)$ be any distinct points satisfying $f(\bw_1)>0$ and $f(\bw_2)>0$. 
Then conditional on the event $\{\bW_1=\bw_1, \bW_2=\bw_2, M(1)=2\}$, $\bW_3,\dots, \bW_n$ are i.i.d with density function
\begin{eqnarray*}
\breve f_{\bw_1,\bw_2}(\bw) := C_{\bw_1, \bw_2}\cdot f(\bw) \cdot \Ind(\bw \notin \tcalB(\bw_1, \bw_2)),  \quad \text{for } \bw \in \supp(\bW),
\end{eqnarray*}
where
\begin{eqnarray*}
C_{\bw_1, \bw_2} = \frac{1}{1-\P(\bW \in \tcalB(\bw_1, \bw_2) )}
= \frac{1}{1-\int_{\bw \in\tcalB(\bw_1, \bw_2) } f(\bw) \d \bw}.
\end{eqnarray*}
\end{lemmaA}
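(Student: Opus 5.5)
The plan is to compute the joint conditional law of $(\bW_3,\dots,\bW_n)$ given $\bW_1=\bw_1$, $\bW_2=\bw_2$ and $M(1)=2$ by Bayes' rule, and then check that it factorizes into identical one-dimensional factors. Write the conditioning event as
\[
\{M(1)=2\}=\bigcap_{j=3}^n\{\bW_j\notin \tcalB(\bw_1,\bw_2)\}.
\]
Ties occur with probability zero by absolute continuity, so strict versus non-strict ball membership is immaterial.

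First, under Assumption~\ref{assump_4.1}-type independence, $\bW_3,\dots,\bW_n$ are independent of $(\bW_1,\bW_2)$, each with density $f$. Conditioning on $\bW_1=\bw_1,\bW_2=\bw_2$ therefore leaves them i.i.d.\ with density $f$. To make this rigorous, define the regular conditional distribution through the joint density: $(\bW_1,\dots,\bW_n)$ has joint density $\prod_i f(\bw_i)$, and $f(\bw_1)f(\bw_2)>0$ makes the conditional density given $(\bw_1,\bw_2)$ well defined as $\prod_{j\ge3}f(\bw_j)$.

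Second, intersect with the event above. For any Borel sets $A_3,\dots,A_n$,
\[
\P\Big(\bigcap_{j\ge 3}\{\bW_j\in A_j\}\,\Big|\,\bW_1=\bw_1,\bW_2=\bw_2,M(1)=2\Big)
=\frac{\prod_{j\ge3}\int_{A_j} f(\bw)\Ind(\bw\notin\tcalB(\bw_1,\bw_2))\d\bw}{\prod_{j\ge3}\big(1-\P(\bW\in\tcalB(\bw_1,\bw_2))\big)}.
\]
This is exactly $\prod_{j\ge3}\int_{A_j}\breve f_{\bw_1,\bw_2}$, which gives both the independence and the stated common density $\breve f_{\bw_1,\bw_2}$.

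The only genuine issue is that the denominator must be positive, i.e.\ $\P(\bW\in\tcalB(\bw_1,\bw_2))<1$. Since $\bw_2$ lies on the boundary of the ball and $f(\bw_2)>0$ with $f$ continuous on the support, a small neighborhood of $\bw_2$ outside the ball carries positive mass. Strictly, only one-sided continuity relative to $\supp(\bW)$ is needed near the boundary. One could instead use $f(\bw_1)>0$ to handle the complementary degenerate configurations, but the $\bw_2$ argument suffices. With the denominator positive, the conditioning is on a positive-probability event within the conditional model, so no measure-theoretic subtlety remains beyond the standard density-based definition of the conditional law.
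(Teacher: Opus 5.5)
Your Bayes-rule computation is correct and is essentially the paper's argument. The paper identifies the conditional density of $(\bW_3,\dots,\bW_n)$ by three properties: it vanishes when some $\bw_i\in\tcalB(\bw_1,\bw_2)$, it is proportional to $\prod_{i\ge 3}f(\bw_i)$ otherwise, and it integrates to one; it then appeals to uniqueness. You instead compute the conditional probability of rectangles $\bigcap_{j\ge3}\{\bW_j\in A_j\}$ directly. That makes the factorization, and hence the independence, explicit rather than asserted.

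The step that fails as written is your justification that $\P(\bW\in\tcalB(\bw_1,\bw_2))<1$. Continuity and positivity of $f$ at $\bw_2$ do not imply that $\supp(\bW)$ extends outside the ball near $\bw_2$. A counterexample:
\begin{itemize}
\item take $\bW$ uniform on the closed unit ball, with density equal to the constant $1/\lambda(\calB(\mathbf 0,1))$ on its support (so continuous there);
\item take $\bw_1=\mathbf 0$ and $\|\bw_2\|=1$.
\end{itemize}
Then $\P(\bW\in\tcalB(\bw_1,\bw_2))=1$. The event $\{M(1)=2\}$ has conditional probability $\{1-\P(\bW\in\tcalB(\bw_1,\bw_2))\}^{n-2}=0$ given $(\bw_1,\bw_2)$, and $C_{\bw_1,\bw_2}$ is undefined. ``One-sided continuity'' does not address this, and $f(\bw_1)>0$ only supplies mass inside the ball, so it cannot help.

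So the lemma, read literally for all such pairs, needs an extra hypothesis. Neither your argument nor the paper's covers this case; the paper's proof silently takes $C_{\bw_1,\bw_2}<\infty$ for granted.

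The clean fix is to require $\bw_2$ to lie in the interior of $\supp(\bW)$. Then for small $\delta$ the ball $\calB(\bw_2,\delta)$ lies in the support with $f\ge f(\bw_2)/2$ on it. Its part in the half-space $\{\bw:\langle \bw-\bw_2,\bw_2-\bw_1\rangle>0\}$ lies outside $\tcalB(\bw_1,\bw_2)$ and carries positive mass.

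This restriction is harmless for the application in the proof of Lemma~\ref{lemma:two_NNGs}. There $\bw_2$ plays the role of $\bW_{M(1)}$, which lies almost surely in the set $\calW^+$ of interior points with positive density. Equivalently, the degenerate pairs form a null set under the conditional law of $(\bW_1,\bW_2)$ given $M(1)=2$, because that law has density proportional to $f(\bw_1)f(\bw_2)\{1-\P(\bW\in\tcalB(\bw_1,\bw_2))\}^{n-2}$.
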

\begin{proof}
Let $A= \{\bW_1=\bw_1, \bW_2=\bw_2, M(1)=2\}$, and let $f(\bw_3, \dots, \bw_n \mid A)$ be the joint density function of $\bW_3, \dots, \bW_n$ conditional on $A$. Since $\bW_2$ is the NN of $\bW_1$ under the event $A$, $f(\bw_3, \dots, \bw_n \mid A)$ must satisfy the following three properties.
\begin{enumerate}
\item (Zero density in $\tcalB(\bw_1, \bw_2)$).
If there exists any $\bw_i \in \tcalB(\bw_1, \bw_2)$ for $i=3, \dots, n$, then
$f(\bw_3, \dots, \bw_n \mid A) = 0$. This is because, conditional on $A$, $\bw_2$ is the NN of $\bw_1$. Hence, $\|\bW_i-\bw_1\|> \|\bw_2-\bw_1\|$, implying that $\bW_i \notin \tcalB(\bw_1, \bw_2)$ for $i=3,\dots,n$. 
\item (Proportionality). 
For any $\bw_3,\dots, \bw_n \in \supp(\bW)$, and $\bw_3', \dots, \bw_n' \in \supp(\bW)$ such that $f(\bw_i)>0$, $f(\bw'_i)>0$, and 
$\bw_i, \bw_i' \notin \tcalB(\bw_1, \bw_2)$ for all $i=3, \dots, n$, we have 
\begin{eqnarray*}
\frac{f(\bw_3, \dots, \bw_n \mid A)}{f(\bw_3', \dots, \bw_n' \mid A)}
=
\frac{f(\bw_3,\dots, \bw_n)}{f(\bw_3',\dots, \bw_n')}
=
\frac{\Pi_{i=3}^n f(\bw_i)}{\Pi_{i=3}^n f(\bw_i')}.
\end{eqnarray*}
\item (Normalization property). 
\begin{eqnarray*}
\int f(\bw_3, \dots, \bw_n \mid A) \d \bw_3 \dots \d \bw_n=1.
\end{eqnarray*}
\end{enumerate}
It is straightforward to verify that $f(\bw_3, \dots, \bw_n \mid A) = \Pi_{i=3}^n \breve{f}_{\bw_1, \bw_2}(\bw_i)$ is the unique joint conditional density function that satisfy all the three properties above. This completes the proof.
\end{proof}

\begin{proof}[Proof of Lemma \ref{lemma:two_NNGs}]
For $i \in \lbr n \rbr$, 
let $M(i)$ and $N(i)$ index the  NNs of $\bW_i = (\bU_i, \bV_i)$ and $\bU_i$ respectively.	
It suffices to show that, as $n \to \infty$, 
\begin{eqnarray*}
\E\big[\#\{j\in \lbr n \rbr:j \neq 1, N(j) = M(1)\} \ \big | \ \bW_1\big] \conP 1.
\end{eqnarray*}

Let $f_\bW$ be the density function of $\bW$. 	Let $\calW^o \subseteq \mathbbR^{d_1+d_2}$ denote the interior of the support of $\bW$, and 
let $\calW^+ = \{\bw \in \calW^o: f_\bW(\bw)>0\}$. 
Since $\bW$ is absolutely continuous and $f_\bW$ is continuous on $\supp(\bW)$, it follows that $\calW^+$ is an open subset of $\mathbbR^{d_1+d_2}$ and $\P(\bW \in \calW^+) = 1$.

Fixing some $\bw_1 = (\bu_1, \bv_1)\in \calW^+$, we have 
\begin{eqnarray}
&&\E\big[\#\{j\in \lbr n \rbr:j \neq 1, N(j) = M(1)\} \ \big | \ \bW_1=\bw_1\big] \cr
&=&
\sum_{k=2}^n \Big\{
\E\big[\#\{j\in \lbr n \rbr:j \neq 1, N(j) = k\} \ \big | \ \bW_1=\bw_1, M(1) =k\big] \cr
&& \qquad \times\P\big(M(1)=k \ \big | \ \bW_1=\bw_1\big) \Big\}\cr
&=&
\E\big[\#\{j\in \lbr n \rbr:j \neq 1, N(j) = 2\} \ \big | \ \bW_1=\bw_1, M(1) =2\big]
\cr
&=&
(n-2) \cdot \P\big[N(3) = 2 \ \big | \ \bW_1=\bw_1, M(1) =2\big] \cr
&=&
(n-2) \cdot  \int f_{M(1)} (\bw_2) \cdot \P\big[ N(3) = 2 \ \big | \ \bW_1=\bw_1, M(1) = 2, \bW_2 = \bw_2\big] \d \bw_2 \cr
&=&
(n-2) \cdot  \int f_{M(1)} (\bw_2) \cdot \P\big[ N(3) = 2 \ \big | \ A\big] \d \bw_2,
\label{eq:lemma:two_NNGs:1}
\end{eqnarray}
where $f_{M(1)}(\cdot)$ denotes the density function of $\bW_{M(1)}$ (conditional on the event $\bW_1 = \bw_1$), and 
$A$ denotes the event $\{\bW_1=\bw_1, M(1) = 2, \bW_2 = \bw_2\}$.
By Lemma \ref{lemA:cond_pdf}, conditional on the event $A$, $\bW_3, \dots, \bW_n$ are i.i.d. with density function 
\begin{eqnarray*}
\breve f_{\bw_1, \bw_2}(\bw) :=  \frac{ f_\bW(\bw) \cdot \Ind\big(\bw \notin \tcalB(\bw_1, \bw_2)\big)}{1-\int_{\bw\in \tcalB(\bw_1, \bw_2)}f_\bW(\bw) \d \bw}. 
\end{eqnarray*} 
Let 
\begin{eqnarray}
\tilde f_{\bw_1, \bw_2}(\bu) := \int 	\breve f_{\bw_1, \bw_2}(\bw) \d \bv \label{eq:lemma:two_NNGs:2}
\end{eqnarray}
be the marginal density function for $\bU$ under $\breve f_{\bw_1, \bw_2}$. Then conditional on the event $A$, $\bU_3 \dots, \bU_n$ are i.i.d. with density function $\tilde f_{\bw_1, \bw_2}(\bu) $.
Hence,
\begin{eqnarray}
&&\P[ N(3) = 2 \mid A] \cr
&=&
\int 
\tilde f_{\bw_1, \bw_2}(\bu_3) \cdot \P\big[ \|\bU_j -\bu_3\| \geq \|\bu_2-\bu_3\| \text{ for all } j =4,\dots,n \ \big | \ A, \bU_3 = \bu_3\big] \d \bu_3 \cr
&=&
\int 
\tilde f_{\bw_1, \bw_2}(\bu_3) \cdot \P\big(\bU_4 \notin \tcalB(\bu_3,\bu_2) \ \big | \ A, \bU_3=\bu_3\big)^{n-3} \d \bu_3 \cr 
&=&
\int 
\tilde f_{\bw_1, \bw_2}(\bu_3) \cdot \Big[1-\int_{\bu_4 \in \tcalB(\bu_3,\bu_2)} \tilde f_{\bw_1, \bw_2}(\bu_4) \d \bu_4 \Big]^{n-3} \d \bu_3 \cr
&=&
\int_{r \in [0, \infty)}\int_{\bu_3 \in \mathcal{S}(\bu_2, r)}
\tilde f_{\bw_1, \bw_2}(\bu_3) \cdot \Big[1-\int_{\bu_4 \in \mathcal{B}(\bu_3,r)} \tilde f_{\bw_1, \bw_2}(\bu_4) \d \bu_4 \Big]^{n-3} \d \bu_3 \d r,
\nonumber
\end{eqnarray}
where $\mathcal{S}(\bu_2, r) = \{ \bu \in \mathbbR^{d_1}: \|\bu-\bu_2\| =r\}$ denotes the sphere with center $\bu_2$ and radius $r$. 

We next apply a change of variables by rescaling the integration variable $r$.
Let $ t = n\cdot r^{d_1}$. 
Then
\begin{eqnarray}
r = n^{-1/d_1}\cdot t^{1/d_1}, \quad \text{and }\d r = n^{-1/d_1}\cdot d_1^{-1} \cdot t^{(1-d_1)/d_1} \d t.   \label{eq:lemma:two_NNGs:3}
\end{eqnarray}
Substituting $\d r$ by $\d t$ in the above integral gives
\begin{eqnarray}
&& \hspace{-0.8cm}\P[ N(3) = 2 \mid A] =
\int_{t \in [0, \infty)} \Big\{\int_{\bu_3 \in \mathcal{S}(\bu_2, r)}
\tilde f_{\bw_1, \bw_2}(\bu_3) \cdot \Big[1-\int_{\bu_4 \in \mathcal{B}(\bu_3,r)} \tilde f_{\bw_1, \bw_2}(\bu_4) \d \bu_4 \Big]^{n-3} \d \bu_3 \Big\}  \cr
&& \hspace{3cm} \times n^{-\frac{1}{d_1}} \cdot d_1^{-1} \cdot t^{\frac{1-d_1}{d_1}}\d t. \nonumber
\end{eqnarray}
Note that the original variable $r$ still appears in the integrand. In this case, it can be regarded as a function of the new integration variable $t$, as specified in \eqref{eq:lemma:two_NNGs:3}.

For $\tildet > 0$, we truncate the upper limit of integration in $t$ from $\infty$ to $\tildet$, and define
\begin{eqnarray}
\P[ N(3) = 2 \mid A](\tildet) &:=& 
\int_{t \in [0, \tildet]} \Big\{\int_{\bu_3 \in \mathcal{S}(\bu_2, r)}
\tilde f_{\bw_1, \bw_2}(\bu_3) \cdot \Big[1-\int_{\bu_4 \in \mathcal{B}(\bu_3,r)} \tilde f_{\bw_1, \bw_2}(\bu_4) \d \bu_4 \Big]^{n-3} \d \bu_3 \Big\} \cr
&& \times  n^{-\frac{1}{d_1}}\cdot d_1^{-1} \cdot t^{\frac{1-d_1}{d_1}}\d t \cr
&=&	\P\big[ N(3) = 2, \|\bU_3 - \bu_2\| \leq  \tilde{r} \mid A\big],
\label{eq:lemma:two_NNGs:4}
\end{eqnarray}
where 
$\tilde{r} = n^{-1/d_1}\cdot \tildet^{1/d_1}$.
Also, define
\begin{eqnarray}
g_n(\tildet) := (n-2) \cdot  \int f_{M(1)} (\bw_2) \cdot \P[ N(3) = 2 \mid A](\tildet) \d \bw_2. \label{eq:lemma:two_NNGs:5}
\end{eqnarray}
Using these definitions, \eqref{eq:lemma:two_NNGs:1} can be rewritten as
\begin{eqnarray*}
&&\E\big[\#\{j\in \lbr n \rbr:j \neq 1, N(j) = M(1)\} \ \big | \ \bW_1=\bw_1\big] \cr
&=&
(n-2) \cdot  \int f_{M(1)} (\bw_2) \cdot \P[ N(3) = 2 \mid A] \d \bw_2 \ =\
\lim_{\tildet \to \infty} g_n(\tildet) : = g_n(\infty).
\end{eqnarray*}
Then it suffices to show that $g_n(\infty) \conP 1$ as $n \to \infty$. 

Let $f(\bu): = f_{\bU}(\bu)$ denote the marginal density function of $\bU$, which is also continuous over its support $\supp(\bU)$. Since $f_\bW(\bw_1)>0$, we also have $f(\bu_1)>0$. 
In the following proofs, most of the effort will be devoted to establishing the following statement: for each fixed $\tildet \in (0,\infty)$, 
\begin{eqnarray}
\lim_{n\to \infty} g_n(\tildet) = 1- \exp\big(- C_1 \cdot f(\bu_1) \cdot \tildet\big), \label{eq:lemma:two_NNGs:6}
\end{eqnarray}
where $C_1 =  \pi^{d_1/2} \cdot \Gamma(d_1/2+1)^{-1}$ is the volume of the $d_1$-dimensional unit ball.

Note that from the definition of $\tilde f_{\bw_1, \bw_2}(\cdot)$ in \eqref{eq:lemma:two_NNGs:2}, it is easy to check that
\begin{eqnarray*}
\lim_{\bw_2 \to \bw_1} \tilde f_{\bw_1, \bw_2}(\bu) = f(\bu), \quad \text{for all } \bu \in \supp(\bU).
\end{eqnarray*}
Moreover, since both functions $f(\bu)$ and $\tilde f_{\bw_1, \bw_2}(\bu)$ are bounded and continuous over $\supp(\bU)$, it can be verified that: for any $\epsilon>0$, there exists some neighborhood $\calN_{\bw_1}$ of $\bw_1$ and $\calN_{\bu_1}$ of $\bu_1$, such that 
\begin{eqnarray*}
\sup_{\bw_2 \in \calN_{\bw_1}, \ \bu \in \calN_{\bu_1}} |\tilde f_{\bw_1, \bw_2}(\bu) - f(\bu)| < \epsilon.
\end{eqnarray*}
Also, $f(\bu)$ is uniformly continuous in some neighborhood of $\bu_1$.
Thus, for any $\epsilon >0$, 
\begin{eqnarray*}
&&\sup_{\bw_2 \in \calN_{\bw_1}, \ \bu \in \calN_{\bu_1}} |\tilde f_{\bw_1, \bw_2}(\bu) - f(\bu_1)| \cr
&\leq& \sup_{\bw_2 \in \calN_{\bw_1}, \ \bu \in \calN_{\bu_1}} |\tilde f_{\bw_1, \bw_2}(\bu) - f(\bu)| + \sup_{\ \bu \in \calN_{\bu_1}} |f(\bu) -f(\bu_1)| \cr
&\leq& \epsilon
\end{eqnarray*}
holds for some neighborhoods $\calN_{\bw_1}$ of $\bw_1$ and $\calN_{\bu_1}$ of $\bu_1$.
This directly  yields the following statement:
for any $\epsilon >0$, there exists $R>0$, such that for any  $\bw_2 \in \mathbbR^{d_1+d_2}$ and $\bu \in
\mathbbR^{d_1}$ satisfying conditions $\|\bw_2-\bw_1\|\leq R$ and $\|\bu-\bu_2\| \leq 2 \cdot n^{-1/d_1}\cdot \tildet^{1/d_1}$,
\begin{eqnarray*}
|\tilde f_{\bw_1, \bw_2}(\bu) - f(\bu_1)| < \epsilon 
\end{eqnarray*}
holds for sufficiently large $n$. 

The above statement implies that, provided $\|\bw_2 - \bw_1\| \leq R$, the quantity $\tilde f_{\bw_1, \bw_2}(\bu)$ appearing in \eqref{eq:lemma:two_NNGs:4} for $\P\big[ N(3) = 2 \ \big | \ A \big](\tildet)$ differs from $f(\bu_1)$ by an arbitrarily small $\epsilon$ for sufficiently large $n$. 
In such case,
\begin{eqnarray*}
&&\P[ N(3) = 2 \mid A](\tildet) \cr
&=&
\int_{t \in [0, \tildet]} \Big\{\int_{\bu_3 \in \mathcal{S}(\bu_2, r)}
\tilde f_{\bw_1, \bw_2}(\bu_3) \cdot \Big[1-\int_{\bu_4 \in \mathcal{B}(\bu_3,r)} \tilde f_{\bw_1, \bw_2}(\bu_4) \d \bu_4 \Big]^{n-3} \d \bu_3 \Big\} \cr
&&\qquad \times  n^{-\frac{1}{d_1}}\cdot d_1^{-1} \cdot t^{\frac{1-d_1}{d_1}}\d t. \cr
&\geq &
\int_{t \in [0, \tildet]} \Big\{\int_{\bu_3 \in \mathcal{S}(\bu_2, r)}
(f(\bu_1)-\epsilon) \cdot \Big[1-\int_{\bu_4 \in \mathcal{B}(\bu_3,r)} (f(\bu_1)+\epsilon) \d \bu_4 \Big]^{n-3} \d \bu_3 \Big\}  \cr
&& \qquad \times  n^{-\frac{1}{d_1}}\cdot d_1^{-1} \cdot t^{\frac{1-d_1}{d_1}}\d t. \cr
&=&
\int_{t \in [0, \tildet]}  S_r \cdot
(f(\bu_1)-\epsilon) \cdot \Big[1- V_r \cdot (f(\bu_1)+\epsilon)  \Big]^{n-3}   \cdot  n^{-\frac{1}{d_1}}\cdot d_1^{-1} \cdot t^{\frac{1-d_1}{d_1}}\d t \cr
&=&
\int_{t \in [0, \tildet]}  C_2 \cdot t^{\frac{d_1-1}{d_1}} \cdot  n^{\frac{1-d_1}{d_1}}\cdot 
(f(\bu_1)-\epsilon) \cdot \Big[1- C_1 \cdot t \cdot n^{-1}\cdot (f(\bu_1)+\epsilon)  \Big]^{n-3}    \cr
&& \qquad \times  n^{-\frac{1}{d_1}}\cdot d_1^{-1} \cdot t^{\frac{1-d_1}{d_1}}\d t, \cr
&=&
\int_{t \in [0, \tildet]}  C_2\cdot n^{-1} \cdot d_1^{-1} \cdot  
(f(\bu_1)-\epsilon) \cdot \Big[1- C_1 \cdot t \cdot n^{-1}\cdot (f(\bu_1)+\epsilon)  \Big]^{n-3} \d t,
\end{eqnarray*}
where $V_r$ and $S_r$ denote the volume and the surface area of the $d_1$-dimensional ball with radius $r$, respectively. Moreover, 
$C_1 =  \pi^{d_1/2} \cdot \Gamma(d_1/2+1)^{-1}$ and $C_2 = 2 \pi^{d_1/2} \cdot \Gamma(d_1/2)^{-1}$
are the volume and the surface area of the $d_1$-dimensional unit ball, respectively.
Note that
\begin{eqnarray*}
\lim_{n \to \infty}\Big[1- C_1 \cdot t \cdot n^{-1}\cdot (f(\bu_1)+\epsilon)  \Big]^{n-3} = \exp\big(-C_1 \cdot t \cdot  (f(\bu_1)+\epsilon)\big).
\end{eqnarray*}
Therefore,
\begin{eqnarray*}
&&\liminf_{n \to \infty} (n-2)\cdot \P[ N(3) = 2 \mid A](\tildet)  \cr
&\geq &	\liminf_{n \to \infty}  (n-2) \cdot \int_{t \in [0, \tildet]}  C_2\cdot n^{-1} \cdot d_1^{-1} \cdot  
(f(\bu_1)-\epsilon) \cdot \Big[1- C_1 \cdot t \cdot n^{-1}\cdot (f(\bu_1)+\epsilon)  \Big]^{n-3} \d t, \cr
&=&
\int_{t \in [0, \tildet]}  C_2 \cdot d_1^{-1} \cdot  
(f(\bu_1)-\epsilon) \cdot \exp\big(-C_1 \cdot t \cdot  (f(\bu_1)+\epsilon)\big) \d t, \cr
&=&
\frac{C_2 \cdot d_1^{-1} \cdot  
(f(\bu_1)-\epsilon)}{C_1 \cdot  (f(\bu_1)+\epsilon)} \cdot \Big[1-  \exp\big(-C_1 \cdot  (f(\bu_1)+\epsilon)\cdot \tildet\big)\Big] \cr
&=&
\frac{f(\bu_1)-\epsilon}{f(\bu_1)+\epsilon}  \cdot \Big[1-  \exp\big(-C_1 \cdot  (f(\bu_1)+\epsilon)\cdot \tildet\big)\Big].
\end{eqnarray*}
Similarly, we also have
\begin{eqnarray*}
&&\limsup_{n \to \infty} (n-2)\cdot \P[ N(3) = 2 \mid A](\tildet) \cr
&\leq &\frac{f(\bu_1)+\epsilon}{f(\bu_1)-\epsilon}  \cdot \Big[1-  \exp\big(-C_1 \cdot  (f(\bu_1)-\epsilon)\cdot \tildet\big)\Big].
\end{eqnarray*}
Combining the above two facts, we arrive at the conclusion:
for any $\epsilon >0$, there exists some $R>0$, such that for any  $\bw_2$ satisfying $\|\bw_2-\bw_1\|\leq R$, 
\begin{eqnarray}
\Big|(n-2)\cdot \P[ N(3) = 2 \mid A](\tildet) -\Big[1-  \exp(-C_1 \cdot  f(\bu_1)\cdot \tildet)\Big]\Big| <\epsilon \label{eq:lemma:two_NNGs:7}
\end{eqnarray}
holds for sufficiently large $n$. 

By Lemma 11.3 of \cite{azadkia2019simple}, we have $\|\bW_{M(1)}- \bW_1\| \conas 0$. Therefore, for any $\epsilon_2 >0$ and  $R_2 \in (0, R)$, 
\begin{eqnarray}
\int_{\bw_2 \in \mathcal{B}(\bw_1, R_2)} f_{M(1)} (\bw_2) \d \bw_2>1-\epsilon_2 \label{eq:lemma:two_NNGs:8}
\end{eqnarray}
holds for sufficiently large $n$. 
Also note that $(n-2)\cdot \P[ N(3) = 2 \mid A](\tildet) \leq (n-2)\cdot \P[ N(3) = 2 \mid A] = \E[ \#\{j\in \lbr n \rbr:j \neq 1, j \neq 2, N(j) = 2\} \mid A]$
does not exceed the number of indices whose NN is $\bU_2$, which is bounded by some constant $C_3>0$ according to \cite{MR682809} (Corollary S1). 
Thus, 
\begin{eqnarray}
g_n(\tildet) &=& (n-2) \cdot  \int f_{M(1)} (\bw_2) \cdot \P[ N(3) = 2 \mid A](\tildet) \d \bw_2 \cr
&=:&
\int_{\bw_2 \in \mathcal{B}(\bw_1, R_2)}  f_{M(1)} (\bw_2) \cdot 	(n-2) \cdot \P[ N(3) = 2 \mid A](\tildet) \d \bw_2 + C(\epsilon_2),
\label{eq:lemma:two_NNGs:9}
\end{eqnarray}
where the term $C(\epsilon_2)$ satisfies $|C(\epsilon_2)| \leq C_3 \cdot \epsilon_2$ for sufficiently large $n$. 
Combining this with \eqref{eq:lemma:two_NNGs:7} and \eqref{eq:lemma:two_NNGs:8}, we have that: for any $\epsilon>0$, 
\begin{eqnarray*}
\Big|g_n(\tildet) -\big[1-  \exp(-C_1 \cdot  f(\bu_1)\cdot \tildet)\big]\Big| < \epsilon 
\end{eqnarray*}
holds for sufficiently large $n$. This proves \eqref{eq:lemma:two_NNGs:6}.

Note that $\E\big[\#\{j\in \lbr n \rbr:j \neq 1, N(j) = M(1)\} \ \big | \ \bW_1=\bw_1\big]= g_n(\infty) \geq g_n(\tildet)$ for all $\tildet>0$. Then
\begin{eqnarray*}
&&\liminf_{n \to \infty}\E\big[\#\{j\in \lbr n \rbr:j \neq 1, N(j) = M(1)\} \ \big | \ \bW_1=\bw_1\big]\cr
&\geq &
\sup_{\tildet>0} \liminf_{n \to \infty} g_n(\tildet) \, = \,	\sup_{\tildet>0} \big\{1- \exp(- C_1 \cdot f(\bu_1) \cdot \tildet)\big\} \, = \, 1.
\end{eqnarray*}
Since $\bw_1 \in \calW^+$ and $\P(\bW \in  \calW^+) = 1$, it follows that
\begin{eqnarray*}
\liminf_{n \to \infty}\E\big[\#\{j\in \lbr n \rbr:j \neq 1, N(j) = M(1)\} \ \big | \ \bW_1\big]\geq 1 
\end{eqnarray*}
holds with probability one.
On the other hand, \citet[Lemma 7.4, Equation (28)]{Shi_Drton_Han_2024_Bernoulli} has proved that 
\begin{eqnarray*}
\lim_{n\to \infty}\E\big[\#\{j\in \lbr n \rbr:j \neq 1, N(j) = M(1)\}\big] =1.
\end{eqnarray*}
Combining these two results and applying Lemma \ref{lemA:Xn_conP_C}, we complete the proof.
\end{proof}

\section{Proofs of Theorems \ref{thm:var_xi}, \ref{thm:est_var_xi}, and Proposition \ref{prop:nlogn}}  \label{secA:proofs-2}

\begin{proof}[Proof of Theorem \ref{thm:var_xi}]
By \cite{Lin_Han_2025_CLT} (proof of Theorem 1.2, p.~19), we have
\begin{eqnarray*}
&& 36^{-1} \cdot
\E(\var(\xi_n \mid \bfZ)) \cr
&=&
\E \Big\{\var\big[F_Y(Y_1 \wedge \tY_1)\mid \bZ_1 \big] \Big\} \cr
&&+
2 \cdot \E \Big\{\cov\big[F_Y(Y_1 \wedge \tY_1),  F_Y(Y_1 \wedge \tY_1')\mid \bZ_1 \big] \cdot \Ind\big(N(N(1)) \neq 1\big)\Big\}
\cr
&&
+ \E \Big\{\var\big[F_Y(Y_1 \wedge \tY_1)\mid \bZ_1 \big] \cdot
\Ind\big(N(N(1)) = 1\big)
\Big\} \cr
&&+
\E \Big\{\cov\big[F_Y(Y_1 \wedge \tY_1),  F_Y(Y_1 \wedge \tY_1')\mid \bZ_1 \big] 
\cdot \#\{j \in \lbr n \rbr: j\neq 1, N(j) = N(1)\}
\Big\} \cr
&&+
\E \Big\{\cov\big[\Ind (Y_3 \leq Y_1 \wedge  \tY_1),  \Ind (Y_3 \leq Y_2 \wedge  \tY_2) \mid \bZ_1, \bZ_2, \bZ_3\big] \Big\} \cr
&&+ \ 
4 \cdot \E \Big\{\cov\big[\Ind (Y_2 \leq Y_1 \wedge  \tY_1),  F_Y( Y_2 \wedge  \tY_2) \mid \bZ_1,\bZ_2\big] \Big\}  \Big] +  o(1).
\end{eqnarray*}
By Lemmas \ref{lemma:q_d} and \ref{lemma:o_d}, we have 
\begin{eqnarray*}
&& \E\big[	\Ind\big(N(N(1)) = 1\big)\mid \bZ_1\big] \conP \mathfrak{q}_q, \cr
&&	\E\big[\#\{j \in \lbr n \rbr: j\neq 1, N(j) = N(1)\} \mid \bZ_1\big] \conP \mathfrak{o}_q.
\end{eqnarray*}
Notice that 
$\#\{j \in \lbr n \rbr: j\neq 1, N(j) = N(1)\}$ is bounded above by the maximum degree of the $1$-NNG, which is itself bounded \citep{MR682809}. By applying the bounded convergence theorem, the second, third, and fourth terms in the above decomposition could be further simplified, which yields
\begin{eqnarray}
36^{-1} \cdot
\E(\var(\xi_n \mid \bfZ))
&=&
(1+\mathfrak{q}_q)\cdot\E \Big\{\var\big[F_Y(Y_1 \wedge \tY_1)\mid \bZ_1 \big] \Big\} \cr
&&+
(2-2\mathfrak{q}_q + \mathfrak{o}_q)\cdot \E \Big\{\cov\big[F_Y(Y_1 \wedge \tY_1),  F_Y(Y_1 \wedge \tY_1')\mid \bZ_1 \big] \Big\} \cr
&&+
\E \Big\{\cov\big[\Ind (Y_3 \leq Y_1 \wedge  \tY_1),  \Ind (Y_3 \leq Y_2 \wedge  \tY_2) \mid \bZ_1, \bZ_2, \bZ_3\big] \Big\} \cr
&&+ \ 
4 \cdot \E \Big\{\cov\big[\Ind (Y_2 \leq Y_1 \wedge  \tY_1),  F_Y( Y_2 \wedge  \tY_2) \mid \bZ_1,\bZ_2\big] \Big\}+  o(1) \cr
&=:&  
(1+\mathfrak{q}_q)\cdot S_1 + 	(2-2\mathfrak{q}_q + \mathfrak{o}_q)\cdot S_2 + S_3 + 4 S_4 + o(1). 
\label{eq:thm:var_xi:1}
\end{eqnarray}
On the other hand, invoking Lemma C.1 (pp.~20, 25) and the proof of Lemma 2.11 (p.~47) in \cite{Lin_Han_2025_CLT}, we obtain
\begin{eqnarray}
36^{-1} \cdot \var(\E(\xi_n \mid \bfZ))  &=& 
\var\Big\{\E (F_Y(Y_1 \wedge \tY_1)\mid \bZ_1 )\Big\}+
\E\Big\{\E\big(\Ind(Y_1 \leq Y_2 \wedge \tY_2)\mid \bZ_1\big)^2\Big\} \cr
&&
-2 \E\Big\{ \Ind(Y_1 \leq Y_2 \wedge \tY_2) \cdot F_Y(\tY_1' \wedge \tY_1'') \Big\} +
\E\big\{ F_Y(Y_1 \wedge \tY_1)\big\} ^2  + o(1) \cr
&=:&
S_5 + S_6 - 2 S_7 +  S_8 + o(1). 
\label{eq:thm:var_xi:2}
\end{eqnarray}
Note that 
\begin{eqnarray}
S_1 &=&\E \Big\{\var\big[F_Y(Y_1 \wedge \tY_1)\mid \bZ_1 \big] \Big\}   \cr
&=&
\E \Big\{\E \big[F_Y(Y_1 \wedge \tY_1)^2\mid \bZ_1 \big] \Big\}  
-
\E \Big\{\E \big[F_Y(Y_1 \wedge \tY_1) \cdot F_Y(Y_1' \wedge \tY_1'') \mid \bZ_1 \big] \Big\}   \cr
&=&  \E\big\{ F_Y^2(Y \wedge \tY)\big\} -
\E\big\{F_Y(Y \wedge \tY) \cdot F_Y(\tY' \wedge\tY'')\big\}= T_1- T_3.
\label{eq:thm:var_xi:3}
\end{eqnarray}
Similarly, we can derive the following identities
\begin{eqnarray}
&&S_2 = T_2 - T_3, \quad S_3 = T_6 - S_6, \quad S_4 = T_4 - T_7,  \cr
&& S_5 = T_3 - T_7, \quad  S_7 = T_5, \quad \text{and } S_8 = T_7. \label{eq:thm:var_xi:4}
\end{eqnarray}
Using $\var(\xi_n) = \E(\var(\xi_n \mid \bfZ))  +  \var(\E(\xi_n \mid \bfZ))$, and combining \eqref{eq:thm:var_xi:1}--\eqref{eq:thm:var_xi:4}, we complete the proof of Theorem \ref{thm:var_xi}. \end{proof}

\begin{proof}[Proof of Theorem \ref{thm:est_var_xi}] 
Using \eqref{eq:thm:var_xi:3}, \eqref{eq:thm:var_xi:4}, and \cite{Lin_Han_2025_CLT} (Lemmas 2.10 and 2.11, pp.~19–20), we have that $\hatT_i \conP T_i$, for $i = 1,2,3,4,5,6,7$. This completes the proof. 
\end{proof}

\begin{proof}[Proof of Proposition \ref{prop:nlogn}]
To compute $\hsigma^2_{\xi(Y,\bZ)}$, we first need to obtain the ranks $R_i$, $R_{N(i)}$, $R_{N_2(i)}$, and $R_{N_3(i)}$ for all $i=1,\ldots,n$
	This requires performing a $k$-nearest neighbor search ($k=1,2,3$) over the entire set $\{\bZ_i\}_{i=1}^n$, followed by sorting the sample $\{Y_i\}_{i=1}^n$. 
	Each of these two steps can be carried out in $O(n \log n)$ time. 
	After this, the terms $\hatT_1$, $\hatT_2$, $\hatT_3$, and $\hatT_7$ only involve summation over single index $i$, and thus 
	can clearly be computed in $O(n \log n)$ time. 
	
	For the remaining terms $\hatT_4$, $\hatT_5$, and $\hatT_6$, although their definitions involve double summations over index pairs $(i,j)$ and thus appear to require $O(n^2)$ operations, Algorithm~\ref{alg1} provides a substantial computational acceleration. Specifically, by introducing the auxiliary ranks $R_i^*$ and $R_i^\#$ in Step~4, the original double summations are reformulated as equivalent single summations over $i$. Since, for each $i$, the quantities $R_i^*$ and $R_i^\#$ can be obtained via binary search in $O(\log n)$ time, the overall computational complexity of Algorithm~\ref{alg1} is bounded by $O(n\log n)$.
This completes the proof.
\end{proof}

\section{Proof of Theorem \ref{thm:CLT-main}} \label{secA:proofs-3}

The proof of Theorem \ref{thm:CLT-main} consists of three steps:
\begin{description}
\item[Step (1).] Derive the H\'ajek representation $	\tT_n^*$ of $\tT_n$, 
\begin{eqnarray}
\tT_n^* &=&  \frac{1}{n}\sum_{i=1}^n \Big\{F_Y(Y_i \wedge Y_{M(i)}) + h_1(Y_i)\Big\} -(1-T)\cdot\frac{1}{n}\sum_{i=1}^n \Big\{ F_Y(Y_i \wedge Y_{N(i)}) +h_2(Y_i)\Big\} \cr
&=:& S_{1,n}  -(1-T) \cdot S_{2,n}, \label{eq:Hajek} \\
\text{where} && h_1(t): = \P(Y \wedge \barY>t), \qquad h_2(t): = \P(Y \wedge \tY>t). \nonumber
\end{eqnarray}
Show that $\lim_{n \to \infty} \var(\tT_n - \tT_n^*) = 0$. Then $\tT_n - \E(\tT_n)$ shares the same limit distribution as $\tT_n^*- \E(\tT_n^*)$. 
\item [Step (2).] Prove that $\sigma^2 = \lim_{n \to \infty} n \var(\tT_n^*) $ exists, and admits the closed-form representation as in \eqref{eq:sigma2_tTn}--\eqref{eq:U1-U9}.

\item[Step (3).] Prove the CLT for $\tT_n^*$:
\begin{eqnarray*}
\sqrt{n}\cdot \big\{\tT_n^* - \E(\tT_n^*)\big\} \conD N(0,\sigma^2), \quad \text{as } n \to \infty.
\end{eqnarray*}
\end{description}

We present the proofs of Steps (1), (2), and (3) separately in Sections~\ref{secA:proof-stepI}, \ref{secA:proof-stepII}, and \ref{secA:proof-stepIII}, respectively.
\subsection{Proof of Step (1)}  \label{secA:proof-stepI}
\begin{proof}
Recall the decomposition of $\tT_n$ in \eqref{eq:tTn}:
\begin{eqnarray*}
&&	\tT_n \ = \kappa_n\cdot (T_n -T) \ = \ \txi_{1,n} -(1 -T) \cdot \txi_{2,n} - \big\{(n+1)/(2n)-3^{-1}\big\}\cdot T, \cr
&& \text{where } \hspace{1.8cm} \txi_{1,n} \ = \ \frac{1}{n^2}\sum_{i=1}^n\min\{R_i, R_{M(i)}\}- \frac{1}{3}, \cr
&& \hspace{3cm} \txi_{2,n} \ = \ \frac{1}{n^2}\sum_{i=1}^n\min\{R_i, R_{N(i)}\}- \frac{1}{3}.
\end{eqnarray*}

As shown in \cite{Lin_Han_2025_CLT} (Theorem 1.3, p.~8), the H\'ajek representations of 	$ \txi_{1,n}$ and $\txi_{2,n}$ take the form
\begin{eqnarray*}
S_{1,n} &:=&
\frac{1}{n}\sum_{i=1}^n F_Y(Y_i \wedge Y_{M(i)}) + \sum_{i=1}^n h_{1}(Y_i),\cr
S_{2,n} &:=& 
\frac{1}{n}\sum_{i=1}^n F_Y(Y_i \wedge Y_{N(i)}) + \sum_{i=1}^n h_{2}(Y_i),
\end{eqnarray*}
where
\begin{eqnarray*}
&&G_{\bX, \bZ}(t) := \E\{\Ind(Y\geq t) \mid \bX, \bZ\}, \quad h_{1}(t): = \E\big[G^2_{\bX, \bZ}(t)\big] = \P(\barY \wedge Y > t),\cr
\text{and} &&	G_\bZ(t) := \E\{\Ind(Y\geq t) \mid \bZ\}, \quad \qquad  h_{2}(t): = \E\big[	G^2_\bZ(t)\big] =  \P(\tY \wedge Y > t).
\end{eqnarray*}
Moreover, we have that 
\begin{eqnarray}
\lim_{n \to \infty} n \var\big(\txi_{1,n}-  S_{1,n}\big) =0, \qquad \lim_{n \to \infty} n \var\big(\txi_{2,n}-  S_{2,n}\big) =0,   \label{eq:lim_var_0}
\end{eqnarray}
which implies that
\begin{eqnarray*}
\lim_{n \to \infty} n \var\Big[\tT_n- \big\{S_{1,n} -  (1-T)\cdot S_{2,n} \big\}\Big] =0.
\end{eqnarray*}
Thus $\tT_n^* = S_{1,n} -  (1-T)\cdot S_{2,n}  $ in \eqref{eq:Hajek} is the H\'ajek representation of $\tT_n$. 
This completes the proof.
\end{proof}

\subsection{Proof of Step (2)} \label{secA:proof-stepII}
\paragraph*{Overview.}
Decompose the limit of variance as follows,
\begin{eqnarray*}
\lim_{n \to \infty} n \var(\tT_n^*)  &=& \lim_{n \to \infty} n \var\big(S_{1,n}  -(1-T) \cdot S_{2,n}\big)   \cr
&=&
\lim_{n \to \infty} n \var(S_{1,n}) + (1-T)^2\cdot \lim_{n \to \infty} n \var(S_{2,n}) -2\cdot(1-T)\cdot 	\lim_{n \to \infty} n \cov(S_{1,n}, S_{2,n}).
\end{eqnarray*}
By \eqref{eq:lim_var_0}, \eqref{eq:txi_2} and Theorem \ref{thm:var_xi}, we have
\begin{eqnarray*}
&&\lim_{n \to \infty} n \var(S_{1,n}) = \lim_{n \to \infty} n \var(\txi_{1,n}) =\lim_{n \to \infty} n \var\big(6^{-1} \cdot \xi_n(Y,\bZ) \big) = 36^{-1}\cdot \sigma^2_{\xi(Y,\bZ)}, \cr
&&\lim_{n \to \infty} n \var(S_{2,n}) = \lim_{n \to \infty} n \var(\txi_{2,n}) =\lim_{n \to \infty} n \var\big(6^{-1} \cdot \xi_n(Y,(\bX,\bZ)) \big) = 36^{-1}\cdot \sigma^2_{\xi(Y,(\bX,\bZ))}.
\end{eqnarray*}
Thus, it remains to show that
\begin{eqnarray*}
\lim_{n \to \infty} n \cov(S_{1,n}, S_{2,n}) = \sigma_{1,2},
\end{eqnarray*}
where $\sigma_{1,2}$ is provided in \eqref{eq:sigma12}--\eqref{eq:U1-U9}. 

Decompose the covariance into four terms:
\begin{eqnarray*}
n \cov(S_{1,n}, S_{2,n}) &=&
n \cdot \cov \Big\{ \frac{1}{n}\sum_{i=1}^n \Big[F_Y(Y_i \wedge Y_{M(i)}) + h_1(Y_i)\Big] , \ \frac{1}{n}\sum_{i=1}^n \Big[ F_Y(Y_i \wedge Y_{N(i)}) +h_2(Y_i)\Big]  \Big\} \cr
&=& \frac{1}{n} \cov\Big\{\sum_{i=1}^n F_Y(Y_i \wedge Y_{N(i)}) , \ \sum_{i=1}^n F_Y(Y_i \wedge Y_{M(i)})\Big\} \cr
&&+
\frac{1}{n} \cov\Big\{\sum_{i=1}^n F_Y(Y_i \wedge Y_{N(i)}) ,  \ \sum_{i=1}^n h_1(Y_i)\Big\}  \cr
&& +
\frac{1}{n} \cov\Big\{\sum_{i=1}^n F_Y(Y_i \wedge Y_{M(i)}) ,  \ \sum_{i=1}^n h_2(Y_i)\Big\} \cr
&& + 
\cov\Big\{h_1(Y), \ h_2(Y)\Big\}\cr
&=:& Q_1 + Q_2 + Q_3 + Q_4.
\end{eqnarray*}
Moreover, decompose $Q_1$ into two terms
\begin{eqnarray*}
Q_1&=&\frac{1}{n}\cov\Big(\sum_{i=1}^n F_Y(Y_i \wedge Y_{N(i)}) , \sum_{i=1}^n F_Y(Y_i \wedge Y_{M(i)}) \Big) \cr
&=&
\frac{1}{n}	
\E \Big\{\cov\Big[\sum_{i=1}^n F_Y(Y_i \wedge Y_{N(i)}), \sum_{j=1}^n F_Y(Y_i \wedge Y_{M(i)}) \mid \bfX, \bfZ\Big] \Big\} \cr
&& +
\frac{1}{n}\cov\Big[ \E\Big(\sum_{i=1}^n F_Y(Y_i \wedge Y_{N(i)})\mid \bfX, \bfZ \Big),
\E\Big(\sum_{i=1}^n F_Y(Y_i \wedge Y_{M(i)}) \mid \bfX, \bfZ \Big)
\Big]  \cr
&=:& Q_{1,1} + Q_{1,2}.  
\end{eqnarray*}
In the following, Sections \ref{secA:proof-stepII:1} and \ref{secA:proof-stepII:2} derive the limits of $Q_{1,1}$ and $Q_{1,2}$, respectively. Section \ref{secA:proof-stepII:3} derives the limits of $Q_2$ and $Q_3$. Finally, Section \ref{secA:proof-stepII:4} combines these results to obtain the limit of $n \cov(S_{1,n}, S_{2,n})$.

\subsubsection{The limit of $Q_{1,1}$} \label{secA:proof-stepII:1}
To analyze the term $Q_{1,1}$, we further decompose it according to the relationships among the indices in the NNG:
\begin{eqnarray}
Q_{1,1} &=& 
\frac{1}{n}\E \Big\{ \sum_{i=1}^n\cov\big[F_Y(Y_i \wedge Y_{N(i)}),  F_Y(Y_i \wedge Y_{M(i)}) \mid \bfX, \bfZ\big] \Big\} \cr
&& +	\frac{1}{n}\E \Big\{\sum_{(i,j)\in \lbr n \rbr\times\lbr n \rbr: \, i,j, N(i), M(j) \,\text{distinct}} \cov\big[F_Y(Y_i \wedge Y_{N(i)}), F_Y(Y_j \wedge Y_{M(j)}) \mid \bfX, \bfZ\big] \Big\} \cr
&&+
\frac{1}{n}	
\E \Big\{
\sum_{\underset{\text{ or } i=M(j), j\neq N(i)}{(i,j)\in \lbr n \rbr\times\lbr n \rbr: j=N(i), i\neq M(j),}	}
\cov\big[F_Y(Y_i \wedge Y_{N(i)}), F_Y(Y_j \wedge Y_{M(j)}) \mid \bfX, \bfZ\big]\Big\} \cr
&&+
\frac{1}{n}	
\E \Big\{\sum_{(i,j)\in \lbr n \rbr\times\lbr n \rbr:i\neq j, N(i) = M(j)}  \cov\big[F_Y(Y_i \wedge Y_{N(i)}), F_Y(Y_j \wedge Y_{M(j)}) \mid \bfX, \bfZ\big] \Big\} \cr
&&+
\frac{1}{n}	
\E \Big\{\sum_{(i,j)\in \lbr n \rbr\times\lbr n \rbr: j = N(i), i = M(j)} \cov\big[F_Y(Y_i \wedge Y_{N(i)}), F_Y(Y_j \wedge Y_{M(j)})  \mid \bfX, \bfZ\big] \Big\} \cr
&=:& 
Q_{1,1,1} + Q_{1,1,2}  +Q_{1,1,3} +Q_{1,1,4}  +Q_{1,1,5} .  \label{eq:Q11_decomp}
\end{eqnarray}
The limit of $Q_{1,1,1}$--$Q_{1,1,5}$ are derived in Lemmas \ref{lemA:lim1}--\ref{lemA:lim5} respectively.

\begin{lemmaA} \label{lemA:lim1}
The limit of $Q_{1,1,1}$ in \eqref{eq:Q11_decomp} is
\begin{eqnarray*}
&&\lim_{n \to \infty}	\E \Big\{\cov\big[F_Y(Y_1 \wedge Y_{N(1)}),  F_Y(Y_1 \wedge Y_{M(1)}) \mid \bfX, \bfZ\big] \Big\}  \cr
&=&
\E \Big\{\cov\big[F_Y(Y_1 \wedge \tY_1),  F_Y(Y_1 \wedge \barY_1) \mid \bX_1, \bZ_1\big] \Big\} .
\end{eqnarray*}
\end{lemmaA}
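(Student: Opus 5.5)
We need to show that the conditional covariance converges. Conditional on $(\bfX,\bfZ)$, the $Y_k$'s are independent with $Y_k\sim\tmu_{(\bX_k,\bZ_k)}$. The indices $N(1)$ and $M(1)$ are measurable functions of $(\bfX,\bfZ)$, so the conditional covariance is an explicit function of three (or two) conditional laws.

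The plan is to split on the event $\{N(1)=M(1)\}$ and its complement.

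On $\{N(1)\neq M(1)\}$, write $k=N(1)$ and $l=M(1)$. The conditional covariance equals $\phi(\bX_1,\bZ_1;\bX_k,\bZ_k;\bX_l,\bZ_l)$, where
\[
\phi(\bw_1;\bw_2;\bw_3)=\cov\big[F_Y(A\wedge B),F_Y(A\wedge C)\big],
\]
with $A,B,C$ independent and $A\sim\tmu_{\bw_1}$, $B\sim\tmu_{\bw_2}$, $C\sim\tmu_{\bw_3}$.

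On $\{N(1)=M(1)\}$, the analogous expression has $B=C$, which is a different functional. We need to show $\P(N(1)=M(1))\to 0$. Since $\|\bZ_{M(1)}-\bZ_1\|\le\|(\bX,\bZ)_{M(1)}-(\bX_1,\bZ_1)\|$, equality of the two neighbours would force the $\bZ$-nearest neighbour to also be the joint nearest neighbour. When $p\ge1$ and $(\bX,\bZ)$ is absolutely continuous, this has vanishing probability. I would get this either from a direct volume computation in the spirit of the proof of Lemma~\ref{lemma:two_NNGs}, or from a citation such as \cite{azadkia2019simple} or \cite{Shi_Drton_Han_2024_Bernoulli}. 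Because $|\phi|\le 1$, this event contributes $o(1)$.

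For the main term, I would use that $\bZ_{N(1)}\to\bZ_1$ almost surely and $(\bX,\bZ)_{M(1)}\to(\bX_1,\bZ_1)$ almost surely (Lemma 11.3 of \cite{azadkia2019simple}). The target limit is obtained by replacing $\tmu_{(\bX_k,\bZ_k)}$ with the law of $Y$ given $\bZ=\bZ_1$, and $\tmu_{(\bX_l,\bZ_l)}$ with $\tmu_{(\bX_1,\bZ_1)}$.

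The second replacement follows from Assumption~\ref{assump_4.5}, which gives a.e. continuity of $G_{\bx,\bz}(t)$. Since $F_Y(a\wedge c)$ is an integral of indicators against $\P_Y$, the covariance depends on $\tmu_{\bw_3}$ only through the integrals $\int G_{\bw_3}(t)\,\d\nu(t)$ against bounded measures $\nu$. Dominated convergence then applies.

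The first replacement is the main obstacle. The point $\bX_{N(1)}$ does not approach $\bX_1$; only $\bZ_{N(1)}\to\bZ_1$. So $\tmu_{(\bX_{N(1)},\bZ_{N(1)})}$ does not converge to a fixed law. Instead, one has to average over $\bX_{N(1)}$.

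My approach would be to condition on everything except $\bX_{N(1)}$ and use the following fact: given $\bZ_{N(1)}$ (and the $\bZ$-sample), $\bX_{N(1)}$ is distributed as $\bX$ given $\bZ=\bZ_{N(1)}$. This holds up to the dependence introduced through $M(1)$, which depends on $\bX_{N(1)}$ only through whether $N(1)$ could be $M(1)$, an event already shown negligible.

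Since $\phi$ is linear in $\tmu_{\bw_2}$ when $\bw_1$ and $\bw_3$ are fixed, averaging over $\bX_{N(1)}$ turns $\tmu_{(\bX_{N(1)},\bZ_{N(1)})}$ into $\mu_{Y\mid\bZ=\bZ_{N(1)}}$. Continuity of $\bz\mapsto G_{\bz}(t)$ from Assumption~\ref{assump_4.5}, together with $\bZ_{N(1)}\to\bZ_1$ and bounded convergence, then gives the limit $\E\{\cov[F_Y(Y_1\wedge\tY_1),F_Y(Y_1\wedge\barY_1)\mid\bX_1,\bZ_1]\}$.

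Making the conditioning step rigorous is the technical crux. I would try to use the independence of $\bX_{N(1)}$ from the $\bZ$-graph given $\bZ_{N(1)}$, with $M(1)$ handled on the high-probability event where $M(1)\neq N(1)$ and $M(1)$ is determined by points other than $N(1)$.
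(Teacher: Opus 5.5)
Your proposal follows the paper's proof. You discard $\{N(1)=M(1)\}$ using $\P(N(1)=M(1))=o(1)$, which the paper cites from \citet[Lemma 7.4]{Shi_Drton_Han_2024_Bernoulli}. You then integrate $\bX_{N(1)}$ out against $\mu_{\bX\mid\bZ=\bZ_{N(1)}}$, turning $\tmu_{(\bX_{N(1)},\bZ_{N(1)})}$ into $\tmu_{\bZ_{N(1)}}$, and pass to the limit. The remaining differences are cosmetic: the paper splits the covariance as $R_1-R_2$, and handles the limit by simple-function approximation plus Lemma 11.7 of \cite{azadkia2019simple} (Lemma \ref{lemA:aux-2-3}) rather than continuity and dominated convergence.

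Your device for the crux is actually a cleaner justification than the paper's. On $\{M(1)\neq N(1)\}$ you replace $M(1)$ by the joint nearest neighbour of $(\bX_1,\bZ_1)$ among indices other than $1$ and $N(1)$. That index is exactly conditionally independent of $\bX_{N(1)}$ given $\bfZ$. The paper instead asserts that $\bX_{N(1)}$ has law exactly $\mu_{\bX\mid\bZ=\bZ_{N(1)}}$ given $(\bX_{M(1)},\bZ_{M(1)})$. That is only approximately true, because $M(1)=l$ forces $(\bX_{N(1)},\bZ_{N(1)})$ outside a shrinking ball around $(\bX_1,\bZ_1)$.
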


\begin{proof}
By manipulating the conditional covariance, we have
\begin{eqnarray*}
&&\E \Big\{\cov\big[F_Y(Y_1 \wedge Y_{N(1)}),  F_Y(Y_1 \wedge Y_{M(1)}) \mid \bfX, \bfZ\big] \Big\} \cr
&=&
\E \Big\{\E\big[F_Y(Y_1 \wedge Y_{N(1)})\cdot  F_Y(Y_1 \wedge Y_{M(1)}) \mid \bfX, \bfZ\big] \Big\} \cr
&&- \E\Big\{ \E\big[F_Y(Y_1 \wedge Y_{N(1)}) \mid \bfX, \bfZ\big] \cdot \E\big[F_Y(Y_1 \wedge Y_{M(1)}) \mid \bfX, \bfZ\big] \Big\} \cr
&=:& R_1 - R_2.
\end{eqnarray*}
By \citet[Lemma 7.4, Equation (26)]{Shi_Drton_Han_2024_Bernoulli}, we have that $\P(N(1) = M(1)) =o(1)$. Then for the term $R_1$, we have
\begin{eqnarray*}
R_1
&=& \E \Big\{\E\big[F_Y(Y_1 \wedge Y_{N(1)})\cdot  F_Y(Y_1 \wedge Y_{M(1)}) \mid \bfX, \bfZ\big] \cdot \Ind(N(1) \neq M(1))\Big\} \cr
&& + \E \Big\{\E\big[F_Y(Y_1 \wedge Y_{N(1)})\cdot  F_Y(Y_1 \wedge Y_{M(1)}) \mid \bfX, \bfZ\big] \cdot \Ind(N(1) = M(1))\Big\} \cr
&=&
\E  \Big\{ \int \int \int F_Y(u\wedge v) \cdot F_Y(u \wedge w) 
\d \tmu_{(\bX_1,\bZ_1)}(u) \d \tmu_{(\bX_{N(1)}, \bZ_{N(1)})}(v) \d \tmu_{(\bX_{M(1)}, \bZ_{M(1)})}(w) \cr
&& \quad \times \Ind(N(1) \neq M(1)) \Big\} + o(1).
\end{eqnarray*}
Conditional on the event $	\{ N(1) \neq M(1), (\bX_1, \bZ_1) = (\bx_1,\bz_1), \bZ_{N(1)} = \bz_{N(1)}, (\bX_{M(1)}, \bZ_{M(1)})=(\bx_{M(1)}, \bz_{M(1)}) \}$, the distribution law of $\bX_{N(1)} = \bx$ is $\mu_{\bX=\bx \mid \bZ = \bz_{N(1)}}$. That is, $\bX_{N(1)}$ only depends on $\bz_{N(1)}$ and is irrelevant of $ (\bx_1,\bz_1)$ and $(\bx_{M(1)}, \bz_{M(1)})$. Therefore, by applying the Fubini's theorem, we can eliminate the random term 
$\bX_{N(1)}$ in this expectation as follows,
\begin{eqnarray}
&&\E  \Big\{ \int \int \int F_Y(u\wedge v) \cdot F_Y(u \wedge w) 
\d \tmu_{(\bX_1,\bZ_1)}(u) \d \tmu_{(\bX_{N(1)}, \bZ_{N(1)})}(v) \d \tmu_{(\bX_{M(1)}, \bZ_{M(1)})}(w) \cr
&& \hspace{3cm}\times\Ind(N(1) \neq M(1))\Big\} \cr
&=& \E\Big[\E  \Big\{ \int \int \int F_Y(u\wedge v) \cdot F_Y(u \wedge w) 
\d \tmu_{(\bX_1,\bZ_1)}(u) \d \tmu_{(\bX_{N(1)}, \bZ_{N(1)})}(v) \d \tmu_{(\bX_{M(1)}, \bZ_{M(1)})}(w)  \cr 
&& \hspace{3cm}\Biggiven(\bX_1,\bZ_1), \bZ_{N(1)}, (\bX_{M(1)}, \bZ_{M(1)}) , \Ind(N(1) \neq M(1))\Big\} \cdot \Ind(N(1) \neq M(1))\Big]
\cr
&=&\E \Big[ \Big\{\int \int \int \int F_Y(u\wedge v) \cdot F_Y(u \wedge w) \d \tmu_{(\bX_1,\bZ_1)}(u) \d \tmu_{(\bx, \bZ_{N(1)})}(v)  \cr
&& \hspace{3cm}
\d \tmu_{(\bX_{M(1)}, \bZ_{M(1)})}(w)  \d \mu_{\bX=\bx \mid \bZ = \bZ_{N(1)}}(\bx)\Big\} \cdot \Ind(N(1) \neq M(1))\Big]  \cr
&=&\E \Big[\Big\{ \int \int \int F_Y(u\wedge v) \cdot F_Y(u \wedge w) 
\d \tmu_{(\bX_1,\bZ_1)}(u) \d \tmu_{\bZ_{N(1)}}(v) \d \tmu_{(\bX_{M(1)}, \bZ_{M(1)})}(w)  \Big\} \cr
&& \hspace{3cm} \times \Ind(N(1) \neq M(1))\Big] \cr
&=& \E\Big\{g\big((\bX_1,\bZ_1) , \bZ_{N(1)},  (\bX_{M(1)}, \bZ_{M(1)}) \big) \cdot \Ind(N(1) \neq M(1)) \Big\} \cr
&=&
\E\Big\{g\big((\bX_1,\bZ_1) , \bZ_{N(1)},  (\bX_{M(1)}, \bZ_{M(1)}) \big) \Big\}  +o(1),
\label{eq:lemA:lim1:1}
\end{eqnarray}
where $g$ is defined in \eqref{eq:lemA:aux-2-3:1} in Lemma \ref{lemA:aux-2-3},
and the last equation is because that $g$ is bounded and $\P(N(1) = M(1)) =o(1)$.
By Lemma \ref{lemA:aux-2-3}, we have 
\begin{eqnarray}
g\big((\bX_1, \bZ_1), \bZ_{N(1)}, (\bX_{M(1)}, \bZ_{M(1)})\big) - g\big((\bX_1, \bZ_1), \bZ_1, (\bX_1, \bZ_1)\big)  \conP 0.
\end{eqnarray}
Thus, by applying the bounded convergence theorem,
\begin{eqnarray}
\lim_{n\to \infty} R_1 &=& 	\lim_{n\to \infty} 	\E\Big\{g\big((\bX_1, \bZ_1), \bZ_{N(1)}, (\bX_{M(1)}, \bZ_{M(1)})\big) \Big\} = \E\Big\{g\big((\bX_1, \bZ_1), \bZ_1, (\bX_1, \bZ_1)\big) \Big\} \cr
&=&
\E\Big\{\int \int \int F_Y(u\wedge v) \cdot F_Y(u \wedge w) 
\d \tmu_{(\bX_1,\bZ_1)}(u) \d \tmu_{\bZ_1}(v) \d \tmu_{(\bX_1,\bZ_1)}(w) \Big\} \cr
&=&
\E\Big\{ \E\big[ F_Y(Y_1 \wedge \tY_1)\cdot F_Y(Y_1 \wedge \barY_1)\mid \bX_1, \bZ_1\big]\Big\}. \label{eq:lemA:lim1:2}
\end{eqnarray}

Next, we examine the term $R_2$. 
\begin{eqnarray*}
R_2 &=& 
\E\Big\{ \E\big[F_Y(Y_1 \wedge Y_{N(1)}) \mid \bfX, \bfZ\big] \cdot \E\big[F_Y(Y_1 \wedge Y_{M(1)}) \mid \bfX, \bfZ\big] \Big\}  \cr
&=&
\E\Big\{ \E\big[F_Y(Y_1 \wedge Y_{N(1)}) \mid \bfX, \bfZ\big] \cdot \E\big[F_Y(Y_1 \wedge Y_{M(1)}) \mid \bfX, \bfZ\big]  \cdot \Ind(N(1) \neq M(1)) \Big\} + o(1)
\cr
&=&
\E\Big\{\tildeg\big((\bX_1,\bZ_1), (\bX_{N(1)}, \bZ_{N(1)})\big) \cdot \tildeg\big((\bX_1,\bZ_1), (\bX_{M(1)}, \bZ_{M(1)})\big) \cdot \Ind(N(1) \neq M(1)) \Big\}	+o(1),
\end{eqnarray*}
where the function $\tildeg$ is defined in \eqref{eq:lemA:aux-2-6:1} of  Lemma \ref{lemA:aux-2-6}  and is bounded. 
Similar to the proof of \eqref{eq:lemA:lim1:1}, we can eliminate the term $\bX_{N(1)}$ in the expectation above and obtain
\begin{eqnarray}
&&\E\Big\{\tildeg\big((\bX_1,\bZ_1), (\bX_{N(1)}, \bZ_{N(1)})\big) \cdot \tildeg\big((\bX_1,\bZ_1), (\bX_{M(1)}, \bZ_{M(1)})\big) \cdot \Ind(N(1) \neq M(1)) \Big\}	\cr
&=&
\E\Big[\E\Big\{\tildeg\big((\bX_1,\bZ_1), (\bX_{N(1)}, \bZ_{N(1)})\big) \cdot \tildeg\big((\bX_1,\bZ_1), (\bX_{M(1)}, \bZ_{M(1)})\big) \cr
&& \hspace{2cm}\Biggiven(\bX_1,\bZ_1), \bZ_{N(1)}, (\bX_{M(1)}, \bZ_{M(1)}),  \Ind(N(1) \neq M(1))  \Big\} \cdot \Ind(N(1) \neq M(1)) \Big]	\cr
&=&
\E\Big[ 
\int\tildeg\big((\bX_1,\bZ_1), (\bx, \bZ_{N(1)})\big) \d \mu_{\bX=\bx \mid \bZ = \bZ_{N(1)}}(\bx) \cr
&&\qquad \times \tildeg\big((\bX_1,\bZ_1), (\bX_{M(1)}, \bZ_{M(1)})\big) \cdot \Ind(N(1) \neq M(1)) \Big]\cr
&=&
\E\Big\{\tildeg^\dagger \big((\bX_1,\bZ_1), \bZ_{N(1)}\big) \cdot \tildeg\big((\bX_1,\bZ_1), (\bX_{M(1)}, \bZ_{M(1)})\big) \cdot \Ind(N(1) \neq M(1)) \Big\} \cr
&=&
\E\Big\{\tildeg^\dagger \big((\bX_1,\bZ_1), \bZ_{N(1)}\big) \cdot \tildeg\big((\bX_1,\bZ_1), (\bX_{M(1)}, \bZ_{M(1)})\big) \Big\}  + o(1),
\label{eq:lemA:lim1:3}
\end{eqnarray}
where the function $\tildeg^\dagger$ is defined in \eqref{eq:lemA:aux-2-7:1} of Lemma \ref{lemA:aux-2-7} and is bounded. 
By Lemmas \ref{lemA:aux-2-6} and \ref{lemA:aux-2-7}, we have
\begin{eqnarray*}
&&	\tildeg^\dagger \big((\bX_1,\bZ_1), \bZ_{N(1)}\big) \cdot \tildeg\big((\bX_1,\bZ_1), (\bX_{M(1)}, \bZ_{M(1)})\big) \cr
&& \hspace{3cm} - \tildeg^\dagger \big((\bX_1,\bZ_1), \bZ_1\big) \cdot \tildeg\big((\bX_1,\bZ_1), (\bX_1, \bZ_1)\big) 
\conP 0.
\end{eqnarray*}
Therefore,
\begin{eqnarray}
\lim_{n\to \infty} R_2 &=& 	\lim_{n\to \infty} \E\Big\{\tildeg^\dagger \big((\bX_1,\bZ_1), \bZ_{N(1)}\big) \cdot \tildeg\big((\bX_1,\bZ_1), (\bX_{M(1)}, \bZ_{M(1)})\big)  \Big\}\cr
&=&
\lim_{n\to \infty} \E\Big\{\tildeg^\dagger \big((\bX_1,\bZ_1), \bZ_1\big) \cdot \tildeg\big((\bX_1,\bZ_1), (\bX_1, \bZ_1)\big) \Big\} \cr
&=&
\E\Big\{ \E\big[F_Y(Y_1 \wedge \tY_1)\mid \bX_1, \bZ_1\big]  \cdot \E\big[F_Y(Y_1 \wedge \barY_1)\mid \bX_1, \bZ_1\big]  \Big\}. \label{eq:lemA:lim1:4}
\end{eqnarray}
Combining \eqref{eq:lemA:lim1:2} and \eqref{eq:lemA:lim1:4} completes the proof.
\end{proof}

\begin{lemmaA} \label{lemA:lim2}
The limit of $Q_{1,1,2}$ in \eqref{eq:Q11_decomp} is
\begin{eqnarray*}
\lim_{n\to \infty}\frac{1}{n}\E \Big\{\sum_{(i,j)\in \lbr n \rbr\times\lbr n \rbr: \, i,j, N(i), M(j) \,\text{distinct}} \cov\big[F_Y(Y_i \wedge Y_{N(j)}), F_Y(Y_i \wedge Y_{M(j)}) \mid \bfX, \bfZ\big] \Big\}=0.
\end{eqnarray*}
\end{lemmaA}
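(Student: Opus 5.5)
The plan is to show that each summand in $Q_{1,1,2}$ vanishes identically, not merely in the limit. The key fact is that, conditional on $(\bfX,\bfZ)$, the responses $Y_1,\dots,Y_n$ are independent, with $Y_k\sim\tmu_{(\bX_k,\bZ_k)}$. This holds because the sample is i.i.d. (Assumption~\ref{assump_4.1}) and the graph indices $N(\cdot)$ and $M(\cdot)$ are measurable functions of $(\bfX,\bfZ)$ alone. Hence, once we condition on $(\bfX,\bfZ)$, the index sets $\{i,N(i)\}$ and $\{j,M(j)\}$ are deterministic.

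Reading the statement in the natural way, as $\cov[F_Y(Y_i\wedge Y_{N(i)}),F_Y(Y_j\wedge Y_{M(j)})\mid\bfX,\bfZ]$ in agreement with \eqref{eq:Q11_decomp}, fix a pair $(i,j)$ for which $i,j,N(i),M(j)$ are all distinct. Then $F_Y(Y_i\wedge Y_{N(i)})$ is a measurable function of $(Y_i,Y_{N(i)})$ alone, and $F_Y(Y_j\wedge Y_{M(j)})$ is a measurable function of $(Y_j,Y_{M(j)})$ alone. Conditionally on $(\bfX,\bfZ)$, these two pairs involve disjoint sets of indices, so the two pairs are conditionally independent. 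Both functions are bounded by $1$, so their conditional covariance exists and equals $0$ almost surely.

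It follows that the random sum inside the expectation is identically zero for every $n$. Its expectation is therefore $0$, and so is the limit.

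The only point needing care is rigor in the conditioning, since the summation set is random. I would rewrite the sum as $\sum_{i,j}\Ind(i,j,N(i),M(j)\text{ distinct})\cdot\cov[\cdots\mid\bfX,\bfZ]$. The indicator is $\sigma(\bfX,\bfZ)$-measurable, so it factors out of the conditional covariance.

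To evaluate the covariance on the event where the indices are distinct, I would use the regular conditional laws $\tmu$ from Lemma~\ref{lemA:mu}. These give the conditional expectation of a product as an integral against the product measure $\bigotimes_k\tmu_{(\bX_k,\bZ_k)}$. On that event the integral factorizes into the product of the two conditional means. I do not expect a genuine obstacle here, because the degree bound from \cite{MR682809} is not even needed when every term is exactly zero.
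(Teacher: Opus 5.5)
Your proposal is correct and matches the paper's proof: the paper reduces by exchangeability to $(n-1)\,\E\{\cov[F_Y(Y_1\wedge Y_{N(1)}),F_Y(Y_2\wedge Y_{M(2)})\mid\bfX,\bfZ]\cdot\Ind(1,2,N(1),M(2)\text{ distinct})\}$ and then uses exactly your conditional-independence argument to show that this covariance vanishes on that event. You are also right to read the statement's $Y_i\wedge Y_{N(j)}$ and $Y_i\wedge Y_{M(j)}$ as a typo for $Y_i\wedge Y_{N(i)}$ and $Y_j\wedge Y_{M(j)}$; this reading is consistent with \eqref{eq:Q11_decomp} and with the paper's own proof.
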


\begin{proof} 
\begin{eqnarray*}
&&\frac{1}{n}\E \Big\{\sum_{(i,j)\in \lbr n \rbr\times\lbr n \rbr: \, i,j, N(i), M(j) \,\text{distinct}} \cov\big[F_Y(Y_i \wedge Y_{N(i)}), F_Y(Y_j \wedge Y_{M(j)}) \mid \bfX, \bfZ\big] \Big\}
\cr
&=&
(n-1)\cdot 
\E \Big\{ \cov\big[ F_Y(Y_1 \wedge Y_{N(1)}), F_Y(Y_2\wedge Y_{M(2)}) \mid \bfX, \bfZ\big] \cdot \Ind\big(1,2, N(1), M(2) \,\text{distinct}\big)\Big\}.
\end{eqnarray*}
For $(\bfX, \bfZ)$ such that $1,2, N(1), M(2)$ are distinct, $Y_1, Y_{N(1)}, Y_2, Y_{M(2)}$ conditional on $(\bfX, \bfZ)$ are independently distributed, and thus
\begin{eqnarray*}
\cov\big[ F_Y(Y_1 \wedge Y_{N(1)}), F_Y(Y_2\wedge Y_{M(2)}) \mid \bfX, \bfZ\big]\cdot \Ind\big(1,2, N(1), M(2) \,\text{distinct}\big) =0.
\end{eqnarray*}
Then
\begin{eqnarray*}
\E \Big\{ \cov\big[ F_Y(Y_1 \wedge Y_{N(1)}), F_Y(Y_2\wedge Y_{M(2)}) \mid \bfX, \bfZ\big] \cdot \Ind\big(1,2, N(1), M(2) \,\text{distinct}\big)\Big\}=0.
\end{eqnarray*}
This completes the proof. \end{proof}

\begin{lemmaA} \label{lemA:lim3}
The limit of $Q_{1,1,3}$ in \eqref{eq:Q11_decomp} is
\begin{eqnarray*}
&&\lim_{n\to \infty} \frac{1}{n}	
\E \Big\{
\sum_{\underset{\text{ or } i=M(j), j\neq N(i)}{(i,j)\in \lbr n \rbr\times\lbr n \rbr: j=N(i), i\neq M(j),}	}
\cov\big[F_Y(Y_i \wedge Y_{N(i)}), F_Y(Y_j \wedge Y_{M(j)}) \mid \bfX, \bfZ\big]\Big\} \cr
&=&
2\cdot \E \Big\{\cov\big[F_Y(Y_1 \wedge \tY_1),  F_Y(Y_1 \wedge \barY_1)\mid \bX_1, \bZ_1 \big] \Big\}.
\end{eqnarray*}
\end{lemmaA}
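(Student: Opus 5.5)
The plan is to split $Q_{1,1,3}$ into its two symmetric pieces and reduce each to a conditional covariance at a single point, mimicking the proof of Lemma~\ref{lemA:lim1}.

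\textbf{Case $j=N(i)$, $i\neq M(j)$.} The pair is indexed by $i$ alone, and by exchangeability the normalized sum equals
\[
\E\big\{\cov[F_Y(Y_1\wedge Y_{N(1)}),F_Y(Y_{N(1)}\wedge Y_{M(N(1))})\mid \bfX,\bfZ]\,\Ind(M(N(1))\neq 1)\big\}.
\]
On this event the variables $Y_1$, $Y_{N(1)}$ and $Y_{M(N(1))}$ (when $M(N(1))\neq N(1)$, which always holds) are conditionally independent with laws $\tmu_{(\bX_1,\bZ_1)}$, $\tmu_{(\bX_{N(1)},\bZ_{N(1)})}$ and $\tmu_{(\bX_{M(N(1))},\bZ_{M(N(1))})}$. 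The only shared variable is $Y_{N(1)}$, so the conditional covariance is an explicit bounded functional of these three laws. As in \eqref{eq:lemA:lim1:1}, I would then integrate out $\bX_1$. Conditional on $\bZ_1$, the index $N(1)$, the remaining data and $M(N(1))\neq 1$, the law of $\bX_1$ is $\mu_{\bX\mid\bZ=\bZ_1}$, because $N(\cdot)$ depends only on $\bZ$. This step needs care, since conditioning on $M(N(1))\neq1$ does involve $\bX_1$. I would handle it by first showing $\P(M(N(1))=1)\to$ a limit and then arguing that restricting or removing the indicator costs $o(1)$ after the bounded-function approximation. An alternative, and probably cleaner, route is to write $\Ind(M(N(1))\neq1)=1-\Ind(M(N(1))=1)$ and treat the second part jointly with $Q_{1,1,5}$.

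With $\bX_1$ integrated out, the law of $Y_1$ becomes $\tmu_{\bZ_1}$. Next I would use $\|\bZ_{N(1)}-\bZ_1\|\to0$ and $\|(\bX,\bZ)_{M(N(1))}-(\bX,\bZ)_{N(1)}\|\to0$ in probability (Lemma 11.3 of \cite{azadkia2019simple}), together with continuity lemmas of the type of Lemmas~\ref{lemA:aux-2-3}--\ref{lemA:aux-2-7}, which rely on Assumption~\ref{assump_4.5}. Relabelling the centre point $N(1)$ as ``1'' shows that the term converges to
\[
\E\,\cov\big[F_Y(Y_1\wedge\tY_1),F_Y(Y_1\wedge\barY_1)\mid\bX_1,\bZ_1\big].
\]
Here I use that $N(1)$ has, asymptotically, the same distribution for $(\bX,\bZ)$ as a typical point; more precisely, I would use $\E\{\#\{i:N(i)=k\}\mid\cdot\}$ and the fact that in-degrees average to one.

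\textbf{Case $i=M(j)$, $j\neq N(i)$.} This piece is symmetric, with the roles of $M$ and $N$ swapped. Now $\bX_{N(M(1))}$ is the variable to integrate out, exactly as in Lemma~\ref{lemA:lim1}, and it yields the same limit.

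The main obstacle is the exchange between the graph-event indicators, in particular $\Ind(M(N(1))\neq 1)$, and the conditional law of the $\bX$-coordinate that is integrated out. I would try first to show that both the indicator corrections and the overlap events ($N(i)=M(j)$-type coincidences, and $\P(N(1)=M(1))=o(1)$ from \cite{Shi_Drton_Han_2024_Bernoulli}) contribute $o(1)$. The bounded maximum degree \citep{MR682809} and bounded convergence would justify passing to the limit. Summing the two cases gives the factor $2$.
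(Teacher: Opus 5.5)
Your plan is correct in outline and has the same skeleton as the paper's proof. You split into two cases, discard the reciprocity indicators, integrate out the $\bX$-coordinate attached to the $\bZ$-neighbour, collapse to a single point with continuity lemmas of the type of Lemmas~\ref{lemA:aux-2-3}--\ref{lemA:aux-2-7}, and add the two equal limits. For the indicators, no limit has to be computed. The paper simply uses $\P(M(N(1))=1)=o(1)$ and $\P(N(M(1))=1)=o(1)$ from \citet[Lemma 7.4, Equation (27)]{Shi_Drton_Han_2024_Bernoulli}. Heuristically, $\bX_{N(1)}$ stays at distance of order one from $\bX_1$, while joint-space NN distances shrink. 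Your ``cleaner'' alternative of merging with $Q_{1,1,5}$ would not help. It only gives the limit of the sum, and the merged term still contains the event $\{M(N(1))=1\}$. On that event $Y_1$ enters both factors of the covariance and $M(N(1))$ depends on $\bX_1$, so you need that vanishing probability in any case.

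The obstacle you flag is genuine, and the paper handles it differently from you. For the main term of Case (i) it reverses your order. It first uses Lemma~\ref{lemA:aux-2-4} to replace $(\bX_{M(N(1))},\bZ_{M(N(1))})$ by $(\bX_{N(1)},\bZ_{N(1)})$ inside $g^*$. Only then does it integrate out $\bX_1$, whose conditional law given $(\bX_{N(1)},\bZ_{N(1)},\bZ_1)$ is $\mu_{\bX\mid\bZ=\bZ_1}$ because $N(1)$ is $\bfZ$-measurable. For the product term $R_2$, however, the paper does integrate out $\bX_1$ with the indicator still present, i.e.\ in your order, without further comment.

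If you keep your order, here is a clean device that also justifies that step. Replace $M(N(1))$ by the joint-space NN of $N(1)$ among the indices other than $1$. It coincides with $M(N(1))$ off $\{M(N(1))=1\}$ and does not depend on $\bX_1$. So $\bX_1$ can then be integrated out exactly, at a cost of $o(1)$.

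Finally, your in-degree relabelling is a genuinely different and more direct justification of the last step. The paper instead appeals to $(\bX_{N(1)},\bZ_{N(1)})\conD(\bX_1,\bZ_1)$ in \eqref{eq:lemA:aux-1-8:4} plus absolute continuity. In your argument, $d^-(1)=\#\{i:N(i)=1\}$ is $\bfZ$-measurable, so exchangeability gives exactly $\E h(\bX_{N(1)},\bZ_{N(1)})=\E\{h(\bX_1,\bZ_1)\,\E(d^-(1)\mid\bZ_1)\}$. Moreover $\E(d^-(1)\mid\bZ_1)\conP 1$; argue as in Lemma~\ref{lemma:two_NNGs}, using that the unconditional mean is exactly $1$. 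Together with bounded in-degrees, this gives convergence for every bounded measurable $h$. That is what is needed, since $g$ and $\tildeg^\dagger$ are only measurable.
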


\begin{proof}
The proof is divided into the following two cases.

\noindent \textbf{Case (i): $j=N(i), i\neq M(j)$. }

By \citet[Lemma 7.4, Equation (27)]{Shi_Drton_Han_2024_Bernoulli}, we have that $\P(M(N(1))=1) =o(1)$ and $\P(N(M(1))=1) =o(1)$. Then
\begin{eqnarray*}
&&\frac{1}{n}	
\E \Big\{\sum_{(i,j)\in \lbr n \rbr\times\lbr n \rbr: j=N(i), i\neq M(j)}  \cov\big[F_Y(Y_i \wedge Y_{N(i)}), F_Y(Y_j \wedge Y_{M(j)}) \mid \bfX, \bfZ\big] \Big\} \cr
&=& 	
\E \Big\{\sum_{j\in \lbr n \rbr: j=N(1), 1\neq M(j)}  \cov\big[F_Y(Y_1 \wedge Y_{N(1)}), F_Y(Y_j \wedge Y_{M(j)}) \mid \bfX, \bfZ\big]\Big\} \cr
&=&
\E \Big\{\cov\big[F_Y(Y_1 \wedge Y_{N(1)}), F_Y(Y_{N(1)} \wedge Y_{M(N(1))}) \mid \bfX, \bfZ\big] \cdot \Ind (M(N(1)) \neq 1) \Big\} \cr
&=&
\E \Big\{\cov\big[F_Y(Y_1 \wedge Y_{N(1)}), F_Y(Y_{N(1)} \wedge Y_{M(N(1))}) \mid \bfX, \bfZ\big] \Big\} + o(1) .
\end{eqnarray*}
By the decomposition of conditional covariance, we have
\begin{eqnarray*}
&&	\E \Big\{\cov\big[F_Y(Y_1 \wedge Y_{N(1)}), F_Y(Y_{N(1)} \wedge Y_{M(N(1))}) \mid \bfX, \bfZ\big] \Big\} \cr
&=&
\E \Big\{\E\big[F_Y(Y_1 \wedge Y_{N(1)})\cdot F_Y(Y_{N(1)} \wedge Y_{M(N(1))}) \mid \bfX, \bfZ\big] \Big\} \cr
&&- \E\Big\{ \E\big[F_Y(Y_1 \wedge Y_{N(1)}) \mid \bfX, \bfZ\big] \cdot \E\big[F_Y(Y_{N(1)} \wedge Y_{M(N(1))})\mid \bfX, \bfZ\big] \Big\} \cr
&=:& R_1 - R_2.
\end{eqnarray*}
For the first term $R_1$, 
by Lemma \ref{lemA:aux-2-4}, we have
\begin{eqnarray*}
R_1 &=&
\E  \Big\{ \int \int \int F_Y(u\wedge v) \cdot F_Y(u \wedge w) 
\d \tmu_{(\bX_{N(1)}, \bZ_{N(1)})}(u) \d \tmu_{(\bX_1, \bZ_1)}(v) \d \tmu_{(\bX_{M(N(1))}, \bZ_{M(N(1))})}(w)  \Big\} \cr
&=&
\E\Big\{ g^*\big((\bX_{N(1)}, \bZ_{N(1)}), (\bX_1, \bZ_1), (\bX_{M(N(1))}, \bZ_{M(N(1))})\big)  \Big\} \cr
&=&
\E\Big\{ g^*\big((\bX_{N(1)}, \bZ_{N(1)}), (\bX_1, \bZ_1), (\bX_{N(1)}, \bZ_{N(1)})\big)\Big\} + o(1),
\end{eqnarray*}
where function $g^*$ is defined in \eqref{eq:lemA:aux-2-4:1} of Lemma \ref{lemA:aux-2-4}. 
Note that conditional on $(\bX_{N(1)}, \bZ_{N(1)}, \bZ_1)$, the distribution of $\bX_1$ only depends on $ \bZ_1$ and is irrelevant with respect to $(\bX_{N(1)}, \bZ_{N(1)})$. 
Then using the similar proof as that of \eqref{eq:lemA:lim1:1}, we further have
\begin{eqnarray*}
&&	\E\Big\{ g^*\big((\bX_{N(1)}, \bZ_{N(1)}), (\bX_1, \bZ_1), (\bX_{N(1)}, \bZ_{N(1)})\big)\Big\} \cr
&=&
\E  \Big\{ \int \int \int F_Y(u\wedge v) \cdot F_Y(u \wedge w) 
\d \tmu_{(\bX_{N(1)}, \bZ_{N(1)})}(u) \d \tmu_{(\bX_1, \bZ_1)}(v) \d \tmu_{(\bX_{N(1)}, \bZ_{N(1)})}(w)  \Big\} \cr
&=&
\E  \Big\{ \int \int \int F_Y(u\wedge v) \cdot F_Y(u \wedge w) 
\d \tmu_{(\bX_{N(1)}, \bZ_{N(1)})}(u) \d \tmu_{\bZ_1}(v) \d \tmu_{(\bX_{N(1)}, \bZ_{N(1)})}(w)  \Big\} \cr
&=&
\E\Big\{ g\big((\bX_{N(1)}, \bZ_{N(1)}), \bZ_1, (\bX_{N(1)}, \bZ_{N(1)})\big) \Big\},
\end{eqnarray*}
where $g$ is defined in \eqref{eq:lemA:aux-2-3:1}. By applying Lemma \ref{lemA:aux-2-5},
\begin{eqnarray*}
&&\E\Big\{ g\big((\bX_{N(1)}, \bZ_{N(1)}), \bZ_1, (\bX_{N(1)}, \bZ_{N(1)})\big) \Big\} \cr
&=& 	\E\Big\{ g\big((\bX_{N(1)}, \bZ_{N(1)}), \bZ_{N(1)}, (\bX_{N(1)}, \bZ_{N(1)})\big) \Big\} + o(1).
\end{eqnarray*}
By \eqref{eq:lemA:aux-1-8:4}, we have $(\bX_{N(1)}, \bZ_{N(1)}) \conD (\bX_1, \bZ_1)$. This combined with the fact that both $(\bX_{N(1)}, \bZ_{N(1)})$ and $(\bX_1, \bZ_1)$ are absolutely continuous yields that
\begin{eqnarray}
&& \E\Big\{ g\big((\bX_{N(1)}, \bZ_{N(1)}), \bZ_{N(1)}, (\bX_{N(1)}, \bZ_{N(1)})\big) \Big\}  \cr
&=&
\E\Big\{ g\big((\bX_1, \bZ_1), \bZ_1, (\bX_1, \bZ_1)\big) \Big\} +o(1).  \label{eq:lemA:lim3:1}
\end{eqnarray}
Combining the above pieces gives that
\begin{eqnarray}
\lim_{n \to \infty} R_1
&=&
\E\Big\{ g\big((\bX_1, \bZ_1), \bZ_1, (\bX_1, \bZ_1)\big) \Big\}\cr
&=&
\E\Big\{ \E\big[ F_Y(Y_1 \wedge \tY_1)\cdot F_Y(Y_1 \wedge \barY_1)\mid \bX_1, \bZ_1\big]\Big\}. \label{eq:lemA:lim3:2}
\end{eqnarray}

For the second term $R_2$, we have
\begin{eqnarray*}
R_2&=& 
\E\Big\{  \E\big[F_Y(Y_1 \wedge Y_{N(1)}) \mid \bfX, \bfZ\big] \cdot \E\big[F_Y(Y_{N(1)} \wedge Y_{M(N(1))})\mid \bfX, \bfZ\big] \Big\}  \cr
&=&
\E\Big\{  \E\big[F_Y(Y_1 \wedge Y_{N(1)}) \mid \bfX, \bfZ\big] \cdot \E\big[F_Y(Y_{N(1)} \wedge Y_{M(N(1))})\mid \bfX, \bfZ\big] \cdot  \Ind (M(N(1)) \neq 1)  \Big\} + o(1)
\cr
&=&
\E\Big\{\tildeg\big((\bX_1,\bZ_1), (\bX_{N(1)}, \bZ_{N(1)})\big) \cdot \tildeg\big((\bX_{N(1)}, \bZ_{N(1)}), (\bX_{M(N(1))}, \bZ_{M(N(1))})\big) \cr
&& \hspace{1cm} \times \Ind (M(N(1)) \neq 1)  \Big\}	+o(1),
\end{eqnarray*}
where the function $\tildeg$ is defined in \eqref{eq:lemA:aux-2-6:1} of Lemma \ref{lemA:aux-2-6}. Using the similar proof as that of \eqref{eq:lemA:lim1:3}, we can eliminate the term $\bX_1$ in the expectation above. Then
\begin{eqnarray*}
&&\E\Big\{\tildeg\big((\bX_1,\bZ_1), (\bX_{N(1)}, \bZ_{N(1)})\big) \cdot \tildeg\big((\bX_{N(1)}, \bZ_{N(1)}), (\bX_{M(N(1))}, \bZ_{M(N(1))})\big) \cdot \Ind (M(N(1)) \neq 1)  \Big\}\cr
&=&
\E\Big\{\tildeg^\dagger\big( (\bX_{N(1)}, \bZ_{N(1)}), \bZ_1\big) \cdot \tildeg\big((\bX_{N(1)}, \bZ_{N(1)}), (\bX_{M(N(1))}, \bZ_{M(N(1))})\big) \cdot \Ind (M(N(1)) \neq 1)  \Big\} \cr
&=&
\E\Big\{\tildeg^\dagger\big( (\bX_{N(1)}, \bZ_{N(1)}), \bZ_1\big) \cdot \tildeg\big((\bX_{N(1)}, \bZ_{N(1)}), (\bX_{M(N(1))}, \bZ_{M(N(1))})\big)\Big\}
+o(1),
\end{eqnarray*}
where $\tildeg^\dagger$ is defined in \eqref{eq:lemA:aux-2-7:1} of Lemma \ref{lemA:aux-2-7}. 
By Lemmas \ref{lemA:aux-2-6} and \ref{lemA:aux-2-7}, we have
\begin{eqnarray*}
&&\E\Big\{\tildeg^\dagger\big( (\bX_{N(1)}, \bZ_{N(1)}), \bZ_1\big) \cdot \tildeg\big((\bX_{N(1)}, \bZ_{N(1)}), (\bX_{M(N(1))}, \bZ_{M(N(1))})\big)\Big\} \cr
&=&
\E\Big\{\tildeg^\dagger\big( (\bX_{N(1)}, \bZ_{N(1)}), \bZ_{N(1)}\big) \cdot \tildeg\big((\bX_{N(1)}, \bZ_{N(1)}), (\bX_{N(1)}, \bZ_{N(1)})\big)\Big\}  +o(1).
\end{eqnarray*}
Similar to \eqref{eq:lemA:lim3:1}, it follows that
\begin{eqnarray*}
&&\E\Big\{\tildeg^\dagger\big( (\bX_{N(1)}, \bZ_{N(1)}), \bZ_{N(1)}\big) \cdot \tildeg\big((\bX_{N(1)}, \bZ_{N(1)}), (\bX_{N(1)}, \bZ_{N(1)})\big)\Big\} \cr
&=&
\E\Big\{\tildeg^\dagger\big( (\bX_1, \bZ_1), \bZ_1\big) \cdot \tildeg\big((\bX_1, \bZ_1), (\bX_1, \bZ_1)\big)\Big\} +o(1).
\end{eqnarray*}
Therefore,
\begin{eqnarray}
\lim_{n\to \infty} R_2 &=&
\E\Big\{\tildeg^\dagger\big( (\bX_1, \bZ_1), \bZ_1\big) \cdot \tildeg\big((\bX_1, \bZ_1), (\bX_1, \bZ_1)\big)\Big\} \cr
&=&
\E\Big\{ \E\big[F_Y(Y_1 \wedge \tY_1)\mid \bX_1, \bZ_1\big]  \cdot \E\big[F_Y(Y_1 \wedge \barY_1)\mid \bX_1, \bZ_1\big]  \Big\}. \label{eq:lemA:lim3:3}
\end{eqnarray}
Combining \eqref{eq:lemA:lim3:2} and \eqref{eq:lemA:lim3:3} proves that 
\begin{eqnarray*}
&&\lim_{n\to \infty}\frac{1}{n}	
\E \Big\{\sum_{(i,j)\in \lbr n \rbr\times\lbr n \rbr: j=N(i), i\neq M(j)}  \cov\big[F_Y(Y_i \wedge Y_{N(i)}), F_Y(Y_j \wedge Y_{M(j)}) \mid \bfX, \bfZ\big] \Big\} \cr
&=&
\E \Big\{\cov\big[F_Y(Y_1 \wedge \tY_1),  F_Y(Y_1 \wedge \barY_1)\mid \bX_1, \bZ_1 \big] \Big\}.
\end{eqnarray*}
\textbf{Case (ii): $i=M(j), j\neq N(i)$. }

Similarly, we have
\begin{eqnarray*}
&&\frac{1}{n}	
\E \Big\{\sum_{(i,j)\in \lbr n \rbr\times\lbr n \rbr:  i=M(j), j\neq N(i)}  \cov\big[F_Y(Y_i \wedge Y_{N(i)}), F_Y(Y_j \wedge Y_{M(j)}) \mid \bfX, \bfZ\big] \Big\} \cr
&=&
\E \Big\{\cov\big[F_Y(Y_1 \wedge Y_{M(1)}), F_Y(Y_{M(1)} \wedge Y_{N(M(1))}) \mid \bfX, \bfZ\big] \cdot \Ind (N(M(1)) \neq 1) \Big\} \cr
&=&
\E \Big\{\cov\big[F_Y(Y_1 \wedge Y_{M(1)}), F_Y(Y_{M(1)} \wedge Y_{N(M(1))}) \mid \bfX, \bfZ\big] \Big\}+ o(1). 
\end{eqnarray*}
By the decomposition of conditional covariance, we have
\begin{eqnarray*}
&&	\E \Big\{\cov\big[F_Y(Y_1 \wedge Y_{M(1)}), F_Y(Y_{M(1)} \wedge Y_{N(M(1))}) \mid \bfX, \bfZ\big] \Big\} \cr
&=&
\E \Big\{\E\big[F_Y(Y_1 \wedge Y_{M(1)})\cdot F_Y(Y_{M(1)} \wedge Y_{N(M(1))}) \mid \bfX, \bfZ\big] \Big\} \cr
&&- \E\Big\{ \E\big[F_Y(Y_1 \wedge Y_{M(1)}) \mid \bfX, \bfZ\big] \cdot \E\big[F_Y(Y_{M(1)} \wedge Y_{N(M(1))})\mid \bfX, \bfZ\big] \Big\} \cr
&=:& R_3 - R_4.
\end{eqnarray*}
Conditional on the event $\{ N(M(1)) \neq 1, (\bX_1, \bZ_1) = (\bx_1,\bz_1),  (\bX_{M(1)}, \bZ_{M(1)})=(\bx_{M(1)}, \bz_{M(1)}),$ $ \bZ_{N(M(1))} = \bz_{N(M(1))} \}$, the distribution of $\bX_{N(M(1))}$ only depends on $\bz_{N(M(1))}$ and is irrelevant of $(\bx_1,\bz_1)$ and $(\bx_{M(1)}, \bz_{M(1)})$. Thus, similar to the proof of \eqref{eq:lemA:lim1:1} and \eqref{eq:lemA:lim1:3}, we can eliminate the term $\bX_{N(M(1))}$ in the expectation and obtain
\begin{eqnarray}
R_3
&=&
\E\Big\{ g^*\big((\bX_{M(1)}, \bZ_{M(1)}), (\bX_1, \bZ_1), (\bX_{N(M(1))}, \bZ_{N(M(1))})\big)  \Big\} \cr
&=& 
\E\Big\{ g^*\big((\bX_{M(1)}, \bZ_{M(1)}), (\bX_1, \bZ_1), (\bX_{N(M(1))}, \bZ_{N(M(1))})\big)   \cdot \Ind(N(M(1)) \neq 1) \Big\} +o(1) \cr
&=&
\E\Big\{ g\big((\bX_{M(1)}, \bZ_{M(1)}), \bZ_{N(M(1))}, (\bX_1, \bZ_1) \big)   \cdot \Ind(N(M(1)) \neq 1) \Big\} +o(1) \cr
&=&
\E\Big\{ g\big((\bX_{M(1)}, \bZ_{M(1)}), \bZ_{N(M(1))}, (\bX_1, \bZ_1) \big)    \Big\} +o(1). \label{eq:lemA:lim3:4}
\end{eqnarray}
By Lemma \ref{lemA:aux-2-5},
\begin{eqnarray*}
\E\Big\{ g\big((\bX_{M(1)}, \bZ_{M(1)}), \bZ_{N(M(1))}, (\bX_1, \bZ_1) \big)    \Big\}  
= \E\Big\{ g\big((\bX_1, \bZ_1), \bZ_1, (\bX_1, \bZ_1) \big)    \Big\}  + o(1). 
\end{eqnarray*}
Thus, \begin{eqnarray}
\lim_{n \to \infty} R_3
&=&
\E\Big\{ g\big((\bX_1, \bZ_1), \bZ_1, (\bX_1, \bZ_1)\big) \Big\}\cr
&=&
\E\Big\{ \E\big[ F_Y(Y_1 \wedge \tY_1)\cdot F_Y(Y_1 \wedge \barY_1)\mid \bX_1, \bZ_1\big]\Big\}. \label{eq:lemA:lim3:5}
\end{eqnarray}

For the term $R_4$, note that
\begin{eqnarray*}
R_4 &=& \E\Big\{ \tildeg\big((\bX_{1},\bZ_{1}), (\bX_{M(1)}, \bZ_{M(1)})\big)
\cdot 
\tildeg \big((\bX_{M(1)}, \bZ_{M(1)}),(\bX_{N(M(1))}, \bZ_{N(M(1))}) \big)
\Big\} \cr
&=&
\E\Big\{ \tildeg\big((\bX_{1},\bZ_{1}), (\bX_{M(1)}, \bZ_{M(1)})\big)
\cdot 
\tildeg \big((\bX_{M(1)}, \bZ_{M(1)}),(\bX_{N(M(1))}, \bZ_{N(M(1))}) \big) \cr
&& \qquad
\times \Ind(N(M(1)) \neq 1)
\Big\}  + o(1).
\end{eqnarray*}
Similar to \eqref{eq:lemA:lim3:4}, we can eliminate the term $\bX_{N(M(1))}$ in the expectation and obtain
\begin{eqnarray*}
&& \E\Big\{ \tildeg\big((\bX_{1},\bZ_{1}), (\bX_{M(1)}, \bZ_{M(1)})\big)
\cdot 
\tildeg \big((\bX_{M(1)}, \bZ_{M(1)}),(\bX_{N(M(1))}, \bZ_{N(M(1))}) \big) 
\cdot \Ind(N(M(1)) \neq 1)
\Big\}  \cr
&=&
\E\Big\{ \tildeg \big((\bX_{1},\bZ_{1}), (\bX_{M(1)}, \bZ_{M(1)})\big)
\cdot 
\tildeg^\dagger \big( (\bX_{M(1)}, \bZ_{M(1)}), \bZ_{M(1)} \big) 
\cdot \Ind(N(M(1)) \neq 1)
\Big\}  \cr
&=&
\E\Big\{ \tildeg \big((\bX_{1},\bZ_{1}), (\bX_{M(1)}, \bZ_{M(1)})\big)
\cdot 
\tildeg^\dagger \big((\bX_{M(1)}, \bZ_{M(1)}), \bZ_{M(1)} \big) 
\Big\}  +o(1)
\end{eqnarray*}
By Lemmas \ref{lemA:aux-2-6} and \ref{lemA:aux-2-7}, we have
\begin{eqnarray*}
&& \E\Big\{ \tildeg \big((\bX_{1},\bZ_{1}), (\bX_{M(1)}, \bZ_{M(1)})\big)
\cdot 
\tildeg^\dagger \big((\bX_{M(1)}, \bZ_{M(1)}), \bZ_{M(1)} \big) 
\Big\} \cr
&=&
\E\Big\{ \tildeg \big((\bX_{1},\bZ_{1}), (\bX_1, \bZ_1)\big)
\cdot 
\tildeg^\dagger \big((\bX_1, \bZ_1), \bZ_1 \big) 
\Big\} +o(1).
\end{eqnarray*}
Therefore,
\begin{eqnarray}
\lim_{n\to \infty} R_4 &=&
\E\Big\{ \tildeg\big((\bX_1, \bZ_1), (\bX_1, \bZ_1)\big) \cdot \tildeg^\dagger\big( (\bX_1, \bZ_1), \bZ_1\big) \Big\} \cr
&=&
\E\Big\{  \E\big[F_Y(Y_1 \wedge \barY_1)\mid \bX_1, \bZ_1\big] \cdot 
\E\big[F_Y(Y_1 \wedge \tY_1)\mid \bX_1, \bZ_1\big]   \Big\}. \label{eq:lemA:lim3:6}
\end{eqnarray}
Combining \eqref{eq:lemA:lim3:5} and \eqref{eq:lemA:lim3:6} proves that 
\begin{eqnarray*}
&& \lim_{n \to \infty}\frac{1}{n}	
\E \Big\{\sum_{(i,j)\in \lbr n \rbr\times\lbr n \rbr:  i=M(j), j\neq N(i)}  \cov\big[F_Y(Y_i \wedge Y_{N(i)}), F_Y(Y_j \wedge Y_{M(j)}) \mid \bfX, \bfZ\big] \Big\} \cr
&=&
\E \Big\{\cov\big[F_Y(Y_1 \wedge \tY_1),  F_Y(Y_1 \wedge \barY_1)\mid \bX_1, \bZ_1 \big] \Big\}.
\end{eqnarray*}

Finally, combining the results in the two cases above completes the proof of this lemma. 
\end{proof}

\begin{lemmaA}\label{lemA:lim4}
The limit of $Q_{1,1,4}$ in \eqref{eq:Q11_decomp} is
\begin{eqnarray*}
&&\lim_{n \to \infty}\frac{1}{n}	
\E \Big\{\sum_{(i,j)\in \lbr n \rbr\times\lbr n \rbr:i\neq j, N(i) = M(j)}  \cov\big[F_Y(Y_i \wedge Y_{N(i)}), F_Y(Y_j \wedge Y_{M(j)}) \mid \bfX, \bfZ\big] \Big\} \cr
&=&
\E \Big\{\cov\big[F_Y(Y_1 \wedge \tY_1),  F_Y(Y_1 \wedge \barY_1)\mid \bX_1, \bZ_1 \big] \Big\}.
\end{eqnarray*}
\end{lemmaA}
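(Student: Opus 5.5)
Following the pattern of Lemmas~\ref{lemA:lim1} and \ref{lemA:lim3}, I would first reduce the double sum by exchangeability to a statement about a single vertex. Writing the sum over pairs $(i,j)$ with $i\neq j$ and $N(i)=M(j)$, symmetry in the index $i$ gives
\[
Q_{1,1,4} = \E\Big\{\sum_{j\neq 1:\, M(j)=N(1)} \cov\big[F_Y(Y_1\wedge Y_{N(1)}), F_Y(Y_j\wedge Y_{M(j)})\mid \bfX,\bfZ\big]\Big\}.
\]
Alternatively, one can sum over $j$ first and count the $i$ with $N(i)=M(j)$. The latter is closer to Lemma~\ref{lemma:two_NNGs}, which controls $\E[\#\{i\neq 1: N(i)=M(1)\}\mid \bW_1]$ with $\bW=(\bX,\bZ)$, $\bU=\bZ$. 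So I would write
\[
Q_{1,1,4}=\E\Big\{\sum_{i\neq 1:\,N(i)=M(1)} \cov\big[F_Y(Y_i\wedge Y_{M(1)}), F_Y(Y_1\wedge Y_{M(1)})\mid\bfX,\bfZ\big]\Big\}.
\]
For each such $i$, the shared vertex is $k=M(1)$. We discard the events $i=M(1)$ and $N(M(1))=1$-type coincidences, which have vanishing probability by \citet[Lemma 7.4]{Shi_Drton_Han_2024_Bernoulli} together with the bounded degree of NNGs \citep{MR682809}. Off these events, $Y_1,Y_i,Y_k$ are conditionally independent given $(\bfX,\bfZ)$. The conditional covariance then equals $\cov[F_Y(A\wedge C),F_Y(B\wedge C)]$ with $A\sim\tmu_{(\bX_i,\bZ_i)}$, $B\sim\tmu_{(\bX_1,\bZ_1)}$ and $C\sim\tmu_{(\bX_k,\bZ_k)}$ independent. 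This is a bounded function $\psi\big((\bX_i,\bZ_i),(\bX_1,\bZ_1),(\bX_k,\bZ_k)\big)$.

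Next I would use the ``elimination of $\bX$'' device from \eqref{eq:lemA:lim1:1}. The index $i$ is selected only through the $\bZ$-graph $N$, not through the $(\bX,\bZ)$-graph. I would therefore argue that, conditional on the graph structure, on $\bZ_i$, and on the remaining coordinates, $\bX_i$ is distributed as $\mu_{\bX\mid\bZ=\bZ_i}$. Strictly, $i$ must also not be a relevant point for $M(1)$. Since $M(1)$ depends on $\bX_i$ through the constraint that $(\bX_i,\bZ_i)$ lie outside $\tcalB((\bX_1,\bZ_1),(\bX_k,\bZ_k))$, this dependence has to be controlled. I would handle it by conditioning as in Lemma~\ref{lemA:cond_pdf} and noting that the excluded ball has vanishing mass. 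Integrating $\bX_i$ out replaces $\tmu_{(\bX_i,\bZ_i)}$ by $\tmu_{\bZ_i}$, giving a function $\psi^\dagger\big(\bZ_i,(\bX_1,\bZ_1),(\bX_k,\bZ_k)\big)$ of the same type as $g$ and $\tildeg^\dagger$ in Lemmas~\ref{lemA:aux-2-3} and \ref{lemA:aux-2-7}.

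Then I would invoke the continuity lemmas (Lemmas~\ref{lemA:aux-2-3}--\ref{lemA:aux-2-7}, relying on Assumption~\ref{assump_4.5}). Since $\bZ_i$, $\bZ_k$ and $(\bX_k,\bZ_k)$ are all within vanishing distance of $(\bX_1,\bZ_1)$ or $\bZ_1$, they give
\[
\psi^\dagger\big(\bZ_i,(\bX_1,\bZ_1),(\bX_k,\bZ_k)\big)-\psi^\dagger\big(\bZ_1,(\bX_1,\bZ_1),(\bX_1,\bZ_1)\big)\conP0.
\]
Because the count is bounded, the sum equals this limit quantity times $\#\{i\neq1:N(i)=M(1)\}$, up to an $o_\P(1)$ error. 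Conditioning on $\bW_1$ and applying Lemma~\ref{lemma:two_NNGs}, the conditional expected count tends to $1$ in probability. Bounded convergence then yields
\[
\E\,\psi^\dagger\big(\bZ_1,(\bX_1,\bZ_1),(\bX_1,\bZ_1)\big)=\E\{\cov[F_Y(\tY_1\wedge Y_1),F_Y(\barY_1\wedge Y_1)\mid\bX_1,\bZ_1]\},
\]
after relabelling the shared $C$ as $Y_1$. This is the claim.

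The main obstacle is decoupling the $o_\P(1)$ continuity error from the random count. This needs $\E\{|\Delta_n|\cdot\#\}\le C_3\E|\Delta_n|\to0$, which is fine. It also needs the replacement of the count by its conditional expectation given $\bW_1$ alone, while $\psi^\dagger$ at the limit point depends only on $\bW_1$. The latter works via the tower property. The delicate point remains justifying the conditional law of $\bX_i$ given the graph, which I would settle through the Lemma~\ref{lemA:cond_pdf} construction.
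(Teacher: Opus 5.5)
Your proposal is correct and follows essentially the paper's route: the same exchangeability reduction to $(n-1)\,\E\{\cdot\,\Ind(M(1)=N(2))\}$, and the same elimination of $\bX_2$ through its conditional law given $\bZ_2$. On that elimination you are more careful than the paper about the truncation by $\tcalB((\bX_1,\bZ_1),(\bX_{M(1)},\bZ_{M(1)}))$, whose slice has vanishing mass. The conclusion via Lemma~\ref{lemma:two_NNGs} and bounded convergence is also exactly how Lemmas~\ref{lemA:aux-2-8} and \ref{lemA:aux-2-9} finish.

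The one step that Lemmas~\ref{lemA:aux-2-3}--\ref{lemA:aux-2-7} do not cover is that every $i$ with $N(i)=M(1)$ has $\bZ_i$ close to $\bZ_1$. This follows from Lemma~\ref{lemA:aux-1-1} together with $n\P(N(2)=M(1))\to 1$: the expected number of such $i$ with $\|\bZ_i-\bZ_1\|\ge\epsilon$ then tends to $0$. The paper encodes exactly this argument in Lemmas~\ref{lemA:aux-1-4}--\ref{lemA:aux-1-7}.
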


\begin{proof}
\begin{eqnarray*}
&&\frac{1}{n}	
\E \Big\{\sum_{(i,j)\in \lbr n \rbr\times\lbr n \rbr:i\neq j, N(i) = M(j)}  \cov\big[F_Y(Y_i \wedge Y_{N(i)}), F_Y(Y_j \wedge Y_{M(j)}) \mid \bfX, \bfZ\big] \Big\} \cr
&=&
(n-1)\cdot\E \Big\{ \cov\big[F_Y(Y_1 \wedge Y_{M(1)}), F_Y(Y_2 \wedge Y_{N(2)}) \mid \bfX, \bfZ\big] \cdot \Ind(M(1) = N(2))\Big\}  \cr
&=&
(n-1)\cdot\E \Big\{ \E\big[F_Y(Y_1 \wedge Y_{M(1)})\cdot F_Y(Y_2 \wedge Y_{N(2)}) \mid \bfX, \bfZ\big] \cdot \Ind(M(1) = N(2))\Big\}  \cr
&& 
- (n-1)\cdot \E \Big\{ \E\big[F_Y(Y_1 \wedge Y_{M(1)})\mid \bfX, \bfZ\big] \cdot\E\big[F_Y(Y_2 \wedge Y_{N(2)})\mid \bfX, \bfZ\big] \cdot \Ind(M(1) = N(2))\Big\}  \cr
&=:& R_1 -R_2.
\end{eqnarray*}

For the term $R_1$, we have
\begin{eqnarray*}
R_1 &=& (n-1)\cdot\E  \Big\{ \int \int \int F_Y(u\wedge v) \cdot F_Y(u \wedge w) 
\d \tmu_{(\bX_{M(1)},\bZ_{M(1)})}(u) \d \tmu_{(\bX_1,\bZ_1)}(v) \d \tmu_{(\bX_2,\bZ_2)}(w)  \cr
&& \quad \times \Ind(M(1) = N(2)) \Big\} .
\end{eqnarray*}
Conditional on the event $	\{ M(1) = N(2), (\bX_1, \bZ_1) = (\bx_1,\bz_1), \bZ_2 = \bz_2, (\bX_{M(1)}, \bZ_{M(1)})=(\bx_{M(1)}, \bz_{M(1)}) \}$, the distribution law of $\bX_2 = \bx$ is $\mu_{\bX=\bx \mid \bZ = \bz_2}$. That is, $\bX_2$ only depends on $\bz_2$ and is irrelevant of $ (\bx_1,\bz_1)$ and $(\bx_{M(1)}, \bz_{M(1)})$. Therefore, we can eliminate the random term 
$\bX_2$, in a similar way as in \eqref{eq:lemA:lim1:1},
\begin{eqnarray*}
&&\E  \Big\{ \int \int \int F_Y(u\wedge v) \cdot F_Y(u \wedge w) 
\d \tmu_{(\bX_{M(1)},\bZ_{M(1)})}(u) \d \tmu_{(\bX_1,\bZ_1)}(v) \d \tmu_{(\bX_2,\bZ_2)}(w) \cr
&& \hspace{3cm}\times\Ind(M(1) = N(2))\Big\} \cr
&=& \E\Big[\E  \Big\{ \int \int \int F_Y(u\wedge v) \cdot F_Y(u \wedge w) 
\d \tmu_{(\bX_{M(1)},\bZ_{M(1)})}(u) \d \tmu_{(\bX_1,\bZ_1)}(v) \d \tmu_{(\bX_2,\bZ_2)}(w)  \cr 
&& \hspace{3cm}\Biggiven(\bX_1,\bZ_1), \bZ_2, (\bX_{M(1)}, \bZ_{M(1)}) , \Ind(M(1) = N(2))\Big\} \cdot \Ind(M(1) = N(2))\Big]
\cr
&=&\E \Big[ \Big\{\int \int \int \int F_Y(u\wedge v) \cdot F_Y(u \wedge w) \d \tmu_{(\bX_{M(1)}, \bZ_{M(1)})}(u) \d \tmu_{(\bX_1,\bZ_1)}(v)  \cr
&& \hspace{3cm}
\d \tmu_{(\bx, \bZ_2)}(w)   \d \mu_{\bX=\bx \mid \bZ = \bZ_2}(\bx)\Big\} \cdot \Ind(M(1) = N(2))\Big]  \cr
&=&\E \Big[\Big\{ \int \int \int F_Y(u\wedge v) \cdot F_Y(u \wedge w) 
\d \tmu_{(\bX_{M(1)}, \bZ_{M(1)})}(u)  	\d \tmu_{(\bX_1,\bZ_1)}(v)   \d \tmu_{\bZ_2}(w) \Big\} \cr
&& \hspace{3cm} \times \Ind(M(1) = N(2))\Big] \cr
&=& \E\Big\{g\big((\bX_{M(1)}, \bZ_{M(1)}), \bZ_2,  (\bX_1,\bZ_1) \big) \cdot \Ind(M(1) = N(2))\Big\}.
\end{eqnarray*}
By applying Lemma \ref{lemA:aux-2-8}, we have
\begin{eqnarray*}
\lim_{n\to \infty} R_1 
&=&
\lim_{n\to \infty}  (n-1)\cdot \E\Big\{g\big((\bX_{M(1)}, \bZ_{M(1)}), \bZ_2,  (\bX_1,\bZ_1) \big) \cdot \Ind(M(1) = N(2))\Big\} \cr
&=&
\E\Big\{ g\big((\bX_1, \bZ_1), \bZ_1, (\bX_1, \bZ_1) \Big\} \cr
&=&
\E\Big\{ \E\big[ F_Y(Y_1 \wedge \tY_1)\cdot F_Y(Y_1 \wedge \barY_1)\mid \bX_1, \bZ_1\big]\Big\}.
\end{eqnarray*}	

For the term $R_2$, we have
\begin{eqnarray*}
R_2 =
(n-1)\cdot \E  \Big\{ \tildeg\big((\bX_1,\bZ_1), (\bX_{M(1)},\bZ_{M(1)})\big) \cdot \tildeg\big((\bX_{M(1)},\bZ_{M(1)}), (\bX_2,\bZ_2)\big) \cdot \Ind\big(N(2)=M(1)\big) \Big\}.
\end{eqnarray*}
Similar to \eqref{eq:lemA:lim1:3}, we can eliminate the term $\bX_2$ in the expectation, that is,
\begin{eqnarray*}
&&\E  \Big\{ \tildeg\big((\bX_1,\bZ_1), (\bX_{M(1)},\bZ_{M(1)})\big) \cdot \tildeg\big((\bX_{M(1)},\bZ_{M(1)}), (\bX_2,\bZ_2)\big) \cdot \Ind\big(N(2)=M(1)\big) \Big\} \cr
&=&
\E  \Big\{ \tildeg\big((\bX_1,\bZ_1), (\bX_{M(1)},\bZ_{M(1)})\big) \cdot \tildeg^\dagger \big((\bX_{M(1)},\bZ_{M(1)}), \bZ_2\big) \cdot \Ind\big(N(2)=M(1)\big) \Big\} .
\end{eqnarray*}	
By Lemma \ref{lemA:aux-2-9}, we have
\begin{eqnarray*}
&&\lim_{n\to \infty} R_2 \cr
&=&
\lim_{n\to \infty}  (n-1)\cdot 	\E  \Big\{ \tildeg\big((\bX_1,\bZ_1), (\bX_{M(1)},\bZ_{M(1)})\big) \cdot \tildeg^\dagger \big((\bX_{M(1)},\bZ_{M(1)}), \bZ_2\big) \cdot \Ind\big(N(2)=M(1)\big) \Big\}  \cr
&=&
\E  \Big\{ \tildeg\big((\bX_1,\bZ_1), (\bX_1,\bZ_1)\big) \cdot \tildeg^\dagger \big((\bX_1,\bZ_1), \bZ_1\big)\Big\}  \cr
&=&
\E\Big\{  \E\big[F_Y(Y_1 \wedge \barY_1)\mid \bX_1, \bZ_1\big] \cdot 
\E\big[F_Y(Y_1 \wedge \tY_1)\mid \bX_1, \bZ_1\big]   \Big\}.
\end{eqnarray*}	
Combining the above two results completes the proof. 
\end{proof}

\begin{lemmaA} \label{lemA:lim5}
The limit of $Q_{1,1,5}$ in \eqref{eq:Q11_decomp} is
\begin{eqnarray*}
\lim_{n\to \infty}\frac{1}{n}	
\E \Big\{\sum_{(i,j)\in \lbr n \rbr\times\lbr n \rbr: j = N(i), i = M(j)} \cov\big[F_Y(Y_i \wedge Y_{N(i)}), F_Y(Y_j \wedge Y_{M(j)})  \mid \bfX, \bfZ\big] \Big\} 
=
0.
\end{eqnarray*}
\end{lemmaA}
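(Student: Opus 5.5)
The plan is to reduce the sum to a single index and then show that the relevant event is asymptotically negligible. By exchangeability the quantity equals
\[
\E\Big\{\cov\big[F_Y(Y_1 \wedge Y_{N(1)}), F_Y(Y_{N(1)} \wedge Y_{M(N(1))}) \mid \bfX,\bfZ\big]\cdot \Ind\big(M(N(1))=1\big)\Big\},
\]
because for each $i$ there is at most one $j$, namely $j=N(i)$, satisfying $j=N(i)$ and $i=M(j)$. Summing over $i$ and dividing by $n$ then leaves exactly this single-index expectation.

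The covariance of two $[0,1]$-valued variables is bounded in absolute value by $1$. So the expression is bounded by $\P(M(N(1))=1)$.

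This probability is $o(1)$ by \citet[Lemma 7.4, Equation (27)]{Shi_Drton_Han_2024_Bernoulli}, which was already used in the proof of Lemma~\ref{lemA:lim3} (Case (i)), where it gave $\P(M(N(1))=1)=o(1)$. Hence the limit is $0$.

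The only point that needs care is that the reduction to index $1$ is exact. Every pair $(i,j)$ in the sum has $j=N(i)\neq i$, so no diagonal term arises, and the normalization $1/n$ exactly cancels the $n$ choices of $i$.

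I do not expect a real obstacle here, since all the analytic content sits in the cited fact $\P(M(N(1))=1)=o(1)$. That fact reflects the intuition that the NN of $\bZ_1$ in the $\bZ$-graph is, with vanishing probability, a point whose $(\bX,\bZ)$-NN returns to $1$. If I wanted a self-contained check, I would argue it directly. Since $\|\bZ_{N(1)}-\bZ_1\|\to 0$ while $\bX_{N(1)}$ remains at non-vanishing distance from $\bX_1$ (being asymptotically a fresh draw from $\mu_{\bX\mid\bZ=\bZ_1}$), the point $(\bX_1,\bZ_1)$ is not close to $(\bX_{N(1)},\bZ_{N(1)})$ on the $n^{-1/(p+q)}$ scale. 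It therefore cannot be its NN except on an event of vanishing probability.
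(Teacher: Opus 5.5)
Your proposal is correct and matches the paper's proof essentially line for line. Like the paper, you reduce by exchangeability to the single-index term carrying $\Ind(M(N(1))=1)$, use boundedness of the conditional covariance, and invoke \citet[Lemma 7.4, Equation (27)]{Shi_Drton_Han_2024_Bernoulli} for $\P(M(N(1))=1)=o(1)$; your optional heuristic in the last paragraph is not needed.
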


\begin{proof}
\begin{eqnarray*}
&&\frac{1}{n}	
\E \Big\{\sum_{(i,j)\in \lbr n \rbr\times\lbr n \rbr: j = N(i), i = M(j)} \cov\big[F_Y(Y_i \wedge Y_{N(i)}), F_Y(Y_j \wedge Y_{M(j)})  \mid \bfX, \bfZ\big] \Big\}  \cr
&=&
\E \Big\{\sum_{j\in \lbr n \rbr: j = N(1), 1= M(j)} \cov\big[F_Y(Y_1 \wedge Y_{N(1)}), F_Y(Y_j \wedge Y_{M(j)}) \mid \bfX, \bfZ\big] \Big\} \cr
&=&
\E \Big\{ \cov\big[F_Y(Y_1 \wedge Y_{N(1)}), F_Y(Y_{N(1)} \wedge Y_{M(N(1))}) \mid \bfX, \bfZ\big] \cdot \Ind(M(N(1))=1)\Big\}.
\end{eqnarray*}
By \citet[Lemma 7.4, Equation (27)]{Shi_Drton_Han_2024_Bernoulli}, we have that $\P(M(N(1))=1) =o(1)$. Thus the proof is completed. \end{proof}

\subsubsection{The limit of $Q_{1,2}$} \label{secA:proof-stepII:2}
We have
\begin{eqnarray*}
Q_{1,2} &=& \frac{1}{n}\cov\Big[ \sum_{i=1}^n  \E\Big(F_Y(Y_i \wedge Y_{N(i)}) \mid \bfX, \bfZ \Big),
\sum_{i=1}^n \E\Big( F_Y(Y_i \wedge Y_{M(i)})\mid \bfX, \bfZ \Big)
\Big]  .
\end{eqnarray*}
By Lemma C.1 in \cite{Lin_Han_2025_CLT}, we have
\begin{eqnarray*}
\lim_{n \to \infty} \frac{1}{n} \var\Big[  \sum_{i=1}^n \E\Big( F_Y(Y_i \wedge Y_{M(i)})\mid \bfX, \bfZ \Big) - \sum_{i=1}^n  \phi(\bX_i, \bZ_i)  \Big] = 0,
\end{eqnarray*}
where 
\begin{eqnarray*}
\phi(\bX_i, \bZ_i) : = \E\big\{F_Y(Y_i \wedge \barY_i) \biggiven \bX_i, \bZ_i  \big\}.
\end{eqnarray*}
Therefore, 
\begin{eqnarray*}
Q_{1,2}' &:=& \frac{1}{n}\cov\Big[ \sum_{i=1}^n  \E\Big(F_Y(Y_i \wedge Y_{N(i)}) \mid \bfX, \bfZ \Big),
\sum_{i=1}^n  \phi(\bX_i, \bZ_i) 
\Big]  
\end{eqnarray*}
has the same limit as $Q_{1,2}$ (if the limit exists). Thus it suffices to find $\lim_{n \to \infty} Q_{1,2}'$. 

Invoking the function 
$\tildeg$ defined in \eqref{eq:lemA:aux-2-6:1}, we obtain
\begin{eqnarray*}
\tildeg\big((\bx_1,\bz_1), (\bx_2,\bz_2)\big) = \int \int F_Y(u\wedge v) 
\d \tmu_{(\bx_1,\bz_1)}(u) \d \tmu_{(\bx_2,\bz_2)}(v).
\end{eqnarray*}
Then 
\begin{eqnarray*}
\E\Big(F_Y(Y_i \wedge Y_{N(i)}) \mid \bfX, \bfZ \Big) = \tildeg\big((\bX_i, \bZ_i), (\bX_{N(i)}, \bZ_{N(i)})\big).
\end{eqnarray*}
Decompose $	Q_{1,2}'$ as follows:
\begin{eqnarray}
Q_{1,2}' &=&
\frac{1}{n} \E\Big\{ \cov \Big[\sum_{i=1}^n  \tildeg\big((\bX_i, \bZ_i), (\bX_{N(i)}, \bZ_{N(i)})\big) , 	\sum_{i=1}^n  \phi(\bX_i, \bZ_i)  \Biggiven \bfZ \Big]\Big\} \cr
&&+\frac{1}{n} \cov\Big\{
\E\Big[\sum_{i=1}^n  \tildeg\big((\bX_i, \bZ_i), (\bX_{N(i)}, \bZ_{N(i)})\big)   \Biggiven \bfZ\Big],
\E\Big[\sum_{i=1}^n  \phi(\bX_i, \bZ_i)  \Biggiven \bfZ\Big]
\Big\} \cr
&=:& Q_{1,2,1}' + Q_{1,2,2}'.  \label{eq:Q'_12}
\end{eqnarray}
Moreover, decompose $Q_{1,2,1}'$ into three terms:
\begin{eqnarray}
Q_{1,2,1}' &=& 
\frac{1}{n}\E \Big\{ \sum_{i=1}^n\cov\big[\tildeg\big((\bX_i, \bZ_i), (\bX_{N(i)}, \bZ_{N(i)})\big) ,   \phi(\bX_i, \bZ_i)   \mid \bfZ\big] \Big\} \cr
&& +
\frac{1}{n}\E \Big\{ \sum_{(i,j)\in \lbr n \rbr\times\lbr n \rbr: \, i,j, N(i) \,\text{distinct}} \cov\big[\tildeg\big((\bX_i, \bZ_i), (\bX_{N(i)}, \bZ_{N(i)})\big) ,   \phi(\bX_j, \bZ_j)   \mid \bfZ\big] \Big\} \cr
&& +
\frac{1}{n}\E \Big\{ \sum_{(i,j)\in \lbr n \rbr\times\lbr n \rbr: \, j = N(i)} \cov\big[\tildeg\big((\bX_i, \bZ_i), (\bX_{N(i)}, \bZ_{N(i)})\big) ,   \phi(\bX_j, \bZ_j)   \mid \bfZ\big] \Big\} \cr
&=:&	V_1 + V_2 + V_3. \label{eq:V1-V3}
\end{eqnarray}
In the following, Lemmas \ref{lemA:lim6}--\ref{lemA:lim8} derive the limits of $V_1$--$V_3$ respectively, and Lemma \ref{lemA:lim9} derives the limit of $Q_{1,2,2}'$. 

\begin{lemmaA} \label{lemA:lim6}
The limit of $V_1$ in \eqref{eq:V1-V3} is 
\begin{eqnarray*}
&&\lim_{n\to \infty}
\E \Big\{\cov\big[\tildeg\big((\bX_1, \bZ_1), (\bX_{N(1)}, \bZ_{N(1)})\big) ,   \phi(\bX_1, \bZ_1)   \mid \bfZ\big] \Big\} \cr
&=&
\E\Big[\cov\Big\{ \E\big( F_Y(Y_1 \wedge \tY_1) \biggiven \bX_1, \bZ_1\big) , \E\big( F_Y(Y_1 \wedge \barY_1) \biggiven \bX_1, \bZ_1\big) \Biggiven \bZ_1\Big\}\Big].
\end{eqnarray*}
\end{lemmaA}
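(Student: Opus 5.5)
The plan follows the template of Lemma~\ref{lemA:lim1}: split the conditional covariance given $\bfZ$ into a product-moment term and a product-of-means term, remove the $\bX$-coordinates that are conditionally independent of everything else, and pass to the limit using the nearest-neighbor convergence lemmas.

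\textbf{Decomposition.} Write
\[
\E\{\cov[\tildeg((\bX_1,\bZ_1),(\bX_{N(1)},\bZ_{N(1)})),\phi(\bX_1,\bZ_1)\mid\bfZ]\}=R_1-R_2,
\]
where
\[
R_1=\E\{\tildeg((\bX_1,\bZ_1),(\bX_{N(1)},\bZ_{N(1)}))\,\phi(\bX_1,\bZ_1)\}
\]
and
\[
R_2=\E\{\E[\tildeg(\cdot)\mid\bfZ]\cdot\E[\phi(\bX_1,\bZ_1)\mid\bfZ]\}.
\]

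\textbf{Limit of $R_1$.} Given $\bfZ$, the vectors $\bX_1,\dots,\bX_n$ are independent, and $\bX_i\sim\mu_{\bX\mid\bZ=\bZ_i}$. This holds because $N(\cdot)$ depends only on $\bfZ$, which is simpler than the $M$–$N$ interplay in Lemma~\ref{lemA:lim1}. Since $N(1)\neq1$, I integrate out $\bX_{N(1)}$ exactly as in \eqref{eq:lemA:lim1:3}, which replaces $\tildeg$ by $\tildeg^\dagger((\bX_1,\bZ_1),\bZ_{N(1)})$. This gives
\[
R_1=\E\{\tildeg^\dagger((\bX_1,\bZ_1),\bZ_{N(1)})\,\phi(\bX_1,\bZ_1)\}.
\]
Lemma~\ref{lemA:aux-2-7} (continuity in the second argument, together with $\bZ_{N(1)}\to\bZ_1$) and bounded convergence then yield
\[
R_1\to\E\{\tildeg^\dagger((\bX_1,\bZ_1),\bZ_1)\,\phi(\bX_1,\bZ_1)\}=\E\big[\E(F_Y(Y_1\wedge\tY_1)\mid\bX_1,\bZ_1)\,\E(F_Y(Y_1\wedge\barY_1)\mid\bX_1,\bZ_1)\big].
\]

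\textbf{Limit of $R_2$.} By the same conditional independence,
\[
\E[\tildeg(\cdot)\mid\bfZ]=\int\tildeg^\dagger((\bx,\bZ_1),\bZ_{N(1)})\,\d\mu_{\bX\mid\bZ=\bZ_1}(\bx)=:\psi(\bZ_1,\bZ_{N(1)}),
\]
and $\E[\phi(\bX_1,\bZ_1)\mid\bfZ]=\bar\phi(\bZ_1)$. Thus $R_2=\E\{\psi(\bZ_1,\bZ_{N(1)})\,\bar\phi(\bZ_1)\}$, and I need $\psi(\bZ_1,\bZ_{N(1)})-\psi(\bZ_1,\bZ_1)\conP0$.

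\textbf{Main obstacle.} The delicate step is this continuity claim for $\psi$ in its second argument, and the analogous one for $\tildeg^\dagger$. I would derive it from Lemma~\ref{lemA:aux-2-7}. If that lemma only gives convergence along the random pair $((\bX_1,\bZ_1),\bZ_{N(1)})$, I would note that $\tildeg^\dagger((\bx,\bz),\bz')=\int\!\!\int F_Y(u\wedge v)\,\d\tmu_{(\bx,\bz)}(u)\,\d\tmu_{\bz'}(v)$. Since $F_Y$ is continuous and bounded, the map $\bz'\mapsto\tmu_{\bz'}$ is a.e.\ weakly continuous by Assumption~\ref{assump_4.5}, so $\tildeg^\dagger$ is a.e.\ continuous in $\bz'$ uniformly in the first argument. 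Dominated convergence over $\bx$ then transfers this to $\psi$.

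\textbf{Conclusion.} With that in hand,
\[
R_2\to\E\{\E[\E(F_Y(Y_1\wedge\tY_1)\mid\bX_1,\bZ_1)\mid\bZ_1]\cdot\E[\E(F_Y(Y_1\wedge\barY_1)\mid\bX_1,\bZ_1)\mid\bZ_1]\}.
\]
Subtracting this from the limit of $R_1$ gives exactly the expected conditional covariance given $\bZ_1$ stated in the lemma.
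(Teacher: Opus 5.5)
Your proposal is correct and matches the paper's proof step for step. You use the same $R_1-R_2$ split, the same integration over $\bX_{N(1)}$ producing $\tildeg^\dagger$, and the same appeal to Lemma~\ref{lemA:aux-2-7} for $R_1$; your $\psi(\bz_1,\bz_2)$ is precisely the paper's $\tildeg^\ddagger(\bz_1,\bz_2)=\int\!\!\int F_Y(u\wedge v)\,\d\tmu_{\bz_1}(u)\,\d\tmu_{\bz_2}(v)$, and your $\bar\phi$ is the paper's $\phi^\ddagger$.

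The step you flag as the main obstacle is not really one. The paper handles it by the simple-function argument of Lemma~\ref{lemA:aux-2-6} applied to $\bZ$ alone, which needs only measurability of $\bz\mapsto\tmu_{\bz}(A)$, not Assumption~\ref{assump_4.5}. It also follows at once from Lemma~\ref{lemA:aux-2-7} by conditional Jensen, since $\psi(\bZ_1,\bZ_{N(1)})-\psi(\bZ_1,\bZ_1)=\E\{\tildeg^\dagger((\bX_1,\bZ_1),\bZ_{N(1)})-\tildeg^\dagger((\bX_1,\bZ_1),\bZ_1)\mid\bfZ\}$ with a bounded integrand. Your weak-continuity fallback is valid but unnecessary.
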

\begin{proof}
\begin{eqnarray*}
&&\E \Big\{\cov\big[\tildeg\big((\bX_1, \bZ_1), (\bX_{N(1)}, \bZ_{N(1)})\big) ,   \phi(\bX_1, \bZ_1)   \mid \bfZ\big] \Big\}  \cr
&=& \E\Big\{ \E\big[\tildeg\big((\bX_1, \bZ_1), (\bX_{N(1)}, \bZ_{N(1)})\big) \cdot   \phi(\bX_1, \bZ_1) \biggiven \bfZ \big] \} \cr
&&-
\E\Big\{
\E\big[\tildeg\big((\bX_1, \bZ_1), (\bX_{N(1)}, \bZ_{N(1)})\big) \biggiven \bfZ \big] \cdot 
\E\big[ \phi(\bX_1, \bZ_1) \biggiven \bZ \big]
\Big\} \cr
&=:& R_1 + R_2.
\end{eqnarray*}
For the term $R_1$,
\begin{eqnarray}
R_1 &=& \E\Big\{\tildeg\big((\bX_1, \bZ_1), (\bX_{N(1)}, \bZ_{N(1)})\big) \cdot   \phi(\bX_1, \bZ_1) \Big\} \cr
&=&
\E \Big\{\E\big[\tildeg\big((\bX_1, \bZ_1), (\bX_{N(1)}, \bZ_{N(1)})\big) \cdot   \phi(\bX_1, \bZ_1) \biggiven \bX_1, \bZ_1 ,\bZ_{N(1)}\big]\Big\} \cr
&=&
\E\Big\{\tildeg^\dagger(\bX_1, \bZ_1, \bZ_{N(1)}) \cdot   \phi(\bX_1, \bZ_1) \Big\}, 
\label{R1_1}
\end{eqnarray}
where 
\begin{eqnarray*}
\tildeg^\dagger\big((\bx_1,\bz_1), \bz_2\big)& =& \int \int F_Y(u\wedge v) 
\d \tmu_{(\bx_1,\bz_1)}(u) \d \tmu_{\bz_2}(v) \cr
&=& 
\int \tildeg\big((\bx_1,\bz_1), (\bx,\bz_2)\big) 
\d \mu_{\bX=\bx \mid \bZ = \bz_2}(\bx),
\end{eqnarray*}
is defined in \eqref{eq:lemA:aux-2-7:1}.     It is straightforward to verify that 
\begin{eqnarray*}
\E\big[\tildeg\big((\bX_1, \bZ_1), (\bX_{N(1)}, \bZ_{N(1)})\big) \biggiven \bX_1, \bZ_1 ,\bZ_{N(1)}\big]
&=&
\tildeg^\dagger\big((\bX_1, \bZ_1), \bZ_{N(1)}\big), \cr
\text{and} \qquad \E\big( F_Y(Y_1 \wedge \tY_1) \biggiven \bX_1, \bZ_1\big)  &=&
\tildeg^\dagger\big((\bX_1, \bZ_1), \bZ_1\big).
\end{eqnarray*}
By Lemma \ref{lemA:aux-2-7}, $\tildeg^\dagger\big((\bX_1, \bZ_1), \bZ_{N(1)}\big) - \tildeg^\dagger\big((\bX_1, \bZ_1), \bZ_1\big) \conP 0$. Since both $\tildeg^\dagger$ and $\phi$ are bounded, we then have
\begin{eqnarray*}
\lim_{n \to \infty} R_1 &=& \E\Big\{\tildeg^\dagger\big((\bX_1, \bZ_1), \bZ_1\big) \cdot   \phi(\bX_1, \bZ_1) \Big\} \cr
&=&
\E\Big\{\E\big( F_Y(Y_1 \wedge \tY_1) \biggiven \bX_1, \bZ_1\big) \cdot \E\big( F_Y(Y_1 \wedge \barY_1) \biggiven \bX_1, \bZ_1\big) \Big\}.
\end{eqnarray*}

For the term $R_2$, define
\begin{eqnarray}
\tildeg^\ddagger(\bz_1, \bz_2) &: =& \int \int F_Y(u\wedge v) 
\d \tmu_{\bz_1}(u) \d \tmu_{\bz_2}(v) \cr
&=& 
\int \int \tildeg\big(\bx_1,\bz_1, \bx_2,\bz_2\big) 
\d \mu_{\bX=\bx_1 \mid \bZ = \bz_1}(\bx_1)
\d \mu_{\bX=\bx_2 \mid \bZ = \bz_2}(\bx_2). \label{eq:lemA:lim6:2}
\end{eqnarray}
It is straightforward that
\begin{eqnarray}
\E\big[\tildeg\big((\bX_1, \bZ_1), (\bX_{N(1)}, \bZ_{N(1)})\big) \biggiven \bfZ \big] 
&=& 
\tildeg^\ddagger(\bZ_1, \bZ_{N(1)}), \cr
\text{and} \qquad \E\big( F_Y(Y_1 \wedge \tY_1) \biggiven \bZ_1\big) 
&=&	\tildeg^\ddagger(\bZ_1, \bZ_1). \label{eq:lemA:lim6:3}
\end{eqnarray}
Also, define 
\begin{eqnarray}
\phi^\ddagger(\bz_1) &=:& \E\big\{F_Y(Y_1 \wedge \barY_1) \biggiven\bZ_1=\bz_1 \big\} \cr
&=&
\E\big\{\phi(\bX_1, \bZ_1) \biggiven \bZ_1 = \bz_1 \big\}. \label{eq:lemA:lim6:4}
\end{eqnarray}
Then 
\begin{eqnarray}
R_2 = \E\Big\{	\tildeg^\ddagger(\bZ_1, \bZ_{N(1)}) \cdot 		\phi^\ddagger(\bZ_1) \Big\}. \label{eq:lemA:lim6:5}
\end{eqnarray}
By Lemma \ref{lemA:aux-2-6} with $(\bX, \bZ)$ replaced by $\bZ$, we have
\begin{eqnarray}
\tildeg^\ddagger(\bZ_1, \bZ_{N(1)}) - \tildeg^\ddagger(\bZ_1, \bZ_1) \conP 0.
\label{eq:lemA:lim6:6}
\end{eqnarray}
Therefore,
\begin{eqnarray*}
\lim_{n \to \infty} R_2
&=&
\E\Big\{	\tildeg^\ddagger(\bZ_1, \bZ_1) \cdot 		\phi^\ddagger(\bZ_1) \Big\} \cr
&=&
\E\Big\{\E\big( F_Y(Y_1 \wedge \tY_1) \biggiven \bZ_1\big) \cdot \E\big( F_Y(Y_1 \wedge \barY_1) \biggiven \bZ_1\big) \Big\}.
\end{eqnarray*}
Combining the above two limits completes the proof.
\end{proof}

\begin{lemmaA} \label{lemA:lim7}
The limit of $V_2$ in \eqref{eq:V1-V3} is

\begin{eqnarray*}
\lim_{n \to \infty}	\frac{1}{n}\E \Big\{ \sum_{(i,j)\in \lbr n \rbr\times\lbr n \rbr: \, i,j, N(i) \,\text{distinct}} \cov\big[\tildeg\big((\bX_i, \bZ_i), (\bX_{N(i)}, \bZ_{N(i)})\big) ,   \phi(\bX_j, \bZ_j)   \mid \bfZ\big] \Big\} =0.
\end{eqnarray*}
\end{lemmaA}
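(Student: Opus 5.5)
The plan is the same conditional-independence argument used for $Q_{1,1,2}$ in Lemma~\ref{lemA:lim2}, now with conditioning on $\bfZ$ alone. By exchangeability of the sample, the sum equals
\[
(n-1)\cdot \E\Big\{ \cov\big[\tildeg\big((\bX_1, \bZ_1), (\bX_{N(1)}, \bZ_{N(1)})\big), \phi(\bX_2,\bZ_2) \mid \bfZ\big]\cdot \Ind\big(1,2,N(1)\ \text{distinct}\big)\Big\}.
\]
Here $N(\cdot)$ is the NN map built from $\{\bZ_i\}_{i=1}^n$ only, so it is $\bfZ$-measurable. Hence the indicator and the index $N(1)$ may be pulled outside the conditional covariance. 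I will show that each conditional covariance in this expression vanishes identically, not just asymptotically.

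The key fact is the following. Conditional on $\bfZ$, the vectors $\bX_1,\dots,\bX_n$ are independent, and $\bX_i$ has law $\mu_{\bX\mid\bZ=\bZ_i}$. This holds because the triples $(\bX_i,Y_i,\bZ_i)$ are i.i.d. (Assumption~\ref{assump_4.1}), so conditioning on all the $\bZ$'s factorizes across $i$. On the event $\{1,2,N(1)\ \text{distinct}\}$, the first argument of the covariance is a bounded measurable function of $(\bX_1,\bX_{N(1)})$, given $\bfZ$. The second argument is a bounded measurable function of $\bX_2$ alone. Since $2\notin\{1,N(1)\}$, these two are conditionally independent given $\bfZ$, and so their conditional covariance is zero almost surely.

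It follows that every summand is exactly $0$, and the limit is trivially $0$. The only step requiring any care is measurability. I must check that $\tildeg$ and $\phi$ are jointly measurable, which follows from Lemma~\ref{lemA:mu} and Fubini. I must also check that the random index $N(1)$ can be handled by decomposing over the events $\{N(1)=k\}$, $k\ge 3$. Each such event is in $\sigma(\bfZ)$, and on it the variable is $\tildeg((\bX_1,\bZ_1),(\bX_k,\bZ_k))$, which is independent of $\bX_2$ given $\bfZ$. I do not expect any genuine obstacle: unlike $V_1$ and $V_3$, no NN-graph asymptotics are needed here.
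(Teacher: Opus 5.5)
Your proposal is correct and follows the paper's proof. Both use exchangeability to reduce the sum to $(n-1)\,\E\{\cov[\cdot,\phi(\bX_2,\bZ_2)\mid\bfZ]\,\Ind(N(1)\neq 2)\}$, then observe that because $N(\cdot)$ is $\bfZ$-measurable and the $\bX_i$'s are conditionally independent given $\bfZ$, each conditional covariance vanishes identically. Hence $V_2=0$ for every $n$.
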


\begin{proof}
We have
\begin{eqnarray*}
&&\frac{1}{n}\E \Big\{ \sum_{(i,j)\in \lbr n \rbr\times\lbr n \rbr: \, i,j, N(i) \,\text{distinct}} \cov\big[\tildeg\big((\bX_i, \bZ_i), (\bX_{N(i)}, \bZ_{N(i)})\big) ,   \phi(\bX_j, \bZ_j)   \mid \bfZ\big] \Big\} \cr
&=&
(n-1)\cdot
\E \Big\{  \cov\big[\tildeg\big((\bX_1, \bZ_1), (\bX_{N(1)}, \bZ_{N(1)})\big) ,   \phi(\bX_2, \bZ_2)   \mid \bfZ\big] \cdot \Ind\big(N(1) \neq 2\big) \Big\} .
\end{eqnarray*}
For $\bfZ$ such that $N(1) \neq 2$, the random variables $\tildeg\big((\bX_1, \bZ_1), (\bX_{N(1)}, \bZ_{N(1)})\big)$ and $\phi(\bX_2, \bZ_2) $ conditional on $\bfZ$ are independently distributed, and thus
\begin{eqnarray*}
\cov\big[\tildeg\big((\bX_1, \bZ_1), (\bX_{N(1)}, \bZ_{N(1)})\big) ,   \phi(\bX_2, \bZ_2)   \mid \bfZ\big] \cdot \Ind\big(N(1) \neq 2\big) = 0.
\end{eqnarray*}
Then $V_2 = 0$. This completes the proof. 
\end{proof}

\begin{lemmaA} \label{lemA:lim8}
The limit of $V_3$ in \eqref{eq:V1-V3} is
\begin{eqnarray*}
&&\lim_{n\to \infty}
\E \Big\{\cov\big[\tildeg\big((\bX_1, \bZ_1), (\bX_{N(1)}, \bZ_{N(1)})\big) ,   \phi(\bX_{N(1)}, \bZ_{N(1)})   \mid \bfZ\big] \Big\} \cr
&=&
\E\Big[\cov\Big\{ \E\big( F_Y(Y_1 \wedge \tY_1) \biggiven \bX_1, \bZ_1\big) , \E\big( F_Y(Y_1 \wedge \barY_1) \biggiven \bX_1, \bZ_1\big) \Biggiven \bZ_1\Big\}\Big].
\end{eqnarray*}
\end{lemmaA}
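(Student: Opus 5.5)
The plan is to mirror the proof of Lemma~\ref{lemA:lim6}, with the roles of the two arguments of $\tildeg$ exchanged so that $\phi$ is now evaluated at the neighbour $N(1)$. First, $V_3$ reduces to the displayed quantity: for each $i$ there is exactly one $j=N(i)$, so by exchangeability $V_3=\E\{\cov[\tildeg((\bX_1,\bZ_1),(\bX_{N(1)},\bZ_{N(1)})),\phi(\bX_{N(1)},\bZ_{N(1)})\mid\bfZ]\}$.

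We then split the covariance as $R_1-R_2$. Here
\[
R_1=\E\{\tildeg((\bX_1,\bZ_1),(\bX_{N(1)},\bZ_{N(1)}))\,\phi(\bX_{N(1)},\bZ_{N(1)})\},
\]
and
\[
R_2=\E\{\E[\tildeg(\cdot)\mid\bfZ]\cdot\E[\phi(\bX_{N(1)},\bZ_{N(1)})\mid\bfZ]\}.
\]
Conditional on $\bfZ$, the $\bX_i$ are independent with laws $\mu_{\bX\mid\bZ=\bZ_i}$, and $N(1)$ is $\bfZ$-measurable.

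\textbf{Term $R_1$.} Condition on $(\bX_{N(1)},\bZ_{N(1)},\bZ_1)$. Because $\bX_1$ is conditionally independent of $\bX_{N(1)}$ given $\bfZ$, with law $\mu_{\bX\mid\bZ=\bZ_1}$, integrating out $\bX_1$ turns $\tildeg$ into $\tildeg^\dagger((\bX_{N(1)},\bZ_{N(1)}),\bZ_1)$; this uses the symmetry of $\tildeg$ in its two arguments. So
\[
R_1=\E\{\tildeg^\dagger((\bX_{N(1)},\bZ_{N(1)}),\bZ_1)\,\phi(\bX_{N(1)},\bZ_{N(1)})\}.
\]
Next apply Lemma~\ref{lemA:aux-2-7}, or its version centred at $N(1)$ as used in Lemma~\ref{lemA:lim3} via Lemma~\ref{lemA:aux-2-5}, to replace $\bZ_1$ by $\bZ_{N(1)}$ up to an $o_\P(1)$ error. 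Boundedness then gives an $o(1)$ error in expectation. Finally, $(\bX_{N(1)},\bZ_{N(1)})\conD(\bX_1,\bZ_1)$ with both absolutely continuous, as in \eqref{eq:lemA:lim3:1}, which yields
\[
R_1\to\E\{\tildeg^\dagger((\bX_1,\bZ_1),\bZ_1)\phi(\bX_1,\bZ_1)\}=\E\{\E(F_Y(Y_1\wedge\tY_1)\mid\bX_1,\bZ_1)\,\E(F_Y(Y_1\wedge\barY_1)\mid\bX_1,\bZ_1)\}.
\]

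\textbf{Term $R_2$.} Use $\E[\tildeg\mid\bfZ]=\tildeg^\ddagger(\bZ_1,\bZ_{N(1)})$ and $\E[\phi(\bX_{N(1)},\bZ_{N(1)})\mid\bfZ]=\phi^\ddagger(\bZ_{N(1)})$, with the notation of \eqref{eq:lemA:lim6:2}--\eqref{eq:lemA:lim6:4}. By \eqref{eq:lemA:lim6:6} and symmetry of $\tildeg^\ddagger$, we get $\tildeg^\ddagger(\bZ_1,\bZ_{N(1)})-\tildeg^\ddagger(\bZ_{N(1)},\bZ_{N(1)})\conP0$. Then $\bZ_{N(1)}\conD\bZ_1$ with density gives
\[
R_2\to\E\{\tildeg^\ddagger(\bZ_1,\bZ_1)\phi^\ddagger(\bZ_1)\}.
\]
Subtracting the two limits gives the stated conditional covariance.

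\textbf{Main obstacle.} The delicate point is the distributional-limit step: passing from $\E h(\bX_{N(1)},\bZ_{N(1)})$ to $\E h(\bX_1,\bZ_1)$ for bounded $h$ that are merely measurable, e.g.\ $\phi$ and $\phi^\ddagger$, not continuous. Weak convergence alone does not suffice for this. I would handle it as in \eqref{eq:lemA:lim3:1}, using that the density of $\bZ_{N(1)}$ converges to that of $\bZ_1$ (total-variation convergence, via \eqref{eq:lemA:aux-1-8:4}). Conditional on $\bZ_{N(1)}$, the law of $\bX_{N(1)}$ is exactly $\mu_{\bX\mid\bZ=\bZ_{N(1)}}$, so total-variation convergence of the marginal of $\bZ_{N(1)}$ transfers to $(\bX_{N(1)},\bZ_{N(1)})$.
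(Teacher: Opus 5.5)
Your proof is correct in outline, and your $R_1$ is the paper's argument step for step. You integrate out $\bX_1$ given $\bfZ$ to get $\tildeg^\dagger((\bX_{N(1)},\bZ_{N(1)}),\bZ_1)$, replace $\bZ_1$ by $\bZ_{N(1)}$ with the second statement of Lemma~\ref{lemA:aux-2-7}, and pass to the limit via $(\bX_{N(1)},\bZ_{N(1)})\conD(\bX_1,\bZ_1)$. The difficulty you flag does not bite there. Under Assumption~\ref{assump_4.5}, the maps $(\bx,\bz)\mapsto\tmu_{(\bx,\bz)}(A)$ and $\bz\mapsto\tmu_{\bz}(A)$ are continuous off Lebesgue-null sets for intervals $A$. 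The integrand $\tildeg^\dagger((\bx,\bz),\bz)\,\phi(\bx,\bz)$ is a uniform limit of finite sums of products of such maps (Lemma~\ref{lemA:aux-2-1}), hence bounded and a.e.\ continuous. So weak convergence to an absolutely continuous limit suffices by the continuous mapping theorem, and this is what justifies the paper's appeal to absolute continuity.

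Your $R_2$ departs from the paper, and that is where a justification is missing. You push everything forward to $\bZ_{N(1)}$, which forces a distributional limit involving $\phi^\ddagger(\bZ_{N(1)})$. But $\phi^\ddagger(\bz)=\E\{F_Y(Y\wedge\barY)\mid\bZ=\bz\}$ depends on the pair law of $(Y,\barY)$ given $\bZ$, for which Assumption~\ref{assump_4.5} gives no continuity. Your proposed total-variation convergence does not follow from \eqref{eq:lemA:aux-1-8:4}, which is only weak convergence. The TV claim is true: $\P(\bZ_{N(1)}\in A)=\E\{\Ind(\bZ_1\in A)\,D_1\}$ with $D_1=\#\{j\neq 1: N(j)=1\}$ uniformly bounded and $\E(D_1\mid\bZ_1)\conP 1$. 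But that needs its own argument. Similarly, \eqref{eq:lemA:lim6:6} plus symmetry gives $\tildeg^\ddagger(\bZ_1,\bZ_{N(1)})-\tildeg^\ddagger(\bZ_1,\bZ_1)\conP 0$, not the version centred at $\bZ_{N(1)}$. For that you also need the diagonal map $\bz\mapsto\tildeg^\ddagger(\bz,\bz)$ evaluated along $\bZ_{N(1)}$.

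The paper avoids both issues by pulling back to $\bZ_1$. It keeps \eqref{eq:lemA:lim6:6} as stated, applies Lemma 11.7 of \cite{azadkia2019simple} ($f(\bZ_{N(1)})-f(\bZ_1)\conP 0$ for every measurable $f$) to $\phi^\ddagger$, and uses boundedness. The same lemma also closes your version directly. For bounded measurable $h$, set $\bar h(\bz)=\int h(\bx,\bz)\,\d\mu_{\bX\mid\bZ=\bz}(\bx)$. Since $N(1)$ is $\bfZ$-measurable, $\E h(\bX_{N(1)},\bZ_{N(1)})=\E\bar h(\bZ_{N(1)})\to\E\bar h(\bZ_1)=\E h(\bX_1,\bZ_1)$, with no continuity or total-variation argument needed.
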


\begin{proof}
\begin{eqnarray*}
&&\E \Big\{\cov\big[\tildeg\big((\bX_1, \bZ_1), (\bX_{N(1)}, \bZ_{N(1)})\big) ,   \phi(\bX_{N(1)}, \bZ_{N(1)})   \mid \bfZ\big] \Big\}  \cr
&=& \E\Big\{ \E\big[\tildeg\big((\bX_1, \bZ_1), (\bX_{N(1)}, \bZ_{N(1)})\big) \cdot   \phi(\bX_{N(1)}, \bZ_{N(1)}) \biggiven \bfZ \big] \} \cr
&&-
\E\Big\{
\E\big[\tildeg\big((\bX_1, \bZ_1), (\bX_{N(1)}, \bZ_{N(1)})\big) \biggiven \bfZ \big] \cdot 
\E\big[ \phi(\bX_{N(1)}, \bZ_{N(1)}) \biggiven \bZ \big]
\Big\} \cr
&=:& R_1 + R_2.
\end{eqnarray*}
For the term $R_1$, similar to \eqref{R1_1}, we have
\begin{eqnarray*}
R_1 &=& \E\Big\{\tildeg\big((\bX_1, \bZ_1), (\bX_{N(1)}, \bZ_{N(1)})\big) \cdot   \phi(\bX_{N(1)}, \bZ_{N(1)}) \Big\} \cr
&=&
\E \Big\{\E\big[\tildeg\big((\bX_1, \bZ_1), (\bX_{N(1)}, \bZ_{N(1)})\big) \cdot   \phi(\bX_{N(1)}, \bZ_{N(1)}) \biggiven \bZ_1 , \bX_{N(1)}, \bZ_{N(1)}\big]\Big\} \cr
&=&
\E\Big\{\tildeg^\dagger\big((\bX_{N(1)}, \bZ_{N(1)}), \bZ_1\big) \cdot   \phi(\bX_{N(1)}, \bZ_{N(1)}) \Big\}.
\end{eqnarray*}
By Lemma \ref{lemA:aux-2-7}, $\tildeg^\dagger\big((\bX_{N(1)}, \bZ_{N(1)}), \bZ_1\big) - \tildeg^\dagger\big((\bX_1, \bZ_1), \bZ_1\big) \conP 0$. Since $\tildeg^\dagger$ and $\phi$ are bounded, we have
\begin{eqnarray*}
&&\E\Big\{\tildeg^\dagger\big((\bX_{N(1)}, \bZ_{N(1)}), \bZ_1\big) \cdot   \phi(\bX_{N(1)}, \bZ_{N(1)}) \Big\} \cr
&=&
\E\Big\{\tildeg^\dagger\big((\bX_{N(1)}, \bZ_{N(1)}), \bZ_{N(1)}\big) \cdot   \phi(\bX_{N(1)}, \bZ_{N(1)}) \Big\} +o(1).
\end{eqnarray*}
By \eqref{eq:lemA:aux-1-8:4}, we have $(\bX_{N(1)}, \bZ_{N(1)}) \conD (\bX_1, \bZ_1)$. This combined with the facts that both $(\bX_{N(1)}, \bZ_{N(1)})$ and $(\bX_1, \bZ_1)$ are absolutely continuous yields that
\begin{eqnarray*}
&&	\E\Big\{\tildeg^\dagger\big((\bX_{N(1)}, \bZ_{N(1)}), \bZ_{N(1)}\big) \cdot   \phi(\bX_{N(1)}, \bZ_{N(1)}) \Big\} \cr
&=&
\E\Big\{\tildeg^\dagger\big((\bX_1, \bZ_1), \bZ_1\big) \cdot   \phi(\bX_1, \bZ_1) \Big\}  +o(1).
\end{eqnarray*}
Therefore,
\begin{eqnarray*}
\lim_{n \to \infty} R_1 &=& \E\Big\{\tildeg^\dagger\big((\bX_1, \bZ_1), \bZ_1\big) \cdot   \phi(\bX_1, \bZ_1) \Big\} \cr
&=&
\E\Big\{\E\big( F_Y(Y_1 \wedge \tY_1) \biggiven \bX_1, \bZ_1\big) \cdot \E\big( F_Y(Y_1 \wedge \barY_1) \biggiven \bX_1, \bZ_1\big) \Big\}.
\end{eqnarray*}

For the term $R_2$, similar to \eqref{eq:lemA:lim6:5}, we have
\begin{eqnarray*}
R_2 = \E\Big\{	\tildeg^\ddagger(\bZ_1, \bZ_{N(1)}) \cdot 		\phi^\ddagger(\bZ_{N(1)}) \Big\}.
\end{eqnarray*}
By \eqref{eq:lemA:lim6:6},
$\tildeg^\ddagger(\bZ_1, \bZ_{N(1)}) - \tildeg^\ddagger(\bZ_1, \bZ_1) \conP 0$.
Since function $\phi^\ddagger$ is measurable and $\|\bZ_{N(1)} -\bZ_1\| \conas 0$, Lemma 11.7 of \cite{azadkia2019simple} implies that
$\phi^\ddagger(\bZ_{N(1)}) - \phi^\ddagger(\bZ_1) \conP 0$. It follows that
\begin{eqnarray*}
\lim_{n \to \infty} R_2
&=&
\E\Big\{	\tildeg^\ddagger(\bZ_1, \bZ_1) \cdot 		\phi^\ddagger(\bZ_1) \Big\} \cr
&=&
\E\Big\{\E\big( F_Y(Y_1 \wedge \tY_1) \biggiven \bZ_1\big) \cdot \E\big( F_Y(Y_1 \wedge \barY_1) \biggiven \bZ_1\big) \Big\}.
\end{eqnarray*}
Combining the above two limits completes the proof.
\end{proof}

\begin{lemmaA} \label{lemA:lim9}
The limit of $Q_{1,2,2}'$ in \eqref{eq:Q'_12} is
\begin{eqnarray*}
&&	\lim_{n \to \infty}\frac{1}{n} \cov\Big\{
\E\Big[\sum_{i=1}^n  \tildeg\big((\bX_i, \bZ_i), (\bX_{N(i)}, \bZ_{N(i)})\big)   \Biggiven \bfZ\Big],
\E\Big[\sum_{i=1}^n  \phi(\bX_i, \bZ_i)  \Biggiven \bfZ\Big]
\Big\}  \cr
&=&
\cov\Big\{\E\big( F_Y(Y_1 \wedge \tY_1 )\mid \bZ_1 \big), \E\big( F_Y(Y_1 \wedge \barY_1 )\mid \bZ_1 \big)\Big\}.
\end{eqnarray*}
\end{lemmaA}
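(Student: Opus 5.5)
Conditional on $\bfZ$, the inner expectations can be written out explicitly using the functions already introduced. Since the $\bX_i$ are conditionally independent given $\bfZ$, with laws $\mu_{\bX\mid\bZ=\bZ_i}$, we have
\[
\E\big[\tildeg((\bX_i,\bZ_i),(\bX_{N(i)},\bZ_{N(i)}))\mid\bfZ\big]=\tildeg^\ddagger(\bZ_i,\bZ_{N(i)})
\quad\text{and}\quad
\E[\phi(\bX_i,\bZ_i)\mid\bfZ]=\phi^\ddagger(\bZ_i),
\]
with $\tildeg^\ddagger,\phi^\ddagger$ as in \eqref{eq:lemA:lim6:2} and \eqref{eq:lemA:lim6:4}. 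Hence $Q_{1,2,2}'=n^{-1}\cov\{\sum_i\tildeg^\ddagger(\bZ_i,\bZ_{N(i)}),\sum_i\phi^\ddagger(\bZ_i)\}$. This is a covariance between a nearest-neighbour statistic built on the $\bZ$-sample alone and a plain i.i.d.\ sum. The plan is to replace the NN statistic by its H\'ajek projection, exactly as in Lemma C.1 of \cite{Lin_Han_2025_CLT}.

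\textbf{Step 1 (projection).} Apply Lemma C.1 of \cite{Lin_Han_2025_CLT}, with $(\bX,\bZ)$ replaced by $\bZ$ and the kernel $\tildeg$ replaced by the bounded symmetric kernel $\tildeg^\ddagger$. This is the same result used above to replace $\sum_i\E(F_Y(Y_i\wedge Y_{M(i)})\mid\bfX,\bfZ)$ by $\sum_i\phi(\bX_i,\bZ_i)$. It should give
\[
\lim_{n\to\infty}\frac1n\var\Big[\sum_{i=1}^n\tildeg^\ddagger(\bZ_i,\bZ_{N(i)})-\sum_{i=1}^n\tildeg^\ddagger(\bZ_i,\bZ_i)\Big]=0 .
\]
If that lemma is stated only for the specific kernel, I would re-derive it in two parts. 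First, use the Efron--Stein inequality together with the bounded in-degree of the NNG \citep{MR682809}: this shows $n^{-1}\var$ of the NN sum is $O(1)$, and that resampling one point changes the sum by a bounded amount. Second, use \eqref{eq:lemA:lim6:6}, i.e.\ $\tildeg^\ddagger(\bZ_1,\bZ_{N(1)})-\tildeg^\ddagger(\bZ_1,\bZ_1)\conP0$, to show the difference has vanishing variance per coordinate.

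\textbf{Step 2 (i.i.d.\ covariance).} By Cauchy--Schwarz, Step 1 lets us replace the first argument of the covariance by $\sum_i\tildeg^\ddagger(\bZ_i,\bZ_i)$ at cost $o(1)$; the second term has $n^{-1}\var=O(1)$, since $\phi^\ddagger$ is bounded. The two sums are now over i.i.d.\ terms, so $n^{-1}\cov=\cov\{\tildeg^\ddagger(\bZ_1,\bZ_1),\phi^\ddagger(\bZ_1)\}$. By \eqref{eq:lemA:lim6:3}, $\tildeg^\ddagger(\bZ_1,\bZ_1)=\E(F_Y(Y_1\wedge\tY_1)\mid\bZ_1)$, and by definition $\phi^\ddagger(\bZ_1)=\E(F_Y(Y_1\wedge\barY_1)\mid\bZ_1)$. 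This yields the claimed limit.

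\textbf{Main obstacle.} The delicate point is Step 1. The naive reduction $\tildeg^\ddagger(\bZ_i,\bZ_{N(i)})\approx\tildeg^\ddagger(\bZ_i,\bZ_i)$ in probability controls means, not $n^{-1}$-scaled variances. The cross-covariances among the $n$ NN terms must be shown to be asymptotically negligible, and this is where the dependency-graph and bounded-degree structure enter. I would handle it by conditioning on the graph and using Lemma C.1's argument: cross terms for non-adjacent $(i,j)$ cancel after centering, and adjacent pairs are $O(n)$ in number with each difference $o_\P(1)$ and bounded.
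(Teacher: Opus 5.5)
Your proposal is correct and follows the paper's proof essentially verbatim. The paper likewise rewrites the two conditional expectations as $\tildeg^\ddagger(\bZ_i,\bZ_{N(i)})$ and $\phi^\ddagger(\bZ_i)$, cites the proof of Lemma C.1 of \cite{Lin_Han_2025_CLT} for $n^{-1}\var[\sum_i\tildeg^\ddagger(\bZ_i,\bZ_{N(i)})-\sum_i\tildeg^\ddagger(\bZ_i,\bZ_i)]\to 0$, and then reduces to an i.i.d.\ covariance. Your fallback sketch for Step 1 is not needed, and its ``conditioning on the graph'' heuristic is imprecise, because the $\bZ_i$ are not independent given the NN graph; an Efron--Stein bound using bounded in-degree would be the cleaner way to re-derive that step.
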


\begin{proof}
Following the notation in \eqref{eq:lemA:lim6:3} and \eqref{eq:lemA:lim6:4}, we have
\begin{eqnarray*}
\E\big\{\tildeg\big((\bX_i, \bZ_i), (\bX_{N(i)}, \bZ_{N(i)})\big)  \biggiven \bfZ \big\} = 
\tildeg^\ddagger(\bZ_i, \bZ_{N(i)}) =
\E\Big( F_Y(Y_i \wedge Y_{N(i)})\mid \bfZ \Big)
\end{eqnarray*}
and 
\begin{eqnarray*}
\E\big\{ \phi(\bX_i, \bZ_i)   \biggiven \bfZ\big\} = \phi^\ddagger(\bZ_i) = \E\big( F_Y(Y_i \wedge \barY_i )\biggiven \bZ_i \big).
\end{eqnarray*}
Therefore,
\begin{eqnarray*}
&&\frac{1}{n} \cov\Big\{
\E\Big[\sum_{i=1}^n  \tildeg\big((\bX_i, \bZ_i), (\bX_{N(i)}, \bZ_{N(i)})\big)   \Biggiven \bfZ\Big],
\E\Big[\sum_{i=1}^n  \phi(\bX_i, \bZ_i)  \Biggiven \bfZ\Big]
\Big\} \cr
&=&
\frac{1}{n}  \cov\Big\{ \sum_{i=1}^n  \tildeg^\ddagger(\bZ_i, \bZ_{N(i)}),
\sum_{i=1}^n   \phi^\ddagger(\bZ_i) 
\Big\} .
\end{eqnarray*}
By the proof of Lemma C.1 in \cite{Lin_Han_2025_CLT}, we have
\begin{eqnarray}
\lim_{n \to \infty} \frac{1}{n} \var\Big[  \sum_{i=1}^n 
\E\Big( F_Y(Y_i \wedge Y_{N(i)})\Biggiven \bfZ \Big) 
- \sum_{i=1}^n  	\E\Big( F_Y(Y_i \wedge \tY_i ) \Biggiven \bZ_i \Big)   \Big] = 0, \label{eq:lemA:lim9:1}
\end{eqnarray}
that is,
\begin{eqnarray*}
\lim_{n \to \infty} \frac{1}{n} \var \Big[  \sum_{i=1}^n  \tildeg^\ddagger(\bZ_i, \bZ_{N(i)}) -  \sum_{i=1}^n  \tildeg^\ddagger(\bZ_i, \bZ_i) \Big] =0 .
\end{eqnarray*}
Therefore,
\begin{eqnarray*}
\frac{1}{n}  \cov\Big\{ \sum_{i=1}^n  \tildeg^\ddagger(\bZ_i, \bZ_{N(i)}),
\sum_{i=1}^n   \phi^\ddagger(\bZ_i) 
\Big\} 
&=&
\frac{1}{n}  \cov\Big\{ \sum_{i=1}^n  \tildeg^\ddagger(\bZ_i, \bZ_i),
\sum_{i=1}^n   \phi^\ddagger(\bZ_i) 
\Big\} +o(1) \cr
&=&
\cov\Big\{ \tildeg^\ddagger(\bZ_1, \bZ_1) , \phi^\ddagger(\bZ_1)  \Big\} +o(1).
\end{eqnarray*}
The proof is completed by noting that
\begin{eqnarray*}
\cov\Big\{ \tildeg^\ddagger(\bZ_1, \bZ_1) , \phi^\ddagger(\bZ_1)  \Big\} 
&=&
\cov\Big\{\E\big( F_Y(Y_1 \wedge \tY_1 )\mid \bZ_1 \big), \E\big( F_Y(Y_1 \wedge \barY_1 )\mid \bZ_1 \big)\Big\}.
\end{eqnarray*}
\end{proof}

\subsubsection{The limits of $Q_2$ and $Q_3$} \label{secA:proof-stepII:3}
Since $Q_2$ and $Q_3$ are structurally similar, we present only the derivation of the limit of $Q_2$. The limit of $Q_3$ can then be obtained analogously by replacing $\bZ$ with $(\bX,\bZ)$ and replacing $h_1$ with $h_2$.
Decompose $Q_2$ into two terms,
\begin{eqnarray}
Q_2&=&\frac{1}{n}\cov\Big(\sum_{i=1}^n F_Y(Y_i \wedge Y_{N(i)}) , \sum_{i=1}^nh_1(Y_i)\Big)\cr
&=&
\frac{1}{n}	
\E \Big\{\cov\Big[\sum_{i=1}^n F_Y(Y_i \wedge Y_{N(i)}), \sum_{i=1}^n\ h_1(Y_i) \Biggiven \bfZ\Big] \Big\} \cr
&& +
\frac{1}{n}\cov\Big[ \E\Big(\sum_{i=1}^n F_Y(Y_i \wedge Y_{N(i)})\mid  \bfZ \Big),
\E\Big(\sum_{i=1}^n h_1(Y_i)\mid \bfZ \Big)
\Big]  \cr
&=:& Q_{2,1} + Q_{2,2}. \label{eq:Q2}
\end{eqnarray}
In the following, Lemmas \ref{lemA:lim10} and \ref{lemA:lim11} present the limits of $Q_{2,1}$ and $Q_{2,2}$ respectively. Whenever no confusion arises, we write function $h_1(\cdot)$ simply as $h(\cdot)$.

\begin{lemmaA} \label{lemA:lim10}
The limit of $Q_{2,1}$ in \eqref{eq:Q2} is
\begin{eqnarray*}
&&\lim_{n\to \infty}	\frac{1}{n}	
\E \Big\{\cov\Big[\sum_{i=1}^n F_Y(Y_i \wedge Y_{N(i)}), \sum_{i=1}^n\ h(Y_i) \Biggiven  \bfZ\Big] \Big\} \cr
&=& 2\cdot \E \Big[ \cov\big\{ F_Y(Y_1 \wedge \tY_1) ,h(Y_1)\biggiven \bZ_1  \big\}\Big].
\end{eqnarray*}
\end{lemmaA}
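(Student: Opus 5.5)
Conditional on $\bfZ$, the pairs $(\bX_i,Y_i)$ are independent across $i$, and $Y_i\mid\bfZ\sim\tmu_{\bZ_i}$. We therefore expand the conditional covariance as a double sum and classify the pairs $(i,j)$ by whether the indices in $F_Y(Y_i\wedge Y_{N(i)})$ and $h(Y_j)$ overlap:
\begin{eqnarray*}
\frac{1}{n}\E\Big\{\cov\Big[\sum_i F_Y(Y_i\wedge Y_{N(i)}),\sum_j h(Y_j)\Biggiven\bfZ\Big]\Big\}
&=& \E\Big\{\cov\big[F_Y(Y_1\wedge Y_{N(1)}),h(Y_1)\mid\bfZ\big]\Big\}\cr
&&+\ \E\Big\{\cov\big[F_Y(Y_1\wedge Y_{N(1)}),h(Y_{N(1)})\mid\bfZ\big]\Big\},
\end{eqnarray*}
using exchangeability. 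All pairs with $j\notin\{i,N(i)\}$ contribute zero. The reason is that, given $\bfZ$, $Y_j$ is independent of $(Y_i,Y_{N(i)})$, exactly as in Lemma~\ref{lemA:lim2} and Lemma~\ref{lemA:lim7}. No sum over $j$ survives, so the degree bound is not even needed here.

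\textbf{First term.} Write it as $R_1-R_2$ with
\begin{eqnarray*}
R_1 = \E\{F_Y(Y_1\wedge Y_{N(1)})h(Y_1)\},\qquad
R_2 = \E\{\E[F_Y(Y_1\wedge Y_{N(1)})\mid\bfZ]\;\E[h(Y_1)\mid\bZ_1]\}.
\end{eqnarray*}
Integrating out $Y_1\sim\tmu_{\bZ_1}$ and $Y_{N(1)}\sim\tmu_{\bZ_{N(1)}}$ gives $R_1=\E\{k(\bZ_1,\bZ_{N(1)})\}$, where
\[
k(\bz_1,\bz_2)=\iint F_Y(u\wedge v)\,h(u)\,\d\tmu_{\bz_1}(u)\,\d\tmu_{\bz_2}(v).
\]
Similarly $R_2=\E\{\tildeg^\ddagger(\bZ_1,\bZ_{N(1)})\,\E[h(Y_1)\mid\bZ_1]\}$. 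Now apply the $\bZ$-only version of Lemma~\ref{lemA:aux-2-6}/\ref{lemA:aux-2-7}, i.e.\ the argument behind \eqref{eq:lemA:lim6:6}. This uses Assumption~\ref{assump_4.5} together with $\|\bZ_{N(1)}-\bZ_1\|\conas0$ and Lemma 11.7 of \cite{azadkia2019simple}, and gives $k(\bZ_1,\bZ_{N(1)})-k(\bZ_1,\bZ_1)\conP0$ and the analogous statement for $\tildeg^\ddagger$. The key point is that the $v$-integral is a functional of $\tmu_{\bZ_{N(1)}}$ of the form $\int F_Y(u\wedge v)\,\d\tmu_{\bz}(v)$, so the same continuity argument applies with the bounded weight $h(u)$ inserted. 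By boundedness and bounded convergence, the limit of $R_1-R_2$ is
\[
\E\{F_Y(Y_1\wedge\tY_1)h(Y_1)\}-\E\{\E[F_Y(Y_1\wedge\tY_1)\mid\bZ_1]\,\E[h(Y_1)\mid\bZ_1]\}
=\E[\cov\{F_Y(Y_1\wedge\tY_1),h(Y_1)\mid\bZ_1\}].
\]

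\textbf{Second term.} The same integration gives $\E\{k(\bZ_{N(1)},\bZ_1)\}-\E\{\tildeg^\ddagger(\bZ_1,\bZ_{N(1)})\,\E[h(Y)\mid\bZ=\bZ_{N(1)}]\}$. First replace the second argument $\bZ_1$ by $\bZ_{N(1)}$ (respectively replace $\bZ_{N(1)}$ by $\bZ_1$) using the same continuity-in-probability statement. Then use $\bZ_{N(1)}\conD\bZ_1$ with absolute continuity, as in \eqref{eq:lemA:lim3:1}, to move the remaining evaluation point to $\bZ_1$. Finally, by symmetry of $u\wedge v$ in the roles of $Y_1$ and $\tY_1$, the limit is again $\E[\cov\{F_Y(Y_1\wedge\tY_1),h(Y_1)\mid\bZ_1\}]$. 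Summing the two terms yields the factor $2$.

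\textbf{Main obstacle.} The delicate step is justifying $k(\bZ_{N(1)},\bZ_1)-k(\bZ_1,\bZ_1)\conP0$ in the first argument, which is the one carrying the weight $h$. This requires $\bz\mapsto\int \psi\,\d\tmu_{\bz}$ to be evaluated along nearest neighbours for measurable bounded $\psi$. I would handle it by conditioning on $\bZ_1$ and applying Lemma 11.7 of \cite{azadkia2019simple} to the measurable map $\bz\mapsto\int F_Y(u\wedge v)h(u)\,\d\tmu_{\bz}(u)$, exactly as was done for $\phi^\ddagger$ in Lemma~\ref{lemA:lim8}.
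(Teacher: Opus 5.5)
Your proof is correct and is essentially the paper's own proof. You use the same split into the $j=i$, $j=N(i)$ and vanishing $j\notin\{i,N(i)\}$ terms, and the same $R_1-R_2$ decomposition, with your $k$ in the role of the paper's $g^\star$ alongside $\tildeg^\ddagger$ and $\E[h(Y)\mid\bZ]$. The continuity argument is also the same, via simple-function approximation and Lemma~11.7 of \cite{azadkia2019simple}.

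The one deviation is in the $j=N(i)$ term, where you pass through $\bZ_{N(1)}\conD\bZ_1$. Weak convergence alone does not control expectations of merely measurable integrands. The cleaner step, which is what the paper does, is to move the first argument directly with Lemma~11.7 applied to $\bz\mapsto\int_{B_j}h\,\d\tmu_{\bz}$ and $\bz\mapsto\E[h(Y)\mid\bZ=\bz]$.
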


\begin{proof}	
\begin{eqnarray*}
Q_{2,1} &=& 
\frac{1}{n}\E \Big\{ \sum_{i=1}^n\cov\big[F_Y(Y_i \wedge Y_{N(i)}), h(Y_i)   \mid \bfZ\big]  \Big\} \cr
&& +
\frac{1}{n}\E \Big\{ \sum_{(i,j)\in \lbr n \rbr\times\lbr n \rbr: \, i,j, N(i) \,\text{distinct}} \cov\big[F_Y(Y_i \wedge Y_{N(i)}), h(Y_j)   \mid \bfZ\big] \Big\} \cr
&& +
\frac{1}{n}\E \Big\{ \sum_{(i,j)\in \lbr n \rbr\times\lbr n \rbr: \, j = N(i)} \cov\big[F_Y(Y_i \wedge Y_{N(i)}), h(Y_j)   \mid \bfZ\big] \Big\} \cr
&=:&	Q_{2,1,1} + Q_{2,1,2}+ Q_{2,1,3}.
\end{eqnarray*}
We derive the limits of $Q_{2,1,1}$, $Q_{2,1,2}$, $Q_{2,1,3}$ separately as follows. 

\textbf{Case (i): $	Q_{2,1,1}$.} We have
\begin{eqnarray*}
Q_{2,1,1}
&=&
\E \Big\{ \cov\big[F_Y(Y_1 \wedge Y_{N(1)}), h(Y_1)   \biggiven \bfZ\big]  \Big\} \cr
&=&
\E \Big\{\E\big[F_Y(Y_1 \wedge Y_{N(1)})\cdot h(Y_1) \biggiven \bfZ\big]\}
-\E\Big\{\E\big[F_Y(Y_1 \wedge Y_{N(1)}) \biggiven \bfZ\big] , \E\big[ h(Y_1) \biggiven \bfZ\big]\Big\} \cr
&=:& R_1 + R_2.
\end{eqnarray*}
For the term $R_1$, define function $g^\star: \mathbbR^{2q} \mapsto \mathbbR$,
\begin{eqnarray}
g^\star(\bz_1, \bz_2) := \int F_Y(u \wedge v) \cdot h(u) \d \tmu_{\bz_1}(u) \d \tmu_{\bz_2}(v). \label{eq:lemA:lim10:1}
\end{eqnarray}
Then 
\begin{eqnarray*}
R_1 = \E\big\{ 	g^\star(\bZ_1, \bZ_{N(1)}) \big\}.
\end{eqnarray*}
Since $Y$ is continuous, then function $h(t) = \P(\barY\wedge Y>t )$ is bounded and continuous.
Since both $F_Y(u \wedge v)$ and $h(u)$ are bounded and continuous, it follows from Lemma A.1 of \cite{Gao_Li_2024} that the bivariate function $F_Y(u \wedge v)h(u)$ can be approximated by a simple function of the form $q(u,v) = \sum_{j=1}^m c_j \Ind_{B_j}(u)\Ind_{C_j}(v)$, similarly to the arguments in Lemma \ref{lemA:aux-2-1}. Thus, analogous to the proof of Lemma \ref{lemA:aux-2-3}, by the fact that $\|\bZ_{N(1)} -\bZ_1 \|\conas 0$, we can show that
\begin{eqnarray}
g^\star(\bZ_1, \bZ_{N(1)}) - g^\star(\bZ_1, \bZ_1)  \conP 0.\label{eq:lemA:lim10:2}
\end{eqnarray}
Since $g^\star$ is bounded, then
\begin{eqnarray*}
\lim_{n \to \infty} R_1  \ = \		\lim_{n \to \infty}\E\big\{ 	g^\star(\bZ_1, \bZ_{N(1)}) \big\} \ = \ \E\big\{ 	g^\star(\bZ_1, \bZ_1) \big\} \  =\
\E \Big\{\E\big[F_Y(Y_1 \wedge \tY_1)\cdot h(Y_1) \biggiven \bfZ\big]\}. 
\end{eqnarray*}

For the term $R_2$,
invoke the function $\tildeg^\ddagger$ defined in \eqref{eq:lemA:lim6:2}:
\begin{eqnarray*}
\tildeg^\ddagger(\bz_1, \bz_2) &=& \int \int F_Y(u\wedge v) 
\d \tmu_{\bz_1}(u) \d \tmu_{\bz_2}(v).
\end{eqnarray*}
Also, define function $\tilde{h}(\bz) = \E\big[ h(Y) \biggiven \bZ = \bz\big]$.
Then, 
\begin{eqnarray*}
R_2 = \E\Big\{\tildeg^\ddagger(\bZ_1, \bZ_{N(1)})  \cdot \tilde{h}(\bZ_1)\Big\}.
\end{eqnarray*}
By \eqref{eq:lemA:lim6:6}, we have
\begin{eqnarray*}
\tildeg^\ddagger(\bZ_1, \bZ_{N(1)}) - \tildeg^\ddagger(\bZ_1, \bZ_1) \conP 0. 
\end{eqnarray*}
Therefore, 
\begin{eqnarray*}
\lim_{n \to \infty} R_2  &=& 		\lim_{n \to \infty}\E\Big\{\tildeg^\ddagger(\bZ_1, \bZ_{N(1)})  \cdot \tilde{h}(\bZ_1)\Big\}
\ =\
\E\Big\{\tildeg^\ddagger(\bZ_1, \bZ_1)  \cdot \tilde{h}(\bZ_1)\Big\} \cr
&=&
\E\Big\{\E\big[F_Y(Y_1 \wedge \tY_1) \biggiven \bZ_1\big] , \E\big[ h(Y_1) \biggiven \bZ_1\big]\Big\}.
\end{eqnarray*}
Combining the above two results yields that
\begin{eqnarray*}
\lim_{n \to \infty} Q_{2,1,1} &=&
\E \Big\{\E\big[F_Y(Y_1 \wedge \tY_1)\cdot h(Y_1) \biggiven \bfZ\big]\}-
\E\Big\{\E\big[F_Y(Y_1 \wedge \tY_1) \biggiven \bZ_1\big] , \E\big[ h(Y_1) \biggiven \bZ_1\big]\Big\} \cr
&=&
\E \Big[ \cov\big\{ F_Y(Y_1 \wedge \tY_1) ,h(Y_1)\biggiven \bZ_1  \big\}\Big].
\end{eqnarray*}

\textbf{Case (ii): $	Q_{2,1,2}$.} We have
\begin{eqnarray*}
Q_{2,1,2} &=& 	\frac{1}{n}\E \Big\{ \sum_{(i,j)\in \lbr n \rbr\times\lbr n \rbr: \, i,j, N(i) \,\text{distinct}} \cov\big[F_Y(Y_i \wedge Y_{N(i)}), h(Y_j)   \mid \bfZ\big] \Big\} \cr
&=&
(n-1)\cdot \E \Big\{ \cov\big[F_Y(Y_1 \wedge Y_{N(1)}), h(Y_2)   \mid \bfZ\big] \cdot \Ind\big(N(1)\neq 2\big)\Big\}.
\end{eqnarray*}
For $\bfZ$ such that $N(1) \neq 2$, the random variables $Y_1$, $Y_2$, and $Y_{N(1)}$ conditional on $\bfZ$ are mutually independent. Therefore, 
\begin{eqnarray*}
\cov\big[F_Y(Y_1 \wedge Y_{N(1)}), h(Y_2)   \mid \bfZ\big] \cdot \Ind\big(N(1)\neq 2\big) =0.
\end{eqnarray*}
It follows that $Q_{2,1,2} =0$. 

\textbf{Case (iii): $	Q_{2,1,3}$.} We have
\begin{eqnarray*}
Q_{2,1,3} &=& 	\frac{1}{n}\E \Big\{ \sum_{(i,j)\in \lbr n \rbr\times\lbr n \rbr: \, j = N(i)} \cov\big[F_Y(Y_i \wedge Y_{N(i)}), h(Y_j)   \mid \bfZ\big] \Big\} \cr
&=&
\E \Big\{ \cov\big[F_Y(Y_1 \wedge Y_{N(1)}), h(Y_{N(1)})   \biggiven \bfZ\big]  \Big\} \cr
&=&
\E \Big\{\E\big[F_Y(Y_1 \wedge Y_{N(1)})\cdot h(Y_{N(1)})  \biggiven \bfZ\big]\}
\cr
&&
-\E\Big\{\E\big[F_Y(Y_1 \wedge Y_{N(1)}) \biggiven \bfZ\big] , \E\big[ h(Y_{N(1)})  \biggiven \bfZ\big]\Big\} \cr
&=:& R_1 + R_2.
\end{eqnarray*}
For the term $R_1$, following the notation in \eqref{eq:lemA:lim10:1}, we have
\begin{eqnarray*}
R_1 = \E\big\{ 	g^\star(\bZ_{N(1)}, \bZ_1) \big\}.
\end{eqnarray*}
Similar to the proof of \eqref{eq:lemA:lim10:2}, we can also show that 
\begin{eqnarray*}
g^\star(\bZ_{N(1)}, \bZ_1) - g^\star(\bZ_1, \bZ_1)  \conP 0.
\end{eqnarray*}
Thus,
\begin{eqnarray*}
\lim_{n \to \infty} R_1  \ = \ 		\lim_{n \to \infty}\E\big\{ 	g^\star(\bZ_{N(1)}, \bZ_1)  \big\} \ = \ \E\big\{ 	g^\star(\bZ_1, \bZ_1) \big\} \ = \ 
\E \Big\{\E\big[F_Y(Y_1 \wedge \tY_1)\cdot h(Y_1) \biggiven \bfZ\big]\}. 
\end{eqnarray*}

For the term $R_2$,
following the previous notation, we have
\begin{eqnarray*}
R_2 = \E\Big\{\tildeg^\ddagger(\bZ_1, \bZ_{N(1)})  \cdot \tilde{h}(\bZ_{N(1)})\Big\}.
\end{eqnarray*}
Note that $\tilde{h}$ is a measurable function and 
$\|\bZ_{N(1)} -\bZ_1 \|\conas 0$. By Lemma 11.7 in \cite{azadkia2019simple}, we have $\tilde{h}(\bZ_{N(1)}) - \tilde{h}(\bZ_1) \conP 0$. Also, 	by \eqref{eq:lemA:lim6:6}, we have
$
\tildeg^\ddagger(\bZ_1, \bZ_{N(1)}) - \tildeg^\ddagger(\bZ_1, \bZ_1) \conP 0$.
It follows that
\begin{eqnarray*}
\lim_{n \to \infty} R_2  &=& 		\lim_{n \to \infty}\E\Big\{\tildeg^\ddagger(\bZ_1, \bZ_{N(1)})  \cdot \tilde{h}(\bZ_{N(1)})\Big\}
=
\E\Big\{\tildeg^\ddagger(\bZ_1, \bZ_1)  \cdot \tilde{h}(\bZ_1)\Big\} \cr
&=&
\E\Big\{\E\big[F_Y(Y_1 \wedge \tY_1) \biggiven \bZ_1\big] , \E\big[ h(Y_1) \biggiven \bZ_1\big]\Big\}.
\end{eqnarray*}
Combining the above two results yields that
\begin{eqnarray*}
\lim_{n \to \infty} Q_{2,1,3} &=&
\E \Big\{\E\big[F_Y(Y_1 \wedge \tY_1)\cdot h(Y_1) \biggiven \bfZ\big]\}-
\E\Big\{\E\big[F_Y(Y_1 \wedge \tY_1) \biggiven \bZ_1\big] , \E\big[ h(Y_1) \biggiven \bZ_1\big]\Big\} \cr
&=&
\E \Big[ \cov\big\{ F_Y(Y_1 \wedge \tY_1) ,h(Y_1)\biggiven \bZ_1  \big\}\Big].
\end{eqnarray*}

In summary, combining the three cases completes the proof of this lemma.
\end{proof}

\begin{lemmaA} \label{lemA:lim11}
The limit of $Q_{2,2}$  in \eqref{eq:Q2} is 
\begin{eqnarray*}
&&	\lim_{n\to \infty} 
\frac{1}{n}\cov\Big[ \E\Big(\sum_{i=1}^n F_Y(Y_i \wedge Y_{N(i)})\mid \bfZ \Big),
\E\Big(\sum_{i=1}^n h(Y_i)\mid \bfZ \Big)
\Big]  \cr
&=& \cov \Big\{\E\big[F_Y(Y_1 \wedge \tY_1)\mid \bZ_1 \big] , \E\big[h(Y_1) \mid \bZ_1\big]\Big\}.
\end{eqnarray*}
\end{lemmaA}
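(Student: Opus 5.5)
The plan mirrors Lemma~\ref{lemA:lim9}: first rewrite both conditional expectations as functions of $\bfZ$ alone, then replace the nearest-neighbour sum by its $\bZ$-only surrogate, and finally reduce to an i.i.d.\ covariance.

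\textbf{Step 1 (explicit conditional expectations).} Conditional on $\bfZ$, the pairs $(Y_i,\bX_i)$ are independent across $i$, and each $Y_i$ has conditional law $\tmu_{\bZ_i}$. Hence, with $\tildeg^\ddagger$ as in \eqref{eq:lemA:lim6:2} and $\tilde h(\bz)=\E[h(Y)\mid\bZ=\bz]$,
\[
\E\big(F_Y(Y_i\wedge Y_{N(i)})\mid\bfZ\big)=\tildeg^\ddagger(\bZ_i,\bZ_{N(i)}),
\qquad
\E\big(h(Y_i)\mid\bfZ\big)=\tilde h(\bZ_i).
\]
The quantity of interest is therefore
\[
\frac1n\cov\Big\{\sum_{i=1}^n\tildeg^\ddagger(\bZ_i,\bZ_{N(i)}),\ \sum_{i=1}^n\tilde h(\bZ_i)\Big\}.
\]

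\textbf{Step 2 (replace the NN sum).} By \eqref{eq:lemA:lim9:1}, which follows from the proof of Lemma~C.1 in \cite{Lin_Han_2025_CLT},
\[
\frac1n\var\Big[\sum_{i=1}^n\tildeg^\ddagger(\bZ_i,\bZ_{N(i)})-\sum_{i=1}^n\tildeg^\ddagger(\bZ_i,\bZ_i)\Big]\to0 .
\]
Because $\tilde h$ is bounded, $n^{-1}\var\{\sum_i\tilde h(\bZ_i)\}=\var\{\tilde h(\bZ)\}=O(1)$. Cauchy--Schwarz then shows that the covariance changes by $o(1)$ when $\tildeg^\ddagger(\bZ_i,\bZ_{N(i)})$ is replaced by $\tildeg^\ddagger(\bZ_i,\bZ_i)$.

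\textbf{Step 3 (i.i.d.\ reduction).} After the replacement, both sums are sums of i.i.d.\ terms. All cross covariances with $i\neq j$ vanish, leaving
\[
\cov\{\tildeg^\ddagger(\bZ_1,\bZ_1),\tilde h(\bZ_1)\}.
\]
Since $\tildeg^\ddagger(\bZ_1,\bZ_1)=\E[F_Y(Y_1\wedge\tY_1)\mid\bZ_1]$ and $\tilde h(\bZ_1)=\E[h(Y_1)\mid\bZ_1]$, this is exactly the claimed limit.

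\textbf{Main obstacle.} The only nontrivial input is the variance approximation \eqref{eq:lemA:lim9:1}, which we import from \cite{Lin_Han_2025_CLT}. It relies on NN-graph dependency bounds (bounded degree) together with $\tildeg^\ddagger(\bZ_1,\bZ_{N(1)})-\tildeg^\ddagger(\bZ_1,\bZ_1)\conP0$, i.e.\ \eqref{eq:lemA:lim6:6}. Given that input, the rest is bookkeeping.
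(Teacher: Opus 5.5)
Your proposal is correct and follows essentially the same route as the paper. Like the paper, you write both conditional expectations as functions of $\bfZ$, use \eqref{eq:lemA:lim9:1} to swap the nearest-neighbour sum for its i.i.d.\ surrogate, and then evaluate the resulting i.i.d.\ covariance; your explicit Cauchy--Schwarz bound (using boundedness of $\tilde h$) spells out the replacement step that the paper performs without comment.
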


\begin{proof}
Recall in \eqref{eq:lemA:lim9:1}, we show that
\begin{eqnarray*}
\lim_{n \to \infty} \frac{1}{n} \var\Big[  \sum_{i=1}^n 
\E\Big( F_Y(Y_i \wedge Y_{N(i)})\Biggiven \bfZ \Big) 
- \sum_{i=1}^n  	\E\Big( F_Y(Y_i \wedge \tY_i ) \Biggiven \bZ_i \Big)   \Big] = 0.
\end{eqnarray*}
Therefore, 
\begin{eqnarray*}
\lim_{n\to \infty}  Q_{2,2} &=&
\lim_{n\to \infty} 
\frac{1}{n}\cov\Big[ \sum_{i=1}^n  \E\Big(F_Y(Y_i \wedge Y_{N(i)})\mid \bfZ \Big),
\sum_{i=1}^n  \E\Big(h(Y_i)\mid \bZ_i \Big)
\Big] \cr
&=&
\lim_{n\to \infty} 
\frac{1}{n}
\cov\Big[ \sum_{i=1}^n  \E\Big( F_Y(Y_i \wedge \tY_i ) \Biggiven \bZ_i \Big),
\sum_{i=1}^n  \E\Big(\sum_{i=1}^n h(Y_i)\mid \bZ_i \Big)
\Big]  \cr
&=&
\cov \Big\{\E\big[F_Y(Y_1 \wedge \tY_1)\mid \bZ_1 \big] , \E\big[h(Y_1) \mid \bZ_1\big]\Big\}.
\end{eqnarray*}
This completes the proof. 
\end{proof}

\subsubsection{Summary of results in Step (2)} \label{secA:proof-stepII:4}

Combining Lemmas \ref{lemA:lim1}--\ref{lemA:lim5} in Section \ref{secA:proof-stepII:1} yields that
\begin{eqnarray}
\lim_{n \to \infty} Q_{1,1} &=& \lim_{n \to \infty}	\frac{1}{n}	
\E \Big\{\cov\Big[\sum_{i=1}^n F_Y(Y_i \wedge Y_{N(i)}), \sum_{j=1}^n F_Y(Y_i \wedge Y_{M(i)}) \mid \bfX, \bfZ\Big] \Big\} \cr
&=&
4\cdot \E \Big\{\cov\big[F_Y(Y_1 \wedge \tY_1),  F_Y(Y_1 \wedge \barY_1)\mid \bX_1, \bZ_1 \big] \Big\}.  \label{eq:limQ11}
\end{eqnarray}
Combining Lemmas \ref{lemA:lim6}--\ref{lemA:lim9} in Section \ref{secA:proof-stepII:2} yields that
\begin{eqnarray}
\lim_{n \to \infty} Q_{1,2} &=& \lim_{n \to \infty}	\frac{1}{n}\cov\Big[ \sum_{i=1}^n  \E\Big(F_Y(Y_i \wedge Y_{N(i)}) \mid \bfX, \bfZ \Big),
\sum_{i=1}^n \E\Big( F_Y(Y_i \wedge Y_{M(i)})\mid \bfX, \bfZ \Big)
\Big]  \cr
&=&
2 \cdot  \E\Big[\cov\Big\{ \E\big( F_Y(Y_1 \wedge \tY_1) \biggiven \bX_1, \bZ_1\big) , \E\big( F_Y(Y_1 \wedge \barY_1) \biggiven \bX_1, \bZ_1\big) \Biggiven \bZ_1\Big\}\Big] \cr
&& 
+ \ 
\cov\Big\{\E\big( F_Y(Y_1 \wedge \tY_1 )\mid \bZ_1 \big), \E\big( F_Y(Y_1 \wedge \barY_1 )\mid \bZ_1 \big)\Big\}.  \label{eq:limQ12}
\end{eqnarray}
Combining Lemmas \ref{lemA:lim10}--\ref{lemA:lim11} in Section \ref{secA:proof-stepII:3} yields that
\begin{eqnarray*}
\lim_{n \to \infty} Q_{2} &=& \lim_{n \to \infty}	\frac{1}{n}\cov\Big(\sum_{i=1}^n F_Y(Y_i \wedge Y_{N(i)}) , \sum_{i=1}^nh_1(Y_i)\Big)\cr
&=&
2\cdot \E \Big[ \cov\big\{ F_Y(Y_1 \wedge \tY_1) ,h_1(Y_1)\biggiven \bZ_1  \big\}\Big]  \cr
&& + \cov \Big\{\E\big[F_Y(Y_1 \wedge \tY_1)\mid \bZ_1 \big] , \E\big[h_1(Y_1) \mid \bZ_1\big]\Big\}.
\end{eqnarray*}
Similarly, replacing $\bZ$ by $(\bX, \bZ)$ and $h_1(\cdot)$ by $h_2(\cdot)$ gives that
\begin{eqnarray*}
\lim_{n \to \infty} Q_{3} &=& \lim_{n \to \infty}	\frac{1}{n}\cov\Big(\sum_{i=1}^n F_Y(Y_i \wedge Y_{M(i)}) , \sum_{i=1}^nh_1(Y_i)\Big)\cr
&=&
2\cdot \E \Big[ \cov\big\{ F_Y(Y_1 \wedge \barY_1) ,h_2(Y_1)\biggiven \bX_1, \bZ_1  \big\}\Big]  \cr
&& + \cov \Big\{\E\big[F_Y(Y_1 \wedge \barY_1)\mid \bX_1, \bZ_1 \big] , \E\big[h_2(Y_1) \mid \bX_1,\bZ_1\big]\Big\}.
\end{eqnarray*}
Note that $Q_4 =  \cov\big\{h_1(Y), \ h_2(Y)\big\}$. 
Therefore, combining the above pieces together gives that
\begin{eqnarray*}
\lim_{n \to \infty}  n \cov(S_{1,n}, S_{2,n})
&=&
\lim_{n \to \infty}  (Q_1 + Q_2+Q_3+Q_4) \cr
&=&4\cdot \E \Big\{\cov\big[F_Y(Y_1 \wedge \tY_1),  F_Y(Y_1 \wedge \barY_1)\mid \bX_1, \bZ_1 \big] \Big\} \cr
&&+2 \cdot  \E\Big[\cov\Big\{ \E\big( F_Y(Y_1 \wedge \tY_1) \biggiven \bX_1, \bZ_1\big) , \E\big( F_Y(Y_1 \wedge \barY_1) \biggiven \bX_1, \bZ_1\big) \Biggiven \bZ_1\Big\}\Big] \cr
&& 
+
\cov\Big\{\E\big( F_Y(Y_1 \wedge \tY_1 )\mid \bZ_1 \big), \E\big( F_Y(Y_1 \wedge \barY_1 )\mid \bZ_1 \big)\Big\} \cr
&&
+2\cdot \E \Big[ \cov\big\{ F_Y(Y_1 \wedge \tY_1) ,h_1(Y_1)\biggiven \bZ_1  \big\}\Big]  \cr
&& + \cov \Big\{\E\big[F_Y(Y_1 \wedge \tY_1)\mid \bZ_1 \big] , \E\big[h_1(Y_1) \mid \bZ_1\big]\Big\} \cr
&&
+2\cdot \E \Big[ \cov\big\{ F_Y(Y_1 \wedge \barY_1) ,h_2(Y_1)\biggiven \bX_1, \bZ_1  \big\}\Big]  \cr
&& + \cov \Big\{\E\big[F_Y(Y_1 \wedge \barY_1)\mid \bX_1, \bZ_1 \big] , \E\big[h_2(Y_1) \mid \bX_1,\bZ_1\big]\Big\}\cr
&& + \cov\big\{h_1(Y), \ h_2(Y)\big\} \cr
&=:& 4 S_1 + 2 S_2 +S_3+ 2 S_4 + S_5 + 2 S_6 + S_7 + S_8.
\end{eqnarray*}
It can be checked that the terms $S_1$--$S_8$ above and $U_1$--$U_9$ in \eqref{eq:U1-U9} admit the following identities,
\begin{eqnarray*}
&&S_1 = U_1 -U_2,   \quad S_2 = U_2 - U_3, \quad S_3 = U_3-U_9, \quad S_4 = U_4-U_5, \cr
&& S_5 = U_5-U_9, \quad S_6 = U_6-U_7, \quad S_7 = U_7-U_9, \quad S_8 = U_8-U_9.
\end{eqnarray*}
Therefore, 
\begin{eqnarray*}
\lim_{n \to \infty}  n \cov(S_{1,n}, S_{2,n}) = 4 \, U_1 -2\, U_2 - U_3 +2 \, U_4 - U_5 + 2\, U_6 - U_7 + U_8 - 4 \, U_9.
\end{eqnarray*}
This completes the proof of Step (2).

\subsection{Proof of Step (3)} \label{secA:proof-stepIII}

The proof of Step (3) is primarily based on the normal approximation technique developed in \cite{MR2435859}, adapted to our $\tT_n^*$.  Before presenting the proof, we first introduce some necessary notation and definitions. 

Consider an index set $A = \{a_1, \dots, a_r\} \subseteq \lbr d \rbr$ with $|A| =r\geq 1$.
For a vector $\bw = (w_1, \dots, w_d) \in \mathbbR^d$ with $d >1$,
define $\bw^A = (w_{a_1}, \dots, w_{a_r})$ as the restriction of $\bw$ to the index set $A$.
Let $\bw_1, \dots, \bw_n \in \mathbbR^d$, with $n >1$. For each $i \in \lbr n \rbr$, define
\begin{eqnarray*}
N^A(i) := \arg\min_{j \in \lbr n \rbr} \|\bw^A_j-\bw^A_i\|.
\end{eqnarray*}
We call $N^A(i)$ the ``$A$-1-NN" of $i$. 

Let $f: (\mathbbR^d)^n \to \mathbbR$ be a function of the form
\begin{eqnarray}
f(\bw_1, \dots, \bw_n) = \sum_{\ell=1}^n f_{\ell}(\bw_1, \dots, \bw_n),
\label{eq:StepIII:1}
\end{eqnarray}
where, for each $\ell$, $f_{\ell}(\bw_1, \dots, \bw_n)$ is a function of only $\bw_\ell$ and its $A$-1-NN $\bw_{N^A(\ell)}$.

For $\bfw =(\bw_1, \dots, \bw_n) \in  (\mathbbR^d)^n$, and distinct $i,j \in \lbr n \rbr$, define
\begin{eqnarray*}
D^A_{\bfw}(i,j) := \#\{\ell: \|\bw^A_i - \bw^A_\ell\| < \|\bw^A_i - \bw^A_j\| \},
\end{eqnarray*}
which is the number of indices that is closer to $i$ than $j$ is to $i$. 
Given any $\bfw  \in (\mathbbR^d)^n$, let $G(\bfw)$ be the 
graphical rule 
(i.e., undirected graph) with vertex set  $\lbr n \rbr$, constructed as follows:
\begin{eqnarray}
&&	\quad \text{puts an edge between $i$ and $j$, if and only if} \cr
&& \text{there exists some $\ell \in \lbr n \rbr$, such that $D^A_{\bfw}(\ell,i) \leq 2$ and $D^A_{\bfw}(\ell,j) \leq 2$.} \label{eq:StepIII:2}
\end{eqnarray}

Having introduced these definitions, we proceed to state Lemma~\ref{lemA:sym_interac}.
\begin{lemmaA} \label{lemA:sym_interac}
$G$ in \eqref{eq:StepIII:2} is a symmetric interaction on $f$ in \eqref{eq:StepIII:1}, based on the following definitions.

1. Definition of ``symmetric rule" (page 1588 of \cite{MR2435859}): for any permutation $\pi$ on $\lbr n \rbr$ and any $(\bw_1,\dots, \bw_n) \in (\mathbbR^d)^n$, the set of edges in $G(\bw_{\pi(1)},\dots, \bw_{\pi(n)})$ is exactly
\begin{eqnarray*}
\Big\{(\pi(i), \pi(j)): (i,j)\in G(\bw_1, \dots \bw_n)\Big\}.
\end{eqnarray*}

2. Definition of ``interaction rule" (page 1589 of \cite{MR2435859}): for any choices of distinct $\bfw, \bfw' \in (\mathbbR^d)^n $ and $i,j \in\lbr n \rbr$, 
\begin{eqnarray*}
&& \text{$(i,j)$ is not an edge in the graphs $G(\bfw), G(\bfw^i), G(\bfw^j), G(\bfw^{ij})$,} \cr
&&\quad \text{implies that } \quad f(\bfw) - f(\bfw^i) =  f(\bfw^j) - f(\bfw^{ij}).
\end{eqnarray*}
Here, for each $i \in \lbr n \rbr$, $\bfw^i$ denotes the vector obtained by replacing $\bw_i$ with $\bw'_i$ in $\bfw$. For two distinct $i,j \in \lbr n \rbr$, $\bfw^{ij}$ denotes the vector obtained by replacing $\bw_i$ with $\bw'_i$ and $\bw_j$ with $\bw'_j$.
\end{lemmaA}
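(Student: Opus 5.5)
The plan is to verify the two properties separately, starting with symmetry and then the interaction property.

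\emph{Symmetry.} The quantities $D^A_{\bfw}(\ell,i)$ depend only on the multiset of pairwise distances among the projected points $\bw^A_1,\dots,\bw^A_n$. Permuting the labels permutes these distances accordingly: $D^A_{\bfw\circ\pi}(\ell,i)=D^A_{\bfw}(\pi(\ell),\pi(i))$. The existence of a witness $\ell$ with both counts at most $2$ is therefore transported by $\pi$, and this yields the stated description of the edge set of $G(\bw_{\pi(1)},\dots,\bw_{\pi(n)})$.

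\emph{Interaction.} Here I will argue through supports, in the style of Chatterjee's nearest-neighbour examples. Write $f=\sum_\ell f_\ell$, where $f_\ell$ depends only on $(\bw_\ell,\bw_{N^A(\ell)})$. Then
\[
f(\bfw)-f(\bfw^i)-f(\bfw^j)+f(\bfw^{ij})=\sum_\ell\Delta_\ell,
\]
where $\Delta_\ell$ is the corresponding second difference of $f_\ell$. A term $\Delta_\ell$ can be nonzero only if the pair $(\bw_\ell,\bw_{N^A(\ell)})$ is affected both by the replacement at $i$ (in one of the configurations) and by the replacement at $j$. Otherwise $f_\ell$ does not change under one of the two replacements, and the second difference cancels.

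I will show the key claim: if replacing $\bw_i$ changes $f_\ell$ in configuration $\bfw$ or $\bfw^j$, then $D^A(\ell,i)\le 1$ in that configuration (or in its $i$-modified version), or $\ell=i$. Replacing $\bw_i$ can change $f_\ell$ in only three ways: $\ell=i$; $i=N^A(\ell)$ before the change; or $i$ becomes $N^A(\ell)$ after the change. In the first case $D^A(\ell,\ell)=0$. In the second, $D^A(\ell,i)$ counts only points strictly closer than the nearest neighbour, so it is $0$ or $1$ (the count includes $\ell$ itself). In the third, the same bound holds in the modified configuration.

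Combining the two cases: if $\Delta_\ell\neq0$, then there exist configurations among $\bfw,\bfw^i,\bfw^j,\bfw^{ij}$ in which $D^A(\ell,i)\le 1$ and $D^A(\ell,j)\le 1$. The remaining work is to move these two conditions to a \emph{single} configuration. This is the main obstacle, and it is why the rule uses the threshold $2$ rather than $1$.

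The tool is a change-of-one-point bound: replacing $\bw_j$ alters $D^A(\ell,i)$ by at most $1$, since only the one point $j$ can enter or leave the ball around $\bw_\ell^A$ of radius $\|\bw^A_\ell-\bw^A_i\|$. So, say, $D^A_{\bfw^i}(\ell,i)\le1$ implies $D^A_{\bfw^{ij}}(\ell,i)\le2$. The care needed is in case-checking which of $i$, $j$, $\ell$ is the modified point in each configuration. In particular, when $\ell\in\{i,j\}$, the witness is taken in a configuration where $\ell$'s own position is the one used, so that $D^A(\ell,\ell)=0$.

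With this in hand, $\ell$ witnesses an edge $(i,j)$ in one of $G(\bfw),G(\bfw^i),G(\bfw^j),G(\bfw^{ij})$, contradicting the hypothesis. Hence every $\Delta_\ell=0$, and $f(\bfw)-f(\bfw^i)=f(\bfw^j)-f(\bfw^{ij})$. Ties occur with probability zero under continuity; for general $\bfw$ a fixed tie-breaking rule suffices and does not affect the counting.
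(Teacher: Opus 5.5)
Your proposal is correct and is essentially the argument the paper relies on. The paper's proof only cites Chatterjee's verification for $A=\lbr n\rbr$ in the proof of Theorem~3.4 of \cite{MR2435859} and says that measuring distances on the coordinates in $A$ changes nothing. You write that argument out: the support of the second difference, nearest-neighbour membership forcing $D^A\le 1$, and the one-point perturbation bound lifting both conditions to threshold $2$ in a single configuration. That configuration takes $i$'s state from the one where $D^A(\ell,i)\le 1$ and $j$'s state from the one where $D^A(\ell,j)\le 1$.

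One cosmetic point: your identity $D^A_{\bfw\circ\pi}(\ell,i)=D^A_{\bfw}(\pi(\ell),\pi(i))$ literally gives the edge set $\{(\pi^{-1}(a),\pi^{-1}(b)):(a,b)\in G(\bfw)\}$. Matching the displayed symmetry condition therefore needs the inverse convention for how $\pi$ acts on the vector; this is a notational slip in that definition, not a gap in your proof.
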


\begin{proof}
In the proof of Theorem 3.4 in \cite{MR2435859} (pages 1597-1598), the case of $A = \lbr n \rbr$ is proved. For general case of $A$, the proof remains unaffected and can still proceed.
\end{proof}

We next extend the previous definitions to the case involving two graphs.

Let $A,B \subseteq \lbr n \rbr$ be two distinct nonempty index sets. 
Let $f^{(1)}: (\mathbbR^d)^n \to \mathbbR$ be a function of the form
\begin{eqnarray*}
f^{(1)}(\bw_1, \dots, \bw_n) = \sum_{\ell=1}^n f^{(1)}_{\ell}(\bw_1, \dots, \bw_n),
\end{eqnarray*}
where, for each $\ell$, $f^{(1)}_{\ell}(\bw_1, \dots, \bw_n)$ is a function of only $\bw_\ell$ and its $A$-1-NN $\bw_{N^A(\ell)}$.

Also, let $f^{(2)}: (\mathbbR^d)^n \to \mathbbR$ be a function of the form
\begin{eqnarray*}
f^{(2)}(\bw_1, \dots, \bw_n) = \sum_{\ell=1}^n f^{(2)}_{\ell}(\bw_1, \dots, \bw_n),
\end{eqnarray*}
where, for each $\ell$, $f^{(2)}_{\ell}(\bw_1, \dots, \bw_n)$ is a function of only $\bw_\ell$ and its $B$-1-NN $\bw_{N^B(\ell)}$.

Let $G^{(1)}(\bfw)$ be the graphical rule on $\lbr n \rbr$ as defined in \eqref{eq:StepIII:2} based on $D^A_{\bfw}$. Also, let $G^{(2)}(\bfw)$ be the graphical rule on $\lbr n \rbr$ as defined in \eqref{eq:StepIII:2} based on $D^B_{\bfw}$. Let 
$G = G^{(1)} \cup G^{(2)}$ be the union of the graphs, in the sense that
\begin{eqnarray}
\text{$i,j$ is connected in $G$ if and only if $i,j$ is connected in either $G^{(1)}$ or $G^{(2)}$.}  \label{eq:StepIII:3}
\end{eqnarray}

The following Lemma \ref{lemA:sym_interac_2} is presented.
\begin{lemmaA} \label{lemA:sym_interac_2}
The graphical rule $G = G^{(1)} \cup G^{(2)}$ is a symmetric interaction on $f = f^{(1)} - f^{(2)}$. 
\end{lemmaA}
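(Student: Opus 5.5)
We need to verify two properties of $G = G^{(1)}\cup G^{(2)}$ with respect to $f = f^{(1)} - f^{(2)}$: that it is a symmetric rule, and that it is an interaction rule. The plan is to reduce both to Lemma~\ref{lemA:sym_interac}, applied once with index set $A$ to the pair $(f^{(1)},G^{(1)})$ and once with index set $B$ to the pair $(f^{(2)},G^{(2)})$. After that, only the monotonicity of the interaction property under adding edges and its linearity in $f$ are needed.

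\emph{Symmetry.} By Lemma~\ref{lemA:sym_interac}, each $G^{(k)}$ is symmetric. For a permutation $\pi$, the edge set of $G^{(k)}(\bw_{\pi(1)},\dots,\bw_{\pi(n)})$ is therefore $\{(\pi(i),\pi(j)):(i,j)\in G^{(k)}(\bfw)\}$ for $k=1,2$. Taking the union over $k$ commutes with relabeling the vertices by $\pi$, so the edge set of $G(\bw_{\pi(1)},\dots,\bw_{\pi(n)})$ equals $\{(\pi(i),\pi(j)):(i,j)\in G(\bfw)\}$. Alternatively, one can check this directly from the definition \eqref{eq:StepIII:2}: the quantities $D^A_{\bfw}$ and $D^B_{\bfw}$ depend only on pairwise distances of the restricted vectors, so they are permutation-equivariant.

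\emph{Interaction.} Fix $\bfw,\bfw'$ and distinct $i,j$. Suppose $(i,j)$ is not an edge of $G(\bfw)$, $G(\bfw^i)$, $G(\bfw^j)$, or $G(\bfw^{ij})$. By the definition \eqref{eq:StepIII:3}, the pair $(i,j)$ is then an edge of neither $G^{(1)}$ nor $G^{(2)}$ at any of these four configurations. The interaction property in Lemma~\ref{lemA:sym_interac} then gives
\[
f^{(1)}(\bfw)-f^{(1)}(\bfw^i)=f^{(1)}(\bfw^j)-f^{(1)}(\bfw^{ij}),
\]
and the same identity with $f^{(2)}$ in place of $f^{(1)}$. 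Subtracting the second identity from the first yields the identity for $f=f^{(1)}-f^{(2)}$, so $G$ is an interaction rule for $f$.

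\emph{Main obstacle.} The only delicate point is whether Lemma~\ref{lemA:sym_interac} really applies verbatim to a restricted index set $A$ (respectively $B$) rather than the full coordinate set. The argument of \cite{MR2435859} depends only on NN relations computed from the distances $\|\bw^A_i-\bw^A_\ell\|$. Those relations transform correctly under replacement of single points: if $\ell$ has neither $i$ nor $j$ among its nearest $A$-neighbors, then changing $\bw_i$ affects $f_\ell$ independently of $\bw_j$. I would cite the remark already made in the proof of Lemma~\ref{lemA:sym_interac} for this point rather than re-derive it. Ties have probability zero and can be ignored.
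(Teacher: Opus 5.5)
Your proposal is correct and follows essentially the same route as the paper. Symmetry of $G$ is inherited from the symmetry of $G^{(1)}$ and $G^{(2)}$ given by Lemma~\ref{lemA:sym_interac}, and the interaction property holds because a non-edge of $G$ at all four configurations is a non-edge of both $G^{(1)}$ and $G^{(2)}$, so the two interaction identities can be subtracted. Your caveat about applying Lemma~\ref{lemA:sym_interac} to a restricted index set is handled in the paper just as you propose, by the remark in that lemma's proof that the argument of \cite{MR2435859} is unaffected.
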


\begin{proof}
By Lemma \ref{lemA:sym_interac}, $G^{(1)}$ and $G^{(2)}$ are both symmetric. It follows that
$G$ is also symmetric. Note that $(i,j)$ is not an edge in $G$ if and only if $(i,j)$ is not an edge in both $G^{(1)}$ and $G^{(2)}$. 
Thus, for any choices of $\bfw, \bfw'$ and $i,j$, if $(i,j)$ is not an edge in any of the graphs 
$G(\bfw)$, $G(\bfw^i)$, $G(\bfw^j)$, and $G(\bfw^{ij})$, then $(i,j)$ is also not an edge in any of the graphs 
$G^{(1)}(\bfw)$, $G^{(1)}(\bfw^i)$, $G^{(1)}(\bfw^j)$, $G^{(1)}(\bfw^{ij})$, 
as well as 
$G^{(2)}(\bfw)$, $G^{(2)}(\bfw^i)$, $G^{(2)}(\bfw^j)$, and $G^{(2)}(\bfw^{ij})$.
By Lemma \ref{lemA:sym_interac},  $G^{(1)}$ and $G^{(2)}$ are interaction rules for $f^{(1)}$ and $f^{(2)}$ respectively, which implies that $f^{(1)}(\bfw) - f^{(1)}(\bfw^i) =  f^{(1)}(\bfw^j) - f^{(1)}(\bfw^{ij})$ and $f^{(2)}(\bfw) - f^{(2)}(\bfw^i) =  f^{(2)}(\bfw^j) - f^{(2)}(\bfw^{ij})$. Finally, by $f = f^{(1)} - f^{(2)}$, it follows that $f(\bfw) - f(\bfw^i) =  f(\bfw^j) - f(\bfw^{ij})$. Therefore, $G$ is an interaction rule with respect to $f$.  The proof is completed. 
\end{proof}

Now we proceed to prove Step (3).
\begin{proof}[Proof of Step (3)]
In our context, collect all variables in the vector $\bw = (\bz,\bfx,y) \in \mathbbR^{p+q+1}$. 
Define functions
\begin{eqnarray*}
f^{(1)}(\bw_1, \dots, \bw_n) &=& \frac{1}{\sqrt{n}}\sum_{\ell=1}^n f^{(1)}_{\ell}(\bw_1, \dots, \bw_n) =  \frac{1}{\sqrt{n}}
\sum_{\ell=1}^n \big\{F_Y(y_\ell) \wedge F_Y(y_{M(\ell)}) +h_1(y_\ell)\big\}, \cr
f^{(2)}(\bw_1, \dots, \bw_n) &=&  \frac{1}{\sqrt{n}}\sum_{\ell=1}^n f^{(2)}_{\ell}(\bw_1, \dots, \bw_n) = \frac{1-T}{\sqrt{n}}
\sum_{\ell=1}^n \big\{F_Y(y_\ell) \wedge F_Y(y_{N(\ell)}) +h_2(y_\ell) \big\}, \cr
\text{and} \quad  \quad	\tildef &=& f^{(1)} -f^{(2)}.
\end{eqnarray*}
Let index sets $A = \{1,2\dots, p+q\}$ and $B=\{1,2,\dots, q\}$. It is obvious that $f^{(1)}_{\ell}(\bw_1, \dots, \bw_n)$ is a function of only $\bw_\ell$ and its $A$-1-NN $\bw_{M(\ell)}$. Also, $f^{(2)}_{\ell}(\bw_1, \dots, \bw_n)$ is a function of only $\bw_\ell$ and its $B$-1-NN $\bw_{N(\ell)}$. Thus, by Lemma \ref{lemA:sym_interac_2}, the graphical rule $G = G^{(1)} \cup G^{(2)}$ (as defined in \eqref{eq:StepIII:3}) is a symmetric interaction on $\tildef = f^{(1)} - f^{(2)}$. 

Now that we have verified the existence of a symmetric interaction $G$ on $\tildef$, the results in Theorem 2.5 and 3.4 in \cite{MR2435859} can be applied to establish the CLT in Step (3). 
In the proof below, $C$ represents a generic constant which may differ between steps but is not explicitly defined.

Let $\bW = (\bZ, \bX, Y) \in \mathbbR^{p+q+1}$ be the random vector that collect $\bZ, \bX, Y$ as in our context. Let $\bW_1,\dots, \bW_n$ be i.i.d. random vectors sampled from $\bW$. Let $\bfW = (\bW_1,\dots, \bW_n)$.
Let $\bfW' = (\bW_1',\dots, \bW_n')$ be an $\iid$ copy of $\bfW$. For a function $f$ defined on $(\mathbbR^{p+q+1})^n$ and $j \in \lbr n \rbr$, denote
\begin{eqnarray*}
\Delta_j f(\bfW) &:=& f(\bfW) - f(\bfW^j)\cr
&=& f(\bfW) - f(\bW_1, \dots, \bW_{j-1}, \bW'_j, \bW_{j+1},\dots, \bW_n).
\end{eqnarray*}
It is obvious that $\Delta_j \tildef(\bfW) = 	\Delta_j f^{(1)}(\bfW) - 	\Delta_j f^{(2)}(\bfW)$. Let $M_1 = \max_{j\in\lbr n \rbr} |	\Delta_j f^{(1)}(\bfW)|$, $M_2 = \max_{j\in\lbr n \rbr} |	\Delta_j f^{(2)}(\bfW)|$, and $M = \max_{j\in\lbr n \rbr} |	\Delta_j \tildef(\bfW)|$. 
For either case of $f= f^{(1)}$ or $f^{(2)}$, define
\begin{eqnarray*}
M_f: = \max_{\ell \in \lbr n \rbr } |f_\ell(\bfW)| \vee \max_{j,\ell \in \lbr n \rbr} |f_\ell(\bfW^j)|
\end{eqnarray*}
Since both $f^{(1)}_\ell$ and $f^{(2)}_\ell$ are bounded from above by $1$ for all $\ell \in \lbr n \rbr$, it is clear that $M_{f^{(1)}} \leq 1$ and $M_{f^{(2)}} \leq 1$. By Theorem 3.4 of \cite{MR2435859}, we have
\begin{eqnarray*}
&&M_1 \leq 4 n^{-1/2} \alpha(p+q) M_{f^{(1)}} \leq 4  n^{-1/2} \alpha(p+q), \cr
&&M_2 \leq 4 n^{-1/2} \alpha(q) M_{f^{(2)}} \leq 4  n^{-1/2} \alpha(q),
\end{eqnarray*}
where $\alpha(d)$ is the minimum number of $60^\circ$ cones at the origin required to cover $\mathbbR^d$. Hence,
\begin{eqnarray}
M &=& \max_{j\in\lbr n \rbr} |	\Delta_j \tildef(\bfW)| \ = \ \max_{j\in\lbr n \rbr} |	\Delta_j f^{(1)}(\bfW) - \Delta_j f^{(2)}(\bfW)| \cr
&\leq&  \max_{j\in\lbr n \rbr} |	\Delta_j f^{(1)}(\bfW) | +\max_{j\in\lbr n \rbr} |	\Delta_j f^{(2)}(\bfW) |   \ = \ M_1 + M_2 \cr
&\leq&  4  n^{-1/2} \big(\alpha(p+q) + \alpha(q)\big) \leq C n^{-1/2}, \label{eq:StepIII:4}
\end{eqnarray}
for some constant $C \in (0, \infty)$. Also, for any dimension $d$, we have $\gamma^{(1)}_d: = \max_{\ell \in \lbr n \rbr} \E(f^{(1)}_\ell(\bfW)) \leq 1$, and the same holds for $\gamma^{(2)}_d$. By the proof of Theorem 3.4 of \cite{MR2435859}, for any $d \geq 8$, we have
\begin{eqnarray*}
\E \big(|\Delta_j f^{(1)}(\bfW)|^3\big) &\leq& C \alpha(d)^3 n^{-3/2} (n \gamma^{(1)}_d)^{3/d},  \cr
\E\big(|\Delta_j f^{(2)}(\bfW)|^3\big) &\leq& C \alpha(d)^3 n^{-3/2} (n \gamma^{(1)}_d)^{3/d}.
\end{eqnarray*}
By choosing some sufficiently large $d$, we further have
\begin{eqnarray*}
\max\big\{\E\big(|\Delta_j f^{(1)}(\bfW)|^3\big) , \E\big(|\Delta_j f^{(2)}(\bfW)|^3 \big)\big\} \leq C n^{-4/3}
\end{eqnarray*}
for some constant $C \in (0,\infty)$.
This yields that
\begin{eqnarray}
\E\big(|\Delta_j \tildef(\bfW)|^3\big) &=& 	\E\big(|\Delta_j f^{(1)}(\bfW) - \Delta_j f^{(2)}(\bfW)|^3\big) \cr
&\leq& 8 \cdot \max\big\{\E\big(|\Delta_j f^{(1)}(\bfW)|^3\big) , \E\big(|\Delta_j f^{(2)}(\bfW)|^3\big) \big\} \leq C n^{-4/3} \label{eq:StepIII:5}
\end{eqnarray}
for some constant $C \in (0,\infty)$.

By the proof of Theorem 3.4 of \cite{MR2435859}, there exists symmetric extensions $G^{(1)'}$ and $G^{(2)'}$ of $G^{(1)}$ and $G^{(2)}$ respectively on $\lbr n+4 \rbr$, with the maximum degree of $G^{(1)'}(\bfW)$ and $G^{(2)'}(\bfW)$ being bounded by $12\alpha(p+q)$ and $12\alpha(q)$ respectively. Then $G': = G^{(1)'} \cup G^{(2)'}$ is obviously a symmetric extension of $G = G^{(1)} \cup G^{(2)}$, with maximum vertex degree $d_{\max}$ 
satisfying
\begin{eqnarray}
d_{\max} \leq 12\alpha(p+q) + 12\alpha(q). \label{eq:StepIII:6}
\end{eqnarray}

Let $	\delta_{\tildef(\bfW)}$  denote the Wasserstein distance between the law of $$\big[\tildef(\bfW)- \E(\tildef(\bfW))\big]/\sqrt{\var(\tildef(\bfW))}$$ and the standard Gaussian law (see Definition 2.1 of \cite{MR2435859}).  By combining the bounds in \eqref{eq:StepIII:4}, \eqref{eq:StepIII:5}, and \eqref{eq:StepIII:6}, and by applying
Theorem 2.5 of \cite{MR2435859}, we get 
\begin{eqnarray*}
\delta_{\tildef(\bfW)} &\leq& \frac{C n^{1/2}}{\sigma_n^2} \E(M^8)^{1/4} d_{\max} + \frac{1}{2\sigma_n^3} \sum_{j=1}^n \E\big(|\Delta_j \tildef(\bfW)|^3\big) \cr
&\leq & C n^{-1/2} \sigma_n^{-2} + C n^{-1/3}\sigma_n^{-3},
\end{eqnarray*}
where $\sigma_n^2 = \var(\tildef(\bfW))$ and $C$ is some positive constant.

Note that $\sqrt{n} \tT_n^* = \tildef(\bfW)$.
By Step (2), we know $\sigma^2 = \lim_{n \to \infty} \var(\sqrt{n} \tT_n^*)$ exists. If 
$\sigma^2 = 0$, then $\sqrt{n} \big(\tT_n^* -\E(\tT_n^*)\big) = o_\P(1)$, implying that
$\sqrt{n} \big(\tT_n^* -\E(\tT_n^*)\big)  \conD 0$, where $0$ coincides with $N(0, \sigma^2)$.

If $\sigma^2 > 0$,
then $\sigma_n^2 = \sigma^2 + o(1)$ implies that $\sigma_n^2 >C$ for some constant $C>0$ when $n$ is sufficiently large. It follows that
\begin{eqnarray*}
\delta_{\tildef(\bfW)} = O(n^{-1/2}) + O(n^{-1/3}) = o(1),
\end{eqnarray*}
and
\begin{eqnarray*}
\frac{\tT_n^* - \E(\tT_n^*)}{\sqrt{\var(\tT_n^*)}} = \frac{\tildef(\bfW)- \E(\tildef(\bfW))}{\sqrt{\var(\tildef(\bfW))}}  \conD N(0,1).
\end{eqnarray*}
The proof is completed. 
\end{proof}

\section{Proofs of the rest theorems in Section \ref{sec:theory}} \label{secA:proofs-4}

\subsection{Proof of Corollary \ref{cor: CLT-Tn}}
\begin{proof}
If $\sqrt{n} L_n \to 0$, then Theorem \ref{thm:CLT-main} implies that
\begin{eqnarray*}
\sqrt{n} \big\{\tT_n - 0 \big\}\conD N(0, \sigma^2).
\end{eqnarray*}
Since $\kappa_n \conas \kappa >0$, applying Slutsky's theorem yields that
\begin{eqnarray*}
\sqrt{n} \big\{\tT_n/\kappa_n - 0 \big\}\conD N(0, \sigma^2/\kappa^2).
\end{eqnarray*}
Thus, Statement (i) is proved by noticing that $\tT_n/\kappa_n  = T_n -T$. 

Next, we prove Statement (ii). 
\begin{eqnarray*}
T_n^{\mathrm{bc}}   - T &=& \big(\tau_n - 	\hat{L}^{(\tau)}_n\big)/\big(\kappa_n - 	\hat{L}^{(\kappa)}_n\big) - T  \ = \
\frac{\tau_n - T \cdot \kappa_n - (	\hat{L}^{(\tau)}_n - T\cdot \hat{L}^{(\kappa)}_n ) }{\kappa_n - 	\hat{L}^{(\kappa)}_n}. 
\end{eqnarray*}
By \eqref{eq:step2-bias}, $	\hat{L}^{(\tau)}_n - L^{(\tau)}_n = o_\P(n^{-1/2})$ and $\hat{L}^{(\kappa)}_n - L^{(\kappa)}_n = o_\P(n^{-1/2})$. It follows that
\begin{eqnarray*}
&&\frac{\tau_n - T \cdot \kappa_n - (	\hat{L}^{(\tau)}_n - T\cdot \hat{L}^{(\kappa)}_n ) }{\kappa_n - 	\hat{L}^{(\kappa)}_n}  \cr
&=& 
\frac{\tau_n - T \cdot \kappa_n - (	L^{(\tau)}_n - T\cdot L^{(\kappa)}_n ) + o_\P(n^{-1/2}) }{\kappa_n - 	\hat{L}^{(\kappa)}_n}  \cr
&=&
\frac{\tau_n - T \cdot \kappa_n - \big(	\E(\tau_n) - T\cdot \E(\kappa_n) \big) + (\tau -T\cdot  \kappa) + o_\P(n^{-1/2}) }{\kappa_n - 	\hat{L}^{(\kappa)}_n} \cr
&=&
\frac{\tT_n - \E(\tT_n) + o_\P(n^{-1/2})}{\kappa_n - 	\hat{L}^{(\kappa)}_n}.
\end{eqnarray*}
Since $L^{(\kappa)}_n = o(1)$ and $\hat{L}^{(\kappa)}_n - L^{(\kappa)}_n = o_\P(n^{-1/2})$, we have $	\hat{L}^{(\kappa)}_n = o_\P(1)$, and $\kappa_n - 	\hat{L}^{(\kappa)}_n \conP \kappa>0$. Again, applying Slutsky's theorem gives that
\begin{eqnarray*}
\frac{\sqrt{n}\cdot \big\{\tT_n - \E(\tT_n) \big\}}{\kappa_n - 	\hat{L}^{(\kappa)}_n} \conD N(0, \sigma^2/\kappa^2).
\end{eqnarray*}
Therefore, $\sqrt{n} \cdot (T_n^{\mathrm{bc}}   - T)\conD N(0, \sigma^2/\kappa^2)$.
This completes the proof of Statement (ii). 
\end{proof}

\subsection{Proof of Corollary \ref{cor: CLT-H0}}
\begin{proof}
By applying Theorem \ref{thm:CLT-main}, the CLT follows immediately. It therefore remains to show that, under $H_0$, the variance $\sigma^2$ is simplified to $\sigma_0^2$ in \eqref{eq:sigma2_H0} and is strictly positive.
Indeed, this can be verified directly by combining the 23 terms appearing in Theorem \ref{thm:CLT-main}. However, we instead present an alternative proof, which is more intuitive and better reveals the underlying mechanism behind this simplification.

Recall the  the H\'ajek representation $\tT_n^*$ of $\tT_n$ in \eqref{eq:Hajek}, 
\begin{eqnarray*}
\tT_n^* &=&  \frac{1}{n}\sum_{i=1}^n \Big\{F_Y(Y_i \wedge Y_{M(i)}) + h_1(Y_i)\Big\} -(1-T)\cdot\frac{1}{n}\sum_{i=1}^n \Big\{ F_Y(Y_i \wedge Y_{N(i)}) +h_2(Y_i)\Big\}, 
\end{eqnarray*}
where $h_1(t) = \P(Y \wedge \barY>t)$ and $h_2(t) = \P(Y \wedge \tY>t)$. 
When $H_0$ holds, $\mu_{Y \mid \bX, \bZ}$ and $\mu_{Y \mid \bZ}$ are identical conditional distributions. Therefore, $\barY$ and $\tY$ are the same type copies of $Y$, implying that 
$h_1(t) = h_2(t)$ for all $t \in \mathbbR$. Moreover, under $H_0$, we have $T=0$. Thus $\tT_n^*$ reduces to
\begin{eqnarray*}
\tT_n^* &=&  \frac{1}{n}\sum_{i=1}^n F_Y(Y_i \wedge Y_{M(i)})  -\frac{1}{n}\sum_{i=1}^n F_Y(Y_i \wedge Y_{N(i)}) .
\end{eqnarray*}
In the following, we will prove the three equations
\begin{eqnarray}
\lim_{n \to \infty}\frac{1}{n}\var\Big[\sum_{i=1}^n F_Y(Y_i \wedge Y_{M(i)}) \Big]   
&=& (1+\mathfrak{q}_{p+q})\cdot\E \Big\{\var\big[F_Y(Y_1 \wedge \tY_1)\mid \bZ_1 \big] \Big\} \cr
&&+
(2-2\mathfrak{q}_{p+q} + \mathfrak{o}_{p+q})\cdot \E \Big\{\cov\big[F_Y(Y_1 \wedge \tY_1),  F_Y(Y_1 \wedge \tY_1')\mid \bZ_1 \big] \Big\}  \cr
&& + 
\var \Big\{\E\big[F_Y(Y_1 \wedge \tY_1)\mid \bZ_1 \big] \Big\}, \label{eq:pf-cor-H0:1} \\
\lim_{n \to \infty}\frac{1}{n}\var\Big[\sum_{i=1}^n F_Y(Y_i \wedge Y_{N(i)}) \Big]   
&=& (1+\mathfrak{q}_q)\cdot\E \Big\{\var\big[F_Y(Y_1 \wedge \tY_1)\mid \bZ_1 \big] \Big\} \cr
&&+
(2-2\mathfrak{q}_q + \mathfrak{o}_q)\cdot \E \Big\{\cov\big[F_Y(Y_1 \wedge \tY_1),  F_Y(Y_1 \wedge \tY_1')\mid \bZ_1 \big] \Big\}  \cr
&& + 
\var \Big\{\E\big[F_Y(Y_1 \wedge \tY_1)\mid \bZ_1 \big] \Big\},
\label{eq:pf-cor-H0:2}
\end{eqnarray}
and
\begin{eqnarray}
&&\lim_{n\to \infty}\frac{1}{n}\cov\Big(\sum_{i=1}^n F_Y(Y_i \wedge Y_{N(i)}) , \sum_{i=1}^n F_Y(Y_i \wedge Y_{M(i)}) \Big) \cr
&=&
4 \cdot \E \Big\{\cov\big[F_Y(Y_1 \wedge \tY_1),  F_Y(Y_1 \wedge \tY_1') \mid \bZ_1\big] \Big\} + \var\big\{\E[F_Y(Y_1 \wedge \tY_1) \mid \bZ_1]\big\}. \label{eq:pf-cor-H0:3}
\end{eqnarray}

We first prove \eqref{eq:pf-cor-H0:2}.
By \cite{Lin_Han_2025_CLT} (Lemmas 2.1, 2.2, and the proof of Theorem 1.3, pages 12 and 13), we have
\begin{eqnarray*}
&& \frac{1}{n} \E\Big[  \var\Big(\sum_{i=1}^n F_Y(Y_i \wedge Y_{N(i)}) \  \Big| \ \bfZ\Big)\Big]\cr
&=&
\E \Big\{\var\big[F_Y(Y_1 \wedge \tY_1)\mid \bZ_1 \big] \Big\} \cr
&&+
2 \E \Big\{\cov\big[F_Y(Y_1 \wedge \tY_1),  F_Y(Y_1 \wedge \tY_1')\mid \bZ_1 \big] \cdot \Ind\big(N(N(1)) \neq 1\big)\Big\}
\cr
&&
+ \E \Big\{\var\big[F_Y(Y_1 \wedge \tY_1)\mid \bZ_1 \big] \cdot
\Ind\big(N(N(1)) = 1\big)
\Big\} \cr
&&+
\E \Big\{\cov\big[F_Y(Y_1 \wedge \tY_1),  F_Y(Y_1 \wedge \tY_1')\mid \bZ_1 \big] 
\cdot \#\{j \in \lbr n \rbr: j\neq 1, N(j) = N(1)\}
\Big\}  +o(1).
\end{eqnarray*}
By Lemmas \ref{lemma:q_d} and \ref{lemma:o_d}, we have 
\begin{eqnarray*}
&& \E\big[	\Ind\big(N(N(1)) = 1\big)\mid \bX_1\big] \conP \mathfrak{q}_q, \cr
&&	\E\big[\#\{j \in \lbr n \rbr: j\neq 1, N(j) = N(1)\} \mid \bX_1\big] \conP \mathfrak{o}_q.
\end{eqnarray*}
Note that both $F_Y$ and $\#\{j \in \lbr n \rbr: j\neq 1, N(j) = N(1)\}$ are bounded \citep{MR682809}. Applying the bounded convergence theorem yields that
\begin{eqnarray}
&& \frac{1}{n} \E\Big[  \var\Big(\sum_{i=1}^n F_Y(Y_i \wedge Y_{N(i)}) \  \Big| \  \bfZ\Big)\Big]\cr
&=&
(1+\mathfrak{q}_q)\cdot\E \Big\{\var\big[F_Y(Y_1 \wedge \tY_1)\mid \bZ_1 \big] \Big\} \cr
&&+
(2-2\mathfrak{q}_q + \mathfrak{o}_q)\cdot \E \Big\{\cov\big[F_Y(Y_1 \wedge \tY_1),  F_Y(Y_1 \wedge \tY_1')\mid \bZ_1 \big] \Big\} +o(1).
\label{eq:pf-cor-H0:4}
\end{eqnarray}
Define $h(x) := \E[F_Y(Y \wedge \tY)\mid X=x]$. By applying \cite{Lin_Han_2025_CLT} (Lemma C.1, p.~25), we have
\begin{eqnarray*}
\frac{1}{n} \var\Big[ \E \Big(\sum_{i=1}^n F_Y(Y_i \wedge Y_{N(i)}) \  \Big| \  \bfZ\Big) - \sum_{i=1}^n h(\bZ_i)\Big] \to 0,
\end{eqnarray*}
which yields that
\begin{eqnarray}
\frac{1}{n} \var\Big[  \E \Big(\sum_{i=1}^n F_Y(Y_i \wedge Y_{N(i)}) \  \Big| \  \bfZ\Big)\Big]
&=& \var\big\{h(\bZ_1)\big\} 	+o(1) \cr
&=&
\var \Big\{\E\big[F_Y(Y_1 \wedge \tY_1)\mid \bZ_1 \big] \Big\} 
+o(1). 	\label{eq:pf-cor-H0:5}
\end{eqnarray}
Combining \eqref{eq:pf-cor-H0:4} and \eqref{eq:pf-cor-H0:5} proves \eqref{eq:pf-cor-H0:2}. 
Then \eqref{eq:pf-cor-H0:1} can be derived similarly as \eqref{eq:pf-cor-H0:2}by switching $\bZ$ to $(\bX, \bZ)$.

For \eqref{eq:pf-cor-H0:3}, combining \eqref{eq:limQ11} and \eqref{eq:limQ12} yields that 
\begin{eqnarray*}
&&\lim_{n\to \infty}\frac{1}{n}\cov\Big(\sum_{i=1}^n F_Y(Y_i \wedge Y_{N(i)}) , \sum_{i=1}^n F_Y(Y_i \wedge Y_{M(i)}) \Big) \cr
&=&
4\cdot \E \Big\{\cov\big[F_Y(Y_1 \wedge \tY_1),  F_Y(Y_1 \wedge \barY_1)\mid \bX_1, \bZ_1 \big] \Big\} \cr
&&+ 
2 \cdot  \E\Big[\cov\Big\{ \E\big( F_Y(Y_1 \wedge \tY_1) \biggiven \bX_1, \bZ_1\big) , \E\big( F_Y(Y_1 \wedge \barY_1) \biggiven \bX_1, \bZ_1\big) \Biggiven \bZ_1\Big\}\Big] \cr
&& 
+
\cov\Big\{\E\big( F_Y(Y_1 \wedge \tY_1 )\mid \bZ_1 \big), \E\big( F_Y(Y_1 \wedge \barY_1 )\mid \bZ_1 \big)\Big\} .
\end{eqnarray*}
Under $H_0$, $\barY$ and $\tY$ are the same type of copies, and 
$ \E\Big[\cov\Big\{ \E\big( F_Y(Y_1 \wedge \tY_1) \biggiven \bX_1, \bZ_1\big) , \E\big( F_Y(Y_1 \wedge \barY_1) \biggiven \bX_1, \bZ_1\big) \Biggiven \bZ_1\Big\}\Big] =0$. 
Thus, the above equation reduces to the form in \eqref{eq:pf-cor-H0:3}. This proves \eqref{eq:pf-cor-H0:3}.

Combining \eqref{eq:pf-cor-H0:1}, \eqref{eq:pf-cor-H0:2}, and \eqref{eq:pf-cor-H0:3}, we obtain
\begin{eqnarray}
\sigma_0^2&=& \lim_{n \to \infty } n \var(\tT_n^*) \cr
&=& 
(2+\mathfrak{q}_q+ \mathfrak{q}_{p+q})\cdot\E \Big\{\var\big[F_Y(Y \wedge \tY)\mid \bZ \big] \Big\} \cr
&&+
(\mathfrak{o}_q+ \mathfrak{o}_{p+q} - 2\mathfrak{q}_q- 2\mathfrak{q}_{p+q} -4)\cdot \E \Big\{\cov\big[F_Y(Y \wedge \tY),  F_Y(Y \wedge \tY')\mid \bZ \big] \Big\}. \label{eq:pf-cor-H0:6}
\end{eqnarray}
It is straightforward to verify that this is identical to the limiting variance in \eqref{eq:sigma2_H0}, upon noting that
\begin{eqnarray*}
\E \Big\{\var\big[F_Y(Y \wedge \tY)\mid \bZ \big] \Big\} &=&\E\big\{ F_Y^2(Y \wedge \tY)\big\} - \E\big\{F_Y(Y \wedge \tY) \cdot F_Y(\tY' \wedge\tY'')\big\} , \cr
\E \Big\{\cov\big[F_Y(Y \wedge \tY),  F_Y(Y \wedge \tY')\mid \bZ \big] \Big\}
&=&
\E\big\{F_Y(Y \wedge \tY) \cdot F_Y(Y \wedge\tY')\big\} \cr
&& - \ \E\big\{F_Y(Y \wedge \tY) \cdot F_Y(\tY' \wedge\tY'')\big\}. 
\end{eqnarray*}

Finally, an application of Lemma \ref{lemA:var_cov_ineq} (stated below this proof) yields
\begin{eqnarray*}
\var\big[F_Y(Y \wedge \tY)\mid \bZ \big] >  2 \cdot\cov\big[F_Y(Y \wedge \tY),  F_Y(Y \wedge \tY')\mid \bZ \big]  >0,
\end{eqnarray*}
which further implies that
\begin{eqnarray*}
\sigma_0^2 >(\mathfrak{o}_q+ \mathfrak{o}_{p+q}) \cdot \E \Big\{\cov\big[F_Y(Y \wedge \tY),  F_Y(Y \wedge \tY')\mid \bZ \big] \Big\} >0.
\end{eqnarray*}
This completes the proof. 
\end{proof}

\begin{lemmaA} \label{lemA:var_cov_ineq}
Let $U,V,W$ be i.i.d. non-degenerate random variables. 
Then
\begin{eqnarray*}
\var(U \wedge V) > 2 \cdot\cov(U \wedge V, U \wedge W) >0.
\end{eqnarray*}
\end{lemmaA}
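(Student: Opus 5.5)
The plan is to condition on $U$ and use the Hoeffding (ANOVA) decomposition of the symmetric kernel $f(u,v)=u\wedge v$. I will assume, as is implicit in the application, that $U$ has a finite second moment; in the paper's use $U$ is replaced by $F_Y(\cdot)$-transformed quantities, which are bounded. Set
\[
m(u):=\E(u\wedge V),\qquad g(u):=m(u)-\E(U\wedge V),\qquad r(u,v):=u\wedge v-\E(U\wedge V)-g(u)-g(v).
\]
Since $V$ and $W$ are i.i.d.\ and independent of $U$, we get $\E\{(U\wedge V)(U\wedge W)\mid U\}=m(U)^2$. This gives the identity
\[
\cov(U\wedge V,\,U\wedge W)=\var\{g(U)\}.
\]
The standard orthogonality relations $\E\{r(U,V)\mid U\}=\E\{r(U,V)\mid V\}=0$ then give
\[
\var(U\wedge V)=2\var\{g(U)\}+\var\{r(U,V)\}.
\]
So the two claimed strict inequalities reduce to showing $\var\{g(U)\}>0$ and $\var\{r(U,V)\}>0$.

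\textbf{Step 1: $\var\{g(U)\}>0$.} Note that $m$ is nondecreasing, with $m(b)-m(a)=\int_a^b \P(V>t)\,\d t$ for $a<b$. By nondegeneracy, choose $a$ with $\P(U\le a)>0$ and $\P(U>a)>0$. Because $V$ has the same law as $U$, $\P(V>a)>0$. Right-continuity gives $\delta>0$ with $\P(V>t)>\P(V>a)/2$ for $t\in(a,a+\delta)$. Since $\P(U>a)=\lim_k\P(U\ge a+1/k)$, we can pick $b\in(a,a+\delta)$ with $\P(U\ge b)>0$. Then $m(b)>m(a)$. Hence $m(U)\le m(a)$ and $m(U)\ge m(b)$ each occur with positive probability, so $m(U)$ is nondegenerate.

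\textbf{Step 2: $\var\{r(U,V)\}>0$.} I expect this to be the main point. Suppose instead that $r=0$ a.s. Then $u\wedge v$ is a.s.\ additive, so its mixed second difference vanishes a.s.\ under the product law of four i.i.d.\ copies $(u,u',v,v')$; the additive terms $g(\cdot)$ and constants cancel in this difference, so it equals the mixed difference of $r$. Now take $a<b$ as in Step 1, and consider the event $\{u,v\le a,\ u',v'\ge b\}$, which has positive probability. On this event,
\[
u\wedge v-u\wedge v'-u'\wedge v+u'\wedge v'=u\wedge v-u-v+u'\wedge v'=-(u\vee v)+u'\wedge v'\ge b-a>0,
\]
which contradicts the vanishing of the mixed difference. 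Therefore $\var\{r(U,V)\}>0$, which yields $\var(U\wedge V)>2\var\{g(U)\}=2\cov(U\wedge V,U\wedge W)$. Combined with Step 1, this proves the lemma.
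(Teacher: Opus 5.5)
Your proof is correct. Its skeleton coincides with the paper's: condition on $U$, use $\cov(U\wedge V,U\wedge W)=\var\{\E(U\wedge V\mid U)\}$, and apply the law of total variance. Your Hoeffding identity $\var(U\wedge V)=2\var\{g(U)\}+\var\{r(U,V)\}$ is the same statement, because $\E\{\var(U\wedge V\mid U)\}-\var\{\E(U\wedge V\mid U)\}=\var\{r(U,V)\}$.

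\textbf{Where you differ: the strict positivity of the gap.}
\begin{itemize}
\item The paper uses the layer-cake representation $u\wedge v=\int \Ind(u\ge t)\Ind(v\ge t)\,\d t$. This is valid for nonnegative variables, which suffices there because $U$ is an $F_Y$-transformed quantity. With $G(t)=\P(U\ge t)$, it computes the gap explicitly as $\iint\{G(t\vee s)-G(t)G(s)\}^2\,\d t\,\d s$. That is exactly $\var\{r(U,V)\}$ in closed form, so the paper's route is quantitative.
\item Your mixed-second-difference contradiction is qualitative but representation-free. It needs only a finite second moment, with no sign or support restriction. It also makes transparent why the inequality is strict: the kernel $u\wedge v$ fails to be additive on a set of positive product measure.
\end{itemize}

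\textbf{What your Step 1 adds.} The paper deduces ``$\cov>0$'' from $\cov=\var\{\E(U\wedge V\mid U)\}$, which by itself only gives $\ge 0$. You actually show that $m(U)$ is nondegenerate, which is the missing piece. The right-continuity detour there is unnecessary: directly, $m(b)-m(a)=\int_a^b\P(V>t)\,\d t\ge (b-a)\P(V\ge b)>0$.
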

\begin{proof}
We first prove the following result:
\begin{eqnarray}
\E\big[\var(U \wedge V \mid U)\big] > \var\big[\E(U \wedge V \mid U)\big].  
\label{eq:lemA:var_cov_ineq:1}
\end{eqnarray}
Note that
\begin{eqnarray*}
&&\E[\var(U \wedge V \mid U)] - \var[\E(U \wedge V \mid U)] \cr
&=&
\E((U \wedge V)^2) - \E[\E(U \wedge V \mid U)^2] - \big\{\E[\E(U \wedge V \mid U)^2] - \E(U \wedge V)^2\big\} \cr
&=&
\E((U \wedge V)^2)  + \E(U \wedge V)^2 - 2\cdot \E[\E(U \wedge V \mid U)^2].
\end{eqnarray*}
Let $\mu(\cdot)$ be the distribution law of $U$ and $V$. Let $G(t) = \P(U \geq t)$. For $t \in \mathbbR$,
\begin{eqnarray*}
\E((U \wedge V)^2)  
&=&
\int \int \Big[\int \Ind(u \geq t) \Ind(v \geq t) \d t\Big] \cdot \Big[\int \Ind(u \geq s) \Ind(v \geq s) \d s\Big] \d\mu(u) \d \mu(v) \cr
&=&
\int \int \Big[ \int \Ind(u\geq t) \Ind(u \geq s) \d \mu(u)\Big] \cdot \Big[\int \Ind(v\geq t) \Ind(v \geq s) \d \mu(v) \Big] \d t  \d s \cr
&=&
\int \int G^2(t \vee s) \d t  \d s.
\end{eqnarray*}
Note that 
\begin{eqnarray*}
\E(U \wedge V \mid U =u) 
=
\int \Ind(u \geq t) \Ind(v \geq t)  \d t  \d\mu(v) =
\int G(t) \Ind(u \geq t)  \d t.
\end{eqnarray*}
Then 
\begin{eqnarray*}
\E[\E(U \wedge V \mid U)^2] 
&=&
\int \Big[ \int G(t) \Ind(u\geq t) \d t\Big] \cdot \Big[ \int G(s) \Ind(u\geq s) \d s\Big] \d \mu(u) \cr
&=&
\int \int G(t) G(s) \Big[\int \Ind(u\geq t) \Ind(u \geq s) \d \mu(u) \Big] \d t \d s \cr
&=&
\int \int G(t) G(s) G(t \vee s) \d t \d s.
\end{eqnarray*}
Also,
\begin{eqnarray*}
\E(U \wedge V) 
=
\int \int \int \Ind(u \geq t) \Ind(v \geq t) \d t \d \mu(u) \d \mu(v)
=
\int G^2(t) \d t,
\end{eqnarray*}
which yields 
\begin{eqnarray*}
\E(U \wedge V)^2
= \int \int G^2(t) \cdot G^2(s) \d t \d s.
\end{eqnarray*}
Putting these pieces together, we have
\begin{eqnarray*}
&&\E((U \wedge V)^2)  + \E(U \wedge V)^2 - 2\cdot \E[\E(U \wedge V \mid U)^2] \cr
&=& 
\int \int \Big[G^2(t \vee s)  + G^2(t) \cdot G^2(s) - 2G(t) G(s) G(t \vee s) \Big] \d t \d s \cr
&=&
\int \int \big[G(t\vee s) - G(t) G(s) \big ]^2 \d t \d s >0. 
\end{eqnarray*}
This completes the proof of \eqref{eq:lemA:var_cov_ineq:1}.

Next, we prove Lemma \ref{lemA:var_cov_ineq}.
Note that
\begin{eqnarray*}
\cov(U \wedge V, U \wedge W) 
&=&
\E \big[\cov(U \wedge V, U \wedge W \mid U) \big]
+ \cov\big[ \E(U \wedge V \mid U), \E(U \wedge W \mid U)  \big]
\cr
&=&
0 + \var\big[\E(U \wedge V \mid U)\big], \label{eq:7.2}
\end{eqnarray*}
which proves that $\cov(U \wedge V, U \wedge W) >0$. Also,
\begin{eqnarray*}
\var(U \wedge V)   = \E \big[ \var \big( U \wedge V \mid U \big)\big] + \var\big[ \E(U \wedge V\mid U) \big].
\end{eqnarray*}
By \eqref{eq:lemA:var_cov_ineq:1}, we have
\begin{eqnarray*}
\var(U \wedge V)  > 2 \cdot\var\big[ \E(U \wedge V\mid U) \big] = 2 \cdot\cov(U \wedge V, U \wedge W).
\end{eqnarray*}
This completes the proof. 
\end{proof}

\subsection{Proof of Theorem \ref{thm: est_var-main} }
\begin{proof}
By Theorem \ref{thm:est_var_xi}, $\hsigma_1^2 $ and $\hsigma_2^2$ are consistent estimators for $\sigma_1^2 $ and $\sigma_2^2$ respectively. It remains to show $\hsigma_{1,2}$ is also consistent in estimating $\sigma_{1,2}$. 

Noting that $\hsigma_{1,2}$ and $\sigma_{1,2}$ share the similar structure as $\hsigma^2_{\xi(Y,\bZ)}$ and $\sigma^2_{\xi(Y,\bZ)}$ in Theorem \ref{thm:est_var_xi}, we can analogously establish that $\hsigma_{1,2} \conP \sigma_{1,2}$ by following the proof of Theorem \ref{thm:est_var_xi}.
\end{proof}

\subsection{Proof of Corollary \ref{cor: est_var-H0}}
\begin{proof}
Corollary \ref{cor: est_var-H0} directly follows from combining Theorem \ref{thm:est_var_xi} and Corollary \ref{cor: CLT-H0}. 
\end{proof}

\subsection{Proof of Proposition \ref{prop:est_var-B}}
\begin{proof}
By Corollary \ref{cor: CLT-H0}, $\tau_n$ satisfies a CLT, 
\begin{eqnarray*}
\sqrt{n} \big\{\tau_n - \E(\tau_n) \big\}\conD N(0, \sigma_0^2),
\end{eqnarray*}
with $\lim_{n \to \infty} n\,\var(\tau_n) = \sigma_0^2$ existing and strictly positive. Then the proof of Proposition \ref{prop:est_var-B} directly follows from that of Theorem 1 (ii) of \cite{Dette_Kroll_2025}.
\end{proof}

\subsection{Proof of Theorem \ref{thm:bias_correct}}
\begin{proof}
By Theorem 3.1 of \cite{azadkia2026biascorrection}, we have
\begin{eqnarray*}
L_n^{\bZ} = O(n^{-2/q} + n^{-1}). 
\end{eqnarray*}
By replacing $\bZ$ with $(\bX, \bZ)$ in the above result yields that 
\begin{eqnarray*}
L_n^{\bX, \bZ} = O(n^{-2/(p+q)} + n^{-1}). 
\end{eqnarray*}
This proves Result (i). 

By Theorem 4.2 of \cite{azadkia2026biascorrection}, the ridge regression estimator $\hatG_\bz(\cdot)$ provided in Algorithm \ref{alg2} satisfies the regularity conditions on $\hatG$ in Assumption 4.3 of \cite{azadkia2026biascorrection}. Then applying Theorem 4.1 of \cite{azadkia2026biascorrection} yields that $\sqrt{n}\E \big(	|\hat{L}_n^{\bZ} - L_n^{\bZ}| \big) \to 0$, as $n \to \infty$. This proves  	$\hat{L}_n^{\bZ} - L_n^{\bZ} = o_\P(n^{-1/2})$. 
By applying a similar proof procedure with $\bZ$ replaced by $(\bX, \bZ)$, we can analogously show that 
$\hat{L}_n^{\bX,\bZ} - L_n^{\bX,\bZ} = o_{\P}(n^{-1/2})$. This proves Result (ii).
\end{proof}

\subsection{Proof of Theorem \ref{thm:CI}}

\begin{proof}
By Theorem \ref{thm:bias_correct}, $p+q\leq 3$ implies that $\sqrt{n} L_n \to 0$. Then Corollary \ref{cor: CLT-Tn} (i) gives that
\begin{eqnarray*}
\sqrt{n} \big(T_n - T \big)\conD N(0, \sigma^2/\kappa^2).
\end{eqnarray*}
By Theorem \ref{thm: est_var-main}, $\hsigma \conP \sigma$. Also, $\kappa_n \conas \kappa >0$. When $\sigma^2>0$, applying Slutsky's theorem yields that
\begin{eqnarray*}
\sqrt{n} \big( (T_n - T) \cdot \kappa_n  /  \hsigma \big)\conD N(0, 1).
\end{eqnarray*}
Therefore,
\begin{eqnarray*}
\P\Big(  -\frac{\hsigma\cdot z_{\alpha/2}}{\sqrt{n}\cdot \kappa_n }
< T_n - T  < \frac{\hsigma\cdot z_{\alpha/2}}{\sqrt{n}\cdot \kappa_n }\Big ) \to 1-\alpha.
\end{eqnarray*}
This proves result (i) of Theorem \ref{thm:CI}.

Result (ii) can be proved analogously by using Corollary \ref{cor: CLT-Tn} (ii) instead of (i). This completes the proof.
\end{proof}

\subsection{Proof of Theorem \ref{thm:test}}

\begin{proof}
We first prove Result (i). By \cite{azadkia2019simple}, $\tau(Y, \bX \mid \bZ) $ defined in \eqref{eq:tau} equals $0$ if $H_0$ holds, and is strictly positive if $H_0$ is violated. 
Therefore, the first statement of result (i) directly follows from combining the CLT of $\tau_n$ in Corollary \ref{cor: CLT-H0}, the consistency of variance estimation established in Corollary \ref{cor: est_var-H0}, and the bias correction result in Theorem \ref{thm:bias_correct} (ii). 

For the second statement of Result (i), it follows directly from the fact that the bias $L_n^{(\tau)} = o(n^{-1/2})$ is negligible when $p+q \leq 3$, as established in Theorem~\ref{thm:bias_correct} (i).

We next prove Result (ii). 
Note that under $H_1$, the fast simplified estimator $\hat{\sigma}_{0,\mathrm{F}}^2$ in Corollary \ref{cor: est_var-H0} and the original estimator $\hsigma^2$ in Theorem \ref{thm: est_var-main} have different limits:  $\hsigma^2 \conP \sigma^2$ in Theorem \ref{thm:CLT-main}, while  
$ \hat{\sigma}_{0,\mathrm{F}}^2 \conP \sigma_0^2$ in  Corollary \ref{cor: CLT-H0}.
By \eqref{eq:pf-cor-H0:6}, we further have
\begin{eqnarray*}
\sigma_0^2 &=&
(2+\mathfrak{q}_q+ \mathfrak{q}_{p+q})\cdot\E \Big\{\var\big[F_Y(Y \wedge \tY)\mid \bZ \big] \Big\} \cr
&&+
(\mathfrak{o}_q+ \mathfrak{o}_{p+q} - 2\mathfrak{q}_q- 2\mathfrak{q}_{p+q} -4)\cdot \E \Big\{\cov\big[F_Y(Y \wedge \tY),  F_Y(Y \wedge \tY')\mid \bZ \big] \Big\}. 
\end{eqnarray*}
Whenever $Y$ is not almost surely equal to a function of $\bZ$, applying Lemma \ref{lemA:var_cov_ineq} yields that
\begin{eqnarray*}
\sigma_0^2  >(\mathfrak{o}_q+ \mathfrak{o}_{p+q}) \cdot \E \Big\{\cov\big[F_Y(Y \wedge \tY),  F_Y(Y \wedge \tY')\mid \bZ \big] \Big\} >0.
\end{eqnarray*}
This proves that $\sigma_0^2 $ remains strictly positive under $H_1$. 
Therefore,
\begin{eqnarray}
\lim_{n \to \infty} \P(\mathsf{T}^{\mathrm{F}}_{\alpha} = 1)	&=&\lim_{n \to \infty} \P(	\sqrt{n} \tau_n  /\hat{\sigma}_{0,\mathrm{F}} > z_\alpha  ) \cr &=&  \lim_{n \to \infty} \P \big(	 (\tau +o_\P(1)) /(\sigma_0+ o_\P(1))> z_\alpha /\sqrt{n} \big)  \cr
&=& \lim_{n \to \infty} \P(	 \tau  /\sigma_0> z_\alpha /\sqrt{n} ) \ = \ 1.
\label{eq:thm:test:1}
\end{eqnarray}

Now we examine the $m$-out-of-$n$ bootstrap estimator $\hat{\sigma}_{0,\mathrm{B}}$ defined in \eqref{eq:sigma2_B}. Note that $\tau_n$ by its definition in \eqref{eq:tau_n} is bounded for any $n$. Specifically, we have $ -1 \leq \tau_n \leq 1$. Hence, the bootstrap samples $\{\tau_{m,b}^* \}_{b=1}^B$ in  \eqref{eq:sigma2_B} are also bounded, implying that its sample variance is bounded from above by some constant $C>0$.  Therefore, we have $0 < \hat{\sigma}_{0,\mathrm{B}}^2 < m \cdot C$. It follows that
\begin{eqnarray}
\lim_{n \to \infty} \P(\mathsf{T}^{\mathrm{B}}_{\alpha} = 1)	&=&\lim_{n \to \infty} \P(	\sqrt{n} \tau_n  /\hat{\sigma}_{0,\mathrm{B}} > z_\alpha  ) \cr &\geq&  \lim_{n \to \infty} \P \big(	 (\tau +o_\P(1)) /\sqrt{m\cdot C}> z_\alpha /\sqrt{n} \big)  \cr
&=& \lim_{n \to \infty} \P\big(	 \tau  /\sqrt{C}> z_\alpha \cdot \sqrt{m/n} \big) \ = \ 1, \label{eq:thm:test:2}
\end{eqnarray}
where the last equation is due to $m = o(n)$.

By Theorem \ref{thm:bias_correct}, we have $L_n^{(\tau)} = O(n^{-2/(p+q)})$ and $\hatL_n^{(\tau)} - L_n^{(\tau)}  =  o_\P(n^{-1/2})$, yielding that $\hatL_n^{(\tau)}  = o_\P(1)$, which is dominated by $\tau_n$ and $\tau$.  Therefore, we can prove the consistency for $\mathsf{T}^{\mathrm{F,bc}}_\alpha$ and $\mathsf{T}^{\mathrm{B,bc}}_\alpha$ in the same way, as in \eqref{eq:thm:test:1} and \eqref{eq:thm:test:2}. 	This completes the proof of Result (ii). 
\end{proof}

\section{Auxiliary lemmas } \label{secA:aux-lemmas}
This section presents auxiliary lemmas required for proving Theorem~\ref{thm:CLT-main} (Step (2): limiting variance; see Section \ref{secA:proof-stepII}). For clarity, we divide these lemmas into two groups, Group~1 and Group~2. 
Specifically,  Lemmas \ref{lemA:aux-1-1}--\ref{lemA:aux-1-9} in Group~1 mainly concern probabilistic properties of the NNG, serving as preparatory results for Group~2, whereas Lemmas \ref{lemA:aux-2-1}--\ref{lemA:aux-2-9} in Group~2 are directly used in the proof of Step (2) of Theorem~\ref{thm:CLT-main}.

The notations used in this section are consistent with those in Sections \ref{sec:SI} and \ref{sec:theory}. Throughout this section, we always assume Assumptions \ref{assump_4.1} -- \ref{assump_4.5}.

\subsection{Auxiliary lemmas: Group 1}
\begin{lemmaA} \label{lemA:aux-1-1}
For any $\epsilon>0$, the following inequality holds with probability one:
\begin{eqnarray*}
&& \liminf_{n \to \infty} n\P\big[N(3) = M(1), \|\bZ_3-\bZ_1\| <\epsilon \ \big | \ (\bX_1, \bZ_1) \big]\geq 1.
\end{eqnarray*}
\end{lemmaA}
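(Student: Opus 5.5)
We plan to repeat the local computation from the proof of Lemma~\ref{lemma:two_NNGs}, with the extra restriction $\|\bZ_3-\bZ_1\|<\epsilon$ inserted. Here $\bW=(\bX,\bZ)$ plays the role of the full vector. The $\bU$ in that lemma is $\bZ$, the coordinate subvector of dimension $d_1=q$. The NN maps are $M$ (for $(\bX,\bZ)$) and $N$ (for $\bZ$).

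\textbf{Step 1 (reduction).} Fix $\bw_1=(\bx_1,\bz_1)$ in the open set $\calW^+$ of interior points with positive density; this set carries full probability. By exchangeability of indices $2,\dots,n$, exactly as in \eqref{eq:lemma:two_NNGs:1}, we write
\[
n\P\big[N(3)=M(1),\|\bZ_3-\bZ_1\|<\epsilon \mid \bW_1=\bw_1\big]
=\frac{n}{n-2}\,(n-2)\int f_{M(1)}(\bw_2)\,\P\big[N(3)=2,\ \|\bZ_3-\bz_1\|<\epsilon\mid A\big]\d\bw_2 .
\]
Here $A=\{\bW_1=\bw_1,M(1)=2,\bW_2=\bw_2\}$. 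Lemma~\ref{lemA:cond_pdf} again describes the conditional law of $\bW_3,\dots,\bW_n$ given $A$, and $\tilde f_{\bw_1,\bw_2}$ is the resulting marginal density of the $\bZ$-coordinates.

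\textbf{Step 2 (truncation that forces the $\epsilon$-constraint).} For fixed $\tildet$, consider the truncated probability \eqref{eq:lemma:two_NNGs:4}, which restricts to $\|\bZ_3-\bz_2\|\le n^{-1/q}\tildet^{1/q}$. Restrict also to $\bw_2\in\mathcal{B}(\bw_1,R_2)$ with $R_2<\epsilon/2$. Then $\|\bZ_3-\bz_1\|\le\|\bZ_3-\bz_2\|+\|\bz_2-\bz_1\|<\epsilon$ once $n$ is large, so the $\epsilon$-constraint is automatically satisfied. Nonnegativity of the integrand therefore gives
\[
n\P[\cdots\mid\bW_1=\bw_1]\ \ge\ \int_{\mathcal{B}(\bw_1,R_2)} f_{M(1)}(\bw_2)\,(n-2)\,\P[N(3)=2\mid A](\tildet)\d\bw_2 .
\]

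\textbf{Step 3 (limits).} The uniform estimate \eqref{eq:lemma:two_NNGs:7} says $(n-2)\P[N(3)=2\mid A](\tildet)\to 1-\exp(-C_1 f_{\bZ}(\bz_1)\tildet)$ uniformly over $\|\bw_2-\bw_1\|\le R$. We shrink $R_2$ below this $R$. The mass bound \eqref{eq:lemma:two_NNGs:8}, which follows from $\|\bW_{M(1)}-\bW_1\|\conas 0$, lets us use Fatou/liminf to obtain
\[
\liminf_n n\P[\cdots\mid\bW_1=\bw_1]\ge (1-\epsilon_2)\big(1-\exp(-C_1 f_{\bZ}(\bz_1)\tildet)\big)-\epsilon_2 .
\]
Letting $\epsilon_2\to0$ and then $\tildet\to\infty$ gives the bound $\ge 1$, using $f_{\bZ}(\bz_1)>0$. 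Since this holds for every $\bw_1\in\calW^+$, it holds with probability one.

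\textbf{Main obstacle.} The delicate point is making sure the uniformity in \eqref{eq:lemma:two_NNGs:7} holds on a $\bw_2$-neighbourhood that can be chosen independently of the extra $\epsilon$-restriction; this is why we shrink $R_2$ to $\min(R,\epsilon/2)$. The continuity of $\tilde f_{\bw_1,\bw_2}$ near $\bz_1$, which was established in that proof, is what is needed, and no upper-bound argument is required since only a liminf is claimed.
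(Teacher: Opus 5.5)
Your proposal is correct and follows the paper's proof essentially step for step. It uses the same reduction to the event $M(1)=2$, the same truncation $\|\bZ_3-\bz_2\|\le n^{-1/q}\tildet^{1/q}$ combined with a small $\bw_2$-ball so that the $\epsilon$-constraint holds automatically, and the same appeal to \eqref{eq:lemma:two_NNGs:6}--\eqref{eq:lemma:two_NNGs:8}. The only cosmetic difference is that you lower-bound the ball-restricted integral directly through the uniform estimate \eqref{eq:lemma:two_NNGs:7}, whereas the paper compares it to $g_n(\tildet)$ via \eqref{eq:lemma:two_NNGs:9}; your route spares you the bounded-degree fact, which is not needed for a liminf.
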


\begin{proof}
The proof of this lemma utilizes similar techniques to those used in Lemma~\ref{lemma:two_NNGs} (see Section~\ref{secB.2}).
Let $(\bx_1, \bz_1)$ be some fixed point in the interior of the support of $(\bX, \bZ)$ with positive density. Similar to \eqref{eq:lemma:two_NNGs:1}, we have
\begin{eqnarray} 
&&n\P\big[N(3) = M(1), \|\bZ_3-\bZ_1\| <\epsilon \mid (\bX_1, \bZ_1) =(\bx_1, \bz_1)\big]\cr
&=&
n \sum_{k=2}^n \Big\{\P\big[N(3) = M(1), \|\bZ_3-\bZ_1\| <\epsilon \ \big | \ (\bX_1, \bZ_1) =(\bx_1, \bz_1), M(1) = k\big]\cr&& \qquad \times \P\big(M(1)=k \ \big | \ (\bX_1, \bZ_1)=(\bx_1, \bz_1)\big) \Big\} \cr
&=&
n P\big[N(3) =2, \|\bZ_3-\bZ_1\| <\epsilon \ \big | \ (\bX_1, \bZ_1) =(\bx_1, \bz_1), M(1) = 2\big]
\cr
&=&
n \int f_{M(1)} (\bx_2, \bz_2) \cdot \P\big[ N(3) = 2 , \|\bZ_3-\bZ_1\| <\epsilon \ \big | \ A\big] \d (\bx_2, \bz_2), \label{eq:lemA:aux-1-1:1}
\end{eqnarray}
where $f_{M(1)}(\cdot)$ denotes the density function of $(\bX_{M(1)}, \bZ_{M(1)})$ (conditional on the event $(\bX_1, \bZ_1) = (\bx_1, \bz_1)$), 
and $A$ denotes the event $\{(\bX_1, \bZ_1)=(\bx_1, \bz_1), M(1) = 2, (\bX_2, \bZ_2) = (\bx_2, \bz_2)\}$. 

Now fix some $\tildet >0$, and let $\tilde{r} = n^{-1/q}\cdot \tildet^{1/q}$. Since $\lim_{n\to \infty}\tilde{r}  =0$, there exists some $R>0$, such that
\begin{eqnarray}
\sup_{	(\bx_2, \bz_2) \in \mathcal{B}((\bx_1, \bz_1), R), \  \bz_3 \in \mathcal{B}(\bz_2, \tilde{r})} \|\bz_3-\bz_1\| < \epsilon  \label{eq:lemA:aux-1-1:2}
\end{eqnarray}
holds for sufficiently large $n$. 
Fix some small $\epsilon_0 \in (0, \epsilon)$.
By Lemma 11.3 of \cite{azadkia2019simple}, $\|(\bX_{M(1)}, \bZ_{M(1)}) - (\bX_1, \bZ_1)\| \to 0$ almost surely. Thus,
\begin{eqnarray}
\int_{(\bx_2, \bz_2) \in \mathcal{B}((\bx_1, \bz_1), R)} f_{M(1)} (\bx_2, \bz_2) >1-\epsilon_0 \label{eq:lemA:aux-1-1:3}
\end{eqnarray}
holds for sufficiently large $n$.  Combining \eqref{eq:lemA:aux-1-1:2} and \eqref{eq:lemA:aux-1-1:3} gives that
\begin{eqnarray}
&&n \int f_{M(1)} (\bx_2, \bz_2) \cdot \P\big[ N(3) = 2 , \|\bZ_3-\bZ_1\| <\epsilon \ \big | \ A\big] \d (\bx_2, \bz_2) \cr
&\geq&
n \int_{(\bx_2, \bz_2) \in \mathcal{B}((\bx_1, \bz_1), R)} f_{M(1)} (\bx_2, \bz_2) \cdot \P\big[ N(3) = 2 , \|\bZ_3-\bz_2\| \leq \tilde{r} \ \big | \ A\big] \d (\bx_2, \bz_2) \cr
&=&
n \int_{(\bx_2, \bz_2) \in \mathcal{B}((\bx_1, \bz_1), R)} f_{M(1)} (\bx_2, \bz_2) \cdot \P[ N(3) = 2 \mid A](\tildet) \d (\bx_2, \bz_2) \cr
&=& (1+o(1))\cdot g_n(\tildet) -O(\epsilon_0), \label{eq:lemA:aux-1-1:4}
\end{eqnarray}
where 
\begin{eqnarray}
&&\P[ N(3) = 2 \mid A](\tildet) := \P\big[ N(3) = 2, \|\bZ_3 - \bz_2\| \leq  \tilde{r} \mid A\big] \label{def_tt} \\
\text{and }&&g_n(\tildet) := (n-2) \cdot  \int f_{M(1)} (\bx_2, \bz_2) \cdot \P[ N(3) = 2 \mid A](\tildet) \d (\bx_2, \bz_2) \nonumber
\end{eqnarray}
are defined in the same way as in \eqref{eq:lemma:two_NNGs:4} and \eqref{eq:lemma:two_NNGs:5}.
The last equation of \eqref{eq:lemA:aux-1-1:4} is by the similar argument as that of \eqref{eq:lemma:two_NNGs:9}. 
By \eqref{eq:lemma:two_NNGs:6}, we have
\begin{eqnarray*}
\lim_{n\to \infty} g_n(\tildet) = 1- \exp\big(- C_2 \cdot f(\bz_1) \cdot \tildet \big),
\end{eqnarray*}
where $f$ denotes the density function of $\bZ$. 
This combined with \eqref{eq:lemA:aux-1-1:1}, \eqref{eq:lemA:aux-1-1:4} and the fact that $\epsilon_0$ can be arbitrarily small, we obtain
\begin{eqnarray}
&&\liminf_{n\to \infty}n\P\big[N(3) = M(1), \|\bZ_3-\bZ_1\| <\epsilon \ \big | \ (\bX_1, \bZ_1) =(\bx_1, \bz_1)\big] \cr
&\geq& \sup_{\tildet >0} \liminf_{n\to \infty} g_n(\tildet)  \geq  \sup_{\tildet >0} \big\{1-  \exp(-C_2 \cdot  f(\bz_1)\cdot \tildet)\big\} = 1
\label{eq:lemA:aux-1-1:5}.
\end{eqnarray}
This completes the proof. 
\end{proof}

\begin{lemmaA} \label{lemA:aux-1-2}
Let $f: \supp(\bZ) \to \mathbbR$ be a measurable function that is continuous almost everywhere.
For any $\epsilon>0$, the following inequality holds with probability one:
\begin{eqnarray*}
&& \liminf_{n \to \infty} n\P\big[N(3) = M(1), |f(\bZ_3)-f(\bZ_1)| <\epsilon \ \big | \ (\bX_1, \bZ_1)\big]\geq 1.
\end{eqnarray*}
\end{lemmaA}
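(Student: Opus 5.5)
The plan is to reduce Lemma~\ref{lemA:aux-1-2} to Lemma~\ref{lemA:aux-1-1} by using continuity of $f$ at the conditioning point. Let $D\subseteq\supp(\bZ)$ be the set of discontinuity points of $f$. By assumption $D$ is Lebesgue-null. Since $\bZ$ is absolutely continuous (it is a marginal of $(\bX,\bZ)$, which is absolutely continuous by Assumption~\ref{assump_4.3}), we get $\P(\bZ_1\in D)=0$. It therefore suffices to prove the inequality for every fixed $(\bx_1,\bz_1)$ satisfying three conditions: it lies in the interior of $\supp((\bX,\bZ))$, it has positive density, and $\bz_1\notin D$. These three conditions hold almost surely.

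Fix such a point and $\epsilon>0$. Because $f$ is continuous at $\bz_1$, there is $\delta>0$ such that $\|\bz-\bz_1\|<\delta$ implies $|f(\bz)-f(\bz_1)|<\epsilon$. Hence, on the event $\{(\bX_1,\bZ_1)=(\bx_1,\bz_1)\}$,
\[
\big\{N(3)=M(1),\ \|\bZ_3-\bZ_1\|<\delta\big\}\subseteq\big\{N(3)=M(1),\ |f(\bZ_3)-f(\bZ_1)|<\epsilon\big\}.
\]
Taking conditional probabilities, multiplying by $n$, and passing to the $\liminf$ then gives
\[
\liminf_{n\to\infty} n\P\big[N(3)=M(1),\,|f(\bZ_3)-f(\bZ_1)|<\epsilon \,\big|\, (\bX_1,\bZ_1)=(\bx_1,\bz_1)\big]\ \ge\ \liminf_{n\to\infty} n\P\big[N(3)=M(1),\,\|\bZ_3-\bZ_1\|<\delta \,\big|\, (\bX_1,\bZ_1)=(\bx_1,\bz_1)\big].
\]
By the pointwise argument in the proof of Lemma~\ref{lemA:aux-1-1}, applied with $\delta$ in place of $\epsilon$, the right-hand side is at least $1$. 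That proof establishes \eqref{eq:lemA:aux-1-1:5} for every interior point with positive density.

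The main subtlety is that $\delta$ depends on the point $\bz_1$. For this reason I would not invoke Lemma~\ref{lemA:aux-1-1} in its "with probability one for each fixed $\epsilon$" form. Instead, I would invoke its pointwise version \eqref{eq:lemA:aux-1-1:5}, which holds for every admissible $(\bx_1,\bz_1)$ and every radius. Applying the pointwise statement at each good point avoids any uncountable union over $\delta$. The set of good points has probability one, so the almost-sure conclusion follows. A second minor check is that the conditional probabilities are taken with respect to a regular version, which is the same version used in Lemma~\ref{lemA:aux-1-1}; this makes the event inclusion above legitimate.
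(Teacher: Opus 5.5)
Your proposal is correct and follows essentially the same route as the paper. The paper also restricts to interior, positive-density points $(\bx_1,\bz_1)$ with $\bz_1$ a continuity point of $f$, and reruns the proof of Lemma~\ref{lemA:aux-1-1} with \eqref{eq:lemA:aux-1-1:2} replaced by its $f$-analogue \eqref{eq:lemA:aux-1-2:3}, which comes from the continuity radius at $\bz_1$; your event inclusion does the same thing more cleanly by appealing directly to the pointwise bound \eqref{eq:lemA:aux-1-1:5} with radius $\delta$.
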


\begin{proof}
Let $\calW^o \subseteq \mathbbR^{p+q}$ be the interior of the support of $(\bX, \bZ)$, and let $\calW^+ = \{(\bx,\bz) \in \calW^o : f_{\bX, \bZ}(\bx, \bz) >0\}$. 
Let $\calZ^o \subseteq \mathbbR^q$ be the interior of the support of $\bZ$, and let $\calZ^* = \{\bz \in \calZ^o: f \text{ is continuous at } \bz\}$.
By Assumption \ref{assump_4.3} and the fact that $f$ is continuous almost everywhere, it is clear that 
$\P((\bX,\bZ) \in \calW^+) =1$ and $\P(\bZ \in \calZ^*) =1$. Moreover, 
for any $\bz_1 \in \calZ^*$, 
there exists some neighborhood $\mathcal{B}(\bz_1, R_1) \subseteq \calZ^o$ such that 
\begin{eqnarray}
\sup_{\bz_3 \in \mathcal{B}(\bz_1, R_1)} |f(\bz_3)-f(\bz_1)| < \epsilon. \label{eq:lemA:aux-1-2:1}
\end{eqnarray}

We shall prove for any $(\bx_1, \bz_1) \in \calW^+$ with $\bz_1 \in \calZ^*$,
\begin{eqnarray}
&& \liminf_{n \to \infty} n\P\big[N(3) = M(1), \|\bZ_3-\bZ_1\| <\epsilon\} \ \big | \ (\bX_1, \bZ_1) = (\bx_1, \bz_1)\big]\geq 1. \label{eq:lemA:aux-1-2:2}
\end{eqnarray}
The proof proceeds along the same lines as that of Lemma~\ref{lemA:aux-1-1}. The only difference is that \eqref{eq:lemA:aux-1-1:2} needs to be replaced by the following statement:
there exists some $R>0$, such that
\begin{eqnarray}
\sup_{	(\bx_2, \bz_2) \in \mathcal{B}((\bx_1, \bz_1), R), \  \bz_3 \in \mathcal{B}(\bz_2, \tilde{r})} |f(\bz_3)-f(\bz_1)| < \epsilon  \label{eq:lemA:aux-1-2:3}
\end{eqnarray}
holds for sufficiently large $n$. Note that 
\begin{eqnarray*}
\sup_{	(\bx_2, \bz_2) \in \mathcal{B}((\bx_1, \bz_1), R), \  \bz_3 \in \mathcal{B}(\bz_2, \tilde{r})} \|\bz_3- \bz_1\| \leq R + \tilde{r}. \label{4.17}
\end{eqnarray*}
Recall that $\tilde{r} = n^{-1/q}\cdot \tildet^{1/q}\to 0$. Thus, there exists some small $R$ such that $\sup\|\bz_3- \bz_1\|\leq R+\tilde{r}  <  R_1$ holds for sufficiently large $n$. Then by \eqref{eq:lemA:aux-1-2:1}, we have $\sup |f(\bz_3)-f(\bz_1)| < \epsilon$. This proves \eqref{eq:lemA:aux-1-2:3}. 

It remains to follow the same proof procedure as in Lemma~\ref{lemA:aux-1-1}to establish \eqref{eq:lemA:aux-1-2:2}. Finally, noting that $\P((\bX,\bZ) \in \calW^+) = 1$ and $\P(\bZ \in \calZ^*) = 1$, we conclude that the statement of the lemma holds.
\end{proof}

\begin{lemmaA} \label{lemA:aux-1-3}
Let $f: \supp((\bX,\bZ)) \to \mathbbR$ be a measurable function that is continuous almost everywhere. For any $\epsilon>0$, the following inequality holds with probability one:
\begin{eqnarray*}
&& \liminf_{n \to \infty} n\P\big[N(3) = M(1), |f(\bX_{M(1)},\bZ_{M(1)})-f(\bX_1,\bZ_1)| <\epsilon \ \big | \ (\bX_1, \bZ_1)\big]\geq 1.
\end{eqnarray*}
\end{lemmaA}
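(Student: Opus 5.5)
We follow the proofs of Lemmas~\ref{lemA:aux-1-1} and \ref{lemA:aux-1-2} essentially verbatim. The one change is that the localization now concerns the point $(\bX_{M(1)},\bZ_{M(1)})$, which is the common neighbor, rather than $\bZ_3$. We let $\calW^+$ be as in Lemma~\ref{lemA:aux-1-2}, and let $\calW^*=\{(\bx,\bz)\in\calW^+: f \text{ is continuous at } (\bx,\bz)\}$. Since $f$ is continuous almost everywhere and $(\bX,\bZ)$ is absolutely continuous, $\P((\bX,\bZ)\in\calW^*)=1$. It therefore suffices to prove the inequality for every fixed $(\bx_1,\bz_1)\in\calW^*$. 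For such a point there is $R_1>0$ with $\mathcal{B}((\bx_1,\bz_1),R_1)\subseteq\calW^o$ and $\sup_{(\bx_2,\bz_2)\in\mathcal{B}((\bx_1,\bz_1),R_1)}|f(\bx_2,\bz_2)-f(\bx_1,\bz_1)|<\epsilon$.

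Next we decompose exactly as in \eqref{eq:lemA:aux-1-1:1}, conditioning on $M(1)=2$ and $(\bX_2,\bZ_2)=(\bx_2,\bz_2)$. This gives
\[
n\P\big[N(3)=M(1),\,|f(\bX_{M(1)},\bZ_{M(1)})-f(\bx_1,\bz_1)|<\epsilon \mid (\bX_1,\bZ_1)=(\bx_1,\bz_1)\big]
= n\int f_{M(1)}(\bx_2,\bz_2)\,\Ind\big(|f(\bx_2,\bz_2)-f(\bx_1,\bz_1)|<\epsilon\big)\,\P[N(3)=2\mid A]\,\d(\bx_2,\bz_2).
\]
The indicator is now deterministic given $A$, so it factors out of the inner conditional probability. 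We bound the integrand from below by restricting the integration to $(\bx_2,\bz_2)\in\mathcal{B}((\bx_1,\bz_1),R)$ with $R\le R_1$, where the indicator equals one. We also replace $\P[N(3)=2\mid A]$ by its truncated version $\P[N(3)=2\mid A](\tildet)$ from \eqref{def_tt}. Together these give a lower bound of the form $(1+o(1))\,g_n(\tildet)-O(\epsilon_0)$, just as in \eqref{eq:lemA:aux-1-1:4}. Here we use \eqref{eq:lemA:aux-1-1:3}, which holds because $\|(\bX_{M(1)},\bZ_{M(1)})-(\bx_1,\bz_1)\|\to0$ almost surely by Lemma~11.3 of \cite{azadkia2019simple}. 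We also use the bounded-degree argument behind \eqref{eq:lemma:two_NNGs:9} to control the discarded mass by $C_3\epsilon_0$. In fact the restriction needs only $R$ small and no condition on $\tilde r$, so this step is simpler than in Lemma~\ref{lemA:aux-1-2}.

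Finally, \eqref{eq:lemma:two_NNGs:6} gives $g_n(\tildet)\to1-\exp(-C\,f_\bZ(\bz_1)\tildet)$. Letting $\epsilon_0\to0$ and then taking the supremum over $\tildet$ yields $\liminf\ge1$ at every $(\bx_1,\bz_1)\in\calW^*$, and hence the claim holds with probability one. The main point needing care is the restricted-integral step. The convergence \eqref{eq:lemma:two_NNGs:7} is uniform only over $\bw_2$ near $\bw_1$, so we must choose $R$ as the minimum of $R_1$ and the radius from \eqref{eq:lemma:two_NNGs:7} before letting $n\to\infty$. With that choice the argument closes identically to Lemma~\ref{lemma:two_NNGs}.
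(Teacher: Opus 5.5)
Your proposal is correct and follows the paper's proof essentially step for step. You restrict to continuity points of $f$ in $\calW^+$, condition on $M(1)=2$ and $(\bX_2,\bZ_2)=(\bx_2,\bz_2)$ so that the $f$-condition becomes a deterministic restriction of the $\bw_2$-integral to a small ball, and then reduce to $g_n(\tildet)$ and \eqref{eq:lemma:two_NNGs:6}. Your closing caveat about also shrinking $R$ to the radius in \eqref{eq:lemma:two_NNGs:7} is harmless but unnecessary: once you pass through $g_n(\tildet)$, you only need $\int_{\mathcal{B}((\bx_1,\bz_1),R)} f_{M(1)}>1-\epsilon_0$ for large $n$, and this holds for any fixed $R\in(0,R_1]$.
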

\begin{proof}
Let $\calW^o \subseteq \mathbbR^{p+q}$ be the interior of the support of $(\bX, \bZ)$, and let $\calW^+ = \{(\bx,\bz) \in \calW^o : f_{\bX, \bZ}(\bx, \bz) >0\}$. 
Let $\calW^* = \{(\bx, \bz) \in \calW^+: f \text{ is continuous at } (\bx, \bz)\}$.
By Assumption \ref{assump_4.3} and the fact that $f$ is continuous almost everywhere, it is clear that 
$\P((\bX,\bZ) \in \calW^*) =1$. Moreover, 
for any $(\bx_1,\bz_1) \in \calW^*$, 
there exists some neighborhood $\mathcal{B}((\bx_1,\bz_1), R_1) \subseteq \calW^*$ such that 	
\begin{eqnarray*}
\sup_{(\bx_2,\bz_2) \in \mathcal{B}((\bx_1,\bz_1), R_1)}  |f(\bx_2,\bz_2)-f(\bx_1, \bz_1)| 
< \epsilon.  
\end{eqnarray*}
Thus, by applying a similar derivation as in \eqref{eq:lemA:aux-1-1:1}, we obtain
\begin{eqnarray*} 
&&n\P\big[N(3) = M(1), |f(\bX_{M(1)},\bZ_{M(1)})-f(\bX_1,\bZ_1)|  <\epsilon \ \big | \ (\bX_1, \bZ_1) =(\bx_1, \bz_1)\big]\cr
&=&
n \sum_{k=2}^n \Big\{\P\big[N(3) = M(1), |f(\bX_{M(1)},\bZ_{M(1)})-f(\bX_1,\bZ_1)|<\epsilon \ \big | \ (\bX_1, \bZ_1) =(\bx_1, \bz_1), M(1) = k\big]\cr&& \qquad \times \P\big(M(1)=k \ \big | \ (\bX_1, \bZ_1)=(\bx_1, \bz_1)\big) \Big\} \cr
&=&
n P\big[N(3) =2, |f(\bX_2,\bZ_2)-f(\bX_1,\bZ_1)| <\epsilon \ \big | \ (\bX_1, \bZ_1) =(\bx_1, \bz_1), M(1) = 2\big]
\cr
&\geq &
n \int_{(\bx_2,\bz_2) \in \mathcal{B}((\bx_1,\bz_1), R_1)} f_{M(1)} (\bx_2, \bz_2) \cdot \P[ N(3) = 2 \mid A] \d (\bx_2, \bz_2),
\end{eqnarray*}
where $A$ denotes the event $\{(\bX_1, \bZ_1)=(\bx_1, \bz_1), M(1) = 2, (\bX_2, \bZ_2) = (\bx_2, \bz_2)\}$. Since $\|(\bX_{M(1)}, \bZ_{M(1)}) - (\bX_1, \bZ_1)\| \to 0$ almost surely, for any $\epsilon_0 >0$, there exists some $R \in (0, R_1)$, such that
\begin{eqnarray*}
\int_{(\bx_2, \bz_2) \in \mathcal{B}((\bx_1, \bz_1), R)} f_{M(1)} (\bx_2, \bz_2) >1-\epsilon_0 
\end{eqnarray*}
holds for sufficiently large $n$.

Therefore, for any $\tildet>0$,
\begin{eqnarray*}
&&n \int_{(\bx_2,\bz_2) \in \mathcal{B}((\bx_1,\bz_1), R_1)} f_{M(1)} (\bx_2, \bz_2) \cdot \P[ N(3) = 2 \mid A] \d (\bx_2, \bz_2) \cr
&\geq&
n \int_{(\bx_2, \bz_2) \in \mathcal{B}((\bx_1, \bz_1), R)} f_{M(1)} (\bx_2, \bz_2) \cdot \P[ N(3) = 2 \mid A] \d (\bx_2, \bz_2) \cr
&\geq &
n \int_{(\bx_2, \bz_2) \in \mathcal{B}((\bx_1, \bz_1), R)} f_{M(1)} (\bx_2, \bz_2) \cdot \P[ N(3) = 2 \mid A](\tildet) \d (\bx_2, \bz_2) \cr
&=& (1+o(1))\cdot g_n(\tildet) -O(\epsilon_0),
\end{eqnarray*}
where 
$\P[ N(3) = 2 \mid A](\tildet)$ is defined in \eqref{def_tt}, and
the last equation is due to \eqref{eq:lemA:aux-1-1:4}.  By the similar arguments as those in \eqref{eq:lemA:aux-1-1:4} to \eqref{eq:lemA:aux-1-1:5}, we have
\begin{eqnarray*}
\liminf_{n \to \infty} n\P\big[N(3) = M(1), |f(\bX_{M(1)},\bZ_{M(1)})-f(\bX_1,\bZ_1)| <\epsilon \ \big | \ (\bX_1, \bZ_1) =(\bx_1, \bz_1)\big]\geq 1.
\end{eqnarray*}
Finally, by $\P((\bX,\bZ) \in \calW^*) = 1$, the proof is completed. 
\end{proof}

\begin{lemmaA} \label{lemA:aux-1-4}
Let $f: \supp(\bZ) \to \mathbbR$ be a measurable function that is continuous almost everywhere.
For any $\epsilon>0$, 
\begin{eqnarray*}
\lim_{n\to \infty} n \P\big(|f(\bZ_2)-f(\bZ_1)| <\epsilon, N(2)=M(1)\big) = 1.
\end{eqnarray*}

\end{lemmaA}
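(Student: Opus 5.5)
We prove the two inequalities $\liminf \ge 1$ and $\limsup \le 1$ separately. The lower bound comes from the conditional statements already proved, via Fatou. The upper bound comes from the unconditional limit of \citet[Lemma 7.4]{Shi_Drton_Han_2024_Bernoulli}.

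First we symmetrize the quantity. By exchangeability of the indices $2,\dots,n$,
\begin{eqnarray*}
(n-1)\,\P\big(|f(\bZ_2)-f(\bZ_1)|<\epsilon,\ N(2)=M(1)\big)
=\E\Big[\#\big\{j\neq 1: N(j)=M(1),\ |f(\bZ_j)-f(\bZ_1)|<\epsilon\big\}\Big].
\end{eqnarray*}
Moreover, $n\P(\cdot)$ and $(n-1)\P(\cdot)$ differ by at most $\P(\cdot)\le 1/(n-1)\cdot O(1)$, so it suffices to treat either normalization.

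\textbf{Lower bound.} The event here is the same as in Lemma~\ref{lemA:aux-1-2}, with index $2$ in place of $3$. Taking expectations and using exchangeability, Lemma~\ref{lemA:aux-1-2} gives
\begin{eqnarray*}
\liminf_{n\to\infty} n\P\big[N(2)=M(1),\ |f(\bZ_2)-f(\bZ_1)|<\epsilon \,\big|\, (\bX_1,\bZ_1)\big]\ge 1 \quad \text{a.s.}
\end{eqnarray*}
Fatou's lemma then yields
\begin{eqnarray*}
\liminf_{n\to\infty} n\P\big(|f(\bZ_2)-f(\bZ_1)|<\epsilon,\ N(2)=M(1)\big)\ge 1.
\end{eqnarray*}
Fatou applies because the conditional quantities are nonnegative.

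\textbf{Upper bound.} We drop the constraint on $f$:
\begin{eqnarray*}
n\P\big(|f(\bZ_2)-f(\bZ_1)|<\epsilon,\ N(2)=M(1)\big)\le n\P\big(N(2)=M(1)\big)=\tfrac{n}{n-1}\,\E\big[\#\{j\neq1:N(j)=M(1)\}\big].
\end{eqnarray*}
By \citet[Lemma 7.4, Equation (28)]{Shi_Drton_Han_2024_Bernoulli}, the right-hand side converges to $1$. One should check that this result is applied with $\bU=\bZ$ and $\bW=(\bX,\bZ)$, which is the setting of Lemma~\ref{lemma:two_NNGs}, where the same citation was used. Hence the $\limsup$ is at most $1$.

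Combining the two bounds gives the limit $1$. The only delicate point is the lower bound, namely that the almost-sure conditional bound of Lemma~\ref{lemA:aux-1-2} can be integrated. Fatou handles this directly, so no uniform-integrability argument is needed; this matters because the conditional quantities need not be uniformly bounded along all $n$. The remaining steps are bookkeeping.
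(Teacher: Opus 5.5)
Your proof is correct. It uses the same two inputs as the paper:
\begin{itemize}
\item the almost-sure conditional lower bound of Lemma~\ref{lemA:aux-1-2}, relabelled from index $3$ to index $2$ by exchangeability of $2,\dots,n$ given $(\bX_1,\bZ_1)$;
\item the unconditional limit $n\P(N(2)=M(1))\to 1$ from \citet[Lemma 7.4, Equation (28)]{Shi_Drton_Han_2024_Bernoulli}.
\end{itemize}

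The only difference from the paper is how these are combined. The paper sets $S_n = n\P(|f(\bZ_2)-f(\bZ_1)|<\epsilon,\ N(2)=M(1)\mid \bX_1,\bZ_1)$. It feeds $\liminf S_n\ge 1$ a.s.\ and $\limsup \E(S_n)\le 1$ into Lemma~\ref{lemA:Xn_conP_C} to obtain $S_n\conP 1$. It then uses the bounded in-degree of the NN graph \citep{MR682809} and bounded convergence to conclude $\E(S_n)\to 1$.

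Your Fatou step gives $\liminf \E(S_n)\ge \E(\liminf S_n)\ge 1$ directly. You therefore need neither Lemma~\ref{lemA:Xn_conP_C} nor the degree bound. The paper's detour does buy the stronger conditional statement $S_n\conP 1$, but that is not needed for this lemma.

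One side remark of yours is inaccurate, though harmless. The conditional quantities are in fact uniformly bounded, since $S_n\le \frac{n}{n-1}\E[\#\{j\neq 1: N(j)=M(1)\}\mid \bX_1,\bZ_1]$. The in-degree of any vertex of the $\bZ$-NN graph is bounded by a constant depending only on $q$, and this is exactly what the paper uses.

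Your symmetrization and the $n$ versus $n-1$ bookkeeping are also unnecessary. The identity $n\P(N(2)=M(1))=\frac{n}{n-1}\E[\#\{j\neq 1:N(j)=M(1)\}]$ in your upper bound already handles the normalization.
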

\begin{proof}

Let $S_n = n\cdot \P\big(|f(\bZ_2)-f(\bZ_1)|<\epsilon , N(2)=M(1) \ \big | \  \bX_1, \bZ_1\big)$. Then
\begin{eqnarray*}
\E(S_n) &=& n \P\big(|f(\bZ_2)-f(\bZ_1)|<\epsilon , N(2)=M(1) \big) 
\cr
&\leq& n \P\big( N(2)=M(1) \big)  \ = \ 1 + o(1),
\end{eqnarray*}
where the last equation is due to \citet[Lemma 7.4, Equation (28)]{Shi_Drton_Han_2024_Bernoulli}. On the other hand, Lemma \ref{lemA:aux-1-2} proves that $\liminf_{n \to \infty}S_n \geq 1$ holds with probability one. This combined with Lemma \ref{lemA:Xn_conP_C} yields that $S_n \conP 1$. 

Note that 
\begin{eqnarray*}
S_n &\leq & n\cdot \P\big(N(2)=M(1) \ \big | \  \bX_1, \bZ_1\big) \cr
&=& \frac{n}{n-1} \E\Big[ \sum_{k=2}^n \Ind\big( N(k) = M(1)\big) \ \Big | \  \bX_1, \bZ_1 \Big].
\end{eqnarray*}
By Corollary S1 of \cite{MR682809}, $\sum_{k=2}^n \Ind( N(k) = M(1))$ is bounded from above by some constant. Thus $S_n$ is also bounded from above. This combined with $S_n \conP 1$ gives that $\E(S_n) \to 1$. \end{proof}

\begin{lemmaA} \label{lemA:aux-1-5}
Let $f: \supp((\bX,\bZ)) \to \mathbbR$ be a measurable function that is continuous almost everywhere.
For any $\epsilon>0$, 
\begin{eqnarray*}
\lim_{n\to \infty} n \P\big(|f(\bX_{M(1)},\bZ_{M(1)})-f(\bX_1,\bZ_1)| <\epsilon, N(2)=M(1)\big) = 1.
\end{eqnarray*}

\end{lemmaA}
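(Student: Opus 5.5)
We follow the proof of Lemma~\ref{lemA:aux-1-4} essentially verbatim, with Lemma~\ref{lemA:aux-1-3} taking the role of Lemma~\ref{lemA:aux-1-2}. Set
\begin{eqnarray*}
S_n := n\cdot \P\big(|f(\bX_{M(1)},\bZ_{M(1)})-f(\bX_1,\bZ_1)|<\epsilon,\ N(2)=M(1) \ \big | \ \bX_1,\bZ_1\big).
\end{eqnarray*}
The goal is to show $\E(S_n)\to 1$. We do this by proving $S_n\conP 1$ and then upgrading to convergence of means through a uniform bound on $S_n$.

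\textbf{Upper bound on the mean.} Dropping the event on $f$ gives
\begin{eqnarray*}
\E(S_n)\le n\P\big(N(2)=M(1)\big).
\end{eqnarray*}
By exchangeability of indices $2,\dots,n$, the right-hand side equals $\frac{n}{n-1}\E\,\#\{j\neq 1: N(j)=M(1)\}$. By \citet[Lemma 7.4, Equation (28)]{Shi_Drton_Han_2024_Bernoulli}, this is $1+o(1)$. Hence $\limsup_n \E(S_n)\le 1$.

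\textbf{Lower bound almost surely.} Exchangeability again lets us relabel index $3$ as index $2$ in Lemma~\ref{lemA:aux-1-3}, since the event there is symmetric in the non-$1$ indices. That lemma then gives $\liminf_n S_n\ge 1$ with probability one.

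\textbf{Conclusion.} Lemma~\ref{lemA:Xn_conP_C} combines the two bounds to give $S_n\conP 1$. Moreover,
\begin{eqnarray*}
S_n\le \frac{n}{n-1}\,\E\Big[\sum_{k\ge2}\Ind\big(N(k)=M(1)\big)\ \Big|\ \bX_1,\bZ_1\Big].
\end{eqnarray*}
The sum counts in-degrees of the single vertex $M(1)$ in the $\bZ$-NNG, so it is bounded by a constant depending only on $q$ by Corollary S1 of \cite{MR682809}. The bounded convergence theorem then yields $\E(S_n)\to1$, which is the claim.

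The only point needing care is the relabeling from index $3$ to index $2$ in Lemma~\ref{lemA:aux-1-3}. The conditioning is on $(\bX_1,\bZ_1)$ alone, and the event $\{N(k)=M(1)\}$ together with the constraint on $M(1)$ has the same conditional law for every $k\ne1$, so the relabeling is legitimate. This is the same step already used implicitly in Lemma~\ref{lemA:aux-1-4}, so no genuine obstacle is expected.
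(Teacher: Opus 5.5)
Your proposal is correct and is the paper's proof: the paper also just reruns the proof of Lemma~\ref{lemA:aux-1-4} with Lemma~\ref{lemA:aux-1-3} in place of Lemma~\ref{lemA:aux-1-2}. That argument uses the upper bound on the mean via $n\P(N(2)=M(1))\to 1$, the almost-sure $\liminf$ lower bound, Lemma~\ref{lemA:Xn_conP_C}, and bounded convergence through the bounded in-degree of the $\bZ$-NNG. Your explicit justification of relabeling index $3$ to index $2$ by conditional exchangeability given $(\bX_1,\bZ_1)$ is a step the paper leaves implicit.
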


\begin{proof}
The proof is similar to that of Lemma \ref{lemA:aux-1-4}. The only difference is that Lemma \ref{lemA:aux-1-3} is used in place of Lemma \ref{lemA:aux-1-2}.
\end{proof}

\begin{lemmaA} \label{lemA:aux-1-6}
Let $f: \supp(\bZ) \to \mathbbR$ be a measurable function that is continuous almost everywhere.
For any $\epsilon>0$, 
\begin{eqnarray*}
\lim_{n \to \infty}\P\big(|f(\bZ_2)-f(\bZ_1)|<\epsilon \ \big | \ N(2)=M(1)\big) =1.
\end{eqnarray*}
\end{lemmaA}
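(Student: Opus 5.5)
The statement is a ratio of two quantities whose asymptotics are already available. Write
\[
\P\big(|f(\bZ_2)-f(\bZ_1)|<\epsilon \mid N(2)=M(1)\big)
= \frac{n\,\P\big(|f(\bZ_2)-f(\bZ_1)|<\epsilon,\ N(2)=M(1)\big)}{n\,\P\big(N(2)=M(1)\big)}.
\]
By Lemma~\ref{lemA:aux-1-4}, the numerator converges to $1$ as $n\to\infty$. By \citet[Lemma 7.4, Equation (28)]{Shi_Drton_Han_2024_Bernoulli}, already invoked in the proof of Lemma~\ref{lemA:aux-1-4}, the denominator also converges to $1$. The ratio therefore tends to $1$.

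For the denominator, exchangeability gives
\[
n\,\P(N(2)=M(1)) = \frac{n}{n-1}\,\E\Big[\sum_{k\neq 1}\Ind\big(N(k)=M(1)\big)\Big],
\]
and the expectation tends to $1$ by the cited result. This is the only point needing care. The event $\{N(2)=M(1)\}$ requires $2\neq 1$ but allows $M(1)=2$. In that case $N(2)=2$ is impossible, since a nearest neighbor is never the point itself, so the event is consistent with the counting used in Lemma~\ref{lemma:two_NNGs}. In particular $\P(N(2)=M(1))>0$ for large $n$, so the conditional probability is well defined.

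There is no real obstacle. The substantive work, namely the liminf bound from Lemma~\ref{lemA:aux-1-2} combined with Lemma~\ref{lemA:Xn_conP_C} and bounded convergence, is already carried out in Lemma~\ref{lemA:aux-1-4}. The present lemma only requires dividing by the matching normalization.

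Finally, the complementary event $\{|f(\bZ_2)-f(\bZ_1)|\ge\epsilon\}$ has conditional probability tending to $0$. This is the form in which the lemma will presumably be used later, namely to replace $f(\bZ_2)$ by $f(\bZ_1)$ under the conditioning $N(2)=M(1)$, for example when establishing Lemma~\ref{lemA:aux-2-8}.
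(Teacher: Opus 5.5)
Your proposal is correct and is essentially the paper's own proof. You write the conditional probability as the ratio $n\P\big(|f(\bZ_2)-f(\bZ_1)|<\epsilon,\ N(2)=M(1)\big)\big/\{n\P(N(2)=M(1))\}$, then send the numerator to $1$ via Lemma~\ref{lemA:aux-1-4} and the denominator to $1$ via \citet[Lemma 7.4, Equation (28)]{Shi_Drton_Han_2024_Bernoulli}; your side remarks on exchangeability and on the case $M(1)=2$ are consistent with how the count is set up in Lemma~\ref{lemma:two_NNGs}.
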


\begin{proof} Note that
\begin{eqnarray*}
\P\big(|f(\bZ_2)-f(\bZ_1)|<\epsilon \ \big | \ N(2)=M(1)\big) =
\frac{n \P\big( |f(\bZ_2)-f(\bZ_1)| <\epsilon, N(2)=M(1)\big) }{n \P\big(N(2) = M(1)\big)}.
\end{eqnarray*} 
By Lemma \ref{lemA:aux-1-4}, $	\lim_{n\to \infty} n \P\big(|f(\bZ_2)-f(\bZ_1)| <\epsilon, N(2)=M(1)\big) = 1$. By \citet[Lemma 7.4, Equation (28)]{Shi_Drton_Han_2024_Bernoulli}, $\lim_{n\to \infty} n \P\big(  N(2)=M(1)\big) = 1$. Combining these two results completes the proof. \end{proof}

\begin{lemmaA} \label{lemA:aux-1-7}
Let $f: \supp((\bX,\bZ)) \to \mathbbR$ be a measurable function that is continuous almost everywhere.
For any $\epsilon>0$, 
\begin{eqnarray*}
\lim_{n \to \infty}\P\big(|f(\bX_{M(1)},\bZ_{M(1)})-f(\bX_1,\bZ_1)|<\epsilon \ \big | \ N(2)=M(1)\big) =1.
\end{eqnarray*}
\end{lemmaA}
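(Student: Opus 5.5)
The plan is to copy the proof of Lemma~\ref{lemA:aux-1-6}, using Lemma~\ref{lemA:aux-1-5} in place of Lemma~\ref{lemA:aux-1-4}. First write the conditional probability as a ratio of two quantities of order one:
\begin{eqnarray*}
\P\big(|f(\bX_{M(1)},\bZ_{M(1)})-f(\bX_1,\bZ_1)|<\epsilon \ \big | \ N(2)=M(1)\big)
=
\frac{n \P\big( |f(\bX_{M(1)},\bZ_{M(1)})-f(\bX_1,\bZ_1)| <\epsilon, N(2)=M(1)\big) }{n \P\big(N(2) = M(1)\big)}.
\end{eqnarray*}
This identity is valid for all sufficiently large $n$. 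The reason is that $n\P(N(2)=M(1))\to 1$ by \citet[Lemma 7.4, Equation (28)]{Shi_Drton_Han_2024_Bernoulli}, so the denominator is eventually positive.

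Next, Lemma~\ref{lemA:aux-1-5} applies directly, since $f$ is measurable and continuous almost everywhere on $\supp((\bX,\bZ))$. It shows that the numerator tends to $1$. The same result from \cite{Shi_Drton_Han_2024_Bernoulli} shows that the denominator also tends to $1$. The ratio therefore converges to $1$, which is the claim.

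No step here is a real obstacle, because all the work is done in Lemma~\ref{lemA:aux-1-5}. That lemma in turn rests on Lemma~\ref{lemA:aux-1-3}, which gives an almost-sure lower bound of $1$ on the conditional $\liminf$, and on Lemma~\ref{lemA:Xn_conP_C}, combined with bounded convergence through the bounded in-degree of the NNG \citep{MR682809}. The one point to check is that Lemma~\ref{lemA:aux-1-5} is stated for exactly this event, $|f(\bX_{M(1)},\bZ_{M(1)})-f(\bX_1,\bZ_1)|<\epsilon$ jointly with $N(2)=M(1)$. It is, so the proof reduces to the two limit statements above.
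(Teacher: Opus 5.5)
Your proposal is correct and is exactly the paper's argument. The paper proves this lemma by repeating the ratio argument of Lemma~\ref{lemA:aux-1-6}, with Lemma~\ref{lemA:aux-1-5} giving the limit of the numerator and \citet[Lemma 7.4, Equation (28)]{Shi_Drton_Han_2024_Bernoulli} giving the limit of the denominator.
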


\begin{proof} This lemma can be proved by following the same procedure as in the proof of Lemma \ref{lemA:aux-1-6}. 
The only difference is that Lemma \ref{lemA:aux-1-5} is used in place of Lemma \ref{lemA:aux-1-4}.
\end{proof}

\begin{lemmaA} \label{lemA:aux-1-8}
As $n \to \infty$, we have
\begin{eqnarray}
&&\|\bZ_{M(N(1))} - \bZ_{1}\| \conP 0, \label{eq:lemA:aux-1-8:1}\\
\text{and} &&\|\bZ_{N(M(1))} - \bZ_{1}\| \conP 0. \label{eq:lemA:aux-1-8:2}
\end{eqnarray}
\end{lemmaA}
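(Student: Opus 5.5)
The plan is to prove both claims by the triangle inequality, reducing each to one-step NN convergence plus a uniform-in-location control of how far the second step can travel. For \eqref{eq:lemA:aux-1-8:1}, write
\[
\|\bZ_{M(N(1))}-\bZ_1\| \le \|\bZ_{M(N(1))}-\bZ_{N(1)}\| + \|\bZ_{N(1)}-\bZ_1\| .
\]
The second term tends to $0$ almost surely by Lemma 11.3 of \cite{azadkia2019simple}, applied to the $\bZ$-sample. For the first term, note that $\|\bZ_{M(j)}-\bZ_j\|\le \|(\bX_{M(j)},\bZ_{M(j)})-(\bX_j,\bZ_j)\|$ for every $j$. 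It therefore suffices to show that
\[
\|(\bX_{M(N(1))},\bZ_{M(N(1))})-(\bX_{N(1)},\bZ_{N(1)})\|\conP 0 .
\]

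The key step is to avoid conditioning on the random index $N(1)$ directly. Instead, sum over the possible values of $N(1)$. For fixed $\delta>0$,
\begin{align*}
\P\big(\|\bW_{M(N(1))}-\bW_{N(1)}\|>\delta\big)
&= \E\Big[\sum_{j\neq 1}\Ind\big(N(1)=j\big)\,\Ind\big(\|\bW_{M(j)}-\bW_j\|>\delta\big)\Big] \\
&\le \E\Big[\sum_{j}\Ind\big(\|\bW_{M(j)}-\bW_j\|>\delta\big)\,\#\{i:N(i)=j\}\Big],
\end{align*}
where $\bW_j=(\bX_j,\bZ_j)$. The in-degree $\#\{i:N(i)=j\}$ of the $\bZ$-NNG is bounded by a constant \citep{MR682809}. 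Hence the right-hand side is at most $C\,n\,\P(\|\bW_{M(1)}-\bW_1\|>\delta)$ by exchangeability, and this bound is useless unless the probability decays faster than $1/n$.

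I would therefore bound $\P(\|\bW_{M(1)}-\bW_1\|>\delta)$ at an exponential rate. On a compact subset $K$ of $\calW^+$ on which the density is bounded below by $c>0$, this probability is at most $(1-c'\delta^{p+q})^{n-1}$. Points outside $K$ are handled by choosing $K$ with $\P(\bW\notin K)\le\eta$. The bounded-degree sum then splits as follows: indices $j$ with $\bW_j\in K$ contribute $o(1)$, and the rest contribute at most
\[
C\,\P\big(\bW_{N(1)}\notin K\big)\le C\eta+o(1),
\]
using that $\bW_{N(1)}$ and $\bW_1$ are close in distribution ($\bZ_{N(1)}\to\bZ_1$, and $\bX_{N(1)}$ given $\bZ_{N(1)}$ has the law $\mu_{\bX\mid\bZ}$). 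The main obstacle is exactly this step, namely making the tail control uniform enough that the NN of a shifted point does not escape. If it proves too delicate, I would first try the simpler route of bounding the first term by $\|\bW_{M(j)}-\bW_j\|\le \mathrm{diam}$ of a ball around $\bW_j$ containing another sample point, and using a union bound over a finite cover of $K$ by small cubes: each cube of volume $\asymp \delta^{p+q}$ contains a sample point with probability $1-(1-c\delta^{p+q})^n\to1$.

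For \eqref{eq:lemA:aux-1-8:2}, write
\[
\|\bZ_{N(M(1))}-\bZ_1\|\le \|\bZ_{N(M(1))}-\bZ_{M(1)}\|+\|\bZ_{M(1)}-\bZ_1\| .
\]
The last term is at most $\|\bW_{M(1)}-\bW_1\|\to0$ almost surely. The first term is handled by the identical summation argument with the roles of the two graphs swapped. The bounded in-degree of the $(\bX,\bZ)$-NNG gives
\[
\P\big(\|\bZ_{N(M(1))}-\bZ_{M(1)}\|>\delta\big)\le C\,\E\Big[\sum_j \Ind\big(\|\bZ_{N(j)}-\bZ_j\|>\delta,\ \bZ_j\in K_{\bZ}\big)\Big]/n+C\eta ,
\]
and the exponential bound for the $\bZ$-NN distance on a compact $K_{\bZ}$ with density bounded below finishes the argument. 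Letting $\eta\to0$ gives convergence in probability in both claims.
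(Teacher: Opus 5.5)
Your proof is correct, and it handles the key step differently from the paper. Both arguments reduce, via the triangle inequality, to $\|\bW_{M(N(1))}-\bW_{N(1)}\|\conP 0$ (respectively $\|\bZ_{N(M(1))}-\bZ_{M(1)}\|\conP 0$).

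The paper then does three things:
\begin{itemize}
\item it localizes to the set $\calW_{\epsilon,\delta}$ of points whose $\epsilon$-ball has mass exceeding $\delta$;
\item it uses $(\bX_{N(1)},\bZ_{N(1)})\conD(\bX_1,\bZ_1)$ to put $\bW_{N(1)}$ in that set with high probability;
\item it bounds the conditional probability of a long second step by $(1-\delta)^{n-1}$, treating the remaining points as fresh i.i.d.\ draws given $\bW_{N(1)}$.
\end{itemize}
The last step is only approximately true, because the event $\{N(1)=j\}$ constrains the other points. You never condition on the random index: you sum over the value $j$ of $N(1)$ and use the bounded in-degree of the NN graph \citep{MR682809}. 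That is the more rigorous way to handle this step.

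In part (1), however, you dropped a factor $1/n$ that exchangeability gives for free, and you use that factor correctly in part (2). Since the law of $\|\bW_{M(N(i))}-\bW_{N(i)}\|$ does not depend on $i$, averaging over the starting index gives
\[
\P\big(\|\bW_{M(N(1))}-\bW_{N(1)}\|>\delta\big)=\frac{1}{n}\,\E\Big[\sum_{j}\#\{i:N(i)=j\}\,\Ind\big(\|\bW_{M(j)}-\bW_j\|>\delta\big)\Big]\le C\,\P\big(\|\bW_{M(1)}-\bW_1\|>\delta\big)\to 0
\]
by Lemma 11.3 of \cite{azadkia2019simple}. So the bound is not "useless unless the probability decays faster than $1/n$". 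The exponential-rate localization to a compact $K$ and the comparison of the laws of $\bW_{N(1)}$ and $\bW_1$ are unnecessary, and the "main obstacle" you flagged does not arise. Part (2) is the same one-liner with the two graphs swapped.

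Your longer route still closes, since the $K$-part is at most $n(1-\delta_0)^{n-1}$. If you keep it, control $\P(\bW_{N(1)}\notin K)$ with the same averaging trick, $\P(\bW_{N(1)}\in B)\le C\,\P(\bW_1\in B)$ for any Borel $B$, rather than with weak convergence. For a compact $K$ the portmanteau inequality points the wrong way unless $K$ is chosen to be a continuity set.

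In exchange, the paper's route produces \eqref{eq:lemA:aux-1-8:4} as a by-product, and it reuses that fact in later lemmas. The in-degree route yields the lemma, and \eqref{eq:lemA:aux-1-8:6}, with no conditioning subtleties and without the sets $\calW_{\epsilon,\delta}$ or any distributional convergence of $\bW_{N(1)}$.
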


\begin{proof} 
We first prove \eqref{eq:lemA:aux-1-8:1}. 
Let $\calW \subseteq \mathbbR^{p+q}$ be the support of $(\bX, \bZ)$. For any $\epsilon >0$ and $\delta>0$, define the set
\begin{eqnarray*}
\calW_{\epsilon, \delta} = \Big\{(\bx, \bz) \in \calW: \P\big((\bX, \bZ) \in \calB((\bx, \bz),\epsilon)\big) >\delta \Big\}.
\end{eqnarray*}
It is obvious that $\calW_{\epsilon,\delta}$ is nondecreasing as $\delta \to 0$, with $\cup_{\ell=1}^\infty \calW_{\epsilon,1/\ell}=  \calW$. Thus, for any $\epsilon>0$, there exists some $\delta$, such that 
\begin{eqnarray}
\P\big((\bX, \bZ) \in \calW_{\epsilon, \delta}\big) >1-\epsilon. \label{eq:lemA:aux-1-8:3}
\end{eqnarray}
By Lemma 11.3 in \cite{azadkia2019simple}, $\|\bZ_{N(1)}-\bZ_1\| \to 0$ almost surely as $n \to \infty$, and thus $\bZ_{N(1)} \conD \bZ_1$. Note that both $(\bX, \bZ)$ and $(\bX_{N(1)}, \bZ_{N(1)})$ are absolutely continuous, and
the conditional distribution $\bX_{N(1)} \mid \bZ_{N(1)}=\bz$ is identical to $\bX_1 \mid \bZ_1=\bz$ for all $\bz$ in the support of $\bZ$. Then we have 
\begin{eqnarray}
(\bX_{N(1)}, \bZ_{N(1)}) \conD (\bX_1, \bZ_1), \label{eq:lemA:aux-1-8:4}
\end{eqnarray}
which yields that 
\begin{eqnarray}
\lim_{n\to \infty}\P\big((\bX_{N(1)}, \bZ_{N(1)}) \in \calW_{\epsilon, \delta}\big) = \P\big((\bX_1, \bZ_1) \in \calW_{\epsilon, \delta}\big). \label{eq:lemA:aux-1-8:5}
\end{eqnarray}
Combining \eqref{eq:lemA:aux-1-8:5} and \eqref{eq:lemA:aux-1-8:3}, for any $\epsilon>0$, there exists some small $\delta>0$ such that 
\begin{eqnarray*}
\P\big((\bX_{N(1)}, \bZ_{N(1)}) \in \calW_{\epsilon, \delta}\big) >1-2\epsilon 
\end{eqnarray*}
holds for sufficiently large $n$. 
Note that for any $(\bx, \bz)\in \calW_{\epsilon, \delta}$, 
\begin{eqnarray*}
&&\P\big(\|(\bX_{M(N(1))},\bZ_{M(N(1))}) - (\bX_{N(1)}, \bZ_{N(1)})\|>\epsilon \ \big | \ (\bX_{N(1)}, \bZ_{N(1)})=(\bx, \bz)\big) \cr
&=& \big\{1- \P\big((\bX, \bZ) \in \calB((\bx, \bz),\epsilon)\big)\big\}^{n-1} \cr
&\leq & (1-\delta)^{n-1}.
\end{eqnarray*}
Therefore, for sufficiently large $n$,
\begin{eqnarray*}
&& \P\big(\|(\bX_{M(N(1))},\bZ_{M(N(1))}) - (\bX_{N(1)}, \bZ_{N(1)})\|>\epsilon\big) \cr
&\leq& \P\big(\|(\bX_{M(N(1))},\bZ_{M(N(1))}) - (\bX_{N(1)}, \bZ_{N(1)})\|>\epsilon, \ (\bX_{N(1)}, \bZ_{N(1)}) \in \calW_{\epsilon, \delta}\big) \cr
&&+ \P\big((\bX_{N(1)}, \bZ_{N(1)}) \notin \calW_{\epsilon, \delta}\big)\cr
&\leq&  (1-\delta)^{n-1} + 2\epsilon,
\end{eqnarray*}
which can be arbitrarily small. This proves that 
\begin{eqnarray}
\|(\bX_{M(N(1))},\bZ_{M(N(1))}) - (\bX_{N(1)}, \bZ_{N(1)})\| \conP 0, \label{eq:lemA:aux-1-8:6}
\end{eqnarray}
which further yields $\|\bZ_{M(N(1))} - \bZ_{N(1)}\| \conP 0$. This combined with the fact that $\|\bZ_{N(1)}-\bZ_1\| \conP 0$ completes the proof of \eqref{eq:lemA:aux-1-8:1}. 

Next, we prove \eqref{eq:lemA:aux-1-8:2}. The procedure is similar to proving \eqref{eq:lemA:aux-1-8:1}. Let $\calZ \subseteq \mathbbR^{q}$ be the support of $\bZ$. For any $\epsilon >0$ and $\delta>0$, define the set 
\begin{eqnarray*}
\calZ_{\epsilon, \delta} = \Big\{\bz \in \calZ: \P\big(\bZ \in \calB(\bz,\epsilon)\big) >\delta \Big\}. 
\end{eqnarray*}
For any $\epsilon>0$, there exists some $\delta>0$, such that 
\begin{eqnarray*}
\P(\bZ \in \calZ_{\epsilon, \delta}) >1-\epsilon. 
\end{eqnarray*}
By Lemma 11.3 in \cite{azadkia2019simple}, $\|(\bX_{M(1)}, \bZ_{M(1)})-(\bX_1, \bZ_1)\| \to 0$ almost surely, which yields $\bZ_{M(1)} \conD \bZ_1$. Thus
\begin{eqnarray*}
\lim_{n\to \infty}\P(\bZ_{M(1)} \in \calZ_{\epsilon, \delta}) = \P(\bZ_1\in \calZ_{\epsilon, \delta}). 
\end{eqnarray*}
Then for any $\epsilon>0$, there exists  small $\delta>0$ such that 
\begin{eqnarray*}
\P(\bZ_{M(1)} \in \calZ_{\epsilon, \delta}) >1-2\epsilon 
\end{eqnarray*}
holds for sufficiently large $n$. Note that for any $\bz\in \calZ_{\epsilon, \delta}$, 
\begin{eqnarray*}
\P\big(\|\bZ_{N(M(1))}- \bZ_{M(1)}\|>\epsilon \ \big | \ \bZ_{M(1)}=\bz\big) 
\ = \ \big\{1- \P\big(\bZ \in \calB(\bz,\epsilon)\big)\big\}^{n-1} \ \leq \  (1-\delta)^{n-1}.
\end{eqnarray*}
Thus for sufficiently large $n$, 
\begin{eqnarray*}
&& \P(\|\bZ_{N(M(1))}- \bZ_{M(1)}\|>\epsilon) \cr
&\leq& \P\big(\|\bZ_{N(M(1))}- \bZ_{M(1)}\|>\epsilon, \bZ_{M(1)} \in \calZ_{\epsilon, \delta}\big) + \P(\bZ_{M(1)} \notin \calZ_{\epsilon, \delta})\cr
&\leq&  (1-\delta)^{n-1} + 2\epsilon,
\end{eqnarray*}
which can be arbitrarily small. This proves that $\|\bZ_{N(M(1))}- \bZ_{M(1)}\|\conP 0$. This combined with the fact that $\|\bZ_{M(1)}-\bZ_1\| \conP 0$ completes the proof of \eqref{eq:lemA:aux-1-8:2}. \end{proof}

\begin{lemmaA} \label{lemA:aux-1-9}
Let $\bU \in \mathbbR^d$ be a $d$-dimensional random vector. Assume that $\bU$ is absolutely continuous, and admits a continuous density function over its support. 
Let $\{\bU_n\}_{n=1}^\infty$ be a sequence of $d$-dimensional random vector, such that $\bU_n \conD \bU$ as $n \to \infty$. Let $\{\bV_n\}_{n=1}^\infty$ be a sequence of $d$-dimensional random vector, satisfying that $\|\bV_n - \bU_n\| \conP 0$ as $n \to \infty$. Then for any measurable function $f: \mathbbR^d \mapsto \mathbbR$, we have
\begin{eqnarray*}
f(\bV_n) - f(\bU_n) \conP 0, \quad \text{as } n \to \infty.
\end{eqnarray*}
\end{lemmaA}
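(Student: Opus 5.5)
The idea is to approximate the measurable $f$ by a continuous function and to control the exceptional set using tightness and a uniform bound on densities. Because absolute continuity of $\bU_n$ is not assumed, I will work directly with $\bU_n \conD \bU$. Even so, I expect the density of $\bU$ to be needed.

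Fix $\epsilon,\eta>0$.

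\textbf{Step 1 (continuous approximation).} Since $\bU_n \conD \bU$, the sequence is tight. Choose a compact set $K$ with $\P(\bU\in K)>1-\eta$ and $\P(\bU_n\in K)>1-\eta$ for all large $n$. By Lusin's theorem applied to the finite measure $\mu_{\bU}$ on a compact neighbourhood $K'$ of $K$, there is a continuous, compactly supported $g$ and an open set $O$ with $\mu_{\bU}(O)<\eta$ and $f=g$ on $K'\setminus O$.

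\textbf{Step 2 (transfer the exceptional set from $\bU$ to $\bU_n$).} Portmanteau applied to the open set $O$ gives only $\liminf_n \P(\bU_n\in O)\ge \mu_{\bU}(O)$, which points the wrong way. Instead, take a closed set $C\supseteq O$ with $\mu_{\bU}(C)<2\eta$; for instance, a closed $\delta$-enlargement intersected with $K'$. Then $\limsup_n \P(\bU_n\in C)\le\mu_{\bU}(C)<2\eta$.

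The same must be done for $\bV_n$. Since $\|\bV_n-\bU_n\|\conP0$, we also have $\bV_n\conD\bU$, so $\limsup_n\P(\bV_n\in C)<2\eta$ as well. Choosing such a $C$ requires $\mu_{\bU}$ of a small enlargement of $O$ to be small. This holds because $O$ can be taken as a finite union of balls up to $\mu_{\bU}$-small error, using regularity together with absolute continuity of $\bU$ (boundaries of balls are null). This step is where the hypotheses on $\bU$ enter.

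\textbf{Step 3 (conclude).} On the event $\{\bU_n,\bV_n\in K'\setminus C\}$ we have
\[
|f(\bV_n)-f(\bU_n)|=|g(\bV_n)-g(\bU_n)|.
\]
The function $g$ is uniformly continuous, so $g(\bV_n)-g(\bU_n)\conP0$ because $\|\bV_n-\bU_n\|\conP 0$. Hence
\[
\limsup_n \P\big(|f(\bV_n)-f(\bU_n)|>\epsilon\big)\le 2\eta+2\eta+\eta+\eta ,
\]
and letting $\eta\to0$ gives the claim.

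\textbf{Main obstacle.} The delicate point is Step 2: the exceptional set from Lusin's theorem must be controlled under both $\bU_n$ and $\bV_n$, neither of which is assumed absolutely continuous. The plan is to replace $O$ by a closed, $\mu_{\bU}$-small set so that the portmanteau upper bound applies.

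If this fails, the fallback is to use the specific structure in the applications, where $\bU_n=(\bX_{N(1)},\bZ_{N(1)})$ has a density converging to $f_{\bU}$. Its law is then dominated by a constant multiple of a fixed measure, and Lusin's theorem can be applied directly to that dominating measure, in the spirit of Lemma 11.7 of \cite{azadkia2019simple}.
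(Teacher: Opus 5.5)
Your Step 2 is where the argument breaks, and it cannot be repaired, because the statement is false as written. Take $d=1$, $\bU\sim\mathrm{Unif}[0,1]$, $\bU_n$ uniform on $\{1/n,2/n,\dots,1\}$, $\bV_n=\bU_n+\sqrt{2}/n$, and $f=\Ind_{\mathbb{Q}}$. Every hypothesis holds, yet $f(\bV_n)-f(\bU_n)=-1$ for all $n$. Requiring $\bU_n,\bV_n$ to have densities does not help either. Fix a Borel set $A$ with $0<\lambda(A\cap I)<\lambda(I)$ for every interval $I$, pick a cell $I_k=((k-1)/n,k/n]$ uniformly at random, let $\bU_n$ and $\bV_n$ be uniform on $A\cap I_k$ and $I_k\setminus A$ respectively, and take $f=\Ind_A$.

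In both examples $f$ agrees with a continuous function on no interval. Hence every Lusin exceptional set $O$ is dense in $\mathrm{int}\,K'$, and every closed $C\supseteq O$ has $\mu_{\bU}(C)\ge\mu_{\bU}(K)>1-\eta$; the $C$ you ask for does not exist. Approximating $O$ from inside by finitely many balls only relocates the problem. The discarded remainder is again a $\mu_{\bU}$-small, possibly dense open set on which $f\neq g$, and weak convergence gives no upper bound on $\P(\bU_n\in\cdot)$ or $\P(\bV_n\in\cdot)$ for such sets. For instance, with the natural choice $g\equiv 0$ in the first example, $O\supseteq\mathbb{Q}\cap K'$ and $\P(\bU_n\in O)=1$.

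The paper's own proof has the same hole. It asserts $\P(\bU_n\in B)\to\P(\bU\in B)$ for the Lusin set $B=\{f\neq g\}\cap A$, which would require $B$ to be a $\mu_{\bU}$-continuity set. You correctly saw that portmanteau points the wrong way, but no enlargement fixes it.

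What is missing is a hypothesis tying the laws of both $\bU_n$ and $\bV_n$ to a fixed measure on all Borel sets. For example, $\P(\bU_n\in S)\vee\P(\bV_n\in S)\le c\,\P(\bU\in S)$ for all Borel $S$ and all large $n$. Under it the proof is short. Lusin gives a continuous $g$ with $\mu_{\bU}(f\neq g)<\eta$, so $f(\bU_n)=g(\bU_n)$ and $f(\bV_n)=g(\bV_n)$ off an event of probability at most $2c\eta$. Tightness plus uniform continuity of $g$ on compacts then handles $g(\bV_n)-g(\bU_n)$.

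Your fallback heads this way, but the justification is off in two respects. Convergence of densities does not give a uniform domination bound, and you need domination for $\bV_n$ as well as for $\bU_n$. In the paper's uses, e.g.\ Lemma~\ref{lemA:aux-2-4} with $\bU_n=(\bX_{N(1)},\bZ_{N(1)})$ and $\bV_n=(\bX_{M(N(1))},\bZ_{M(N(1))})$, domination comes from exchangeability and the bounded in-degree of NN graphs. Writing $\bW=(\bX,\bZ)$, one has $\P(\bW_{M(N(1))}\in S)=n^{-1}\E\sum_j\Ind(\bW_j\in S)\,\#\{i:M(N(i))=j\}\le c\,\P(\bW\in S)$, and likewise for $\bW_{N(1)}$. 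This is exactly the mechanism behind Lemma~11.7 of \cite{azadkia2019simple}. So what you can prove is a corrected, restricted version of the lemma, which suffices for its applications, not the statement as given.
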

\begin{proof}
Fix some $\epsilon >0$. For any random vector $\bU$, there exists a connected compact set $A \subseteq \mathbbR^d$, such that $\P(\bU \in A) > 1- \epsilon$. Since $\bU_n \conD \bU$ and $\|\bV_n - \bU_n\| \conP 0$, it is clear that both $\P(\bU_n \in A)$ and $\P(\bV_n \in A)$ converge to $\P(\bU \in A)$. Thus, for sufficiently large $n$, we have
\begin{eqnarray*}
\P(\bU_n \in A) > 1- 2 \epsilon,  \quad \text{and} \quad  \P(\bV_n \in A) > 1- 2 \epsilon.
\end{eqnarray*} 
Note that $f$ is measurable. By applying the Lusin's theorem, there exists a continuous function $g$, such that $\lambda(B) < \epsilon$, where
\begin{eqnarray*}
B = \{ \bx \in A: f(\bx) \neq g(\bx)\},
\end{eqnarray*}
and $\lambda(\cdot)$ denotes the Lebesgue measure in $\mathbbR^d$. 

Since 	$\bU$ is absolutely continuous and admits a  continuous density function, we have
$\P(\bU \in B) < C \cdot \epsilon$ for some constant $C >0$ not depending on $\epsilon$. Also, note that both $\P(\bU_n \in B)$ and $\P(\bV_n \in B)$ converge to $\P(\bU \in B)$, it follows that
\begin{eqnarray*}
\P(\bU_n \in B) < C \cdot \epsilon,  \quad \text{and}  \quad \P(\bV_n \in B) < C \cdot \epsilon,
\end{eqnarray*}
hold for sufficiently large $n$. 

By applying the triangle inequality, 
\begin{eqnarray*}
|f(\bV_n) - f(\bU_n)| < |f(\bU_n) - g(\bU_n)| + |g(\bV_n) - g(\bU_n)| + |f(\bV_n) - g(\bV_n)| . 
\end{eqnarray*}
For the first term $ |f(\bU_n) - g(\bU_n)| $, we have
\begin{eqnarray*}
\P( |f(\bU_n) - g(\bU_n)| =0) \ \geq \  \P(\bU_n \in A) - \P(\bU_n \in B) \ > \ 1-(C+2)\cdot  \epsilon,
\end{eqnarray*}
holds for sufficiently large $n$. The same inequality also holds for the third term $|f(\bV_n) - g(\bV_n)|$.

Next we examine the second term $|g(\bV_n) - g(\bU_n)| $. Since $A$ is a connected compact set and $g$ is continuous, then $g$ is uniformly continuous in $A$.  Thus there exists some $\delta >0$, such that $|g(\bV_n) - g(\bU_n)| < \epsilon$ whenever $\bV_n, \bU_n \in A$, and $\|\bV_n - \bU_n\| < \delta$. By $\|\bV_n - \bU_n\| \conP 0$, we have $\P(\|\bV_n - \bU_n\| > \delta) \to 0$. Therefore, for sufficiently large $n$, 
\begin{eqnarray*}
\P(|g(\bV_n) - g(\bU_n)|> \epsilon)  \ \leq  \ \P(\bV_n \notin A) + \P(\bU_n \notin A) + \P(\|\bV_n - \bU_n\| > \delta) \ < \ 5 \cdot \epsilon,
\end{eqnarray*}
Combining the above inequalities, it follows that
\begin{eqnarray*}
&&\P(|f(\bV_n) - f(\bU_n)|  > \epsilon) \cr
& \leq& \P(|g(\bV_n) - g(\bU_n)|> \epsilon) + \P( |f(\bU_n) - g(\bU_n)| \neq 0)  + \P( |f(\bV_n) - g(\bV_n)| \neq 0)  \cr
&<& 5 \cdot \epsilon + 2 \cdot (C+2) \cdot \epsilon \ < \ C_2 \cdot \epsilon
\end{eqnarray*}
holds for sufficiently large $n$, where $C_2 >0$ is some constant. 
Thus, by noting that $\epsilon$ can be arbitrarily small, we prove that $ |f(\bV_n) - f(\bU_n)| \conP 0$.
\end{proof}

\subsection{Auxiliary lemmas: Group 2}

\begin{lemmaA} \label{lemA:aux-2-1}
For any $\epsilon>0$, there exists a simple function of form
\begin{eqnarray*}
q(u,v) = \sum_{j=1}^m c_j \Ind_{B_j}(u)\Ind_{C_j}(v)
\end{eqnarray*}
with $m<\infty$, $c_j>0$, and $B_j, C_j$ being intervals of $\mathbbR$, such that
\begin{eqnarray*}
\sup_{(u,v)\in \mathbbR^2} | q(u,v)- F_Y(u \wedge v) | <\epsilon.
\end{eqnarray*}
\end{lemmaA}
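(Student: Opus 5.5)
The plan is to exploit that $F_Y(u\wedge v)=F_Y(u)\wedge F_Y(v)$ is a monotone function of two real arguments taking values in $[0,1]$, and to discretize its range rather than its domain. Fix $\epsilon>0$ and choose an integer $K$ with $1/K<\epsilon$. Because $F_Y$ is continuous (Assumption~\ref{assump_4.2}) and nondecreasing, for each $k=1,\dots,K$ the level set $I_k:=\{u\in\mathbbR: F_Y(u)\ge k/K\}$ is an interval of the form $[a_k,\infty)$ (or empty, or all of $\mathbbR$). These sets are nested, $I_1\supseteq I_2\supseteq\cdots\supseteq I_K$.

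Define
\begin{eqnarray*}
q(u,v):=\sum_{k=1}^K \frac{1}{K}\,\Ind_{I_k}(u)\,\Ind_{I_k}(v).
\end{eqnarray*}
This has the required form, with $m=K$, $c_j=1/K>0$, and $B_j=C_j=I_j$ intervals. The key identity is $\Ind_{I_k}(u)\Ind_{I_k}(v)=\Ind\{F_Y(u)\wedge F_Y(v)\ge k/K\}$. Summing over $k$ therefore gives
\begin{eqnarray*}
q(u,v)=\frac{1}{K}\,\#\{k\in\lbr K\rbr: k\le K\,F_Y(u\wedge v)\}=\frac{\lfloor K F_Y(u\wedge v)\rfloor}{K}.
\end{eqnarray*}
Here we use $F_Y(u\wedge v)=\min\{F_Y(u),F_Y(v)\}$, which holds by monotonicity. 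It then follows that $0\le F_Y(u\wedge v)-q(u,v)<1/K<\epsilon$ uniformly in $(u,v)$, which is the claimed bound.

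The only step needing care is checking that each $I_k$ is genuinely an interval, and that degenerate cases (empty or full line) are allowed. Empty sets can simply be dropped from the sum. Continuity of $F_Y$ is not strictly needed for the bound itself; monotonicity suffices to make the upper level sets half-lines, which are intervals. The stated strict inequality also comes out without extra work, since $x-\lfloor Kx\rfloor/K<1/K$ for every $x$. I do not expect any genuine obstacle here; the lemma is a direct quantization argument, and its role is to let later lemmas (e.g.\ Lemma~\ref{lemA:aux-2-3}) reduce integrals against $F_Y(u\wedge v)$ to products of conditional probabilities of intervals.
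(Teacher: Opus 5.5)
Your proof is correct, and it takes a different route from the paper. The paper does not construct $q$ explicitly. It observes that $F_Y(u\wedge v)=\min\{F_Y(u),F_Y(v)\}$ is a continuous function of $(F_Y(u),F_Y(v))$ and then invokes Lemma~A.1 of \cite{Gao_Li_2024}, an external result that approximates continuous functions of $(F_Y(u),F_Y(v))$ uniformly by finite sums of products of interval indicators.

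You instead quantize the range of the specific function $\min$. The nested upper level sets $I_k=\{u: F_Y(u)\ge k/K\}$ give the exact identity $q(u,v)=\lfloor K F_Y(u\wedge v)\rfloor/K$, so the error bound is immediate.

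Your argument has three advantages. It is self-contained. It produces an explicit $q$ with $m=K$, coefficients $c_j=1/K>0$, and error at most $1/K$. It also uses only monotonicity of $F_Y$, not its continuity.

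The paper's citation buys generality. The same external lemma is reused directly in the proof of Lemma~\ref{lemA:lim10} for $F_Y(u\wedge v)\,h(u)$, which is not of pure $\min$-type. Your construction would handle that case only after an extra step: quantize the monotone function $h$ by its (half-line) level sets and multiply the two approximations, as in Lemma~\ref{lemA:aux-2-2}.
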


\begin{proof} Note that $F_Y(u \wedge v) = \min\{F_Y(u), F_Y(v)\}$ is a continuous bivariate function of $(F_Y(u), F_Y(v))$. This result follows directly by applying Lemma~A.1 of \cite{Gao_Li_2024} with $h(\cdot,\cdot) = \min\{\cdot,\cdot\}$. \end{proof}

\begin{lemmaA} \label{lemA:aux-2-2}
For any $\epsilon>0$, there exists a simple function of form
\begin{eqnarray*}
q(u,v,w) = \sum_{j=1}^m c_j \Ind_{B_j}(u)\Ind_{C_j}(v)\Ind_{D_j}(w)
\end{eqnarray*}
with $m<\infty$, $c_j>0$ and $B_j, C_j, D_j$ being intervals of $\mathbbR$, such that
\begin{eqnarray}
\sup_{(u,v)\in \mathbbR^2} | q(u,v,w)- F_Y(u \wedge v) \cdot F_Y(u \wedge w)  | <\epsilon. \label{eq:lemA:aux-2-2:1}
\end{eqnarray}
\end{lemmaA}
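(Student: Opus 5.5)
The plan is to reduce to the univariate approximation already available in Lemma~\ref{lemA:aux-2-1}, and then combine it with a product construction. Write $a=F_Y(u\wedge v)$ and $b=F_Y(u\wedge w)$; both take values in $[0,1]$. First, fix $\epsilon'>0$ (to be chosen in terms of $\epsilon$) and apply Lemma~\ref{lemA:aux-2-1} twice. This gives simple functions
\[
q_1(u,v)=\sum_{j=1}^{m_1} c_j \Ind_{B_j}(u)\Ind_{C_j}(v),
\qquad
q_2(u,w)=\sum_{k=1}^{m_2} d_k \Ind_{B'_k}(u)\Ind_{D_k}(w),
\]
with $\sup|q_1-F_Y(u\wedge v)|<\epsilon'$ and $\sup|q_2-F_Y(u\wedge w)|<\epsilon'$. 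Here $c_j,d_k>0$ and all sets are intervals. Since $0\le F_Y\le 1$, both $q_1$ and $q_2$ are bounded in absolute value by $1+\epsilon'$.

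Next, take $q=q_1q_2$ and expand the product:
\[
q(u,v,w)=\sum_{j,k} c_jd_k\,\Ind_{B_j\cap B'_k}(u)\,\Ind_{C_j}(v)\,\Ind_{D_k}(w).
\]
This has the required form with $m=m_1m_2<\infty$ terms. The coefficients $c_jd_k$ are positive, and $B_j\cap B'_k$ is again an interval (possibly empty, in which case the term can be dropped). For the uniform error, use the identity $q_1q_2-ab=(q_1-a)q_2+a(q_2-b)$. This gives
\[
\sup_{(u,v,w)}|q-ab|\le \epsilon'(1+\epsilon')+\epsilon'.
\]
Choosing $\epsilon'$ small enough that this bound is below $\epsilon$ proves \eqref{eq:lemA:aux-2-2:1}, with the supremum taken over $(u,v,w)\in\mathbb{R}^3$.

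I do not expect a real obstacle. The only points needing care are that intersections of intervals remain intervals, and that the positivity of the coefficients is inherited from Lemma~\ref{lemA:aux-2-1}. If one preferred not to rely on positivity there, one could instead invoke Lemma~A.1 of \cite{Gao_Li_2024} directly with the continuous trivariate map $(x,y,z)\mapsto \min\{x,y\}\min\{x,z\}$ on $[0,1]^3$, composed with $(F_Y(u),F_Y(v),F_Y(w))$, exactly as in the proof of Lemma~\ref{lemA:aux-2-1}.
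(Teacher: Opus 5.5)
Your proposal is correct and follows essentially the same route as the paper: apply Lemma~\ref{lemA:aux-2-1} twice, take $q=q_1q_2$, expand it into a sum over intersected intervals, and bound the error by $2\epsilon'+\epsilon'^2$ using the same algebraic decomposition. You also correctly take the supremum over $(u,v,w)\in\mathbb{R}^3$, which is what the statement intends despite its $(u,v)\in\mathbb{R}^2$.
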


\begin{proof} Fix some $\epsilon >0$. By Lemma \ref{lemA:aux-2-1}, there exist simple functions $$q_1(u,v) = \sum_{j=1}^{m_1} c_{1,j} \Ind_{B_{1,j}}(u)\Ind_{C_{1,j}}(v)$$ and $$q_2(u,w) = \sum_{j=1}^{m_2}  c_{2,j} \Ind_{B_{2,j}}(u)\Ind_{C_{2,j}}(w)$$ with 
\[
\{B_{1,j}, C_{1,j}\}_{j=1}^{m_1}~~~ {\rm and}~~~  \{B_{2,j}, C_{2.j}\}_{j=1}^{m_2}
\]
being intervals in $\mathbbR$, 
such that 
\[
\sup_{(u,v)\in \mathbbR^2} | q_1(u,v)- F_Y(u \wedge v) | <\epsilon~~~ {\rm and}~~~ \sup_{(u,w)\in \mathbbR^2} | q_2(u,w)- F_Y(u \wedge w) | <\epsilon.
\]
Let
\begin{eqnarray}
q(u,v,w)&:=& q_1(u,v) \cdot q_2(u,w) \cr
&=& \Big[\sum_{j=1}^{m_1} c_{1,j} \Ind_{B_{1,j}}(u)\Ind_{C_{1,j}}(v)\Big]\cdot \Big[ \sum_{j=1}^{m_2}  c_{2,j} \Ind_{B_{2,j}}(u)\Ind_{C_{2,j}}(w)\Big] \cr
&=&
\sum_{i=1}^{m_1}\sum_{j=1}^{m_2} c_{1,i}c_{2,j} \Ind_{B_{1,i} \cap B_{2,j}}(u)\Ind_{C_{1,i}}(v) \Ind_{C_{2,j}}(w), \label{eq:lemA:aux-2-2:2}
\end{eqnarray}
which is a simple function of $(u,v, w)$. Write $\Delta_1 =  q_1(u,v)- F_Y(u \wedge v) $ and $\Delta_2 =  q_2(u,w)- F_Y(u \wedge w) $. Then
\begin{eqnarray*}
|q(u,v,w) - F_Y(u \wedge v) \cdot F_Y(u \wedge w)| 
&=&
|q_1(u,v)\cdot q_2(u,w) - F_Y(u \wedge v) \cdot F_Y(u \wedge w)|\cr
&=& |\Delta_1 \cdot F_Y(u \wedge w) + \Delta_2 \cdot  F_Y(u \wedge v) + \Delta_1 \cdot \Delta_2| \cr
&\leq &  2\epsilon + \epsilon^2.
\end{eqnarray*}
Since $2\epsilon + \epsilon^2$ above could be arbitrarily small,
we prove that the simple  function $q(u,v,w)$ in  \eqref{eq:lemA:aux-2-2:2} satisfies \eqref{eq:lemA:aux-2-2:1}. \end{proof}

\begin{lemmaA} \label{lemA:aux-2-3}
Define the function $g:\mathbbR^{2p+3q} \mapsto [0,\infty)$,
\begin{eqnarray}
g\big( (\bx_1, \bz_1), \bz_2, (\bx_3,\bz_3)\big): = \int \int \int F_Y(u\wedge v) \cdot F_Y(u \wedge w) 
\d \tmu_{(\bx_1, \bz_1)}(u) \d \tmu_{\bz_2}(v) \d \tmu_{(\bx_3,\bz_3)}(w), \qquad  \label{eq:lemA:aux-2-3:1}
\end{eqnarray}
where $\tmu_\bz(\cdot)$ is the conditional law of $Y$ given $\bZ=\bz$, and $\tmu_{(\bx, \bz)}(\cdot)$ is the conditional law of $Y$ given $(\bX,\bZ)=(\bx ,\bz)$ as defined in  Lemma \ref{lemA:mu}.
Then as $n\to \infty$, we have the following convergence results:
\begin{eqnarray}
&& g\big((\bX_1, \bZ_1), \bZ_{N(1)}, (\bX_{M(1)}, \bZ_{M(1)})\big) - g\big((\bX_1, \bZ_1), \bZ_1, (\bX_1, \bZ_1)\big)  \conP 0, \label{eq:lemA:aux-2-3:2}
\end{eqnarray}
\end{lemmaA}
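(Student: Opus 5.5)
The plan is to reduce $g$ to products of conditional probabilities of intervals, via the simple-function approximation of Lemma~\ref{lemA:aux-2-2}, and then handle each factor with the nearest-neighbor convergence results of Group~1.

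\textbf{Step 1 (uniform approximation).} Fix $\epsilon>0$ and take $q(u,v,w)=\sum_{j=1}^m c_j\Ind_{B_j}(u)\Ind_{C_j}(v)\Ind_{D_j}(w)$ from Lemma~\ref{lemA:aux-2-2}, with sup-error below $\epsilon$. Since each $\tmu$ is a probability measure (Lemma~\ref{lemA:mu}), integrating $q$ in place of the integrand changes $g$ by at most $\epsilon$, uniformly in all arguments. The approximating function is
\[
g_q\big((\bx_1,\bz_1),\bz_2,(\bx_3,\bz_3)\big)=\sum_{j=1}^m c_j\,\tmu_{(\bx_1,\bz_1)}(B_j)\,\tmu_{\bz_2}(C_j)\,\tmu_{(\bx_3,\bz_3)}(D_j).
\]
It therefore suffices to prove \eqref{eq:lemA:aux-2-3:2} with $g$ replaced by $g_q$. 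Because $m$ is finite and every factor lies in $[0,1]$, this reduces to two facts for each interval $I$:
\[
\tmu_{\bZ_{N(1)}}(I)-\tmu_{\bZ_1}(I)\conP 0
\quad\text{and}\quad
\tmu_{(\bX_{M(1)},\bZ_{M(1)})}(I)-\tmu_{(\bX_1,\bZ_1)}(I)\conP 0 .
\]
The first factor, $\tmu_{(\bX_1,\bZ_1)}(B_j)$, is unchanged and needs no treatment.

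\textbf{Step 2 (the two factors).} For a half-line $I=[t,\infty)$ we have $\tmu_{\bz}(I)=G_{\bz}(t)$. By Assumption~\ref{assump_4.5}, $\bz\mapsto G_{\bz}(t)$ is measurable and continuous almost everywhere, and likewise $(\bx,\bz)\mapsto G_{\bx,\bz}(t)$. A general interval is a difference of such half-lines, open or closed at the endpoints; because $F_Y$ is continuous, we can set $P(Y=t\mid\cdot)=0$ almost surely and both versions of each half-line carry the same mass.

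For these functions, $\|\bZ_{N(1)}-\bZ_1\|\conas0$ and $\|(\bX_{M(1)},\bZ_{M(1)})-(\bX_1,\bZ_1)\|\conas0$ (Lemma 11.3 of \cite{azadkia2019simple}). We then apply Lemma~\ref{lemA:aux-1-9} with $\bU_n=\bU=\bZ_1$ and $\bV_n=\bZ_{N(1)}$, which is absolutely continuous with a continuous density by Assumption~\ref{assump_4.3}, and analogously for the pair $(\bX,\bZ)$. Lemma 11.7 of \cite{azadkia2019simple} gives the same conclusion for measurable functions directly. Either route yields both convergences above.

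\textbf{Step 3 (assembling).} Combining Steps 1 and 2, $\limsup_n \P(|g(\cdots)-g(\cdots)|>3\epsilon)=0$ for every $\epsilon>0$, which proves the lemma.

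The main obstacle is Step 2. Lemma~\ref{lemA:aux-1-9} needs a continuous density for the limit variable, and $\bZ$'s density is continuous only as the marginal of $f_{\bX,\bZ}$. If that is not enough, the fallback is Lemma 11.7 of \cite{azadkia2019simple}, which needs only measurability, or a direct use of the almost-everywhere continuity in Assumption~\ref{assump_4.5} together with almost-sure convergence of the nearest neighbor.
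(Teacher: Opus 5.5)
Your proposal is correct and follows the paper's proof step for step. Both arguments approximate $F_Y(u\wedge v)F_Y(u\wedge w)$ uniformly by the simple function of Lemma~\ref{lemA:aux-2-2}, reduce to the factors $\tmu_{\bZ_{N(1)}}(C_j)$ and $\tmu_{(\bX_{M(1)},\bZ_{M(1)})}(D_j)$, invoke Lemmas 11.3 and 11.7 of \cite{azadkia2019simple}, and finish with the triangle inequality. What you call the fallback is in fact the paper's main route, and it is the one to use. Lemma 11.7 needs only measurability of $\bz\mapsto\tmu_{\bz}(I)$, which Lemma~\ref{lemA:mu} supplies for every Borel $I$. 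By contrast, Assumption~\ref{assump_4.3} does not guarantee a continuous marginal density for $\bZ$, so the Lemma~\ref{lemA:aux-1-9} variant and the half-line / Assumption~\ref{assump_4.5} detour are unnecessary.
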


\begin{proof}
Fix some $\epsilon >0$ and $\delta >0$. From Lemma \ref{lemA:aux-2-2}, there exists a simple function $	q(u,v,w) = \sum_{j=1}^m c_j \Ind_{B_j}(u)\Ind_{C_j}(v)\Ind_{D_j}(w)$ such that $	\sup_{(u,v,w)\in \mathbbR^3} | q(u,v,w)- F_Y(u \wedge v) \cdot F_Y(u \wedge w)  | <\epsilon$. Define the function 
\begin{eqnarray}
r\big( (\bx_1, \bz_1), \bz_2, (\bx_3,\bz_3)\big) &:=& \int \int \int q(u,v,w) 	\d \tmu_{(\bx_1, \bz_1)}(u) \d \tmu_{\bz_2}(v) \d \tmu_{(\bx_3,\bz_3)}(w)\cr
&=&
\sum_{j=1}^m c_j \int \int \int \Ind_{B_j}(u)\Ind_{C_j}(v)\Ind_{D_j}(w) 	\d \tmu_{(\bx_1, \bz_1)}(u) \d \tmu_{\bz_2}(v) \d \tmu_{(\bx_3,\bz_3)}(w)\cr
&=&
\sum_{j=1}^m c_j \cdot \tmu_{(\bx_1, \bz_1)}(B_j) \tmu_{\bz_2}(C_j)  \tmu_{(\bx_3,\bz_3)}(D_j) . \label{eq:lemA:aux-2-3:3}
\end{eqnarray}
Then 
\begin{eqnarray*}
&&\sup_{((\bx_1, \bz_1), \bz_2, (\bx_3,\bz_3)) \in \mathbbR^{2p+3q} } \Big| g\big((\bx_1, \bz_1), \bz_2, (\bx_3,\bz_3)\big) - r\big((\bx_1, \bz_1), \bz_2, (\bx_3,\bz_3)\big) \Big|  \cr
&\leq&
\sup_{((\bx_1, \bz_1), \bz_2, (\bx_3,\bz_3)) \in \mathbbR^{2p+3q}  } 
\int \int \int \big| F_Y(u\wedge v) \cdot F_Y(u \wedge w) - q(u,v,w) \big|\d \tmu_{(\bx_1, \bz_1)}(u) \d \tmu_{\bz_2}(v) \d \tmu_{(\bx_3,\bz_3)}(w)\cr
&<& \epsilon.
\end{eqnarray*}

Note that 
\begin{eqnarray}
&& r\big((\bX_1, \bZ_1), \bZ_{N(1)}, (\bX_{M(1)}, \bZ_{M(1)})\big) - r\big((\bX_1, \bZ_1), \bZ_1, (\bX_1, \bZ_1)\big)  \cr
&=& \sum_{j=1}^m 
c_j \cdot \tmu_{\bZ_1}(B_j)\cdot\big[ \tmu_{\bZ_{N(1)}}(C_j)  \tmu_{(\bX_{M(1)},\bZ_{M(1)})}(D_j)   - \tmu_{\bZ_1}(C_j) \tmu_{(\bX_1,\bZ_1)}(D_j) \big]. \label{eq:lemA:aux-2-3:4}
\end{eqnarray}
By Lemma 11.3 in \cite{azadkia2019simple},  $\|\bZ_{N(1)}-\bZ_1\|\to 0$ and $\|(\bX_{M(1)}, \bZ_{M(1)})-(\bX_1, \bZ_1)\| \to 0$ almost surely as $n \to \infty$. 
Note that  for any Borel set $A$, both $\bz \mapsto \tmu_\bz(A)$ and $(\bx,\bz) \mapsto \tmu_{(\bx,\bz)}(A)$ 
are measurable mappings. 
Then by Lemma 11.7 in \cite{azadkia2019simple}, we have that 
$\tmu_{\bZ_{N(1)}}(C_j) - \tmu_{\bZ_1}(C_j) \conP 0 $ and $\tmu_{(\bX_{M(1)}, \bZ_{M(1)})}(D_j) - \tmu_{(\bX_1,\bZ_1)}(D_j) \conP 0 $ for all $j$, which further implies that $\tmu_{\bZ_{N(1)}}(C_j)  \tmu_{(\bX_{M(1)},\bZ_{M(1)})}(D_j)   - \tmu_{\bZ_1}(C_j) \tmu_{(\bX_1,\bZ_1)}(D_j) \conP 0$. This combined with \eqref{eq:lemA:aux-2-3:4} gives that $r\big((\bX_1, \bZ_1), \bZ_{N(1)}, (\bX_{M(1)}, \bZ_{M(1)})\big) - r\big((\bX_1, \bZ_1), \bZ_1, (\bX_1, \bZ_1)\big)   \conP 0$. Therefore, for sufficiently large $n$, we have
\begin{eqnarray*}
\P\Big( \Big| r\big((\bX_1, \bZ_1), \bZ_{N(1)}, (\bX_{M(1)}, \bZ_{M(1)})\big) - r\big((\bX_1, \bZ_1), \bZ_1, (\bX_1, \bZ_1)\big)  \Big| >\epsilon \Big) < \delta.
\end{eqnarray*}
Note that
\begin{eqnarray*}
&&\Big|g\big((\bX_1, \bZ_1), \bZ_{N(1)}, (\bX_{M(1)}, \bZ_{M(1)})\big) - g\big((\bX_1, \bZ_1), \bZ_1, (\bX_1, \bZ_1)\big) \Big| \cr
&\leq&
\Big| g\big((\bX_1, \bZ_1), \bZ_{N(1)}, (\bX_{M(1)}, \bZ_{M(1)})\big) - r\big((\bX_1, \bZ_1), \bZ_{N(1)}, (\bX_{M(1)}, \bZ_{M(1)})\big) \Big| \cr
&& +
\Big|r\big((\bX_1, \bZ_1), \bZ_{N(1)}, (\bX_{M(1)}, \bZ_{M(1)})\big) - r\big((\bX_1, \bZ_1), \bZ_1, (\bX_1, \bZ_1)\big) \Big| \cr
&& +
\Big| g\big((\bX_1, \bZ_1), \bZ_1, (\bX_1, \bZ_1)\big)  -
g\big((\bX_1, \bZ_1), \bZ_1, (\bX_1, \bZ_1)\big) \Big| \cr
&\leq &  \Big|r\big((\bX_1, \bZ_1), \bZ_{N(1)}, (\bX_{M(1)}, \bZ_{M(1)})\big) - r\big((\bX_1, \bZ_1), \bZ_1, (\bX_1, \bZ_1)\big) \Big| + 2 \epsilon.
\end{eqnarray*}
Combining the above results, for sufficiently large $n$, 
\begin{eqnarray*}
\P\Big(|\Big|g\big((\bX_1, \bZ_1), \bZ_{N(1)}, (\bX_{M(1)}, \bZ_{M(1)})\big) - g\big((\bX_1, \bZ_1), \bZ_1, (\bX_1, \bZ_1)\big) \Big| > 3\epsilon \Big) < \delta.
\end{eqnarray*}
The proof is completed given that $\epsilon$ and $\delta$ are arbitrary.
\end{proof}

\begin{lemmaA} \label{lemA:aux-2-4}
Define the function $g^*:\mathbbR^{3p+3q} \mapsto [0,\infty)$,
\begin{eqnarray}
&& g^*\big( (\bx_1, \bz_1), (\bx_2,\bz_2), (\bx_3,\bz_3)\big) \cr
&: =& \int \int \int F_Y(u\wedge v) \cdot F_Y(u \wedge w) 
\d \tmu_{(\bx_1, \bz_1)}(u) \d \tmu_{(\bx_2, \bz_2)}(v) \d \tmu_{(\bx_3,\bz_3)}(w), \qquad  \label{eq:lemA:aux-2-4:1}
\end{eqnarray}
Then as $n\to \infty$, we have the following convergence results:
\begin{eqnarray*}
&& g^*\big((\bX_{N(1)}, \bZ_{N(1)}), (\bX_1, \bZ_1), (\bX_{M(N(1))}, \bZ_{M(N(1))})\big) \cr
&& \hspace{3cm}- g^*\big((\bX_{N(1)}, \bZ_{N(1)}), (\bX_1, \bZ_1), (\bX_{N(1)}, \bZ_{N(1)})\big)  \conP 0. 
\end{eqnarray*}
\end{lemmaA}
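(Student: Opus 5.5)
We follow the template of the proof of Lemma~\ref{lemA:aux-2-3}, replacing $g$ by $g^*$ and the points $\bZ_{N(1)}$, $(\bX_{M(1)},\bZ_{M(1)})$ by the appropriate neighbors of $N(1)$.

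\textbf{Step 1: simple-function approximation.} Fix $\epsilon,\delta>0$. By Lemma~\ref{lemA:aux-2-2}, choose $q(u,v,w)=\sum_{j=1}^m c_j\Ind_{B_j}(u)\Ind_{C_j}(v)\Ind_{D_j}(w)$ uniformly $\epsilon$-close to $F_Y(u\wedge v)F_Y(u\wedge w)$. Define
\[
r^*\big((\bx_1,\bz_1),(\bx_2,\bz_2),(\bx_3,\bz_3)\big)=\sum_{j=1}^m c_j\,\tmu_{(\bx_1,\bz_1)}(B_j)\,\tmu_{(\bx_2,\bz_2)}(C_j)\,\tmu_{(\bx_3,\bz_3)}(D_j).
\]
Since each $\tmu$ is a probability measure, $\sup|g^*-r^*|<\epsilon$ everywhere. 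It therefore suffices to show that the $r^*$-difference tends to $0$ in probability.

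\textbf{Step 2: reduce to a single factor.} In the $r^*$-difference, the first two arguments coincide in both terms. The difference is thus
\[
\sum_j c_j\,\tmu_{(\bX_{N(1)},\bZ_{N(1)})}(B_j)\,\tmu_{(\bX_1,\bZ_1)}(C_j)\,\big[\tmu_{(\bX_{M(N(1))},\bZ_{M(N(1))})}(D_j)-\tmu_{(\bX_{N(1)},\bZ_{N(1)})}(D_j)\big].
\]
The prefactors are bounded by $1$, so it is enough to show, for each fixed Borel set $D$,
\[
\phi_D(\bX_{M(N(1))},\bZ_{M(N(1))})-\phi_D(\bX_{N(1)},\bZ_{N(1)})\conP 0,\qquad\text{where } \phi_D(\bx,\bz)=\tmu_{(\bx,\bz)}(D).
\]
The function $\phi_D$ is measurable by Lemma~\ref{lemA:mu}.

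\textbf{Step 3: apply Lemma~\ref{lemA:aux-1-9}.} Set $\bU_n=(\bX_{N(1)},\bZ_{N(1)})$ and $\bV_n=(\bX_{M(N(1))},\bZ_{M(N(1))})$, with $\bU=(\bX_1,\bZ_1)$.
\begin{itemize}
\item By \eqref{eq:lemA:aux-1-8:4}, $\bU_n\conD\bU$.
\item By \eqref{eq:lemA:aux-1-8:6}, established inside the proof of Lemma~\ref{lemA:aux-1-8}, $\|\bV_n-\bU_n\|\conP0$.
\item By Assumption~\ref{assump_4.3}, $\bU$ is absolutely continuous with a continuous density on its support.
\end{itemize}
Lemma~\ref{lemA:aux-1-9} then gives $\phi_D(\bV_n)-\phi_D(\bU_n)\conP 0$ for each $D=D_j$.

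\textbf{Step 4: conclude.} Summing the finitely many terms yields $|r^*\text{-difference}|\le\epsilon$ with probability at least $1-\delta$ for large $n$. Combined with the uniform $\epsilon$-approximation from Step 1, the $g^*$-difference exceeds $3\epsilon$ with probability less than $\delta$. Since $\epsilon,\delta$ are arbitrary, the claimed convergence follows.

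The main obstacle is Step 3. Here Lemma 11.7 of \cite{azadkia2019simple} cannot be invoked directly, as it was for $N(1)$ and $M(1)$, because the base point $N(1)$ is itself a random nearest neighbor rather than an i.i.d.\ sample point. This is why we rely on the weak-convergence statement \eqref{eq:lemA:aux-1-8:4} together with the Lusin-type argument of Lemma~\ref{lemA:aux-1-9}. One point to check is that Lemma~\ref{lemA:aux-1-9} needs only $\bU$, not $\bU_n$, to have a continuous density, which holds here.
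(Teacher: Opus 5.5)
Your proof follows the paper's argument essentially line for line: Lemma~\ref{lemA:aux-2-2}, then reduction to the factor $\tmu_{(\cdot)}(D_j)$, then Lemma~\ref{lemA:aux-1-9} fed with \eqref{eq:lemA:aux-1-8:4} and \eqref{eq:lemA:aux-1-8:6}. However, your closing remark about Step~3 is wrong, and it sits exactly on the weak point of that route, which the paper's proof shares.

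\textbf{Why Lemma~\ref{lemA:aux-1-9} fails as stated.} The lemma is false under its stated hypotheses. Its proof needs $\P(\bU_n\in B)\to\P(\bU\in B)$ for the Lusin exceptional set $B$, which is an arbitrary Borel set of small Lebesgue measure. Convergence in distribution gives nothing of the sort. For example, take $\bU\sim\mathrm{Unif}[0,1]$, $\bU_n$ uniform on $\{k/n\}_{k=1}^n$, $\bV_n=\bU_n+\sqrt{2}/n^2$ and $f=\Ind_{\mathbb{Q}}$. Then $f(\bV_n)-f(\bU_n)=-1$ identically, even though $\bU$ has a continuous density. Absolutely continuous $\bU_n$ concentrated on sets of vanishing Lebesgue measure give the same failure. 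So a continuous density for $\bU$ alone does not suffice; you need control of the laws of $\bU_n$ and $\bV_n$ themselves. That control is also what makes $\phi_D(\bU_n)$ and $\phi_D(\bV_n)$ independent of the version chosen for $\phi_D(\bx,\bz)=\tmu_{(\bx,\bz)}(D)$, which is only defined almost everywhere.

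\textbf{How to repair Step~3.} In your setting the needed control does hold. Put $\bW=(\bX,\bZ)$. By exchangeability and the bounded in-degree of nearest-neighbor graphs (\cite{MR682809}, Corollary~S1), for every Borel $B$,
\[
\P\big(\bW_{M(N(1))}\in B\big)=\frac{1}{n}\,\E\Big\{\sum_{k=1}^n \Ind(\bW_k\in B)\cdot\#\{i: M(N(i))=k\}\Big\}\le C\,\P(\bW\in B).
\]
This holds because $\#\{i:M(N(i))=k\}=\sum_{j:M(j)=k}\#\{i:N(i)=j\}$ is at most the product of the two maximal in-degrees. Likewise $\P(\bW_{N(1)}\in B)\le C\,\P(\bW\in B)$.

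Now apply Lusin's theorem to $\phi_D$ with respect to the law of $\bW$. For $\epsilon>0$ there is a compact $K$ with $\P(\bW\notin K)<\epsilon$ on which $\phi_D$ is uniformly continuous: say $|\phi_D(\bw)-\phi_D(\bw')|\le\epsilon$ whenever $\bw,\bw'\in K$ and $\|\bw-\bw'\|<\eta$. Then
\[
\P\big(|\phi_D(\bV_n)-\phi_D(\bU_n)|>\epsilon\big)\le 2C\epsilon+\P\big(\|\bV_n-\bU_n\|\ge\eta\big).
\]
The last term tends to $0$ by \eqref{eq:lemA:aux-1-8:6}. That fact also follows directly from the same in-degree computation applied to $\Ind(\|\bW_{M(j)}-\bW_j\|\ge\eta)$, together with $\|\bW_{M(1)}-\bW_1\|\conas 0$. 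Use this in place of the appeal to Lemma~\ref{lemA:aux-1-9}; it needs neither \eqref{eq:lemA:aux-1-8:4} nor continuity of the density. The rest of your argument then goes through unchanged.
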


\begin{proof}
The proof is similar to that of Lemma \ref{lemA:aux-2-3}. We provide some sketches here.

Fix some $\epsilon >0$ and $\delta >0$. From Lemma \ref{lemA:aux-2-2}, there exists a simple function $	q(u,v,w) = \sum_{j=1}^m c_j \Ind_{B_j}(u)\Ind_{C_j}(v)\Ind_{D_j}(w)$ such that $	\sup_{(u,v,w)\in \mathbbR^3} | q(u,v,w)- F_Y(u \wedge v) \cdot F_Y(u \wedge w)  | <\epsilon$. Define the function 
\begin{eqnarray*}
&&r^*\big( (\bx_1, \bz_1),(\bx_2, \bz_2), (\bx_3,\bz_3)\big) \cr
&:=& \int \int \int q(u,v,w) 	\d \tmu_{(\bx_1, \bz_1)}(u) \d \tmu_{(\bx_2, \bz_2)}(v) \d \tmu_{(\bx_3,\bz_3)}(w)\cr
&=&
\sum_{j=1}^m c_j \cdot \tmu_{(\bx_1, \bz_1)}(B_j)  \tmu_{(\bx_2, \bz_2)}(C_j)  \tmu_{(\bx_3,\bz_3)}(D_j) .
\end{eqnarray*}
Then 
\begin{eqnarray*}
&&\sup_{((\bx_1, \bz_1), (\bx_2, \bz_2), (\bx_3,\bz_3)) \in \mathbbR^{3p+3q} } \Big| g^*\big((\bx_1, \bz_1), (\bx_2, \bz_2), (\bx_3,\bz_3)\big) - r^*\big((\bx_1, \bz_1), (\bx_2, \bz_2), (\bx_3,\bz_3)\big) \Big| < \epsilon.
\end{eqnarray*}
Note that 
\begin{eqnarray*}
&& r^*\big((\bX_{N(1)}, \bZ_{N(1)}), (\bX_1, \bZ_1), (\bX_{M(N(1))}, \bZ_{M(N(1))})\big) \cr
&&-
r^*\big((\bX_{N(1)}, \bZ_{N(1)}), (\bX_1, \bZ_1), (\bX_{N(1)}, \bZ_{N(1)})\big) \cr
&=& \sum_{j=1}^m 
c_j \cdot \tmu_{(\bX_{N(1)}, \bZ_{N(1)})}(B_j)\cdot
\tmu_{(\bX_1, \bZ_1)}(C_j) \cdot 
\big[  \tmu_{(\bX_{M(N(1))}, \bZ_{M(N(1))})}(D_j)  - \tmu_{(\bX_{N(1)},\bZ_{N(1)})}(D_j)  \big]. 
\end{eqnarray*}
By \eqref{eq:lemA:aux-1-8:6}, $\|(\bX_{M(N(1))},\bZ_{M(N(1))}) - (\bX_{N(1)}, \bZ_{N(1)})\| \conP 0$. By \eqref{eq:lemA:aux-1-8:4}, $(\bX_{N(1)}, \bZ_{N(1)}) \conD (\bX_1, \bZ_1)$. Note that 
for any Borel set $A$, $(\bx,\bz) \mapsto \tmu_{(\bx,\bz)}(A)$ 
is measurable mapping. 
These three facts combined with 
Lemma \ref{lemA:aux-1-9} give that $  \tmu_{(\bX_{M(N(1))}, \bZ_{M(N(1))})}(D_j)  - \tmu_{(\bX_{N(1)},\bZ_{N(1)})}(D_j) \conP 0$ for all $j$. Therefore, for sufficiently large $n$, we have
\begin{eqnarray*}
&&\P\Big( \Big| r^*\big((\bX_{N(1)}, \bZ_{N(1)}), (\bX_1, \bZ_1), (\bX_{M(N(1))}, \bZ_{M(N(1))})\big) \cr
&&\hspace{3cm}
-r^*\big((\bX_{N(1)}, \bZ_{N(1)}), (\bX_1, \bZ_1), (\bX_{N(1)}, \bZ_{N(1)})\big)   \Big| >\epsilon \Big) < \delta.
\end{eqnarray*}
By the similar arguments in the proof of Lemma \ref{lemA:aux-2-3}, for sufficiently large $n$, we have
\begin{eqnarray*}
&&\P\Big(\Big|g^*\big((\bX_{N(1)}, \bZ_{N(1)}), (\bX_1, \bZ_1), (\bX_{M(N(1))}, \bZ_{M(N(1))})\big) \cr
&& 
\hspace{3cm} - g^*\big((\bX_{N(1)}, \bZ_{N(1)}), (\bX_1, \bZ_1), (\bX_{N(1)}, \bZ_{N(1)})\big) \Big| > 3\epsilon \Big) < \delta.
\end{eqnarray*}
The proof is completed given that $\epsilon$ and $\delta$ are arbitrary.
\end{proof}

\begin{lemmaA} \label{lemA:aux-2-5}
Recall the function $g$ defined in \eqref{eq:lemA:aux-2-3:1}. Then as $n\to \infty$,  we have
\begin{eqnarray*}
&& g\big((\bX_{N(1)}, \bZ_{N(1)}), \bZ_{N(1)}, (\bX_{N(1)}, \bZ_{N(1)})\big) - g\big((\bX_{N(1)}, \bZ_{N(1)}), \bZ_1, (\bX_{N(1)}, \bZ_{N(1)})\big) \conP 0, \cr
&& g\big((\bX_{M(1)}, \bZ_{M(1)}), \bZ_{N(M(1))}, (\bX_1, \bZ_1) \big) - 
g\big((\bX_1, \bZ_1), \bZ_1, (\bX_1, \bZ_1)\big) \conP 0.
\end{eqnarray*}
\end{lemmaA}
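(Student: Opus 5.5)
We follow the template of Lemmas~\ref{lemA:aux-2-3} and \ref{lemA:aux-2-4}: first reduce the claim to finitely many indicator-type conditional probabilities, then establish convergence in probability for each of them. Fix $\epsilon,\delta>0$. By Lemma~\ref{lemA:aux-2-2}, choose a simple function $q(u,v,w)=\sum_{j=1}^m c_j\Ind_{B_j}(u)\Ind_{C_j}(v)\Ind_{D_j}(w)$ that is uniformly $\epsilon$-close to $F_Y(u\wedge v)F_Y(u\wedge w)$. Define the corresponding $r$ as in \eqref{eq:lemA:aux-2-3:3}; it satisfies $\sup|g-r|<\epsilon$. It then suffices to prove both statements with $g$ replaced by $r$. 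Since $r$ is a finite sum of products of bounded factors, it is further enough to show, for each fixed Borel interval $C$ (resp.\ $B$, $D$), that the factors which change converge in probability.

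For the first statement, the arguments $(\bX_{N(1)},\bZ_{N(1)})$ in the first and third slots are common to both terms. The difference is therefore
\[
\sum_j c_j\,\tmu_{(\bX_{N(1)},\bZ_{N(1)})}(B_j)\,\tmu_{(\bX_{N(1)},\bZ_{N(1)})}(D_j)\,\big[\tmu_{\bZ_{N(1)}}(C_j)-\tmu_{\bZ_1}(C_j)\big].
\]
We will show $\tmu_{\bZ_{N(1)}}(C)-\tmu_{\bZ_1}(C)\conP 0$. The map $\bz\mapsto\tmu_\bz(C)$ is measurable by Lemma~\ref{lemA:mu}, and $\|\bZ_{N(1)}-\bZ_1\|\conas 0$ by Lemma 11.3 of \cite{azadkia2019simple}. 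Lemma 11.7 of \cite{azadkia2019simple} then gives the claim directly. Alternatively, apply Lemma~\ref{lemA:aux-1-9} with $\bU_n=\bZ_1$ (whose law is fixed and has a continuous density) and $\bV_n=\bZ_{N(1)}$. Boundedness of the remaining factors finishes this part.

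For the second statement, we telescope:
\[
r\big((\bX_{M(1)},\bZ_{M(1)}),\bZ_{N(M(1))},(\bX_1,\bZ_1)\big)-r\big((\bX_1,\bZ_1),\bZ_1,(\bX_1,\bZ_1)\big),
\]
changing one slot at a time. This requires two convergences.
\begin{itemize}
\item[(a)] $\tmu_{(\bX_{M(1)},\bZ_{M(1)})}(B)-\tmu_{(\bX_1,\bZ_1)}(B)\conP0$. This follows from $\|(\bX_{M(1)},\bZ_{M(1)})-(\bX_1,\bZ_1)\|\conas0$ together with Lemma 11.7 of \cite{azadkia2019simple}, or Lemma~\ref{lemA:aux-1-9} with $\bU_n=(\bX_1,\bZ_1)$.
\item[(b)] $\tmu_{\bZ_{N(M(1))}}(C)-\tmu_{\bZ_1}(C)\conP0$. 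Here we use \eqref{eq:lemA:aux-1-8:2}, $\|\bZ_{N(M(1))}-\bZ_1\|\conP0$, and apply Lemma~\ref{lemA:aux-1-9} with $\bU_n=\bZ_1$ (fixed law, continuous density by Assumption~\ref{assump_4.3}) and $\bV_n=\bZ_{N(M(1))}$.
\end{itemize}
Products of bounded sequences converging in probability converge in probability. The same three-term triangle inequality used in Lemma~\ref{lemA:aux-2-3} then yields $\P(|g(\cdot)-g(\cdot)|>3\epsilon)<\delta$ for large $n$, and the conclusion follows since $\epsilon,\delta$ are arbitrary.

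The main obstacle is step (b). Because $\bZ_{N(M(1))}$ is a second-generation neighbour, it is not a nearest neighbour of $\bZ_1$ in a single graph, so Lemma 11.7 of \cite{azadkia2019simple} does not apply verbatim. We avoid this by anchoring at the fixed-law vector $\bZ_1$ and using only the in-probability closeness from Lemma~\ref{lemA:aux-1-8}, as Lemma~\ref{lemA:aux-1-9} permits. One subtlety must be checked. Lemma~\ref{lemA:aux-1-9} asks for a continuous density of the limit on its support, whereas Assumption~\ref{assump_4.3} gives this for $(\bX,\bZ)$. The marginal density of $\bZ$ is obtained by integration and is at least absolutely continuous. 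If continuity of the marginal is not available, we will instead rerun the Lusin-type argument of Lemma~\ref{lemA:aux-1-9}, which only needs $\P(\bZ\in B)$ small whenever $\lambda(B)$ is small; absolute continuity of $\bZ$ suffices for that.
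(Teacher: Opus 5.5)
Your overall route is the one the paper's proof sketch indicates. You approximate via Lemma~\ref{lemA:aux-2-2}, reduce to finitely many factors $\tmu_{\cdot}(\cdot)$, use Lemma~11.7 of \cite{azadkia2019simple} for the one-step neighbours $N(1)$ and $M(1)$, and use \eqref{eq:lemA:aux-1-8:2} for the two-step neighbour. The first statement and your step (a) are fine along the Lemma~11.7 route.

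The gap is in step (b). You rest it on Lemma~\ref{lemA:aux-1-9}, as the sketch implicitly does, but that lemma does not hold as stated. Its Lusin argument needs $\P(\bV_n\in B)$ to be small for the exceptional set $B=\{f\neq g\}$, and it gets this from ``$\P(\bV_n\in B)\to\P(\bU\in B)$''. Convergence in distribution gives that only when $\P(\bU\in\partial B)=0$, and a Lusin exceptional set is in general nothing like a continuity set. For example, take $\bU_n=\bU\sim\mathrm{Unif}[0,1]$, $\bV_n=\lfloor n\bU\rfloor/n$ and $f=\Ind_{\mathbb{Q}}$; then $f(\bV_n)-f(\bU_n)=1$ almost surely. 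Your fallback does not fix this. Absolute continuity of $\bZ$ makes $\P(\bZ_1\in B)$ small, but the argument equally needs $\P(\bZ_{N(M(1))}\in B)$ small. Closeness in probability to $\bZ_1$ says nothing about whether $\bZ_{N(M(1))}$ lands in a set of small measure. Continuity of the marginal density of $\bZ$ is a red herring: Lusin's theorem applies directly to the finite measure $\P_{\bZ}$.

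What is missing is a bound on the law of the two-step neighbour that is uniform in $n$. Let $C_d$ be the in-degree bound for $1$-NN graphs in $\mathbb{R}^d$ \citep{MR682809}. Using exchangeability and $\{i:N(M(i))=j\}=\bigcup_{k:N(k)=j}\{i:M(i)=k\}$,
\[
\P\bigl(\bZ_{N(M(1))}\in B\bigr)=\frac{1}{n}\,\E\Big\{\sum_{j=1}^n\Ind(\bZ_j\in B)\cdot\#\{i:N(M(i))=j\}\Big\}\le C_q\,C_{p+q}\,\P(\bZ\in B).
\]
With this bound the argument closes. Set $f(\bz)=\tmu_{\bz}(C)$ and fix $\eta>0$. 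Lusin's theorem gives a compact $K$ with $\P(\bZ\notin K)<\eta$ and $f|_K$ continuous. Tietze's theorem extends $f|_K$ to a continuous $h:\mathbb{R}^q\to[0,1]$, and then
\[
|f(\bZ_{N(M(1))})-f(\bZ_1)|\le|h(\bZ_{N(M(1))})-h(\bZ_1)|+\Ind(\bZ_{N(M(1))}\notin K)+\Ind(\bZ_1\notin K).
\]
The last two terms are nonzero with probability at most $C_qC_{p+q}\eta$ and $\eta$, respectively. The first term tends to $0$ in probability by uniform continuity of $h$ on compacts, tightness of $\bZ_1$, and \eqref{eq:lemA:aux-1-8:2}.

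Alternatively, you can avoid the domination bound by invoking Assumption~\ref{assump_4.5}. For a half-open interval $C$, the map $\bz\mapsto\tmu_{\bz}(C)$ is continuous at $\P_{\bZ}$-almost every point, so $\|\bZ_{N(M(1))}-\bZ_1\|\conP 0$ alone suffices. This is essentially how the paper handles the analogous issue in Lemma~\ref{lemA:aux-2-8}, via Lemmas~\ref{lemA:aux-1-6}--\ref{lemA:aux-1-7}. The same caveat applies to your ``alternative'' appeals to Lemma~\ref{lemA:aux-1-9} in the first statement and in (a); the Lemma~11.7 route you also give there is sound.
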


\begin{proof}
This lemma can be proved by a similar approach as that of Lemmas \ref{lemA:aux-2-3} and \ref{lemA:aux-2-4}. The proof utilizes the following convergence results: (i). $\|(\bX_{M(1)}, \bZ_{M(1)})-(\bX_1, \bZ_1)\| \conas 0$, and $\|\bZ_{N(1)} -\bZ_1\|\conas 0 $ (By Lemma 11.3 in \cite{azadkia2019simple}); (ii). $\|\bZ_{N(M(1))} - \bZ_{1}\| \conP 0$ (by Lemma \ref{lemA:aux-1-8}).
\end{proof}

\begin{lemmaA} \label{lemA:aux-2-6}
Define the function $\tildeg: \mathbbR^{2p+2q} \mapsto [0,\infty)$,
\begin{eqnarray}
\tildeg\big((\bx_1,\bz_1), (\bx_2,\bz_2)\big): = \int \int F_Y(u\wedge v) 
\d \tmu_{(\bx_1,\bz_1)}(u) \d \tmu_{(\bx_2,\bz_2)}(v).  \label{eq:lemA:aux-2-6:1}
\end{eqnarray}
Then as $n \to \infty$, we have the following convergence results: 
\begin{eqnarray*}
&&\tildeg\big((\bX_1,\bZ_1), (\bX_{M(1)}, \bZ_{M(1)})\big) - \tildeg\big((\bX_1,\bZ_1), (\bX_1, \bZ_1)\big) \conP 0, \cr
&&\tildeg\big((\bX_{M(1)},\bZ_{M(1)}), (\bX_{M(1)}, \bZ_{M(1)})\big) - \tildeg\big((\bX_1,\bZ_1), (\bX_{M(1)}, \bZ_{M(1)})\big) \conP 0, \cr
&&\tildeg\big((\bX_{N(1)}, \bZ_{N(1)}), (\bX_{M(N(1))}, \bZ_{M(N(1))})\big) -
\tildeg\big((\bX_{N(1)}, \bZ_{N(1)}), (\bX_{N(1)}, \bZ_{N(1)})\big)
\conP 0.
\end{eqnarray*}
\end{lemmaA}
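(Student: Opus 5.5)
We use the same simple-function approximation as in Lemmas~\ref{lemA:aux-2-3} and \ref{lemA:aux-2-4}, now for a two-argument kernel. Fix $\epsilon,\delta>0$. By Lemma~\ref{lemA:aux-2-1}, choose $q(u,v)=\sum_{j=1}^m c_j\Ind_{B_j}(u)\Ind_{C_j}(v)$ with $\sup|q(u,v)-F_Y(u\wedge v)|<\epsilon$. Set
\[
\tilde r\big((\bx_1,\bz_1),(\bx_2,\bz_2)\big):=\sum_{j=1}^m c_j\,\tmu_{(\bx_1,\bz_1)}(B_j)\,\tmu_{(\bx_2,\bz_2)}(C_j).
\]
Then $\sup|\tildeg-\tilde r|<\epsilon$ uniformly in the arguments, because each $\tmu$ is a probability measure (Lemma~\ref{lemA:mu}). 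By the same triangle-inequality step as at the end of the proof of Lemma~\ref{lemA:aux-2-3}, each of the three statements reduces to the corresponding statement for $\tilde r$. Since $\tilde r$ is a finite sum of products of bounded terms, it then suffices to show, for each fixed Borel set $D$, that $\tmu_{\bV_n}(D)-\tmu_{\bU_n}(D)\conP0$ along the relevant pair of random points $(\bU_n,\bV_n)$.

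\emph{First statement.} Take $\bU_n=(\bX_1,\bZ_1)$ and $\bV_n=(\bX_{M(1)},\bZ_{M(1)})$. By Lemma 11.3 of \cite{azadkia2019simple}, $\|\bV_n-\bU_n\|\conas0$. The map $(\bx,\bz)\mapsto\tmu_{(\bx,\bz)}(C_j)$ is measurable, so Lemma 11.7 of \cite{azadkia2019simple} gives $\tmu_{\bV_n}(C_j)-\tmu_{\bU_n}(C_j)\conP0$. The first factor $\tmu_{(\bX_1,\bZ_1)}(B_j)$ is the same in both terms and is bounded by one.

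\emph{Second statement.} Here the second argument $(\bX_{M(1)},\bZ_{M(1)})$ is common to both terms, and only the first argument changes from $(\bX_1,\bZ_1)$ to $(\bX_{M(1)},\bZ_{M(1)})$. Applying the same reasoning to $\tmu_{\cdot}(B_j)$ gives the claim.

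\emph{Third statement.} Take $\bU_n=(\bX_{N(1)},\bZ_{N(1)})$ and $\bV_n=(\bX_{M(N(1))},\bZ_{M(N(1))})$. By \eqref{eq:lemA:aux-1-8:4}, $\bU_n\conD(\bX_1,\bZ_1)$, and the limit has a continuous density by Assumption~\ref{assump_4.3}. By \eqref{eq:lemA:aux-1-8:6}, $\|\bV_n-\bU_n\|\conP0$. Lemma~\ref{lemA:aux-1-9}, applied to the measurable function $\tmu_{\cdot}(C_j)$, gives $\tmu_{\bV_n}(C_j)-\tmu_{\bU_n}(C_j)\conP0$.

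In each case, combining the finitely many $j$ yields $\tilde r(\cdots)-\tilde r(\cdots)\conP0$. Adding the $2\epsilon$ approximation error and using that $\epsilon$ and $\delta$ are arbitrary completes the argument.

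The only delicate point is the third statement. There the base point $(\bX_{N(1)},\bZ_{N(1)})$ is not one of the i.i.d. sample points, so Lemma 11.7 of \cite{azadkia2019simple} cannot be invoked directly, and we need Lemma~\ref{lemA:aux-1-9}. To apply that lemma, we must check that the absolute continuity required of the limit law is supplied by \eqref{eq:lemA:aux-1-8:4}. In the second statement, the first-argument change is just the first statement with the roles of the two arguments reversed, and no new input is needed.
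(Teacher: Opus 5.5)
Your proposal follows the paper's proof essentially step for step. It uses a bivariate simple-function approximation of $F_Y(u\wedge v)$ from Lemma~\ref{lemA:aux-2-1}, reduces to convergence of $\tmu_{\cdot}(D)$ along two nearby random points, invokes Lemmas~11.3 and~11.7 of \cite{azadkia2019simple} for the first two statements, and uses Lemma~\ref{lemA:aux-1-9} with \eqref{eq:lemA:aux-1-8:4} and \eqref{eq:lemA:aux-1-8:6} for the third. The first two statements are fine as you wrote them.

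The step you singled out as delicate is not secured by the hypothesis check you describe, and the defect is inherited from the paper: Lemma~\ref{lemA:aux-1-9} is false as stated. Absolute continuity of the limit law of $(\bX_{N(1)},\bZ_{N(1)})$ therefore buys nothing.

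A counterexample: take $\bU\sim\mathrm{Unif}[0,1]$, $\bU_n$ uniform on $\{k/n:1\le k\le n\}$, $\bV_n=\bU_n+\sqrt{2}/n$, and $f=\Ind_{\mathbb{Q}}$. Every hypothesis holds, yet $f(\bU_n)-f(\bV_n)=1$ for all $n$. The lemma's proof breaks where it asserts $\P(\bU_n\in B)\to\P(\bU\in B)$ for the Lusin exceptional set $B$, which need not be a continuity set.

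What actually makes your third statement true is a domination of laws coming from the bounded in-degree of nearest-neighbor graphs \citep{MR682809}.
\begin{itemize}
\item By exchangeability, for every Borel $A$, $\P\{(\bX_{N(1)},\bZ_{N(1)})\in A\}=n^{-1}\E\sum_{j=1}^n\#\{i:N(i)=j\}\,\Ind\{(\bX_j,\bZ_j)\in A\}\le C\,\P\{(\bX,\bZ)\in A\}$.
\item The same bound, with a product constant, holds for $(\bX_{M(N(1))},\bZ_{M(N(1))})$, since $i\mapsto M(N(i))$ also has uniformly bounded preimages.
\item By Lusin's theorem for the law of $(\bX,\bZ)$, choose a continuous $g$ that agrees with $\tmu_{\cdot}(C_j)$ outside a set of $\P_{(\bX,\bZ)}$-measure less than $\epsilon$. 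By the domination, both random points fall into that set with probability at most a constant times $\epsilon$, uniformly in $n$.
\item Uniform continuity of $g$ on a compact set, together with \eqref{eq:lemA:aux-1-8:6}, then gives $\tmu_{(\bX_{M(N(1))},\bZ_{M(N(1))})}(C_j)-\tmu_{(\bX_{N(1)},\bZ_{N(1)})}(C_j)\conP 0$.
\end{itemize}
This is the same mechanism that underlies Lemma~11.7 of \cite{azadkia2019simple}, and \eqref{eq:lemA:aux-1-8:4} is not needed for this step.
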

\begin{proof}
This lemma can be proved by a similar approach as that of Lemmas \ref{lemA:aux-2-3} and \ref{lemA:aux-2-4}. The only difference is that we need to apply Lemma~\ref{lemA:aux-2-1} instead of  Lemma~\ref{lemA:aux-2-2} and find the bivariate simple function $q(u,v)$ such that $\sup_{(u,v)\in \mathbbR^2} | q(u,v)- F_Y(u \wedge v) | <\epsilon$. The proof utilizes the following convergence results: (i). $\|(\bX_{M(1)}, \bZ_{M(1)})-(\bX_1, \bZ_1)\| \conas 0$ (By Lemma 11.3 in \cite{azadkia2019simple}); (ii). $\|(\bX_{M(N(1))},\bZ_{M(N(1))}) - (\bX_{N(1)}, \bZ_{N(1)})\| \conP 0$ (by \eqref{eq:lemA:aux-1-8:6}).
\end{proof}

\begin{lemmaA} \label{lemA:aux-2-7}
Define the function: 
$\tildeg^\dagger: \mathbbR^{p+2q} \mapsto [0,\infty)$,
\begin{eqnarray}
\tildeg^\dagger\big((\bx_1,\bz_1), \bz_2\big)&: =& \int \int F_Y(u\wedge v) 
\d \tmu_{(\bx_1,\bz_1)}(u) \d \tmu_{\bz_2}(v) \cr
&=& 
\int \tildeg\big((\bx_1,\bz_1), (\bx,\bz_2)\big) 
\d \mu_{\bX=\bx \mid \bZ = \bz_2}(\bx).
\label{eq:lemA:aux-2-7:1}
\end{eqnarray}
Then as $n \to \infty$, we have the following convergence results: 
\begin{eqnarray*}
&&\tildeg^\dagger \big((\bX_1,\bZ_1), \bZ_{N(1)})\big) - \tildeg^\dagger\big((\bX_1,\bZ_1), \bZ_1\big) \conP 0, \cr
&&\tildeg^\dagger\big( (\bX_{N(1)}, \bZ_{N(1)}), \bZ_{N(1)}\big) - \tildeg^\dagger\big( (\bX_{N(1)}, \bZ_{N(1)}), \bZ_1\big) \conP 0, \cr
&& \tildeg^\dagger \big((\bX_{M(1)}, \bZ_{M(1)}), \bZ_{M(1)} \big)  
- \tildeg^\dagger \big((\bX_1,\bZ_1), \bZ_1\big) \conP 0.
\end{eqnarray*}
\end{lemmaA}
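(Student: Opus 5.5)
The plan is to follow the template of Lemmas~\ref{lemA:aux-2-3} and \ref{lemA:aux-2-4}: approximate the integrand by a simple function, and reduce each claimed convergence to the convergence of finitely many conditional probabilities $\tmu_\cdot(B)$ evaluated at nearby random points.

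Fix $\epsilon,\delta>0$. By Lemma~\ref{lemA:aux-2-1} there is $q(u,v)=\sum_{j=1}^m c_j\Ind_{B_j}(u)\Ind_{C_j}(v)$ with $\sup|q-F_Y(u\wedge v)|<\epsilon$. Define the approximation
\[
r^\dagger\big((\bx_1,\bz_1),\bz_2\big):=\sum_{j=1}^m c_j\,\tmu_{(\bx_1,\bz_1)}(B_j)\,\tmu_{\bz_2}(C_j).
\]
Since $\tmu$ is a probability measure, $\sup|\tildeg^\dagger-r^\dagger|<\epsilon$ uniformly in the arguments. As in the final step of Lemma~\ref{lemA:aux-2-3}, it then suffices to prove each of the three statements with $\tildeg^\dagger$ replaced by $r^\dagger$. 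Because each $\tmu_\cdot(B)\in[0,1]$ and $m$ is finite, this in turn reduces to showing that each factor converges in probability to its limiting counterpart.

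\textbf{First statement.} Only the second argument changes, so one needs $\tmu_{\bZ_{N(1)}}(C_j)-\tmu_{\bZ_1}(C_j)\conP0$. This follows from $\|\bZ_{N(1)}-\bZ_1\|\conas0$ (Lemma 11.3 of \cite{azadkia2019simple}) together with Lemma 11.7 of \cite{azadkia2019simple}, applied to the measurable map $\bz\mapsto\tmu_\bz(C_j)$, exactly as in Lemma~\ref{lemA:aux-2-3}.

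\textbf{Second statement.} The first argument $(\bX_{N(1)},\bZ_{N(1)})$ is common to both terms, so only $\tmu_{\bZ_{N(1)}}(C_j)-\tmu_{\bZ_1}(C_j)\conP0$ is needed, which is the same fact as in the first statement.

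\textbf{Third statement.} This requires two convergences:
\[
\tmu_{(\bX_{M(1)},\bZ_{M(1)})}(B_j)-\tmu_{(\bX_1,\bZ_1)}(B_j)\conP0, \qquad \tmu_{\bZ_{M(1)}}(C_j)-\tmu_{\bZ_1}(C_j)\conP0.
\]
The first uses $\|(\bX_{M(1)},\bZ_{M(1)})-(\bX_1,\bZ_1)\|\conas0$ together with Lemma 11.7 of \cite{azadkia2019simple}. The second follows because $\|\bZ_{M(1)}-\bZ_1\|\le\|(\bX_{M(1)},\bZ_{M(1)})-(\bX_1,\bZ_1)\|\to0$ almost surely. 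One caveat: $M(1)$ is not the $\bZ$-nearest neighbour, so Lemma 11.7 may not apply verbatim to the map $\bz\mapsto\tmu_\bz(C_j)$ along $\bZ_{M(1)}$. In that case, invoke Lemma~\ref{lemA:aux-1-9} with $\bU_n=\bZ_1$ (which is absolutely continuous with continuous density by Assumption~\ref{assump_4.3}) and $\bV_n=\bZ_{M(1)}$, giving the convergence for any measurable function.

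The main obstacle is the step where the random point is not the first nearest neighbour of the index whose conditional law it is compared with. This occurs in the third statement, and in the second if one prefers to route it through $(\bX_{N(1)},\bZ_{N(1)})$. In these cases I would rely on Lemma~\ref{lemA:aux-1-9}, using the distributional convergence \eqref{eq:lemA:aux-1-8:4} where needed, rather than on Lemma 11.7. Finally, since $\epsilon$ and $\delta$ are arbitrary, the triangle inequality yields all three convergences for $\tildeg^\dagger$.
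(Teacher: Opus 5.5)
Your proposal is correct and follows essentially the paper's route: the paper proves this lemma by reference to Lemmas~\ref{lemA:aux-2-3}, \ref{lemA:aux-2-4}, and \ref{lemA:aux-2-6}, namely a bivariate simple-function approximation from Lemma~\ref{lemA:aux-2-1} followed by factorwise convergence in probability of the $\tmu_\cdot(B_j)$ and $\tmu_\cdot(C_j)$ terms. Your caveat in the third statement is unnecessary, because $(\bx,\bz)\mapsto\tmu_{\bz}(C_j)$ is itself a measurable function on the joint space and $M(1)$ is the nearest neighbour in the joint sample, so Lemma~11.7 of \cite{azadkia2019simple} applies directly; this is also preferable to your fallback via Lemma~\ref{lemA:aux-1-9} with $\bU=\bZ$, since Assumption~\ref{assump_4.3} guarantees a continuous density only for $(\bX,\bZ)$, not for the marginal of $\bZ$.
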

\begin{proof}
This lemma can be proved by a similar approach as that of Lemma \ref{lemA:aux-2-6}. 
\end{proof}

\begin{lemmaA} \label{lemA:aux-2-8}
Recall the function $g$ defined in \eqref{eq:lemA:aux-2-3:1}. We have
\begin{eqnarray*}
&&\lim_{n\to \infty} n\cdot \E\Big\{g\big((\bX_{M(1)}, \bZ_{M(1)}), \bZ_2,  (\bX_1,\bZ_1) \big) \cdot \Ind(M(1) = N(2))\Big\} \cr
&=&
\E\Big\{ g\big((\bX_1, \bZ_1), \bZ_1, (\bX_1, \bZ_1) \Big\}.
\end{eqnarray*}
\end{lemmaA}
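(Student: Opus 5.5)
We prove Lemma~\ref{lemA:aux-2-8} by combining the approximation argument of Lemma~\ref{lemA:aux-2-3} with the localization results of Group~1, namely Lemmas~\ref{lemA:aux-1-4}--\ref{lemA:aux-1-7}. Throughout, write
\[
\Gamma_n := g\big((\bX_{M(1)}, \bZ_{M(1)}), \bZ_2, (\bX_1,\bZ_1)\big)
\qquad\text{and}\qquad
\gamma := g\big((\bX_1, \bZ_1), \bZ_1, (\bX_1, \bZ_1)\big).
\]
Since $n\P(M(1)=N(2))\to 1$ by \citet[Lemma 7.4, Equation (28)]{Shi_Drton_Han_2024_Bernoulli}, we have
\[
n\,\E\{\Gamma_n \Ind(M(1)=N(2))\} = \{1+o(1)\}\,\E\{\Gamma_n \mid M(1)=N(2)\}.
\]
Because $g$ is bounded, the lemma therefore reduces to two claims: (a) $\Gamma_n-\gamma\to 0$ in probability under the conditional law $\P(\cdot\mid M(1)=N(2))$, and (b) $\E\{\gamma\mid M(1)=N(2)\}\to \E(\gamma)$.

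\textbf{Step 1: reduction to finitely many measurable maps.} Fix $\epsilon>0$ and use Lemma~\ref{lemA:aux-2-2} to replace $g$ by the function $r$ of \eqref{eq:lemA:aux-2-3:3}, at a uniform cost of at most $\epsilon$. The function $r$ is a finite sum of products
\[
\tmu_{(\bx_1,\bz_1)}(B_j)\,\tmu_{\bz_2}(C_j)\,\tmu_{(\bx_3,\bz_3)}(D_j).
\]
For claim (a) it then suffices to show, for each fixed $j$,
\[
\tmu_{(\bX_{M(1)},\bZ_{M(1)})}(B_j)-\tmu_{(\bX_1,\bZ_1)}(B_j)\to 0
\quad\text{and}\quad
\tmu_{\bZ_2}(C_j)-\tmu_{\bZ_1}(C_j)\to 0
\]
in conditional probability given $\{M(1)=N(2)\}$. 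Lemmas~\ref{lemA:aux-1-7} and \ref{lemA:aux-1-6} give exactly these statements, applied to $f=\tmu_{(\cdot)}(B_j)$ and $f=\tmu_{(\cdot)}(C_j)$ respectively. Both lemmas, however, require these maps to be continuous almost everywhere. Assumption~\ref{assump_4.5} supplies this only for half-lines $[t,\infty)$, through $G_{\bz}(t)$ and $G_{\bx,\bz}(t)$.

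\textbf{Step 2 (the main obstacle): obtaining a.e.-continuous maps.} We handle the continuity requirement by sharpening Lemma~\ref{lemA:aux-2-1}, so that the approximating sets $B_j,C_j,D_j$ are intervals $[a,b)$. Since $Y$ is continuous, $F_Y(u\wedge v)$ can be approximated uniformly by step functions on a grid of quantiles, so this is possible. Then
\[
\tmu_{\bz}([a,b)) = G_{\bz}(a)-G_{\bz}(b),
\]
and the same identity holds for $\tmu_{(\bx,\bz)}$. Both maps are continuous almost everywhere by Assumption~\ref{assump_4.5}, so Lemmas~\ref{lemA:aux-1-6} and \ref{lemA:aux-1-7} apply.

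\textbf{Step 3: the product.} Lemmas~\ref{lemA:aux-1-6} and \ref{lemA:aux-1-7} each control one factor, but we need both simultaneously under the same conditioning event. A union bound on the complementary events gives this. Since all factors are bounded by $1$, the differences of the products also vanish in conditional probability, which proves claim (a).

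\textbf{Step 4: claim (b), removing the tilt.} The conditioning on $\{M(1)=N(2)\}$ tilts the law of $(\bX_1,\bZ_1)$, and we must show this tilt disappears. Let $S_n$ be as in the proof of Lemma~\ref{lemA:aux-1-4}, with the constraint on $f$ removed, that is,
\[
S_n := n\,\P\big(N(2)=M(1)\mid \bX_1,\bZ_1\big).
\]
That proof shows $S_n\to 1$ in probability and that $S_n$ is bounded. Hence
\[
n\,\E\{\gamma\,\Ind(M(1)=N(2))\} = \E(\gamma S_n)\to \E(\gamma)
\]
by bounded convergence.

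\textbf{Conclusion.} Combining claims (a) and (b) with the uniform $\epsilon$-approximation of Step~1, and then letting $\epsilon\to 0$, yields the lemma.
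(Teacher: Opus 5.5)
Your proposal is correct and follows essentially the same route as the paper. Both arguments approximate $g$ by the simple function $r$ and apply Lemmas~\ref{lemA:aux-1-6} and \ref{lemA:aux-1-7} under the conditioning on $\{M(1)=N(2)\}$, together with $n\P(M(1)=N(2))\to 1$. Both then remove the tilt through $n\E\{\Ind(M(1)=N(2))\mid \bX_1,\bZ_1\}\to 1$ in probability (the paper cites Lemma~\ref{lemma:two_NNGs} here) plus bounded convergence. Your Step~2 is a small improvement: restricting to half-open intervals $[a,b)$, so that $\tmu_{\bz}([a,b))=G_{\bz}(a)-G_{\bz}(b)$ is a.e.\ continuous by Assumption~\ref{assump_4.5}, spells out a point the paper only asserts for ``any interval.''
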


\begin{proof} 
Fix some $\epsilon>0$ and $\delta>0$. 
Let 
\begin{eqnarray*}
r\big( (\bx_1, \bz_1), \bz_2, (\bx_3,\bz_3)\big) &=& 
\sum_{j=1}^m c_j \cdot \tmu_{(\bx_1, \bz_1)}(B_j) \tmu_{\bz_2}(C_j)  \tmu_{(\bx_3,\bz_3)}(D_j) 
\end{eqnarray*}
be the simple function as defined in \eqref{eq:lemA:aux-2-3:3}, such that 
\begin{eqnarray*}
\sup_{((\bx_1, \bz_1), \bz_2, (\bx_3,\bz_3)) \in \mathbbR^{2p+3q} } \Big| g\big((\bx_1, \bz_1), \bz_2, (\bx_3,\bz_3)\big) - r\big((\bx_1, \bz_1), \bz_2, (\bx_3,\bz_3)\big) \Big|  \ < \ \epsilon.
\end{eqnarray*}
Since the function $g$ is bounded, the function $r$ is also bounded. 

Define
\begin{eqnarray*}
S_1 &:=& n \cdot g\big((\bX_{M(1)}, \bZ_{M(1)}), \bZ_2,  (\bX_1,\bZ_1) \big)  \cdot \Ind\big(M(1)=N(2)\big), \cr
S_2 &:=& n \cdot  r\big((\bX_{M(1)}, \bZ_{M(1)}), \bZ_2,  (\bX_1,\bZ_1) \big) 
\cdot \Ind\big(M(1)=N(2)\big), \cr
S_3 &:=& n \cdot   r\big((\bX_1, \bZ_1), \bZ_1,  (\bX_1,\bZ_1) \big)  \cdot \Ind\big(M(1)=N(2)\big), \cr
S_4 &:=& n \cdot g\big((\bX_1, \bZ_1), \bZ_1,  (\bX_1,\bZ_1) \big)\cdot \Ind\big(M(1)=N(2)\big), \cr
S_5 &:=& g\big((\bX_1, \bZ_1), \bZ_1,  (\bX_1,\bZ_1) \big).
\end{eqnarray*}
Then it suffices to prove $\lim_{n \to \infty} \E(S_1) = \E(S_5)$. To this end,
we will bound the differences in expectations between each pair of consecutive terms.

\noindent \textbf{(i)} Case $|S_1-S_2|$. 
\begin{eqnarray}
\E(|S_1-S_2|)
&\leq& \epsilon \cdot  n \cdot \E\big\{\Ind\big(M(1)=N(2)\big)\big\} \ = \ \epsilon \cdot \frac{n}{n-1}\E\Big\{\sum_{k=2}^n \Ind\big(N(k)=M(1)\big)\Big\}.\quad  \label{eq:lemA:aux-2-8:1}
\end{eqnarray}
By Corollary S1 of \cite{MR682809}, $\sum_{k=2}^n \Ind( N(k) = M(1))$ is bounded from above by some constant. This proves that $\E(|S_1-S_2|) = O(\epsilon)$.

\noindent \textbf{(ii)} Case $|S_2-S_3|$.  Note that 
\begin{eqnarray*}
&& r\big((\bX_{M(1)}, \bZ_{M(1)}), \bZ_2,  (\bX_1,\bZ_1) \big) -    r\big((\bX_1, \bZ_1), \bZ_1,  (\bX_1,\bZ_1) \big)   \cr
&=&
\sum_{j=1}^m 
c_j \cdot \tmu_{(\bX_1,\bZ_1)}(D_j)\cdot \big[\tmu_{(\bX_{M(1)},\bZ_{M(1)})}(B_j)  \tmu_{\bZ_2}(C_j)
- \tmu_{(\bX_1,\bZ_1)}(B_j)  \tmu_{\bZ_1}(C_j) \big].
\end{eqnarray*} 
By Assumption~\ref{assump_4.5}, for any fixed $t \in \mathbb{R}$, the mapping $(\bx,\bz) \mapsto G_{(\bx,\bz)}(t) = \E[\Ind(Y \ge t)\mid  (\bX,\bZ)=(\bx,\bz)]$ is continuous almost everywhere on $\supp((\bX,\bZ))$.
It follows that for any interval $A \subseteq \mathbbR$, the mapping $(\bx,\bz) \mapsto \tmu_{(\bx,\bz)}(A) = \E( \Ind(Y \in A) \mid (\bX,\bZ)=(\bx,\bz))$ is continuous almost everywhere on $\supp((\bX,\bZ))$. 
Similarly,  
the mapping $\bz \mapsto \tmu_{\bz}(A) = \E( \Ind(Y \in A) \mid \bZ=\bz)$ is also continuous almost everywhere on $\supp(\bZ)$.
By Lemmas \ref{lemA:aux-1-6} and \ref{lemA:aux-1-7}, 
for any $\epsilon_0 >0$ and $\delta>0$, the following holds for sufficiently large $n$:
\begin{eqnarray*}
&&\P\Big( \big|\tmu_{(\bX_{M(1)},\bZ_{M(1)})}(B_j)  - \tmu_{(\bX_1,\bZ_1)}(B_j) \big| > \epsilon_0 \ \Big | \ M(1)=N(2)\Big) < \delta, \cr
\text{and} &&\P\Big(| \tmu_{\bZ_2}(C_j) - \tmu_{\bZ_1}(C_j)| > \epsilon_0 \ \Big | \ M(1)=N(2)\Big) < \delta,
\qquad \text{for } j=1,\dots, m.
\end{eqnarray*}
Therefore, for sufficiently large $n$, we have
\begin{eqnarray}
\P\Big( \big|r\big((\bX_{M(1)}, \bZ_{M(1)}), \bZ_2,  (\bX_1,\bZ_1) \big) -    r\big((\bX_1, \bZ_1), \bZ_1,  (\bX_1,\bZ_1) \big)  \big| > \epsilon  \ \Big | \ M(1)=N(2)\Big)  < \delta.  \cr \label{eq:lemA:aux-2-8:2}
\end{eqnarray}
Since the function $r$ is bounded from above, we further have
\begin{eqnarray*}
\E\Big( \big|r\big((\bX_{M(1)}, \bZ_{M(1)}), \bZ_2,  (\bX_1,\bZ_1) \big) -    r\big((\bX_1, \bZ_1), \bZ_1,  (\bX_1,\bZ_1) \big)  \big| \ \Big | \ M(1)=N(2)\Big)  = O(\epsilon+\delta).
\end{eqnarray*}
By \citet[Lemma 7.4, Equation (28)]{Shi_Drton_Han_2024_Bernoulli}, $\lim_{n\to \infty} n \P\big(  M(1)=N(2)\big) = 1$. Combining these gives that
\begin{eqnarray}
&&\E(|S_2-S_3|) \cr
&= & n\cdot  \E\Big( \big|r\big((\bX_{M(1)}, \bZ_{M(1)}), \bZ_2,  (\bX_1,\bZ_1) \big) -    r\big((\bX_1, \bZ_1), \bZ_1,  (\bX_1,\bZ_1) \big)  \big|  \cdot \Ind\big(M(1)=N(2)\big)\Big) \cr
&=&
\E\Big(\big|r\big((\bX_{M(1)}, \bZ_{M(1)}), \bZ_2,  (\bX_1,\bZ_1) \big) -    r\big((\bX_1, \bZ_1), \bZ_1,  (\bX_1,\bZ_1) \big)  \big|   \ \Big | \ M(1)=N(2)\Big) \cr
&&   \times  n \cdot  \P\big(  M(1)=N(2)\big)  \cr
&=&
O(\epsilon+\delta). \label{eq:lemA:aux-2-8:3}
\end{eqnarray}

\noindent \textbf{(iii)} Case $|S_3-S_4|$. 

Using the similar approach for proving Case (i) of $|S_1-S_2|$, we have
\begin{eqnarray*}
\E(|S_3-S_4|)
\leq \epsilon \cdot \frac{n}{n-1}\E\Big\{\sum_{k=2}^n \Ind\big(N(k)=M(1)\big)\Big\} = O(\epsilon).
\end{eqnarray*}

\noindent \textbf{(iv)} Case $S_4$. 
\begin{eqnarray*}
\E(S_4) &=& \E\big[\E(S_4 \ \big | \ \bX_1, \bZ_1)\big] \cr
&=& \E\Big[  g\big((\bX_1, \bZ_1), \bZ_1,  (\bX_1,\bZ_1) \big) \cdot n \E \big( \Ind(M(1)=N(2)) \ \big | \ \bX_1, \bZ_1\big)\Big].
\end{eqnarray*}
By Lemma \ref{lemma:two_NNGs}, 
\begin{eqnarray}
&& n \E \big( \Ind(M(1)=N(2)) \ \big | \ \bX_1, \bZ_1\big) \cr
&=&\frac{n}{n-1}\E\big[\#\{j\in \lbr n \rbr:j \neq 1, N(j) = M(1)\} \ \big | \ \bX_1, \bZ_1 \big] \ = \ 1 +o_\P(1).  \label{eq:lemA:aux-2-8:4}
\end{eqnarray}
By \cite{MR682809} (Corollary S1), $\#\{j\in \lbr n \rbr:j \neq 1, N(j) = M(1)\}$ is bounded from above. 
Also, $g$ is bounded from above. Therefore, by bounded convergence theorem, 
\begin{eqnarray*}
\E(S_4) = \E\Big\{ g\big((\bX_1, \bZ_1), \bZ_1, (\bX_1, \bZ_1) \Big\}+o(1) = \E(S_5) + o(1). 
\end{eqnarray*} 

Finally, putting the above cases together yields that
\begin{eqnarray*}
|\E(S_1) - \E(S_5)| &\leq& 	|\E(S_1) - \E(S_4)| + 	|\E(S_4) - \E(S_5)| \cr
&\leq &
\E(|S_1-S_2|) + \E(|S_2-S_3|) + \E(|S_3-S_4|) + |\E(S_4) - \E(S_5)| \cr
&=&
O(\epsilon+\delta).
\end{eqnarray*}
Since $\epsilon$ and $\delta$ are arbitrary, we get $\lim_{n\to \infty} \E(S_1) = \E(S_5)$. This completes the proof. \end{proof}

\begin{lemmaA} \label{lemA:aux-2-9}
Recall the function $\tildeg$ and $\tildeg^\dagger$ defined in \eqref{eq:lemA:aux-2-6:1} and \eqref{eq:lemA:aux-2-7:1}. We have
\begin{eqnarray*}
&&\lim_{n\to \infty} n\cdot 	\E  \Big\{ \tildeg\big((\bX_1,\bZ_1), (\bX_{M(1)},\bZ_{M(1)})\big) \cdot \tildeg^\dagger \big((\bX_{M(1)},\bZ_{M(1)}), \bZ_2\big) \cdot \Ind\big(M(1)=N(2)\big) \Big\}  \cr
&=&
\E  \Big\{ \tildeg\big((\bX_1,\bZ_1), (\bX_1,\bZ_1)\big) \cdot \tildeg^\dagger \big((\bX_1,\bZ_1), \bZ_1\big)\Big\}  
\end{eqnarray*}	
\end{lemmaA}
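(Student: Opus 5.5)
The plan is to copy the four-step sandwich used in Lemma~\ref{lemA:aux-2-8}. The only change is that the function $g$ of three arguments is replaced by the product $\tildeg\cdot\tildeg^\dagger$, which is a function of $\big((\bX_1,\bZ_1),(\bX_{M(1)},\bZ_{M(1)}),\bZ_2\big)$.

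\textbf{Step 1: reduction to simple functions.} Fix $\epsilon,\delta>0$. By Lemma~\ref{lemA:aux-2-1}, choose a simple function $q(u,v)=\sum_j c_j\Ind_{B_j}(u)\Ind_{C_j}(v)$ within $\epsilon$ of $F_Y(u\wedge v)$ uniformly. Integrating gives
\[
r_1\big((\bx_1,\bz_1),(\bx_2,\bz_2)\big)=\sum_j c_j\,\tmu_{(\bx_1,\bz_1)}(B_j)\,\tmu_{(\bx_2,\bz_2)}(C_j),
\qquad
r_2\big((\bx_2,\bz_2),\bz_3\big)=\sum_j c_j\,\tmu_{(\bx_2,\bz_2)}(B_j)\,\tmu_{\bz_3}(C_j).
\]
These satisfy $\sup|\tildeg-r_1|<\epsilon$ and $\sup|\tildeg^\dagger-r_2|<\epsilon$. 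All four functions are bounded by a constant, so the products satisfy $\sup|\tildeg\,\tildeg^\dagger-r_1r_2|=O(\epsilon)$.

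Define $S_1,\dots,S_5$ exactly as in the proof of Lemma~\ref{lemA:aux-2-8}:
\begin{itemize}
\item $S_1$ is $n\,\tildeg\,\tildeg^\dagger$ evaluated at $\big((\bX_1,\bZ_1),(\bX_{M(1)},\bZ_{M(1)}),\bZ_2\big)$, times $\Ind(M(1)=N(2))$;
\item $S_2$ is the same with $r_1r_2$ in place of $\tildeg\,\tildeg^\dagger$;
\item $S_3$ is $n\,r_1r_2$ evaluated at $\big((\bX_1,\bZ_1),(\bX_1,\bZ_1),\bZ_1\big)$, times the indicator;
\item $S_4$ is the same with $\tildeg\,\tildeg^\dagger$;
\item $S_5=\tildeg\big((\bX_1,\bZ_1),(\bX_1,\bZ_1)\big)\,\tildeg^\dagger\big((\bX_1,\bZ_1),\bZ_1\big)$, with no factor $n$ and no indicator.
\end{itemize}

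\textbf{Step 2: the two approximation gaps.} The bounds $\E|S_1-S_2|=O(\epsilon)$ and $\E|S_3-S_4|=O(\epsilon)$ follow from
\[
n\,\E\,\Ind(M(1)=N(2))=\frac{n}{n-1}\,\E\,\#\{k:N(k)=M(1)\}=O(1),
\]
which holds by the bounded-degree result of \cite{MR682809} (Corollary S1).

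\textbf{Step 3: the main obstacle, $\E|S_2-S_3|$.} Expand $r_1r_2$ as a finite sum of products of the form
\[
\tmu_{(\bX_1,\bZ_1)}(B_i)\,\tmu_{(\bX_{M(1)},\bZ_{M(1)})}(C_i)\,\tmu_{(\bX_{M(1)},\bZ_{M(1)})}(B_j)\,\tmu_{\bZ_2}(C_j).
\]
It then suffices to show that, conditionally on $\{M(1)=N(2)\}$, each factor $\tmu_{(\bX_{M(1)},\bZ_{M(1)})}(A)$ is within $\epsilon_0$ of $\tmu_{(\bX_1,\bZ_1)}(A)$, and each factor $\tmu_{\bZ_2}(A)$ is within $\epsilon_0$ of $\tmu_{\bZ_1}(A)$, both with conditional probability at least $1-\delta$.

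Assumption~\ref{assump_4.5} makes $\tmu_{(\bx,\bz)}(A)$ and $\tmu_{\bz}(A)$ continuous almost everywhere for intervals $A$. Given that, the two required conditional statements are exactly Lemma~\ref{lemA:aux-1-7} and Lemma~\ref{lemA:aux-1-6}. A union bound over the finitely many factors, together with boundedness of the products, gives
\[
\E\big(|\,r_1r_2(\cdot)-r_1r_2(\cdot)\,|\ \big|\ M(1)=N(2)\big)=O(\epsilon+\delta).
\]
Multiplying by $n\,\P(M(1)=N(2))\to1$, from \citet[Lemma 7.4]{Shi_Drton_Han_2024_Bernoulli}, yields $\E|S_2-S_3|=O(\epsilon+\delta)$.

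The delicate point is that the continuity-a.e. arguments must be applied under the conditioning event. I rely on Lemmas~\ref{lemA:aux-1-6}--\ref{lemA:aux-1-7} being stated for that conditional probability, rather than trying to use plain convergence in probability of $\bW_{M(1)}$ and $\bZ_2$.

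\textbf{Step 4: the limit of $\E S_4$.} Condition on $(\bX_1,\bZ_1)$. Then
\[
\E(S_4)=\E\Big[S_5\cdot n\,\E\big(\Ind(M(1)=N(2))\ \big|\ \bX_1,\bZ_1\big)\Big].
\]
By Lemma~\ref{lemma:two_NNGs} the inner conditional expectation is $1+o_\P(1)$, and it is uniformly bounded by the degree bound. Bounded convergence then gives $\E S_4\to\E S_5$.

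Combining Steps 2--4 gives $|\E S_1-\E S_5|=O(\epsilon+\delta)$ in the limit. Since $\epsilon$ and $\delta$ are arbitrary, $\lim_{n\to\infty}\E S_1=\E S_5$, which is the statement.
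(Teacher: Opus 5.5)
Your proposal is correct and matches the paper's proof essentially step for step. It uses the same simple-function approximation from Lemma~\ref{lemA:aux-2-1} to produce $r$ and $r^\dagger$, and the same five-term chain as in Lemma~\ref{lemA:aux-2-8}: the bounded in-degree of the NNG controls the two approximation gaps, Lemmas~\ref{lemA:aux-1-6}--\ref{lemA:aux-1-7} handle the middle gap under the conditioning event $\{M(1)=N(2)\}$, and Lemma~\ref{lemma:two_NNGs} with bounded convergence gives the final limit. The only cosmetic difference is that you expand $r_1r_2$ into four-factor products before the union bound, whereas the paper controls $r$ and $r^\dagger$ separately and then uses boundedness of the product.
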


\begin{proof} 
The proof follows the similar procedure as that of Lemma \ref{lemA:aux-2-8}.
Fix some $\epsilon >0$ and $\delta >0$. From Lemma \ref{lemA:aux-2-1}, there exists a simple function $	q(u,v) = \sum_{j=1}^m c_j \Ind_{B_j}(u)\Ind_{C_j}(v)$ such that $	\sup_{(u,v)\in \mathbbR^2} | q(u,v)- F_Y(u \wedge v)  | <\epsilon$. Define the functions 
\begin{eqnarray*}
r\big( (\bx_1,\bz_1), (\bx_2, \bz_2)\big) &:=& \int \int q(u,v) 	\d \tmu_{(\bx_1,\bz_1)}(u) \d \tmu_{(\bx_2, \bz_2)}(v) \cr
&=&
\sum_{j=1}^m c_j \cdot \tmu_{(\bx_1,\bz_1)}(B_j) \tmu_{(\bx_2, \bz_2)}(C_j)  ,\cr
r^\dagger\big( (\bx_1,\bz_1), \bz_2\big) &:=& \int \int q(u,v) 	\d \tmu_{(\bx_1,\bz_1)}(u) \d \tmu_{\bz_2}(v) \cr
&=&
\sum_{j=1}^m c_j \cdot \tmu_{(\bx_1,\bz_1)}(B_j) \tmu_{\bz_2}(C_j).
\end{eqnarray*}
Then 
\begin{eqnarray}
&& \sup_{((\bx_1,\bz_1),(\bx_2, \bz_2)) \in \mathbbR^{2p+2q}} \Big|\tildeg\big( (\bx_1,\bz_1), (\bx_2, \bz_2)\big)- r\big( (\bx_1,\bz_1), (\bx_2, \bz_2)\big) \Big| < \epsilon, \cr
&& \sup_{((\bx_1,\bz_1),\bz_2) \in \mathbbR^{p+2q}} \Big|\tildeg^\dagger\big( (\bx_1,\bz_1), \bz_2\big)- r^\dagger\big( (\bx_1,\bz_1),  \bz_2\big) \Big| < \epsilon.\label{eq:lemA:aux-2-9:1}
\end{eqnarray} 
Since functions $\tildeg$ and $\tildeg^\dagger$ are bounded, then functions $r$ and $r^\dagger$ are also bounded. 

Define
\begin{eqnarray*}
R_1 &:=& n \cdot  \tildeg\big((\bX_1,\bZ_1), (\bX_{M(1)},\bZ_{M(1)})\big) \cdot \tildeg^\dagger \big((\bX_{M(1)},\bZ_{M(1)}), \bZ_2\big)  \cdot \Ind\big(M(1)=N(2)\big), \cr
R_2 &:=&  n \cdot r\big((\bX_1,\bZ_1), (\bX_{M(1)},\bZ_{M(1)})\big) \cdot r^\dagger \big((\bX_{M(1)},\bZ_{M(1)}), \bZ_2\big) \cdot \Ind\big(M(1)=N(2)\big), \cr
R_3 &:=& n \cdot r\big((\bX_1,\bZ_1), (\bX_1,\bZ_1)\big) \cdot r^\dagger \big((\bX_1,\bZ_1), \bZ_1\big)  \cdot \Ind\big(M(1)=N(2)\big), \cr
R_4 &:=& n \cdot  \tildeg\big((\bX_1,\bZ_1), (\bX_1,\bZ_1)\big) \cdot \tildeg^\dagger \big((\bX_1,\bZ_1), \bZ_1\big)  \cdot \Ind\big(M(1)=N(2)\big), \cr
R_5 &:=& \tildeg\big((\bX_1,\bZ_1), (\bX_1,\bZ_1)\big) \cdot \tildeg^\dagger \big((\bX_1,\bZ_1), \bZ_1\big) .
\end{eqnarray*}
We will bound the difference in expectations between each pair of consecutive terms.

\noindent \textbf{(i)} Case $|R_1-R_2|$. 

By \eqref{eq:lemA:aux-2-9:1} and the fact that $\tildeg$ and $\tildeg^\dagger$ are bounded, it is straightforward that
\begin{eqnarray*}
&&\sup_{\big((\bx_1,\bz_1), (\bx_2, \bz_2), (\bx_3,\bz_3), \bz_4\big) \in \mathbbR^{3p+4q}} \Big|\tildeg\big( (\bx_1,\bz_1), (\bx_2, \bz_2)\big) \cdot \tildeg^\dagger\big( (\bx_3,\bz_3), \bz_4\big) \cr
&& \hspace{6cm}	-
r\big( (\bx_1,\bz_1), (\bx_2, \bz_2)\big) \cdot r^\dagger\big( (\bx_3,\bz_3), \bz_4\big)
\Big| < O(\epsilon).
\end{eqnarray*}
Then 
\begin{eqnarray*}
\E(|R_1-R_2|)
\leq O(\epsilon) \cdot  n \E\big[\Ind\big(M(1)=N(2)\big)\big].
\end{eqnarray*}
Using the similar argument as that of \eqref{eq:lemA:aux-2-8:1}, we get $\E(|R_1-R_2|) = O(\epsilon)$. 

\noindent \textbf{(ii)} Case $|R_2-R_3|$. 

Using the similar argument as that of \eqref{eq:lemA:aux-2-8:2},
for any $\epsilon_0>0$, we have
\begin{eqnarray*}
&&	\P\Big( \big|r\big((\bX_1,\bZ_1), (\bX_{M(1)},\bZ_{M(1)})\big) -  r\big((\bX_1,\bZ_1), (\bX_1,\bZ_1)\big) \big| > \epsilon_0  \ \Big | \ M(1)=N(2)\Big)  < \delta, \cr
&&	\P\Big( \big|r^\dagger \big((\bX_{M(1)},\bZ_{M(1)}), \bZ_2\big)  - r^\dagger \big((\bX_1,\bZ_1), \bZ_1\big) \big| > \epsilon_0  \ \Big | \ M(1)=N(2)\Big)  < \delta,
\end{eqnarray*}
holds for sufficiently large $n$.
Since the functions $r$ and $r^\dagger$ are bounded, we further get
\begin{eqnarray*}
&&\P\Big( \big|r\big((\bX_1,\bZ_1), (\bX_{M(1)},\bZ_{M(1)})\big) \cdot r^\dagger \big((\bX_{M(1)},\bZ_{M(1)}), \bZ_2\big)  \cr
&&  \qquad -r\big((\bX_1,\bZ_1), (\bX_1,\bZ_1)\big) \cdot r^\dagger \big((\bX_1,\bZ_1), \bZ_1\big)  \big| > \epsilon  \ \Big | \ M(1)=N(2)\Big)  < \delta
\end{eqnarray*}
holds for sufficiently large $n$. 
Then 
\begin{eqnarray*}
&&\E\Big( \big|r\big((\bX_1,\bZ_1), (\bX_{M(1)},\bZ_{M(1)})\big) \cdot r^\dagger \big((\bX_{M(1)},\bZ_{M(1)}), \bZ_2\big)  \cr
&&  \qquad -r\big((\bX_1,\bZ_1), (\bX_1,\bZ_1)\big) \cdot r^\dagger \big((\bX_1,\bZ_1), \bZ_1\big)  \big| > \epsilon  \ \Big | \ M(1)=N(2)\Big) = O(\epsilon+\delta).
\end{eqnarray*}
By applying the similar argument as in \eqref{eq:lemA:aux-2-8:3}, we have
$\E(|R_2-R_3|) = O(\epsilon+\delta)$.

\noindent \textbf{(iii)} Case $|R_3-R_4|$. 

This is similar to Case (i) for bounding $|R_1-R_2|$. We have $\E(|R_3-R_4|)=O(\epsilon)$. 

\noindent \textbf{(iv)} Case $R_4$. 
\begin{eqnarray*}
\E(R_4) &=& \E\big[\E(R_4 \mid \bX_1, \bZ_1)\big] \cr
&=& \E\Big[ \tildeg\big((\bX_1,\bZ_1), (\bX_1,\bZ_1)\big) \cdot \tildeg^\dagger \big((\bX_1,\bZ_1), \bZ_1\big)  \cdot n \E\big( \Ind(N(2) = M(1)) \mid  \bX_1, \bZ_1\big)\Big].
\end{eqnarray*}
By \eqref{eq:lemA:aux-2-8:4} and the fact that $\tildeg$ and $\tildeg^\dagger$ are bounded, we get
\begin{eqnarray*}
\E(R_4)= \E\Big[\tildeg\big((\bX_1,\bZ_1), (\bX_1,\bZ_1)\big) \cdot \tildeg^\dagger \big((\bX_1,\bZ_1), \bZ_1\big) \Big] + o(1) = \E(R_5) + o(1).
\end{eqnarray*}

Finally, putting the pieces together yields that $|\E(R_1)-\E(R_5)| = O(\epsilon+\delta)$. Since $\epsilon$ and $\delta$ are arbitrary, we get $\lim_{n\to \infty} \E(R_1) = \E(R_5)$. This completes the proof. \end{proof}

{
\bibliographystyle{apalike}
\bibliography{AMS}
}

\end{document}